\def\eps{\varepsilon}
\def\Id{{\rm \bf Id}}
\def\id{{\rm \bf id}}
\newcommand{\D}[2]{\mathbf{D}_{#1}^{\boldsymbol{\Delta},#2}}
\newcommand{\Dhat}[2]{\widehat{\mathbf{D}}_{#1}^{\boldsymbol{\Delta},#2}}
\newcommand{\sD}{\partial^C}
\definecolor{color1}{HTML}{0072BD}
\definecolor{color2}{HTML}{D95319}
\definecolor{color3}{HTML}{77AC30}
\definecolor{color4}{HTML}{7E2F8E}
\pgfmathsetmacro\xx{1/sqrt(2)}
\pgfmathsetmacro\xy{1/sqrt(6)}
\pgfmathsetmacro\zy{sqrt(2/3)}
\crefname{hypothesis}{Hypothesis}{Hypotheses}
\crefname{fact}{Fact}{Facts}
\title{Well-Posedness of Discretizations for Fractional Elasto-Plasticity\thanks{Submitted to the editors DATE.
\funding{The authors MF and DN received funding from the Deutsche Forschungsgemeinschaft (DFG, German Research Foundation) -- Project-ID 258734477 -- SFB 1173, the Austrian Science Fund (FWF)
under the special research program Taming complexity in PDE systems (grant SFB F65) as well as project I6667-N and from the European Research Council (ERC) under the
European Union’s Horizon 2020
research and innovation programme (Grant agreement No. 101125225).}}}
\author{Michael Feischl\thanks{Institute of Analysis and Scientific Computing, TU Wien 
  (\email{michael.feischl@asc.tuwien.ac.at}).}
\and David Niederkofler\thanks{Institute of Analysis and Scientific Computing, TU Wien 
  (\email{david.niederkofler@asc.tuwien.ac.at}).}
\and Barbara Wohlmuth\thanks{School of Computation, Information and Technology, TU München
(\email{wohlmuth@cit.tum.de})}}
\begin{document}

\maketitle

% REQUIRED
\begin{abstract}
  We consider a fractional plasticity model based on linear isotropic and kinematic hardening as well as a standard von-Mises yield function, where the flow rule is replaced by a Riesz--Caputo fractional derivative. The resulting mathematical model is typically non-local and non-smooth. Our numerical algorithm is based on the well-known radial return mapping and exploits that the kernel is finitely supported.
  We propose explicit and implicit discretizations of the model and show the well-posedness of the explicit in time discretization in combination with a standard finite element approach in space. Our numerical results in 2D and 3D illustrate the performance of the algorithm and the influence of the fractional parameter.
\end{abstract}

% REQUIRED
\begin{keywords}
fractional derivatives,  well-posedness, semismoothness, subdifferential, elasto-plasticity, finite element method
\end{keywords}

% REQUIRED
\begin{MSCcodes}
    26A33,  74H15, 	74H20, 74S05
\end{MSCcodes}

\section{Introduction}
Fractional time derivatives play a significant role in the mathematical modeling of physical phenomena and hence can be found in many application driven partial differential systems 
\cite{seki2003fractional,kosztolowicz2013application,Liu18}.  Such derivatives can account for anomalous diffusion related to a continuous time random walk, see, e.g. \cite{gorenflo2002time} and provide a flexible framework for taking into account non-local effects. Subdiffusion models have recently gained much attraction in a wide range of  anomalous transport processes, e.g., in heterogeneous porous media \cite{CB04,HH21}, in cell 
membranes \cite{RSKIFK05} or nanoscale biophysics \cite{Kou08}.
As fractional time derivatives can describe a smooth transition between purely elastic and purely viscous materials, they are attractive to use
in complex rheological models for viscoelasticity. We refer to the pioneering work \cite{bagley1983theoretical} and the more recent contributions \cite{arikoglu_2014,xiao_2016}. The application area for fractional
derivative based models is quite rich and ranges from
the long-term creep behavior of
concrete \cite{hinze_2022} to biological tissue  modeling \cite{MERAL2010939}. We refer also to the textbook
\cite{mainardi2022fractional} on fractional viscoelasticity 
and the references therein.
Being pioneered in the work \cite{diethelm2010analysis}, the analysis
of fractional systems is particularly challenging due to the fact that standard results such as chain or product rules do not hold and even for the handling of ordinary differential equations more advanced Gronwall inequalities are needed \cite{RA17,YE20071075}. As a consequence, energy estimates are more demanding and require specially tailored approaches. We refer to \cite{zacher2019time,zacher2009weak} for weak solution concepts for time fractional diffusion equations and evolutionary integro-differential equations. In \cite{diethelm2020good} specific challenges in case
of the numerical solution of fractional-order differential problems 
are adressed.

In the present work, we consider a simplified small strain plasticity setting.
General elasto-plasticity models are widely used in practical engineering, but experiments often show that the constitutive response of the material is non-associated~\cite{Qu.2022}, i.e., the direction of plastic strain rate is not necessarily orthogonal to the yield surface. To account for this, advanced models with plastic potential functions are used, but result in very complex approaches. Another direction  aims to use so-called fractional plasticity models, which avoid plastic potentials all along and are able to incorporate non-local and history dependent behavior, see, e.g., \cite{Sumelka.2014,Sumelka.2014b}.
The mathematical model we consider is based on linear isotropic  and kinematic hardening and a standard von-Mises yield function \cite{Han.1999} and replaces the classical flow rule  by a Riesz--Caputo fractional derivative \cite{Sumelka.2016,Sumelka.2018} with a fixed and finitely supported kernel.
We use this model as a prototype problem where two challenges need to be tackled simultaneously. The non-locality of the fractional operator meets the non-smoothness of the inequality constraint. It is well known that inequality  constraints can be often reformulated as variational inequalities or can be equivalently rewritten as a unconstrained but non-linear saddle point problem in terms of a so called NCP (nonlinear complementarity problem) function. There is a rich literature on semi-smooth Newton methods, primal-dual active set strategies and radial return mappings. We only refer to a few:
Radial return strategies have been originally introduced by Simo and Hughes  \cite{Simo.1998} for plasticity formulations but have been by now generalized to many settings and application areas, see the review \cite{Wohlmuth_2011}.
Semi-smooth Newton methods in the context of elasto-plasticity had been discussed in a series of papers by Christensen and coworkers \cite{chr98,CHR02,chr13}. We refer to \cite{HIK02,Che07,hinze_2022} for a mathematical analysis  and the concept of NCP functions as well as to the textbook \cite{deuflhard2011newton}.
For an overview of fractional calculus in plasticity modeling, we refer to~\cite{Qu.2022}, which also provides some numerical algorithms for computing return mappings. An application of Caputo fractional derivatives to rock-fill materials and soil is found in~\cite{Wu.2022, Lu.2019}, while other authors~\cite{Shen.2022, Lu.2022} use Riemann-Liouville fractional derivatives for modeling. Granular soils are modeled in~\cite{Sun.2018} using a combination of left and right Caputo fractional derivatives. Algorithms to evaluate the fractional derivatives can be found in~\cite{Sumelka.2016, Sumelka.2018}. Algorithms to evaluate the corresponding return mapping are introduced in \cite{Qu.2021, Zhou.2020}. Although general associated plasticity models are analyzed mathematically in a very thorough manner (see, e.g., \cite{Han.1999} for an overview), non-associated plasticity models are not treated as rigorously. This is partially due to the more recent developments in this topic, but also due to the much higher complexity. First results for existence and uniqueness analysis for plastic potential based non-associated models can be found in \cite{Lubliner.1984, CastrenzePolizzotto.1998}. However, for non-associated fractional plasticity models, such results are still missing.

Our main theoretical results show well-posedness of the space-time discretization and of a suitable return mapping in case of a non-local and non-smooth plasticity model. The results can be generalized to different PDE models having the same characteristic structure. Our numerical results illustrate the influence of the fractional exponent and the kernel support on the mechanical behavior and the convergence of the associated semi-smooth Newton iteration.

The remainder of the work is structured as follows: In the following subsections, we introduce the model problem and the notation.
Standard low order time integration and the applied finite element discretization of our model are specified in Section~\ref{sec:disc} in which we also introduce
the fractional component based on a finitely supported kernel. In Section~\ref{sec:return}, we show that our radial return mapping is well-defined and semi-smooth and that the elements of its subdifferential are positive definite. Based on these characteristic properties of our radial return mapping, we are then in a position  to prove the well-posedness of the explicit Euler in Section~\ref{sec:wellposed}. Section~\ref{sec:semi} is devoted to the radial return mapping of the implicit Euler discretization, and we show semi-smoothness. 
Finally, the numerical results, presented in Section~\ref{sec:numerics}, underline our theoretical findings and illustrate the influence of the model and discretization parameters.

\subsection{Notation}
We will denote vectors as bold lower-case letters, e.g. $\mathbf{v}=(v_1,\ldots,v_d) \in \mathbb{R}^d$, while matrices or second-order tensors are depicted by bold lower-case Greek letters, e.g. $\boldsymbol{\tau} \in \mathbb{R}^{d \times d}$.  Fourth-order tensors will be denoted by bold upper-case letters, e.g. $\mathbf{C} \in \mathbb{R}^{d \times d \times d \times d}$, or calligraphic upper-case letters, e.g. $\mathcal{S} \in \mathbb{R}^{d \times d \times d \times d}$. We denote the Euclidean inner product by $\mathbf{v}\cdot \mathbf{w}$ as well as the Frobenius inner product or double contraction by $\boldsymbol{\tau}: \boldsymbol{\sigma}=\sum_{i,j=1}^d \tau_{ij}\sigma_{ij}$. Similarly, the product of a fourth-order tensor with a matrix is defined as
$(\mathbf{C}\boldsymbol{\tau})_{ij}=\sum_{k,l=1}^d C_{ijkl}\tau_{kl}$.  The outer or tensor product is defined as
$(\mathbf{v} \otimes \mathbf{w})_{ij} = v_i w_j$ for vectors and $(\boldsymbol{\tau} \otimes \boldsymbol{\sigma})_{ijkl}=\tau_{ij}\sigma_{kl}$ for tensors. The identity matrix is denoted by $\id$ and the fourth order identity tensor is denoted by $\Id$. 

We use the standard definitions ${\rm dev}(\boldsymbol{\tau})=\boldsymbol{\tau}-\frac{1}{d}\text{tr}(\boldsymbol{\tau})\id$ and $\text{tr}(\boldsymbol{\tau})=\sum_{i=1}^d\tau_{ii} $ and denote by $|\cdot|$ the Euclidean norm for vectors and the Frobenius norm for matrices, respectively. For scalar-valued functions we denote the gradient by $\partial_{(\cdot)}f(\cdot)$.
For vector- or matrix-valued functions, $\partial_{(\cdot)}\mathbf{f}(\cdot)$ denotes the Jacobian, which can be a second- or fourth-order tensor. The time-derivative of a (tensor-valued) function $\mathbf{h}$ is denoted by $\dot{\mathbf{h}}$.
Finally, throughout the work, $\Omega\subseteq \mathbb{R}^d$, $d=2,3$ is a bounded domain with $C^1$-boundary $\partial \Omega= \overline \Gamma_D \cup \overline \Gamma_N$, $ \Gamma_D \cap \Gamma_N = \emptyset$ (Dirichlet and Neumann boundary).

\subsection{The model problem}
We consider the following elasto-plastic problem (see~\cite{Han.1999} for details). On the bounded piecewise $C^1$-domain $\Omega \subset \mathbb{R}^d$ with outward pointing unit normal vector $\mathbf{n}$, 
we introduce the stress tensor $\boldsymbol{\sigma}: \Omega \times \mathbb{R} \to M^d$ ($M^d \subset \mathbb{R}^{d \times d}$ is the space of symmetric matrices), the strain tensor $\boldsymbol{\eps}: \Omega\times \mathbb{R} \to M^d$, and the displacement vector $\mathbf{u}:\Omega\times \mathbb{R} \to \mathbb{R}^d$.   We assume that the total strain $\boldsymbol{\eps}$ can be decomposed additively in a purely elastic part and a purely plastic part, i.e., 
\begin{align*}
	\boldsymbol{\eps}=\boldsymbol{\eps}^e+\boldsymbol{\eps}^p
\end{align*}
and consider a linear-isotropic elasticity law for the stress-strain relation 
$
	\boldsymbol{\sigma}(\mathbf{x},t) = \mathbf{C}\boldsymbol{\eps}^e(\mathbf{x},t),
$ 
where $\boldsymbol{C}$ is a fourth order tensor uniquely defined by
\begin{align*}
    \mathbf{C}\boldsymbol{\eps}(\mathbf{x},t) = 2 \mu {\rm dev}(\boldsymbol{\eps}(\mathbf{x},t))+ \kappa \text{tr}(\boldsymbol{\eps}(\mathbf{x},t))\id
\end{align*}
for the material parameters $\mu, \kappa>0$.

In the following, we will often omit the arguments $\mathbf{x}\in\Omega$ and $t\in\mathbb{R}$ for convenience. Additionally to the linearized strain-displacement relation
\begin{align*}
	\boldsymbol{\eps}(\mathbf{u}) = \frac{1}{2}(\nabla\mathbf{u}+(\nabla\mathbf{u})^T),
\end{align*} 
we will introduce variables relating to the hardening behavior of the material. To that end, we consider linear isotropic and kinematic hardening with variables $\boldsymbol{\xi}_1:\Omega \times \mathbb{R} \to M^d$ and $\xi_2:\Omega \times \mathbb{R} \to \mathbb{R}$, respectively. The hardening relation to the corresponding internal forces $\boldsymbol{\chi}=(\boldsymbol{\chi}^1, \chi^2)$ is chosen as
\begin{align*}
    \boldsymbol{\chi}^1 &= - k_1 \boldsymbol{\xi}_1
    \quad\text{and}\quad 
    \chi^2= -k_2 \xi_2\quad\text{for some }k_1,k_2>0.
\end{align*}
We consider the von-Mises yield function (see, e.g.,~\cite{Sauter})
\begin{align} \label{eq:vMises}
    f(\boldsymbol{\sigma},\boldsymbol{\chi}) = |{\rm dev}(\boldsymbol{\sigma}+\boldsymbol{\chi}^1)|+\chi^2-Y_0,
\end{align}
where $Y_0>0$ is the maximal equivalent stress. In classical models (see, e.g.,~\cite{Sauter}), an associated flow rule is constructed from the (classical) derivatives of the yield function. 
In the present work, we aim to generalize this model by using fractional derivates.

\subsubsection{Fractional derivatives}
 Fractional derivatives generalize the notion of classical derivatives to non-integer orders and one of the most common definitions is that of Riesz-Caputo~\cite{Agrawal.2007} given by 
 \begin{align} \label{eq:fracdef}
		\prescript{RC}{a}{\mathbf{D}}^{\alpha}_b h(t) = \frac{1}{2}\Big(\prescript{C}{a}{\mathbf{D}}^{\alpha}_t h(t) +  (-1)^m\prescript{C}{t}{\mathbf{D}}^{\alpha}_b h(t)\Big)\quad\text{for }a<t<b,
	\end{align}
where $m\in \mathbb{N}$ and  $\alpha>0$ such that $m-1<\alpha<m$,  $a<b \in \mathbb{R}$, and $h \in C^{m}([a,b])$. Here, $\prescript{C}{a}{\mathbf{D}}^{\alpha}_t$ and $\prescript{C}{t}{\mathbf{D}}^{\alpha}_b$ define the left- and right-Caputo derivative (see, e.g., \cite{Caputo.1967} for details) given by
\begin{align*}
		\prescript{C}{a}{\mathbf{D}}^{\alpha}_t h(t) &= \frac{1}{\Gamma(m-\alpha)}\int_{a}^{t}(t-\tau)^{m-1-\alpha} h^{(m)}(\tau) \ d \tau,\\
		\prescript{C}{t}{\mathbf{D}}^{\alpha}_b h(t) &= \frac{(-1)^m}{\Gamma(m-\alpha)}\int_{t}^{b}(\tau-t)^{m-1-\alpha} h^{(m)}(\tau) \ d \tau.
  \end{align*}
  For multivariate functions $h:\mathbb{R}^{k+l} \to \mathbb{R}$, we use the same notation to denote the fractional gradient with respect to $\mathbf{x} \in \mathbb{R}^k$, i.e.,
  \begin{align} \label{eq:vecfrac}
    \prescript{RC}{\mathbf{a}}{\mathbf{D}}^{\alpha}_\mathbf{b} h(\mathbf{x}, \mathbf{y}) = \Big(\prescript{RC}{a_1}{\mathbf{D}}^{\alpha}_{b_1}h(\mathbf{x}, \mathbf{y}), \ldots, 	\prescript{RC}{a_k}{\mathbf{D}}^{\alpha}_{b_k}h(\mathbf{x}, \mathbf{y})\Big).
  \end{align}
  Here $\mathbf{a}, \mathbf{b} \in \mathbb{R}^k$ and $\prescript{RC}{a_m}{\mathbf{D}}^{\alpha}_{b_m} h(\mathbf{x}, \mathbf{y})$, $ 1 \leq m \leq k$ applies the definition \eqref{eq:fracdef} to the $m$-th component of $\mathbf{x}$.
  
  In the following, we will choose the interval symmetrically around the differentiation variable and apply the fractional derivative to the von-Mises yield function $f(\boldsymbol{\sigma},\boldsymbol{\chi})$ defined by \eqref{eq:vMises}. More precisely, we set $\mathbf{a} = \boldsymbol{\sigma}- \boldsymbol{\Delta}$ and $\mathbf{b} = \boldsymbol{\sigma}+ \boldsymbol{\Delta}$ for some matrix $\boldsymbol{\Delta}\in \mathbb{R}^{d\times d}_+$, i.e., $k=d^2$. Therefore, we introduce the shorthand notation
  \begin{align} \label{eq:normfrac}
     \D{\boldsymbol{\sigma}}{{\alpha}}f(\boldsymbol{\sigma},\boldsymbol{\chi}):=\prescript{RC}{\boldsymbol{\sigma}-\boldsymbol{\Delta}}{\mathbf{D}}^{\alpha}_{\boldsymbol{\sigma}+\boldsymbol{\Delta}}f(\boldsymbol{\sigma},\boldsymbol{\chi})\quad\text{and}\quad
      \Dhat{\boldsymbol{\sigma}}{{\alpha}}f(\boldsymbol{\sigma},\boldsymbol{\chi}):=\frac{\D{\boldsymbol{\sigma}}{{\alpha}}f(\boldsymbol{\sigma},\boldsymbol{\chi})}{|\D{\boldsymbol{\sigma}}{{\alpha}}f(\boldsymbol{\sigma},\boldsymbol{\chi})|}
  \end{align}
  and note that $\Dhat{\boldsymbol{\sigma}}{{\alpha}}$ stands for a normalized 
  fractional derivative.

  \subsubsection{The fractional flow rule}
We observe that the partial integer order derivative of the von-Mises yield function with respect to $\boldsymbol{\sigma}$ gives automatically a normalized expression. This does not hold for its fractional counterpart. This observation motivates, the choice of the fractional flow rule as a normalized fractional derivative of $f$ of order $\alpha \in (0,1)$. Given the design parameter $\boldsymbol{\Delta} \in \mathbb{R}^{d \times d}_+$, this reads
\begin{align}
\label{eq:flow1}
	\dot{\boldsymbol{\eps}^p} &=\gamma   \Dhat{\boldsymbol{\sigma}}{{\alpha}} f(\boldsymbol{\sigma},\boldsymbol{\chi}) \\
	\dot{\boldsymbol{\xi}}_1 &= \gamma \partial_{\boldsymbol{\chi}^1} f(\boldsymbol{\sigma},\boldsymbol{\chi})=\gamma\frac{{\rm dev}(\boldsymbol{\sigma}+\boldsymbol{\chi}^1)}{|{\rm dev}(\boldsymbol{\sigma}+\boldsymbol{\chi}^1)|}, \label{eq:flow2}\\
        \dot{\xi}_2 &= \gamma \partial_{\chi^2} f(\boldsymbol{\sigma},\boldsymbol{\chi})=\gamma. \label{eq:flow3}
\end{align}
The function $\gamma : \Omega \times \mathbb{R} \to \mathbb{R}$ with $\gamma \geq 0$ is called the \textit{plastic multiplier}. Such approaches were introduced and studied in \cite{Sumelka.2014,Sumelka.2014b,Sumelka.2016,Sumelka.2018}.
Note that $\boldsymbol{\Delta}$ needs to be chosen in such a way that the fractional gradient above is  well-defined. We point out that the fractional derivative only enters into \eqref{eq:flow1} but not in \eqref{eq:flow2} and \eqref{eq:flow3}.

To complete the model problem, we introduce the complementarity conditions for $f$ and $\gamma$, i.e.,
\begin{align}\label{eq:complementarity}
    f(\boldsymbol{\sigma},\boldsymbol{\chi}) &\leq 0,\quad 
    \gamma \geq 0,\quad 
    f(\boldsymbol{\sigma},\boldsymbol{\chi}) \gamma =0,
\end{align}
i.e., $f<0$, $\gamma=0$ in case of purely elastic deformation and $f=0$, $\gamma>0$ for plastic deformation.
Moreover, we require the balance equation for the stress
\begin{align*}
    \text{div}(\boldsymbol{\sigma})+ \mathbf{b}=0
\end{align*}
given some body force $\mathbf{b}: \Omega \times \mathbb{R} \to \mathbb{R}^d$ as well as standard initial and boundary conditions of the form
\begin{align*}
    \mathbf{u}(\mathbf{x},t)&=\mathbf{u}_D(\mathbf{x},t) \quad \text{on } \Gamma_D, \\
	\boldsymbol{\sigma}(\mathbf{x},t)\mathbf{n}(\mathbf{x}) &= \mathbf{t}_N(\mathbf{x},t) \quad \text{on } \Gamma_N,\\
 	\mathbf{u}(\mathbf{x},0) &=
	\boldsymbol{\sigma}(\mathbf{x},0) =
	\boldsymbol{\chi}^1(\mathbf{x},0) =
    \chi^2(\mathbf{x},0)=
    \gamma(\mathbf{x},0)=0\quad \text{in }\Omega,
\end{align*}
$\mathbf{u}_D$ and $\mathbf{t}_N$ denote the Dirichlet and Neumann boundary data, respectively.
\section{Auxiliary results}
The fractional derivative is consistent with integer order derivatives in the sense
\begin{align*}
	\lim_{\alpha \nearrow m} \prescript{RC}{a}{\mathbf{D}}^{\alpha}_b h(t) = 
    h^{(m)}(t) \quad \text{for} \quad h \in C^{m+1}([a,b]),
\end{align*}
and $\prescript{RC}{a}{\mathbf{D}}^{\alpha}_b h(t)=0$ for constant $h$.

The following lemma shows that the angle between fractional and classical derivatives is bounded if applied to the von-Mises yield function.

\begin{lemma}
	\label{lem:positive}
	For $\alpha \in (0,1)$, $\boldsymbol{\sigma},\boldsymbol{\chi}^1 \in M^d$,  $\boldsymbol{\Delta} \in \mathbb{R}^{d \times d}_+$ and
    ${\rm dev}(\boldsymbol{\sigma}+\boldsymbol{\chi}^1) \neq 0$, we find
    that the fractional derivative $\D{\boldsymbol{\sigma}}{{\alpha}}f(\boldsymbol{\sigma},\boldsymbol{\chi})$ is well-defined and
	\begin{align*}
		\partial_{\boldsymbol{\sigma}}f(\boldsymbol{\sigma},\boldsymbol{\chi}):\D{\boldsymbol{\sigma}}{{\alpha}}f(\boldsymbol{\sigma},\boldsymbol{\chi}) > 0 .
	\end{align*} 
    \end{lemma}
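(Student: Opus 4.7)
The plan is to express the fractional derivative componentwise as a weighted integral of $h_{ij}'$, exploit the symmetry of the kernel $|s|^{-\alpha}$ by pairing $s$ with $-s$, and reduce the positivity claim to a Cauchy--Schwarz-type estimate on the deviator $D := {\rm dev}(\boldsymbol{\sigma}+\boldsymbol{\chi}^1)$.

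For each $(i,j)$, I introduce the one-variable slice $h_{ij}(s):=f(\boldsymbol{\sigma}+s\,e_i\otimes e_j,\boldsymbol{\chi})$. Since ${\rm dev}$ is linear and the Frobenius norm is Lipschitz, $h_{ij}$ is Lipschitz in $s$, hence absolutely continuous; the left- and right-Caputo integrals in \eqref{eq:fracdef} with $m=1$ therefore converge absolutely and combine into
\begin{equation*}
(\D{\boldsymbol{\sigma}}{\alpha}f)_{ij}=\frac{1}{2\,\Gamma(1-\alpha)}\int_{-\Delta_{ij}}^{\Delta_{ij}}|s|^{-\alpha}\,h_{ij}'(s)\,ds,
\end{equation*}
which settles well-definedness. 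Writing $c_{ij}:=1-\delta_{ij}/d$ and $N_{ij}(s):=\sqrt{|D|^2+2sD_{ij}+s^2 c_{ij}}$, a chain-rule calculation yields $h_{ij}'(s)=(D_{ij}+sc_{ij})/N_{ij}(s)$; in particular $\partial_{\sigma_{ij}}f=h_{ij}'(0)=D_{ij}/|D|$.

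Substituting $s\mapsto -s$ on $[-\Delta_{ij},0]$ rewrites the full inner product as
\begin{equation*}
\partial_{\boldsymbol{\sigma}}f:\D{\boldsymbol{\sigma}}{\alpha}f=\frac{1}{2\,\Gamma(1-\alpha)}\sum_{i,j}\int_0^{\Delta_{ij}}s^{-\alpha}\,\frac{D_{ij}}{|D|}\bigl(h_{ij}'(s)+h_{ij}'(-s)\bigr)\,ds.
\end{equation*}
Using the identity $N_{ij}(s)^2-N_{ij}(-s)^2=4sD_{ij}$ to collect the cross terms, I expect the summand to simplify to
\begin{equation*}
\frac{D_{ij}}{|D|}\bigl(h_{ij}'(s)+h_{ij}'(-s)\bigr)=\frac{2D_{ij}^2\bigl[|D|^2-s^2 c_{ij}+N_{ij}(s)N_{ij}(-s)\bigr]}{|D|\,N_{ij}(s)N_{ij}(-s)\bigl(N_{ij}(s)+N_{ij}(-s)\bigr)}.
\end{equation*}

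The main obstacle is to show that the bracketed quantity is non-negative for every admissible $s$. Squaring reduces this to the pointwise estimate $c_{ij}|D|^2\geq D_{ij}^2$: for $i\neq j$ this is trivial because $c_{ij}=1$ and $D_{ij}^2$ is a single summand of $|D|^2$; for $i=j$, the relation ${\rm tr}(D)=0$ combined with Cauchy--Schwarz applied to $D_{ii}=-\sum_{k\neq i}D_{kk}$ gives $(d-1)\sum_{k\neq i}D_{kk}^2\geq D_{ii}^2$, which rearranges precisely to $c_{ii}|D|^2\geq D_{ii}^2$. Once non-negativity of each summand is established, strictness follows from two observations: at $s=0$ the bracket equals $2|D|^2>0$, so by continuity each summand with $D_{ij}\neq 0$ is strictly positive on a non-degenerate neighborhood of the origin; and since $D\neq 0$, at least one pair $(i_0,j_0)$ satisfies $D_{i_0 j_0}\neq 0$, guaranteeing a strictly positive contribution to the total sum.
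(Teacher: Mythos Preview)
Your argument is correct and takes a route that is genuinely different from the paper's. The paper proves the stronger componentwise statement that $(\D{\boldsymbol{\sigma}}{\alpha}f)_{ij}$ has the same sign as $(\partial_{\boldsymbol{\sigma}}f)_{ij}$ by splitting into sign cases for $\sigma_{ij}+\chi^1_{ij}\pm\Delta_{ij}$ and comparing the integrand on the two halves of the interval via the monotonicity of $|\sigma_{ij}+\chi^1_{ij}-\tau|^{-\alpha}$. The diagonal case $i=j$ is only sketched (``a tedious calculation \ldots if $\boldsymbol{\Delta}$ was chosen small enough''), so the paper's proof tacitly leans on a smallness assumption that is absent from the statement. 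By contrast, your symmetrization $s\mapsto -s$ produces the closed form
\[
\frac{D_{ij}}{|D|}\bigl(h_{ij}'(s)+h_{ij}'(-s)\bigr)=\frac{2D_{ij}^2\bigl[|D|^2-s^2c_{ij}+N_{ij}(s)N_{ij}(-s)\bigr]}{|D|\,N_{ij}(s)N_{ij}(-s)\bigl(N_{ij}(s)+N_{ij}(-s)\bigr)},
\]
and the positivity of the bracket follows from $c_{ij}|D|^2\ge D_{ij}^2$, which you obtain from ${\rm tr}(D)=0$ via Cauchy--Schwarz. This treats the diagonal and off-diagonal entries uniformly and needs no restriction on $\boldsymbol{\Delta}$; it also yields the paper's componentwise sign conclusion as a by-product, since the bracketed factor is what carries the sign of $D_{ij}$.

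Two small points you should make explicit when writing this up. First, the algebraic simplification you ``expect'' is indeed correct: with $N_\pm=N_{ij}(\pm s)$ one has $h_{ij}'(s)+h_{ij}'(-s)=\bigl(D_{ij}(N_++N_-)-sc_{ij}(N_+-N_-)\bigr)/(N_+N_-)$, and substituting $N_+-N_-=4sD_{ij}/(N_++N_-)$ together with $(N_++N_-)^2=2(|D|^2+s^2c_{ij})+2N_+N_-$ gives exactly your displayed formula. Second, the ``squaring'' step is only needed when $s^2c_{ij}\ge |D|^2$ (otherwise the bracket is trivially positive); in that regime both sides are non-negative, so squaring is legitimate and leads to $N_+^2N_-^2-(s^2c_{ij}-|D|^2)^2=4s^2(c_{ij}|D|^2-D_{ij}^2)\ge 0$. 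Note also that $c_{ij}|D|^2\ge D_{ij}^2$ guarantees $N_{ij}(\pm s)^2\ge 0$ for all $s$, so the square roots are always real and your formula is valid almost everywhere on $(0,\Delta_{ij}]$.
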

\begin{proof}
We fix $1\leq i,j\leq d$ and note that $r_{ij}:= \sum_{k,l} ({\rm dev}(\boldsymbol{\sigma}+\boldsymbol{\chi}^1)_{kl}^2 - {\rm dev}(\boldsymbol{\sigma}+\boldsymbol{\chi}^1)_{ij}^2 \geq 0 $. We treat only the case $r_{ij}>0$, the case $r_{ij}=0$ follows with similar arguments.
The goal is to show that each component of $\D{\boldsymbol{\sigma}}{{\alpha}}f(\boldsymbol{\sigma},\boldsymbol{\chi})$ has the same sign as the corresponding component of $\partial_{\boldsymbol{\sigma}}f(\boldsymbol{\sigma},\boldsymbol{\chi})$, which immediately implies the statement. To that end, we use \eqref{eq:fracdef}--\eqref{eq:vecfrac} and the explicit representation of $\partial_{\boldsymbol{\sigma}} f(\boldsymbol{\sigma},\boldsymbol{\chi})
    = {\rm dev}(\boldsymbol{\sigma}+\boldsymbol{\chi}^1)  / |{\rm dev}(\boldsymbol{\sigma}+\boldsymbol{\chi}^1) |$,
	to get for $i \neq j$ that
	\begin{align}
		\label{eq:ijfrac}
		\Big(\D{\boldsymbol{\sigma}}{{\alpha}}f(\boldsymbol{\sigma},\boldsymbol{\chi})\Big)_{ij}=\frac{1}{2\Gamma(1-\alpha)}\int_{\sigma_{ij}-\Delta_{ij}}^{\sigma_{ij}+\Delta_{ij}} \frac{\tau + \chi^1_{ij}}{|\sigma_{ij}-\tau|^{\alpha}\sqrt{(\tau+\chi^1_{ij})^2+r_{ij}}} \ d\tau.
	\end{align}
    As $r_{ij} >0$, the integral is bounded. If $\sigma_{ij}-\Delta_{ij}+\chi^1_{ij} \geq 0$, the integral is obviously positive and therefore  also $\Big(\partial_{\boldsymbol{\sigma}}f(\boldsymbol{\sigma},\boldsymbol{\chi})\Big)_{ij} \Big(\D{\boldsymbol{\sigma}}{{\alpha}}f(\boldsymbol{\sigma},\boldsymbol{\chi})\Big)_{ij} > 0$. The same is true if $\sigma_{ij}+\Delta_{ij}+\chi^1_{ij} \leq 0$. It remains to consider the case $\sigma_{ij}-\Delta_{ij}+\chi^1_{ij}<0<\sigma_{ij}+\Delta_{ij}+\chi^1_{ij}$. By substitution in~\eqref{eq:ijfrac}, we obtain
	\begin{align}
    \label{eq:subsitution}
			\Big(\D{\boldsymbol{\sigma}}{{\alpha}}f(\boldsymbol{\sigma},\boldsymbol{\chi})\Big)_{ij} = \frac{1}{2 \Gamma(1-\alpha)}\int_{\sigma_{ij}+\chi^1_{ij}-\Delta_{ij}}^{\sigma_{ij}+\chi^1_{ij}+\Delta_{ij}}\frac{\tau}{|\sigma_{ij}+\chi^1_{ij}-\tau|^{\alpha}\sqrt{\tau^2+r_{ij}}} \ d \tau.
	\end{align}
    We aim to argue that the integral has a definite sign (and in particular is non-zero) by exploiting symmetries of the integrand. To that end, we introduce $g(\tau) = \frac{\tau}{\sqrt{\tau^2+r_{ij}}}$ and note that it is symmetric w.r.t the origin. Moreover, ${|\sigma_{ij}+\chi^1_{ij}-\tau|^{-\alpha}}$ is symmetric around $\tau=\sigma_{ij}+\chi^1_{ij}$, increasing for $\tau<\sigma_{ij}+\chi^1_{ij}$ and decreasing otherwise. In case that $\sigma_{ij}+\chi^1_{ij}<0$, we note $\sigma_{ij}+\chi^1_{ij}+\Delta_{ij}<-\sigma_{ij}-\chi^1_{ij}+\Delta_{ij}$ and hence
	\begin{align*}
		\Big(\D{\boldsymbol{\sigma}}{{\alpha}}f(\boldsymbol{\sigma},\boldsymbol{\chi})\Big)_{ij}
  &<\frac{1}{2 \Gamma(1-\alpha)}\int_{\sigma_{ij}+\chi^1_{ij}-\Delta_{ij}}^{-\sigma_{ij}-\chi^1_{ij}+\Delta_{ij}}\frac{g(\tau) }{|\sigma_{ij}+\chi^1_{ij}-\tau|^\alpha } d \tau.
	\end{align*}
 As $|\sigma_{ij}+\chi^1_{ij}-\tau|^\alpha < |\sigma_{ij}+\chi^1_{ij}+\tau|^\alpha$ for $\tau<0$, the symmetric domain of integration and the symmetry of $g(\tau)$ imply
 \begin{align*}
     \Big(\D{\boldsymbol{\sigma}}{{\alpha}}f(\boldsymbol{\sigma},\boldsymbol{\chi})\Big)_{ij}<0.
 \end{align*}
Analogous arguments show for the case $\sigma_{ij}+\chi^1_{ij}>0$ that
	 \begin{align*}
     \Big(\D{\boldsymbol{\sigma}}{{\alpha}}f(\boldsymbol{\sigma},\boldsymbol{\chi})\Big)_{ij}>0.
 \end{align*}
	This gives us the desired result for $i \neq j$. What is now left is the case $i=j$. We obtain
	\begin{align}
		\Big(\D{\boldsymbol{\sigma}}{{\alpha}}f(\boldsymbol{\sigma},\boldsymbol{\chi})\Big)_{ii}=\frac{1}{2\Gamma(1-\alpha)}\int_{\sigma_{ii}-\Delta_{ii}}^{\sigma_{ii}+\Delta_{ii}} \frac{\frac{d-1}{d}\tau + \widetilde{\chi}^1_{ii}}{|\sigma_{ii}-\tau|^{\alpha}\sqrt{(\frac{d-1}{d}\tau+\widetilde{\chi}^1_{ii})^2+\Tilde{r}_{ii}(\tau)}} \ d\tau,
	\end{align}
	with $\widetilde{\chi}^1_{ii}=\chi^1_{ii}-\frac{1}{d}(\text{tr}(\boldsymbol{\sigma}+\boldsymbol{\chi}^1)-\sigma_{ii})$ and $\Tilde{r}_{ii}(\tau)>0$ sums up the remaining terms, where the other diagonal elements also depend on $\tau$ because of the dev. A tedious calculation shows that this can still be transformed such that it has a similar form as \eqref{eq:subsitution} if $\boldsymbol{\Delta}$ was chosen small enough. Here, the same arguments apply verbatim and conclude the proof.
\end{proof}
The following lemma quantifies the difference between standard integer  and fractional order derivative.
\begin{lemma}\label{lem:alphaconv}
    Let $h: I \to \mathbb{R} $ be a twice continuously differentiable function on the compact interval $I \subset \mathbb{R}$. Let $\infty > \delta_1 \geq \delta \geq \delta_0 >0$ (without loss of generality, we set $\delta_0 \leq 1$) such that the set $\widetilde{I}:=\{x \in I: x-\delta,x+\delta \in I\}$ is non-empty, then for $ 0 < \alpha < 1$ there exists a constant $C < \infty$ independent of $\delta$  and $\alpha$ such that
    \begin{align}\label{eq:fracconv1}
        \sup_{x \in \widetilde{I}}|\prescript{RC}{x-\delta}{\mathbf{D}}_{x+\delta}^\alpha h(x)-h'(x)| \leq C (\|h' \|_{L^\infty(I)}+\|h'' \|_{L^\infty(I)})(1-\alpha).
    \end{align}
\end{lemma}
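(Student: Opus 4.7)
The plan is to reduce the Riesz--Caputo derivative to a symmetric integral in $h'$, subtract the target value $h'(x)$ under the integral, and extract a factor $(1-\alpha)$ from $1/\Gamma(1-\alpha)$ using the identity $\Gamma(1-\alpha)(1-\alpha)=\Gamma(2-\alpha)$.

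First, since $0<\alpha<1$ forces $m=1$ in \eqref{eq:fracdef}, and $x\in\widetilde I$ guarantees $[x-\delta,x+\delta]\subset I$ so that $h'$ is available on the integration domain, substituting $s=x-\tau$ in the left Caputo integral and $s=\tau-x$ in the right one yields
\begin{equation*}
\prescript{RC}{x-\delta}{\mathbf{D}}^{\alpha}_{x+\delta} h(x)
= \frac{1}{2\Gamma(1-\alpha)}\int_{0}^{\delta} s^{-\alpha}\bigl[h'(x-s)+h'(x+s)\bigr]\,ds.
\end{equation*}
Using $\int_0^\delta s^{-\alpha}\,ds=\delta^{1-\alpha}/(1-\alpha)$ and $\Gamma(1-\alpha)(1-\alpha)=\Gamma(2-\alpha)$, I would add and subtract $2h'(x)$ to get the decomposition
\begin{equation*}
\prescript{RC}{x-\delta}{\mathbf{D}}^{\alpha}_{x+\delta} h(x)-h'(x)
= \underbrace{\frac{1}{2\Gamma(1-\alpha)}\int_{0}^{\delta} s^{-\alpha}\bigl[h'(x-s)+h'(x+s)-2h'(x)\bigr]ds}_{=:A}
+ \underbrace{h'(x)\!\left(\frac{\delta^{1-\alpha}}{\Gamma(2-\alpha)}-1\right)}_{=:B}.
\end{equation*}

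For the symmetric second-difference term $A$, the mean value theorem gives $|h'(x\pm s)-h'(x)|\leq s\|h''\|_{L^\infty(I)}$, so the integrand is bounded by $2 s^{1-\alpha}\|h''\|_{L^\infty(I)}$. Evaluating the resulting integral and rewriting $1/\Gamma(1-\alpha)=(1-\alpha)/\Gamma(2-\alpha)$ yields
\begin{equation*}
|A|\;\leq\; \|h''\|_{L^\infty(I)}\,\frac{(1-\alpha)\,\delta^{2-\alpha}}{(2-\alpha)\,\Gamma(2-\alpha)},
\end{equation*}
and because $\Gamma$ is continuous and strictly positive on $[1,2]$, $2-\alpha\geq 1$, and $\delta^{2-\alpha}\leq\max(\delta_1,\delta_1^{2})$, this is $\leq C\,\|h''\|_{L^\infty(I)}\,(1-\alpha)$ with $C$ depending only on $\delta_1$.

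For the boundary term $B$, I would write $\delta^{1-\alpha}=\exp((1-\alpha)\log\delta)$ and use the elementary bound $|e^{y}-1|\leq|y|e^{|y|}$ to get $|\delta^{1-\alpha}-1|\leq(1-\alpha)|\log\delta|\,e^{(1-\alpha)|\log\delta|}$, where $|\log\delta|\leq\max(|\log\delta_0|,|\log\delta_1|)$ on $[\delta_0,\delta_1]$. Combined with $|\Gamma(2-\alpha)-1|=|\Gamma(2-\alpha)-\Gamma(1)|\leq\|\Gamma'\|_{L^\infty[1,2]}(1-\alpha)$ and the positivity of $\Gamma$ on $[1,2]$, this gives $|\delta^{1-\alpha}/\Gamma(2-\alpha)-1|\leq C(\delta_0,\delta_1)(1-\alpha)$, hence $|B|\leq C(\delta_0,\delta_1)\,\|h'\|_{L^\infty(I)}\,(1-\alpha)$. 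Summing the bounds on $A$ and $B$ gives \eqref{eq:fracconv1}.

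The main obstacle is the simultaneous blow-up of $1/\Gamma(1-\alpha)$ as $\alpha\nearrow 1$ and the loss of integrability of $s^{-\alpha}$ at $s=0$; both are circumvented cleanly by subtracting $h'(x)$ before estimating, which turns the $s^{-\alpha}$ singularity into the integrable $s^{1-\alpha}$ through the mean value bound, while $\Gamma(1-\alpha)(1-\alpha)=\Gamma(2-\alpha)$ converts the Gamma-singularity into the desired linear factor $(1-\alpha)$. The role of the assumptions $0<\delta_0\leq\delta\leq\delta_1<\infty$ is purely to keep $\delta^{\pm(1-\alpha)}$ and $|\log\delta|$ under uniform control.
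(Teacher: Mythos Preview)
Your proof is correct and, in fact, cleaner than the paper's. The paper treats the left- and right-Caputo pieces separately, integrates by parts to bring $h''$ in explicitly, and then juggles the three resulting terms $\delta^{1-\alpha}h'(x-\delta)$, $\int_0^\delta h''(x-y)(y^{1-\alpha}-1)\,dy$, and $h'(x)-h'(x-\delta)$; the bound on $\int_0^\delta |y^{1-\alpha}-1|\,dy$ is obtained by splitting at $\delta_0$ and invoking L'H\^opital on $[\delta_0,\delta_1]$. You instead merge the two Caputo integrals into the symmetric form $\frac{1}{2\Gamma(1-\alpha)}\int_0^\delta s^{-\alpha}[h'(x-s)+h'(x+s)]\,ds$ and subtract $2h'(x)$ under the integral, which immediately isolates a single remainder $A$ controllable by the mean value theorem and a scalar factor $B$. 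The gain is that no partial integration or L'H\^opital argument is needed; the whole $(1-\alpha)$ factor emerges from the single identity $\Gamma(1-\alpha)(1-\alpha)=\Gamma(2-\alpha)$, and the exponential inequality $|e^y-1|\le |y|e^{|y|}$ replaces the paper's pointwise L'H\^opital bound on $|y^{1-\alpha}-1|/(1-\alpha)$. The paper's approach, by contrast, would generalize slightly more readily to asymmetric intervals since it never uses the symmetry of the Riesz--Caputo combination, but for the statement at hand your route is shorter and more transparent.
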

\begin{proof}
We first show for the left-Caputo derivative by partial integration
    \begin{align*}
    \Big|\prescript{C}{x-\delta}{\mathbf{D}}_x^\alpha h(x)-h'(x)\Big| = \Big|\frac{1}{\Gamma(2-\alpha)}\Big(\delta^{1-\alpha} h'(x-\delta) + \int_{x-\delta}^xh''(y)(x-y)^{1-\alpha}\,dy\Big) - h'(x)\Big|,
    \end{align*}
    which can be written as
    \begin{align*}
        &\Big|\frac{1}{\Gamma(2-\alpha)}\Big(\delta^{1-\alpha} h'(x-\delta)+\int_{0}^\delta h''(x-y)(y^{1-\alpha}-1)\,dy + h'(x)-h'(x-\delta)\Big)-h'(x)\Big|\\
        &\leq \|h'\|_{L^\infty(I)} \Big(\frac{2\max\{|\delta^{1-\alpha}-1|,|1-\Gamma(2-\alpha)|\}}{\Gamma(2-\alpha)} \Big)+\|h''\|_{L^\infty(I)} \frac{\int_{0}^\delta | y^{1-\alpha}-1  | \,dy }{\Gamma(2-\alpha)}.
    \end{align*}
    For $0 < \alpha < 1 $, we have obviously $1 < 2-\alpha < 2$. Moreover,  $ \Gamma \in C^1([1,2] )$ and $\Gamma \geq 1/2$ on $[1,2]$. Taylor expansion shows that
    \begin{align*}
       | \Gamma (2 - \alpha ) - 1 | = | \Gamma( 1 + 1- \alpha) -  \Gamma (1) | \leq C (1- \alpha) .
    \end{align*}
    The rule of L´H\^opital allows us to bound   $g(\alpha,y) = |y^{1 - \alpha } -1 |/ (1- \alpha)$ from above uniformly  on $[0,1] \times [\delta_0, \delta_1]$. Thus, it is sufficient to consider $\int_0^{\delta_0} 1 -y^{1-\alpha} \,dy $ in more detail
    \begin{align*}
        \int_{0}^{\delta_0} 1-y^{1-\alpha} \,dy = \frac{1}{2-\alpha} (\delta_0 -\delta_0^{2-\alpha} )  = \frac{\delta_0}{2-\alpha} (1-\delta_0^{1-\alpha}) = \frac{\delta_0}{2-\alpha} g(\alpha,\delta_0) (1-\alpha).
    \end{align*}
Using the bounds obtained so far, we find
  \begin{align*}
    \Big|\prescript{C}{x-\delta}{\mathbf{D}}_x^\alpha h(x)-h'(x)\Big| \leq
    C (1 - \alpha) \left( \|h'\|_{L^\infty(I)} +\|h''\|_{L^\infty(I)} \right) .
    \end{align*}
    Similar arguments yield the same upper bound for the  negative right-Caputo derivative.  Now \eqref{eq:fracconv1} follows from the definition \eqref{eq:fracdef}. 
\end{proof}
In the following, we will need Lemma~\ref{lem:alphaconv} only for the yield function $f$.
\begin{corollary}\label{cor:alphaconv}
Let $\boldsymbol{\sigma}\in M^d$, $\boldsymbol{\chi} \in M^d\times \mathbb{R}$ such that $f(\boldsymbol{\sigma},\boldsymbol{\chi})\geq 0$.
Let $|{\Delta}_{ij}| \leq  Y_0/2$. Then, the fractional derivative $\Dhat{\boldsymbol{\sigma}}{\alpha}f(\boldsymbol{\sigma},\boldsymbol{\chi})$ is well-defined and
there exists a constant $C < \infty$ independent of $\boldsymbol{\Delta}$
    and $\alpha$ such that
    \begin{align}\label{eq:fracconv2}
        \Big|\Dhat{\boldsymbol{\sigma}}{\alpha}f(\boldsymbol{\sigma},\boldsymbol{\chi})
        -\partial_{\boldsymbol{\sigma}}f(\boldsymbol{\sigma},\boldsymbol{\chi})\Big|\leq C(1-\alpha).
    \end{align}
\end{corollary}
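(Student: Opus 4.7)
The strategy is to apply Lemma~\ref{lem:alphaconv} component-wise to the scalar slice functions
\[
h_{ij}(t) := f(\boldsymbol{\sigma}|_{\sigma_{ij}=t},\boldsymbol{\chi}),
\]
to sum the resulting component errors in the Frobenius norm, and then to transfer the bound from the fractional gradient to its normalization via an elementary perturbation estimate. By the definition \eqref{eq:vecfrac} of the vectorial Riesz--Caputo derivative, $(\D{\boldsymbol{\sigma}}{\alpha}f)_{ij}$ is exactly the scalar Riesz--Caputo derivative of $h_{ij}$ on $[\sigma_{ij}-\Delta_{ij},\sigma_{ij}+\Delta_{ij}]$, while $h_{ij}'(\sigma_{ij})=(\partial_{\boldsymbol{\sigma}}f)_{ij}$, which is precisely the data of Lemma~\ref{lem:alphaconv}.

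The first step is to keep the deviator away from zero on the whole integration region. From $f(\boldsymbol{\sigma},\boldsymbol{\chi})\geq 0$ together with $\chi^2=-k_2\xi_2\leq 0$ (implicit in the physical setting), I obtain $|{\rm dev}(\boldsymbol{\sigma}+\boldsymbol{\chi}^1)|\geq Y_0$. Changing a single entry $\sigma_{ij}$ by at most $|\Delta_{ij}|\leq Y_0/2$ shifts ${\rm dev}(\boldsymbol{\sigma}+\boldsymbol{\chi}^1)$ in Frobenius norm by at most $Y_0/2$, since $|{\rm dev}(e_{ij})|\leq 1$. Hence $|{\rm dev}(\cdot)|\geq Y_0/2$ uniformly on the integration interval, so each $h_{ij}$ is smooth there with $\|h_{ij}'\|_{L^\infty}\leq 1$ (a component of the unit-Frobenius-norm tensor $\partial_{\boldsymbol{\sigma}}f$) and $\|h_{ij}''\|_{L^\infty}\leq C/Y_0$ (the Hessian of $A\mapsto|{\rm dev}(A)|$ scales as $1/|{\rm dev}(A)|$), both bounds being uniform in $\boldsymbol{\Delta}$.

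Applying Lemma~\ref{lem:alphaconv} to each $h_{ij}$ with $\delta=\Delta_{ij}$ and $\delta_1=Y_0/2$ yields
\begin{align*}
\Big|\big(\D{\boldsymbol{\sigma}}{\alpha}f\big)_{ij}-(\partial_{\boldsymbol{\sigma}}f)_{ij}\Big|\leq C(1-\alpha),
\end{align*}
and summing the squared component errors gives $|\D{\boldsymbol{\sigma}}{\alpha}f-\partial_{\boldsymbol{\sigma}}f|\leq C(1-\alpha)$. Since $\partial_{\boldsymbol{\sigma}}f={\rm dev}(\boldsymbol{\sigma}+\boldsymbol{\chi}^1)/|{\rm dev}(\boldsymbol{\sigma}+\boldsymbol{\chi}^1)|$ has unit Frobenius norm, the elementary estimate $|v/|v|-u|\leq 2|v-u|/|v|$ for unit $u$ transfers the bound to $\Dhat{\boldsymbol{\sigma}}{\alpha}f$; the regime of $\alpha$ where $|v|$ would be close to zero is absorbed into a larger constant, since $\Dhat{\boldsymbol{\sigma}}{\alpha}f$ and $\partial_{\boldsymbol{\sigma}}f$ are both of unit Frobenius norm and their difference is trivially bounded by $2$.

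The most delicate point is the claimed $\boldsymbol{\Delta}$-independence of $C$. Lemma~\ref{lem:alphaconv} strictly speaking depends on a positive lower threshold $\delta_0$, because its auxiliary function $g(\alpha,y)=|y^{1-\alpha}-1|/(1-\alpha)$ is uniformly bounded only on strips separated from $y=0$. For $\Delta_{ij}$ bounded below, this is harmless; the degenerate regime $\Delta_{ij}\to 0$ can be handled separately using the explicit representation \eqref{eq:ijfrac} and a Taylor expansion of the integrand around $\tau=\sigma_{ij}+\chi^1_{ij}$, which recovers the same order-$(1-\alpha)$ bound directly. Beyond this technical caveat, the remaining manipulations are routine bookkeeping.
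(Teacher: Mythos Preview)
Your approach is essentially the same as the paper's: bound the Hessian via a uniform lower bound on $|{\rm dev}(\cdot)|$ along the integration intervals, apply Lemma~\ref{lem:alphaconv} component-wise to obtain the estimate for the unnormalized gradient, and then pass to the normalization. Two points deserve comment.

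First, your normalization step is slightly clumsier than necessary. The paper uses the triangle inequality in the form
\[
\big|\Dhat{\boldsymbol{\sigma}}{\alpha}f-\partial_{\boldsymbol{\sigma}}f\big|
\le \big|\Dhat{\boldsymbol{\sigma}}{\alpha}f-\D{\boldsymbol{\sigma}}{\alpha}f\big|
+\big|\D{\boldsymbol{\sigma}}{\alpha}f-\partial_{\boldsymbol{\sigma}}f\big|
= \big|1-|\D{\boldsymbol{\sigma}}{\alpha}f|\big|
+\big|\D{\boldsymbol{\sigma}}{\alpha}f-\partial_{\boldsymbol{\sigma}}f\big|
\le 2\big|\D{\boldsymbol{\sigma}}{\alpha}f-\partial_{\boldsymbol{\sigma}}f\big|,
\]
which avoids the factor $1/|v|$ and hence the case split altogether. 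More importantly, your case split silently assumes that $\Dhat{\boldsymbol{\sigma}}{\alpha}f$ is well-defined (i.e.\ $|\D{\boldsymbol{\sigma}}{\alpha}f|>0$) for \emph{all} $\alpha\in(0,1)$, not just for $\alpha$ close to $1$; the paper secures this by invoking Lemma~\ref{lem:positive}, which gives $\partial_{\boldsymbol{\sigma}}f:\D{\boldsymbol{\sigma}}{\alpha}f>0$ and hence $\D{\boldsymbol{\sigma}}{\alpha}f\neq 0$.

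Second, your caution about the lower threshold $\delta_0$ in Lemma~\ref{lem:alphaconv} is a fair observation; the paper's own proof applies the lemma without discussing this point. Your proposed fix via a Taylor expansion of the integrand does not quite work as stated, however: for a single component with small $\Delta_{ij}$ the unnormalized derivative scales like $\Delta_{ij}^{1-\alpha}/\Gamma(2-\alpha)$ times the classical partial derivative, so $|\D{\boldsymbol{\sigma}}{\alpha}f-\partial_{\boldsymbol{\sigma}}f|$ is not $\mathcal{O}(1-\alpha)$ uniformly as $\Delta_{ij}\to 0$ unless the $\Delta_{ij}$ degenerate jointly (in which case the common factor cancels upon normalization). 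A fully $\boldsymbol{\Delta}$-uniform bound therefore needs either a lower bound on the $\Delta_{ij}$ or an argument that exploits the normalization more carefully in the small-$\Delta$ regime.
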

\begin{proof}
   A straightforward computation of the  Hessian with respect to the first argument of $f(\boldsymbol{\tau},\boldsymbol{\chi})$ shows that it is bounded by $C | \rm{dev } (\boldsymbol{\tau}+ \boldsymbol{\chi}^1) |^{-1}$. 
   The fact $f(\boldsymbol{\sigma},\boldsymbol{\chi})\geq 0$ implies $|{\rm dev}(\boldsymbol{\sigma}+\boldsymbol{\chi}^1)|> Y_0$ and hence $|{\rm dev}(\boldsymbol{\sigma}+\boldsymbol{\chi}^1+x\Delta_{ij} \boldsymbol{e}^{ij})|>Y_0/2$ for all $-1\leq x\leq 1$ and $\boldsymbol{e}^{ij}\in\mathbb{R}^{d\times d}$ with $(\boldsymbol{e}^{ij})_{k\ell}=1$ for $(i,j)=(k,\ell)$ and zero else. This bounds the Hessian of $f$ uniformly, and we may apply Lemma~\ref{lem:alphaconv} to
   $h\colon x\mapsto f(\boldsymbol{\sigma} + x\boldsymbol{e}^{ij},\boldsymbol{\chi})$ for all $1\leq i,j\leq d$. This shows~\eqref{eq:fracconv2} for the non-normalized fractional derivative  $\D{\boldsymbol{\sigma}}{\alpha}f(\boldsymbol{\sigma},\boldsymbol{\chi})$.

To prove~\eqref{eq:fracconv2} with normalized fractional derivative, we employ Lemma \ref{lem:positive} to show $|\D{\boldsymbol{\sigma}}{\alpha}f(\boldsymbol{\sigma},\boldsymbol{\chi})| \neq 0$ and ensure that $\Dhat{\boldsymbol{\sigma}}{\alpha}f(\boldsymbol{\sigma},\boldsymbol{\chi})$ is well-defined. We recall that $ |\Dhat{\boldsymbol{\sigma}}{\alpha}f(\boldsymbol{\sigma},\boldsymbol{\chi}) | =1 = | \partial_{\boldsymbol{\sigma}} f(\boldsymbol{\sigma},\boldsymbol{\chi})|$. Using the triangle inequality, we find
    \begin{align*}
        \Big|\Dhat{\boldsymbol{\sigma}}{\alpha}&f(\boldsymbol{\sigma},\boldsymbol{\chi})
        -\partial_{\boldsymbol{\sigma}}f(\boldsymbol{\sigma},\boldsymbol{\chi})\Big|
       \\& \leq
       \Big|\Dhat{\boldsymbol{\sigma}}{\alpha}f(\boldsymbol{\sigma},\boldsymbol{\chi})
       - \D{\boldsymbol{\sigma}}{\alpha}f(\boldsymbol{\sigma},\boldsymbol{\chi})
      \Big| +
      \Big|\D{\boldsymbol{\sigma}}{\alpha}f(\boldsymbol{\sigma},\boldsymbol{\chi})
        -\partial_{\boldsymbol{\sigma}}f(\boldsymbol{\sigma},\boldsymbol{\chi})\Big| \\
        & \leq  \Big|\Dhat{\boldsymbol{\sigma}}{\alpha}f(\boldsymbol{\sigma},\boldsymbol{\chi}) \Big| \, \, \Big| 1-
       | \D{\boldsymbol{\sigma}}{\alpha}f(\boldsymbol{\sigma},\boldsymbol{\chi})
     | \Big| +
      \Big|\D{\boldsymbol{\sigma}}{\alpha}f(\boldsymbol{\sigma},\boldsymbol{\chi})
        -\partial_{\boldsymbol{\sigma}}f(\boldsymbol{\sigma},\boldsymbol{\chi})\Big|
           \\ & =\Big| 1-
       | \D{\boldsymbol{\sigma}}{\alpha}f(\boldsymbol{\sigma},\boldsymbol{\chi})
     | \Big| +
      \Big|\D{\boldsymbol{\sigma}}{\alpha}f(\boldsymbol{\sigma},\boldsymbol{\chi})
        -\partial_{\boldsymbol{\sigma}}f(\boldsymbol{\sigma},\boldsymbol{\chi})\Big| \leq 2
      \Big|\D{\boldsymbol{\sigma}}{\alpha}f(\boldsymbol{\sigma},\boldsymbol{\chi})
        -\partial_{\boldsymbol{\sigma}}f(\boldsymbol{\sigma},\boldsymbol{\chi})\Big| .
    \end{align*}
    This concludes the proof.
\end{proof}
\section{Discretization}\label{sec:disc}
We move to the incremental plasticity setting by discretizing the problem in time. Introducing a partition  $0=t_0<t_1< \ldots < t_N=T$, we set $\Delta t_n=t_n-t_{n-1}$. Functions evaluated at $t_n$ are denoted by a subscript, e.g., $\boldsymbol{\sigma}(\mathbf{x},t_n):=\boldsymbol{\sigma}_n(\mathbf{x})$. We discretize the problem in space by introducing 
\begin{align*}
    V_h=\Big\{ \mathbf{v} \in W^{1, \infty}(\Omega)^d: \mathbf{v}_{|T} \in P^1(T), \forall T \in \mathcal{T}, \mathbf{v}_{|\Gamma_D}=0\Big\}, \quad
    M_h=P^0(\mathcal{T},M^d),\quad
    M_h^s=P^0(\mathcal{T}),
\end{align*}
for some triangulation $\mathcal{T}$ of $\Omega$, i.e for some time step $t_n$ we have that $\mathbf{u}_n \in V_h$, $\boldsymbol{\sigma}_n \in M_h$ and $\chi^2_n \in M_h^s.$ Usually~\cite{Sauter}, temporal derivatives are discreticed by means of implicit methods, which, particularly in the fractional setting, is mathematically harder to access. To circumvent this difficulty,  we use an explicit time-discretization similar to the one introduced  in~\cite{Halilovic.2009, Zhou.2020} and refer to Section~\ref{sec:semi} for some ideas on the implicit discretization. The  explicit Euler scheme results in
\begin{align*}
		\boldsymbol{\eps}^p_n &= \boldsymbol{\eps}^p_{n-1} + \Delta t_n \gamma_{n-1} \Dhat{\boldsymbol{\sigma}_{n-1}}{{\alpha}}f(\boldsymbol{\sigma}_{n-1},\boldsymbol{\chi}_{n-1}),\\
	\boldsymbol{\xi}^1_n &= \boldsymbol{\xi}^1_{n-1}+\Delta t_n \gamma_{n-1} \frac{{\rm dev}(\boldsymbol{\sigma}_{n-1}+\boldsymbol{\chi}^1_{n-1})}{|{\rm dev}(\boldsymbol{\sigma}_{n-1}+\boldsymbol{\chi}^1_{n-1})|},\\
	\xi^2_n&=\xi^2_{n-1} + \Delta t_n \gamma_{n-1}.
\end{align*}
Using the additive strain decomposition, the linear elasticity and hardening laws and introducing the abbreviated notation $\Delta \gamma_{n-1}:=\Delta t_n \gamma_{n-1}$ yield
\begin{subequations}\label{eq:explicit_disc}
\begin{align}
	\boldsymbol{\sigma}_n &= \boldsymbol{\sigma}_{tr}-\Delta \gamma_{n-1}\mathbf{C}  \Dhat{\boldsymbol{\sigma}_{n-1}}{{\alpha}}f(\boldsymbol{\sigma}_{n-1},\boldsymbol{\chi}_{n-1}), \label{eq:stressreturn} \\
	\boldsymbol{\chi}^1_n &= \boldsymbol{\chi}^1_{n-1}- k_1 \Delta \gamma_{n-1} \frac{{\rm dev}(\boldsymbol{\sigma}_{n-1}+\boldsymbol{\chi}^1_{n-1})}{|{\rm dev}(\boldsymbol{\sigma}_{n-1}+\boldsymbol{\chi}^1_{n-1})|},\label{eq:chi1return}\\
	\chi^2_n &= \chi^2_{n-1}-k_2\Delta \gamma_{n-1}, \label{eq:chi2return}
\end{align}
\end{subequations}
where $\boldsymbol{\sigma}_{tr}=\mathbf{C}(\boldsymbol{\eps}(\mathbf{u}_n)-\boldsymbol{\eps}^p_{n-1})$. 
Since the complementarity conditions
\begin{align}\label{eq:disc_complementarity}
    \Delta \gamma_{n-1}\geq0,\quad 
    f(\boldsymbol{\sigma}_{n},\boldsymbol{\chi}_{n}) \leq0,\quad f(\boldsymbol{\sigma}_{n},\boldsymbol{\chi}_{n})\Delta \gamma_{n-1}=0,
\end{align}
can, in general, not be satisfied exactly for an explicit scheme, we distinguish two cases: 

\emph{Case~1}: If $f(\boldsymbol{\sigma}_{tr},\boldsymbol{\chi}_{n-1}) \leq 0$, we set $\Delta \gamma_{n-1}=0$ to ensure $(\boldsymbol{\sigma}_n,\boldsymbol{\chi}_{n})=(\boldsymbol{\sigma}_{tr},\boldsymbol{\chi}_{n-1})$ and hence~\eqref{eq:disc_complementarity} holds.

\emph{Case~2}: If $f(\boldsymbol{\sigma}_{tr},\boldsymbol{\chi}_{n-1})>0$, we linearize $f\approx f_{\rm lin}$ around $(\boldsymbol{\sigma}_{tr},\boldsymbol{\chi}_{n-1})$ and solve $f_{\rm lin}(\boldsymbol{\sigma}_{n},\boldsymbol{\chi}_{n})=0$ for $\Delta\gamma_{n-1}$ using~\eqref{eq:explicit_disc}, i.e.,
\begin{align*}
\Delta \gamma_{n-1}= \frac{f(\boldsymbol{\sigma}_{tr},\boldsymbol{\chi}_{n-1})}{2 \mu \partial_{\boldsymbol{\sigma}}f(\boldsymbol{\sigma}_{tr},\boldsymbol{\chi}_{n-1}):\Dhat{\boldsymbol{\sigma}_{n-1}}{{\alpha}}f^{n-1}+k_1 \partial_{\boldsymbol{\chi}^1} f(\boldsymbol{\sigma}_{tr},\boldsymbol{\chi}_{n-1}):\partial_{\boldsymbol{\chi}^1} f^{n-1}+k_2},
\end{align*}
where $f^{n-1}=f(\boldsymbol{\sigma}_{n-1},\boldsymbol{\chi}_{n-1})$. In both cases, $\boldsymbol{\sigma}_{tr}$ serves as a trial candidate for the stress tensor, which is then updated to $\boldsymbol{\sigma}_{n}$ in order to (approximately) satisfy the complementarity conditions.

The mapping $R_n\colon M^d\to M^d$, $\boldsymbol{\sigma}_{tr}\mapsto \boldsymbol{\sigma}_n$ is called return-mapping $R_n$ (see, e.g.,~\cite{Sauter} or \cite{Simo.1998}) and can be written as
\begin{align}
\label{eq:returnmapping}
    \begin{split} 
        R_n(\boldsymbol{\sigma}) = \boldsymbol{\sigma}-\frac{\max\{0,f(\boldsymbol{\sigma},\boldsymbol{\chi}_{n-1})\}\mathbf{C}\Dhat{\boldsymbol{\sigma}_{n-1}}{\alpha}f^{n-1}}{2 \mu \partial_{\boldsymbol{\sigma}}f(\boldsymbol{\sigma},\boldsymbol{\chi}_{n-1}):\Dhat{\boldsymbol{\sigma}_{n-1}}{\alpha}f^{n-1} +k_1\partial_{\boldsymbol{\sigma}}f(\boldsymbol{\sigma},\boldsymbol{\chi}_{n-1}):\partial_{\boldsymbol{\sigma}} f^{n-1}+k_2}.
    \end{split}
\end{align}
The subscript denotes the hidden dependence on the previous state $t_{n-1}$.
Note that it is not obvious that the denominator is non-zero and hence we will require some assumptions that will be discussed in the following section. 
By considering $\mathbf{u}_n^D=0$ in the following, we can get the discrete solution at $t_n$ by solving the following weak fomulation. For given $\boldsymbol{\sigma}_{n-1},\boldsymbol{\eps}^p_{n-1},\boldsymbol{\chi}_{n-1} \in M_h \times M_h \times M_h \times M_h^s$, we want to find $\mathbf{u}_n \in V_h$ such that 
\begin{align}\label{eq:modelproblem}
    \int_{\Omega} R_n\Big(\mathbf{C}(\boldsymbol{\eps}(\mathbf{u}_n)-\boldsymbol{\eps}^p_{n-1})\Big) : \boldsymbol{\eps}(\mathbf{v}) \ dx= \int_{\Omega} \mathbf{b}_n \cdot \mathbf{v} \ dx+ \int_{\Gamma_N}\mathbf{t}^N_n \cdot \mathbf{v} \ dS, \quad \forall \mathbf{v} \in V_h.
\end{align}
Note, that once a solution $\mathbf{u}_n$ is found, stress and hardening variables have to be updated according to~\eqref{eq:explicit_disc}. The plastic strain is given by $\boldsymbol{\eps}^p_{n}=\boldsymbol{\eps}(\mathbf{u}_n)- \mathbf{C}^{-1}\boldsymbol{\sigma}_n$.
\begin{remark}
    By choosing $V_h,M_h$ and $M_h^s$ as done here we ensure that $\boldsymbol{\sigma}_{n-1},\boldsymbol{\eps}^p_{n-1},\boldsymbol{\chi}_{n-1} \in M_h \times M_h \times M_h \times M_h^s$ yields $\boldsymbol{\sigma}_{n},\boldsymbol{\eps}^p_{n},\boldsymbol{\chi}_{n} \in M_h \times M_h \times M_h \times M_h^s$, which is not true for arbitrary discrete spaces. The following discussion applies to all discrete subspaces of bounded functions with this property and can even be generalized to arbitrary ones as seen in Section \ref{sec:gen}.
\end{remark}
\section{Analysis of the Return-Mapping}\label{sec:return}
This section collects some results on $R_n$ as defined in the previous section. Note that $R_n$ is continuous but not necessarily differentiable everywhere. We will investigate $R_n$ according to the weaker notion of \textit{subdifferentials} and \textit{semismoothness}  introduced in \cite{clarke} and \cite{Facchinei.2003b} respectively. 
A locally Lipschitz continuous function $R: \mathbb{R}^{m \times n} \supset A \to \mathbb{R}^{k \times l}$ is differentiable in a dense set $D\subseteq A$ according to a result by Rademacher~\cite{Evans.2010}. Thus, we may consider the set-valued limiting Jacobian at $\boldsymbol{\tau}\in A$, defined via
	\begin{align*}
		\partial^B R(\boldsymbol{\tau})= \Bigl\{ \mathcal{S} \in \mathbb{R}^{k \times l \times m \times n}: \ \textup{for} \ D \supset \boldsymbol{\tau}_p \to \boldsymbol{\tau},  \ \mathcal{S}=\lim_{p \to \infty}\partial_{\boldsymbol{\tau}} R(\boldsymbol{\tau}_p) \}  \Bigr\}.
	\end{align*}
With this, Clarke's subdifferential $\sD R(\boldsymbol{\tau})$ is defined as the convex hull of the limiting Jacobian, i.e.
	\begin{align*}
		\sD R(\boldsymbol{\tau})=\textup{conv}\Big(\partial^B R(\boldsymbol{\tau})\Big)\subseteq  \mathbb{R}^{k \times l \times m \times n}.
	\end{align*}
Note that if $\sD R(\boldsymbol{\tau})$ contains only a single element, this coincides with the classical derivative and we identify $\sD R(\boldsymbol{\tau}) = \partial_{\boldsymbol{\tau}} R(\boldsymbol{\tau})$.
Finally, the function $R: \mathbb{R}^{m \times n} \supset A \to \mathbb{R}^{k \times l}$, is called semi-smooth at $\boldsymbol{\tau}$ if it is locally Lipschitz, directionally differentiable in a neighborhood of $\boldsymbol{\tau}$, and additionally if any $\mathcal{S} \in \sD R(\boldsymbol{\tau}+\boldsymbol{\theta})$ satisfies
		\begin{align*}
			|R(\boldsymbol{\tau}+\boldsymbol{\theta})-R(\boldsymbol{\tau})- \mathcal{S}\boldsymbol{\theta}|=o(|\boldsymbol{\theta}|), \ \ \textup{as} \ \ \boldsymbol{\theta} \to 0.
		\end{align*}
		If $o(|\boldsymbol{\theta}|)$ can be replaced by $\mathcal{O}(|\mathbf{\theta}|^{1+s})$, for $s \in (0,1]$ we say R is semismooth of order $s$ at $\boldsymbol{\tau}$.
We note that many results for smooth functions transfer to semi-smooth functions, e.g., the chain rule~\cite{Facchinei.2003b}.
The first result is summarized in the following Theorem.
\begin{theorem}
\label{thm:semismooth}
    For given $\boldsymbol{\sigma}_{n-1} \in M^d,\boldsymbol{\chi}^1_{n-1} \in M^d,\chi^2_{n-1}  \leq 0$, $\alpha \in (0,1)$, there exists $\eps>0$ such that $R_n$ defined in~\eqref{eq:returnmapping} is semismooth for all $|\boldsymbol{\sigma}-\boldsymbol{\sigma}_{n-1}|< \eps$  and $|\boldsymbol{\Delta}|<Y_0- 2\eps$.
\end{theorem}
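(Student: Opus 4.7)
The plan is to decompose $R_n$ as $R_n(\boldsymbol{\sigma}) = \boldsymbol{\sigma} - N(\boldsymbol{\sigma})\,G/D(\boldsymbol{\sigma})$, where $G := \mathbf{C}\,\Dhat{\boldsymbol{\sigma}_{n-1}}{\alpha}f^{n-1}$ is a constant tensor (it depends only on the frozen previous state), $N(\boldsymbol{\sigma}) := \max\{0,f(\boldsymbol{\sigma},\boldsymbol{\chi}_{n-1})\}$ is a scalar numerator, and $D(\boldsymbol{\sigma})$ is the scalar denominator appearing in \eqref{eq:returnmapping}. Once each piece is shown to be smooth or semismooth in a neighborhood of $\boldsymbol{\sigma}_{n-1}$, semismoothness of $R_n$ will follow from the standard calculus rules for semismooth functions (sums, products, quotients with non-vanishing denominator, and compositions with smooth maps), cf.~\cite{Facchinei.2003b}.

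First I would establish that $D$ is smooth and bounded below by a positive constant on a suitable ball around $\boldsymbol{\sigma}_{n-1}$. Evaluating at $\boldsymbol{\sigma}=\boldsymbol{\sigma}_{n-1}$, the summand $k_1\,\partial_{\boldsymbol{\sigma}}f(\boldsymbol{\sigma}_{n-1},\boldsymbol{\chi}_{n-1}):\partial_{\boldsymbol{\sigma}} f^{n-1}$ equals $k_1|\partial_{\boldsymbol{\sigma}}f^{n-1}|^2=k_1$, the term $2\mu\,\partial_{\boldsymbol{\sigma}}f(\boldsymbol{\sigma}_{n-1},\boldsymbol{\chi}_{n-1}):\Dhat{\boldsymbol{\sigma}_{n-1}}{\alpha}f^{n-1}$ is strictly positive by Lemma~\ref{lem:positive} (the normalization by $|\D{\boldsymbol{\sigma}_{n-1}}{\alpha}f^{n-1}|>0$ preserves the sign), and the remaining summand $k_2$ is positive. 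Hence $D(\boldsymbol{\sigma}_{n-1})\geq k_1+k_2>0$. Because ${\rm dev}(\boldsymbol{\sigma}_{n-1}+\boldsymbol{\chi}^1_{n-1})\neq 0$ (implicit in the well-definedness of $\partial_{\boldsymbol{\sigma}} f^{n-1}$ and in the restriction $|\boldsymbol{\Delta}|<Y_0-2\eps$, which keeps the integration cube underlying the fractional derivative away from the non-smooth set of $f$), the map $\boldsymbol{\sigma}\mapsto\partial_{\boldsymbol{\sigma}}f(\boldsymbol{\sigma},\boldsymbol{\chi}_{n-1})= {\rm dev}(\boldsymbol{\sigma}+\boldsymbol{\chi}^1_{n-1})/|{\rm dev}(\boldsymbol{\sigma}+\boldsymbol{\chi}^1_{n-1})|$ is $C^\infty$ on a small ball around $\boldsymbol{\sigma}_{n-1}$. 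Consequently $D$ is smooth there and, after shrinking $\eps$ if necessary, $D(\boldsymbol{\sigma})\geq (k_1+k_2)/2$ for $|\boldsymbol{\sigma}-\boldsymbol{\sigma}_{n-1}|<\eps$.

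Second I would treat the numerator. On the same neighborhood $\boldsymbol{\sigma}\mapsto f(\boldsymbol{\sigma},\boldsymbol{\chi}_{n-1}) = |{\rm dev}(\boldsymbol{\sigma}+\boldsymbol{\chi}^1_{n-1})|+\chi^2_{n-1}-Y_0$ is smooth, while the scalar function $s\mapsto\max\{0,s\}$ is Lipschitz and piecewise linear, hence semismooth of order~$1$ on $\mathbb{R}$. The composition $N(\boldsymbol{\sigma})=\max\{0,f(\boldsymbol{\sigma},\boldsymbol{\chi}_{n-1})\}$ is therefore semismooth by the chain rule. Multiplying by the constant tensor $G$, dividing by the smooth strictly positive scalar $D$, and subtracting from the smooth identity all preserve semismoothness, so $R_n$ is semismooth on $\{|\boldsymbol{\sigma}-\boldsymbol{\sigma}_{n-1}|<\eps\}$. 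Directional differentiability and local Lipschitz continuity, which are required in the definition of semismoothness, follow automatically from these same composition rules.

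The main obstacle is the quantitative step of guaranteeing $D>0$ together with the regularity of all non-fractional components in a neighborhood of $\boldsymbol{\sigma}_{n-1}$; this is where Lemma~\ref{lem:positive} is essential, and where the smallness conditions on $\eps$ and $|\boldsymbol{\Delta}|$ in the theorem enter. Once the denominator is controlled, the remainder is bookkeeping via the calculus of semismooth functions and the standard semismoothness of the $\max$-function.
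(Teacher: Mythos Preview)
Your proposal is correct and follows essentially the same strategy as the paper: control the denominator via Lemma~\ref{lem:positive} and continuity, note that $\max\{0,\cdot\}$ is the only nonsmooth ingredient, and conclude by the chain rule for semismooth functions. The one place where the paper is more careful is an explicit case split on the sign of $f(\boldsymbol{\sigma},\boldsymbol{\chi}_{n-1})$: when $f<0$ the return map is locally the identity and there is nothing to prove, while only in the case $f\geq 0$ does one obtain for free $|{\rm dev}(\boldsymbol{\sigma}_{n-1}+\boldsymbol{\chi}^1_{n-1})|\geq Y_0-\eps>0$, which is what actually makes $G$, $D$, and $\partial_{\boldsymbol{\sigma}}f^{n-1}$ well-defined and lets Lemma~\ref{lem:positive} apply. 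Your parenthetical claim that ${\rm dev}(\boldsymbol{\sigma}_{n-1}+\boldsymbol{\chi}^1_{n-1})\neq 0$ is ``implicit in the restriction $|\boldsymbol{\Delta}|<Y_0-2\eps$'' is not right on its own---that restriction bounds the size of the integration window, not the deviatoric part of the previous state---so you should add the trivial elastic case explicitly before running your decomposition argument.
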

\begin{proof}
  Let us consider two cases:
   
  \emph{Case~1, $f(\boldsymbol{\sigma},\boldsymbol{\chi}_{n-1})<0$:} Then we have $R_n(\boldsymbol{\sigma})=\boldsymbol{\sigma}$ in a neighborhood of $\boldsymbol{\sigma}$ and the assertion follows immediately.

  \emph{Case~2, $f(\boldsymbol{\sigma},\boldsymbol{\chi}_{n-1}) \geq 0$:} Choose $\eps>0$ such that $|\boldsymbol{\Delta}|<Y_0- 2\eps$. This yields the well-definedness of the involved fractional gradients if $|\boldsymbol{\sigma}-\boldsymbol{\sigma}_{n-1}|< \eps$. Furthermore, by the continuity of the denominator in $\boldsymbol{\sigma}$ we have a possibly smaller bound, again denoted by $\eps>0$ such that the denominator is positive for $|\boldsymbol{\sigma}-\boldsymbol{\sigma}_{n-1}|< \eps$, because of Lemma \ref{lem:positive}. Moreover, it is continuously differentiable in $\boldsymbol{\sigma}$ because $|\boldsymbol{\sigma}+\boldsymbol{\chi}^1_{n-1}|$ is sufficiently large. Semismoothness now follows from the Chain-Rule for semismooth functions, since $\max\{0,\cdot\}$, although semismooth, is the only non-differentiable component.

\end{proof}
\begin{remark} Note that in our setting the involved quantities depend on $\boldsymbol{x}$. We can use uniform continuity to show that there is a number $\eps>0$ independent of $\mathbf{x}$ such that for $|\boldsymbol{\tau}-\boldsymbol{\sigma}_{n-1}(\mathbf{x})|< \eps$ we have that $R_n$ is semismooth in $\Omega$, meaning that for all $\mathbf{x} \in \Omega$ the function
  \begin{align*}
      \begin{split}
          &R_n(\boldsymbol{\tau},\mathbf{x})= \boldsymbol{\tau}-\\
          &\frac{\max\{0,f(\boldsymbol{\tau},\boldsymbol{\chi}_{n-1}(\mathbf{x}))\}\mathbf{C}\Dhat{\boldsymbol{\sigma}_{n-1}(\mathbf{x})}{\alpha}f^{n-1}(\mathbf{x})}{2 \mu \partial_{\boldsymbol{\sigma}}f(\boldsymbol{\tau},\boldsymbol{\chi}_{n-1}(\mathbf{x})):\Dhat{\boldsymbol{\sigma}_{n-1}(\mathbf{x})}{\alpha}f^{n-1}(\mathbf{x}) +k_1\partial_{\boldsymbol{\sigma}}f(\boldsymbol{\tau},\boldsymbol{\chi}_{n-1}(\mathbf{x})):\partial_{ \boldsymbol{\sigma}} f^{n-1}(\mathbf{x})+k_2}
      \end{split}
  \end{align*}
  is semismooth at $\boldsymbol{\tau}$.
  \end{remark}
  Let us now turn to the sub-differential of $R_n$ (which exists since $R_n$ is semismooth as shown in the previous discussion). $R_n$ is continuously differentiable if $f(\boldsymbol{\sigma},\boldsymbol{\chi}_{n-1})\neq 0$, i.e., everywhere but for a set of measure zero in $\mathbb{R}^{d\times d}$.
By definition of the sub-differential, we may ignore sets of measure zero, see also~\cite[Section 7.1]{Facchinei.2003b}. Hence, we compute
\begin{subequations}
\begin{align}
\label{eq:s1}
    \sD R_n (\boldsymbol{\sigma}) = \Id\ 
\end{align}
if $f(\boldsymbol{\sigma},\boldsymbol{\chi}_{n-1})<0$ and
\begin{align}\label{eq:s2}
    \sD R_n(\boldsymbol{\sigma}) &= \Id- \frac{\Big(\mathbf{C}\Dhat{\boldsymbol{\sigma}_{n-1}}{{\alpha}}f^{n-1}\Big) \otimes \partial_{\boldsymbol{\sigma}}f(\boldsymbol{\sigma},\boldsymbol{\chi}_{n-1})}{2 \mu \partial_{\boldsymbol{\sigma}}f(\boldsymbol{\sigma},\boldsymbol{\chi}_{n-1}):\Dhat{\boldsymbol{\sigma}_{n-1}}{{\alpha}}f^{n-1}+k_1 \partial_{\boldsymbol{\chi}^1} f(\boldsymbol{\sigma},\boldsymbol{\chi}_{n-1}):\partial_{\boldsymbol{\chi}^1} f^{n-1}+k_2}\notag\\
    &\quad + \Bigg(
    \Big(\mathbf{C}\Dhat{\boldsymbol{\sigma}_{n-1}}{{\alpha}}f^{n-1}\Big) \\
    &\qquad \otimes
    \frac{  \Big( f(\boldsymbol{\sigma},\boldsymbol{\chi}_{n-1})\partial^2_{\boldsymbol{\sigma}^2} f(\boldsymbol{\sigma},\boldsymbol{\chi}_{n-1})\Big(2 \mu\Dhat{\boldsymbol{\sigma}_{n-1}}{{\alpha}}f^{n-1}+ k_1 \partial_{\boldsymbol{\chi}^1} f^{n-1}\Big)\Big)}{\Big(2 \mu \partial_{\boldsymbol{\sigma}}f(\boldsymbol{\sigma},\boldsymbol{\chi}_{n-1}):\Dhat{\boldsymbol{\sigma}_{n-1}}{{\alpha}}f^{n-1}+k_1 \partial_{\boldsymbol{\chi}^1} f(\boldsymbol{\sigma},\boldsymbol{\chi}_{n-1}):\partial_{\boldsymbol{\chi}^1} f^{n-1}+k_2\Big)^2}\Bigg),  \notag
\end{align}
\end{subequations}
if  $f(\boldsymbol{\sigma},\boldsymbol{\chi}_{n-1}) >0$.
If $f(\boldsymbol{\sigma},\boldsymbol{\chi}_{n-1}) =0$, $\sD R_n(\boldsymbol{\sigma})$ is the convex hull of the terms in equations~\eqref{eq:s1}--\eqref{eq:s2}. The second derivative with respect to stress of $f$ can be computed as
\begin{align*}
	\partial^2_{\boldsymbol{\sigma}^2} f(\boldsymbol{\sigma},\boldsymbol{\chi})=\frac{\Id-\frac{1}{d}(\id \otimes \id)}{|{\rm dev}(\boldsymbol{\sigma}+\boldsymbol{\chi}^1)|}-\frac{{\rm dev}(\boldsymbol{\sigma}+\boldsymbol{\chi}^1) \otimes {\rm dev}(\boldsymbol{\sigma}+\boldsymbol{\chi}^1)}{|{\rm dev}(\boldsymbol{\sigma}+\boldsymbol{\chi}^1)|^3}.
\end{align*}
We want to investigate the regularity of the operators in $\sD R_n(\boldsymbol{\sigma})$, which will be crucial for arguments involving the implicit function theorem below.
\begin{theorem}
  \label{thm:rposdef2}
   For given $\boldsymbol{\sigma}_{n-1} \in M^d,\boldsymbol{\chi}^1_{n-1} \in M^d,\chi^2_{n-1}\leq 0, \alpha \in (0,1)$, there exists $\eps>0$ such that for $|\boldsymbol{\sigma}-\boldsymbol{\sigma}_{n-1}|< \eps$ and $|\boldsymbol{\Delta}|<Y_0- 2\eps$ every element of $\sD R_n(\boldsymbol{\sigma})$ is positive-definite if at least one of the following cases is satisfied:
      \begin{itemize}
          \item[(i)] There holds $f(\boldsymbol{\sigma},\boldsymbol{\chi}_{n-1})<0$.
          \item[(ii)] The material constants satisfy $\frac{\max\{ 2 \mu,\kappa d\}}{k_1+k_2}<\frac{-1+\sqrt{5}}{2}$, or
          \item[(iii)] there holds $1 -\alpha \leq \mathcal{O}\Big( \frac{k_1+k_2}{\kappa d}\Big)$, where the hidden constant is independent of $k_1$, $k_2$, and $\kappa d$.
      \end{itemize}
  \end{theorem}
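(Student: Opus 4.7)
The plan is to treat the three listed cases separately. Case (i) is trivial because formula~\eqref{eq:s1} gives $\sD R_n(\boldsymbol{\sigma})=\{\Id\}$. For Cases (ii) and (iii) with $f(\boldsymbol{\sigma},\boldsymbol{\chi}_{n-1})\geq 0$, I analyze the tensor $\Id-T_1+T_2$ from formula~\eqref{eq:s2}, where $T_1$ and $T_2$ denote the first and second rank-one correction terms with denominators $D$ and $D^2$ respectively. Since $\mathcal{S}\mapsto v:\mathcal{S}v$ is linear in $\mathcal{S}$ and the positive-definite cone is convex, the convex hull arising at the kink $f=0$ is handled by verifying positivity at the extreme elements $\Id$ and $\Id-T_1+T_2$ separately.

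Two preliminary bounds underpin both non-trivial cases. First, the decomposition $\mathbf{C}=2\mu\bigl(\Id-\tfrac{1}{d}\id\otimes\id\bigr)+\kappa\,\id\otimes\id$ applied to the unit tensor $\Dhat{\boldsymbol{\sigma}_{n-1}}{\alpha}f^{n-1}$ splits into orthogonal deviatoric and trace contributions, yielding $|\mathbf{C}\,\Dhat{\boldsymbol{\sigma}_{n-1}}{\alpha}f^{n-1}|\leq\max\{2\mu,\kappa d\}$. Second, Lemma~\ref{lem:positive} makes the $2\mu$-summand of $D$ non-negative, and since $\partial_{\boldsymbol{\chi}^1}f:\partial_{\boldsymbol{\chi}^1}f^{n-1}=1$ at $\boldsymbol{\sigma}=\boldsymbol{\sigma}_{n-1}$, continuity yields $D\geq k_1+k_2-C\eps$ for $|\boldsymbol{\sigma}-\boldsymbol{\sigma}_{n-1}|<\eps$.

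For Case (ii) the plan is to pass to the bilinear form $v:(\Id-T_1+T_2)v$ and apply Cauchy-Schwarz to the two rank-one tensor products. This produces $|v:T_1 v|\leq a\,|v|^2$ with $a=|\mathbf{C}\,\Dhat{\boldsymbol{\sigma}_{n-1}}{\alpha}f^{n-1}|/D<\phi$ for $\eps$ small, where $\phi=(-1+\sqrt{5})/2$ is the golden ratio characterized by $\phi(1+\phi)=1$. The delicate step is the companion estimate $|v:T_2 v|\leq a\,b\,|v|^2$ with $b\leq\tfrac{2\mu}{k_1+k_2}+O(\eps)<\phi$, which rests on (a) the identity $\partial^2_{\boldsymbol{\sigma}^2}f\cdot\partial_{\boldsymbol{\sigma}}f=0$ obtained by differentiating $|\partial_{\boldsymbol{\sigma}}f|\equiv 1$ (so the $k_1\partial_{\boldsymbol{\chi}^1}f^{n-1}$ contribution is only $O(\eps)$), (b) the explicit pointwise Hessian bound $|\partial^2_{\boldsymbol{\sigma}^2}f\cdot X|\leq |X|/|{\rm dev}(\boldsymbol{\sigma}+\boldsymbol{\chi}^1_{n-1})|$, and (c) the algebraic ratio $|f(\boldsymbol{\sigma},\boldsymbol{\chi}_{n-1})|/|{\rm dev}(\boldsymbol{\sigma}+\boldsymbol{\chi}^1_{n-1})|\leq 1$, which holds because $f\geq 0$ and $\chi^2_{n-1}\leq 0$. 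Combining these, $v:\sD R_n(\boldsymbol{\sigma})v\geq(1-a(1+b))|v|^2>0$ by the relation $\phi(1+\phi)=1$ together with the strict inequality of hypothesis (ii), after further shrinking $\eps$.

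For Case (iii), Corollary~\ref{cor:alphaconv} writes $\Dhat{\boldsymbol{\sigma}_{n-1}}{\alpha}f^{n-1}=\partial_{\boldsymbol{\sigma}}f^{n-1}+r$ with $|r|\leq C(1-\alpha)$, and deviatoricness of $\partial_{\boldsymbol{\sigma}}f^{n-1}$ gives $\mathbf{C}\partial_{\boldsymbol{\sigma}}f^{n-1}=2\mu\,\partial_{\boldsymbol{\sigma}}f^{n-1}$. Hence $T_1$ reduces to a perturbation of the classical symmetric projection $\tfrac{2\mu}{2\mu+k_1+k_2}\partial_{\boldsymbol{\sigma}}f\otimes\partial_{\boldsymbol{\sigma}}f$, whose largest eigenvalue lies strictly below $1$; the perturbation has size $\kappa d(1-\alpha)/(k_1+k_2)$, exactly the quantity controlled by hypothesis (iii), while the same Hessian identity combined with $(1-\alpha)$-smallness handles $T_2$. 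I expect the principal obstacle to be the bound on $T_2$ in Case (ii): it cannot exploit $(1-\alpha)$-smallness and must instead be squeezed out of the sharp algebraic ratio $|f|/|{\rm dev}|\leq 1$ together with the vanishing identity $\partial^2_{\boldsymbol{\sigma}^2}f\cdot\partial_{\boldsymbol{\sigma}}f=0$, so that the golden-ratio closing relation $\phi(1+\phi)=1$ becomes the decisive algebraic input.
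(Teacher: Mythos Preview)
Your proposal is correct and follows essentially the same route as the paper's proof: the same convexity reduction to the extreme element~\eqref{eq:s2}, the same key identities $\partial^2_{\boldsymbol{\sigma}^2}f\cdot\partial_{\boldsymbol{\sigma}}f=0$ and $|f|/|{\rm dev}(\boldsymbol{\sigma}+\boldsymbol{\chi}^1_{n-1})|\leq 1$, the same Cauchy--Schwarz bounds leading to the golden-ratio inequality $1-a(1+b)>0$ in Case~(ii), and the same use of Corollary~\ref{cor:alphaconv} in Case~(iii). The only cosmetic difference is that you phrase Case~(iii) as a perturbation of the classical symmetric projection $\tfrac{2\mu}{2\mu+k_1+k_2}\partial_{\boldsymbol{\sigma}}f\otimes\partial_{\boldsymbol{\sigma}}f$, whereas the paper carries out an explicit quadratic calculation in $C(\alpha)$; both arrive at the same $\mathcal{O}((k_1+k_2)/(\kappa d))$ threshold.
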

  \begin{remark}
      Note that the constraints on the constants in Theorem \ref{thm:rposdef2} can be interpreted physically. If ($k_1+k_2) \to 0$ we have vanishing hardening behavior which would also result in ill-posedness of the associated plasticity problem, see e.g. \cite{Wieners.1999}. On the other hand if $\kappa \to \infty$, the model approaches the incompressible limit case, where it is well known that numerical instabilities (known as locking) occur, even in simpler, purely elastic problems.
  \end{remark}
  \begin{proof}
      Semismoothness and well-definedness of $R_n$ around $\boldsymbol{\sigma}_{n-1}$ was already shown in Theorem~\ref{thm:semismooth}. This also shows that the terms in~\eqref{eq:s1} and~\eqref{eq:s2} are well-defined. For case~(i), we note that~\eqref{eq:s1} is obviously positive definite. In the other cases~(ii)--(iii), we argue that $\sD R_n(\boldsymbol{\sigma})$ consists of convex combinations of terms in~\eqref{eq:s1}--\eqref{eq:s2}. Since convex combinations of positive operators are positive themselves, it remains to show that all terms~\eqref{eq:s2} are positive definite. To that end, we may assume $f(\boldsymbol{\sigma},\boldsymbol{\chi}_{n-1})\geq 0$. The third term in \eqref{eq:s2} satisfies
      \begin{align*}
          \partial^2_{\boldsymbol{\sigma}^2} f(\boldsymbol{\sigma},\boldsymbol{\chi})\partial_{\boldsymbol{\sigma}}f(\boldsymbol{\sigma},\boldsymbol{\chi})=\frac{{\rm dev}(\boldsymbol{\sigma}+\boldsymbol{\chi}^1)}{|{\rm dev}(\boldsymbol{\sigma}+\boldsymbol{\chi}^1)|^2}-\frac{{\rm dev}(\boldsymbol{\sigma}+\boldsymbol{\chi}^1)}{|{\rm dev}(\boldsymbol{\sigma}+\boldsymbol{\chi}^1)|^2}=0.
      \end{align*}
      This allows us to find $\eps>0$ such that for $|\boldsymbol{\sigma}-\boldsymbol{\sigma}_{n-1}|<\eps$, the third term is arbitrarily close to the term
      \begin{align}
      \label{eq:s2_simple}
          \Bigg(
      \frac{ \Big(\mathbf{C}\Dhat{\boldsymbol{\sigma}_{n-1}}{{\alpha}}f^{n-1}\Big)
       \otimes \Big( 2 \mu f(\boldsymbol{\sigma},\boldsymbol{\chi}_{n-1})\partial^2_{\boldsymbol{\sigma}^2} f(\boldsymbol{\sigma},\boldsymbol{\chi}_{n-1})\Dhat{\boldsymbol{\sigma}_{n-1}}{{\alpha}}f^{n-1}\Big)}{\Big(2 \mu \partial_{\boldsymbol{\sigma}}f(\boldsymbol{\sigma},\boldsymbol{\chi}_{n-1}):\Dhat{\boldsymbol{\sigma}_{n-1}}{{\alpha}}f^{n-1}+k_1 \partial_{\boldsymbol{\chi}^1} f(\boldsymbol{\sigma},\boldsymbol{\chi}_{n-1}):\partial_{\boldsymbol{\chi}^1} f^{n-1}+k_2\Big)^2}\Bigg).
      \end{align}
      For arbitrary $\boldsymbol{\tau} \in M^d$ and using Cauchy-Schwartz as well as the definition of $\partial^2_{\boldsymbol{\sigma}^2} f$, the numerator in \eqref{eq:s2_simple} satisfies
      \begin{align}
      \label{eq:s2third}
              &\boldsymbol{\tau}:\Bigg( \Big(\mathbf{C}\Dhat{\boldsymbol{\sigma}_{n-1}}{{\alpha}}f^{n-1}\Big)
       \otimes  \Big( 2 \mu f(\boldsymbol{\sigma},\boldsymbol{\chi}_{n-1})\partial^2_{\boldsymbol{\sigma}^2} f(\boldsymbol{\sigma},\boldsymbol{\chi}_{n-1})\Dhat{\boldsymbol{\sigma}_{n-1}}{{\alpha}}f^{n-1}\Big)\Bigg) \boldsymbol{\tau} \\
       &\geq -|\boldsymbol{\tau}|\Big|\mathbf{C}\Dhat{\boldsymbol{\sigma}_{n-1}}{{\alpha}}f^{n-1}\Big|\frac{2 \mu f(\boldsymbol{\sigma},\boldsymbol{\chi}_{n-1})  |\text{dev}(\boldsymbol{\tau})|}{|\text{dev}(\boldsymbol{\sigma}+\boldsymbol{\chi}^1_{n-1})|}\Bigg|\Dhat{\boldsymbol{\sigma}_{n-1}}{{\alpha}}f^{n-1}-\partial_{\boldsymbol{\sigma}}f(\boldsymbol{\sigma},\boldsymbol{\chi}_{n-1})\Big(\partial_{\boldsymbol{\sigma}}f(\boldsymbol{\sigma},\boldsymbol{\chi}_{n-1}):\Dhat{\boldsymbol{\sigma}_{n-1}}{{\alpha}}f^{n-1}\Big) \Bigg|.\notag
         \end{align}
      Since $|\partial_{\boldsymbol{\sigma}}f(\boldsymbol{\sigma},\boldsymbol{\chi}_{n-1})|=|\Dhat{\boldsymbol{\sigma}_{n-1}}{{\alpha}}f^{n-1}|=1$, the last expression in~\eqref{eq:s2third} is the difference of a unit length vector and its projection onto another vector and hence satisfies
      \begin{align*}
          \Bigg|\Dhat{\boldsymbol{\sigma}_{n-1}}{{\alpha}}f^{n-1}-\partial_{\boldsymbol{\sigma}} f(\boldsymbol{\sigma},\boldsymbol{\chi}_{n-1})\Big(\partial_{\boldsymbol{\sigma}}f(\boldsymbol{\sigma},\boldsymbol{\chi}_{n-1}):\Dhat{\boldsymbol{\sigma}_{n-1}}{{\alpha}}f^{n-1}\Big) \Bigg| \leq 1.
      \end{align*}
      Combined with the fact that $\frac{f(\boldsymbol{\sigma},\boldsymbol{\chi}_{n-1})}{|\text{dev}(\boldsymbol{\sigma}+\boldsymbol{\chi}^1_{n-1})|} \leq 1$, this shows
      \begin{align}
      \label{eq:3rd_estimate}
      \begin{split}
          &\boldsymbol{\tau}: \Bigg(
      \frac{\Big(\mathbf{C}\Dhat{\boldsymbol{\sigma}_{n-1}}{{\alpha}}f^{n-1}\Big)
       \otimes  \Big( 2 \mu f(\boldsymbol{\sigma},\boldsymbol{\chi}_{n-1})\partial^2_{\boldsymbol{\sigma}^2} f(\boldsymbol{\sigma},\boldsymbol{\chi}_{n-1})\Dhat{\boldsymbol{\sigma}_{n-1}}{{\alpha}}f^{n-1}\Big)}{\Big(2 \mu \partial_{\boldsymbol{\sigma}}f(\boldsymbol{\sigma},\boldsymbol{\chi}_{n-1}):\Dhat{\boldsymbol{\sigma}_{n-1}}{{\alpha}}f^{n-1}+k_1 \partial_{\boldsymbol{\chi}^1} f(\boldsymbol{\sigma},\boldsymbol{\chi}_{n-1}):\partial_{\boldsymbol{\chi}^1} f^{n-1}+k_2\Big)^2}\Bigg) \boldsymbol{\tau}\\
      &\qquad\geq \frac{-2 \mu \Big|\mathbf{C}\Dhat{\boldsymbol{\sigma}_{n-1}}{{\alpha}}f^{n-1}\Big| |\boldsymbol{\tau}|^2}{\Big(2 \mu \partial_{\boldsymbol{\sigma}}f(\boldsymbol{\sigma},\boldsymbol{\chi}_{n-1}):\Dhat{\boldsymbol{\sigma}_{n-1}}{{\alpha}}f^{n-1}+k_1 \partial_{\boldsymbol{\chi}^1} f(\boldsymbol{\sigma},\boldsymbol{\chi}_{n-1}):\partial_{\boldsymbol{\chi}^1} f^{n-1}+k_2\Big)^2}.
      \end{split}
      \end{align}
      By a similar estimate using Cauchy-Schwartz for the second summand in \eqref{eq:s2}, we get for $\mathcal{S}\in \sD R_n(\boldsymbol{\sigma})$ that
      \begin{align}
      \label{eq:fullestimate}
      \begin{split}
          \boldsymbol{\tau}:\mathcal{S} \boldsymbol{\tau} &\geq |\boldsymbol{\tau}|^2 \Bigg(1- \frac{\Big|\mathbf{C} \Dhat{\boldsymbol{\sigma}_{n-1}}{{\alpha}}f^{n-1}\Big|}{2 \mu \partial_{\boldsymbol{\sigma}}f(\boldsymbol{\sigma},\boldsymbol{\chi}_{n-1}):\Dhat{\boldsymbol{\sigma}_{n-1}}{{\alpha}}f^{n-1}+k_1 \partial_{\boldsymbol{\chi}^1} f(\boldsymbol{\sigma},\boldsymbol{\chi}_{n-1}):\partial_{\boldsymbol{\chi}^1} f^{n-1}+k_2}\\
      &\qquad -\frac{2 \mu \Big|\mathbf{C}\Dhat{\boldsymbol{\sigma}_{n-1}}{{\alpha}}f^{n-1}\Big|}{\Big(2 \mu \partial_{\boldsymbol{\sigma}}f(\boldsymbol{\sigma},\boldsymbol{\chi}_{n-1}):\Dhat{\boldsymbol{\sigma}_{n-1}}{{\alpha}}f^{n-1}+k_1 \partial_{\boldsymbol{\chi}^1} f(\boldsymbol{\sigma},\boldsymbol{\chi}_{n-1}):\partial_{\boldsymbol{\chi}^1} f^{n-1}+k_2\Big)^2}\Bigg).
      \end{split}
      \end{align}
      Note that for sufficiently small $\eps>0$ the norms of the denominators in \eqref{eq:fullestimate} are arbitrarily close to $k_1+k_2$ and $(k_1+k_2)^2$ respectively. Furthermore, we have that $|\mathbf{C}\Dhat{\boldsymbol{\sigma}_{n-1}}{{\alpha}}f^{n-1}| \leq \max\{\kappa d, 2 \mu\}$. So the term in the parenthesis in \eqref{eq:fullestimate} is positive if $\frac{\max\{ 2 \mu,\kappa d\}}{k_1+k_2}<\frac{-1+\sqrt{5}}{2}$. Now factoring in the convergence properties for $\alpha$, we first recall that Corollary~\ref{cor:alphaconv} shows $\Big|\partial_{\boldsymbol{\sigma}}f(\boldsymbol{\sigma},\boldsymbol{\chi})- \Dhat{\boldsymbol{\sigma}}{{\alpha}}f(\boldsymbol{\sigma},\boldsymbol{\chi})\Big| := C(\alpha)= \mathcal{O}(1-\alpha)$ as $\alpha \nearrow 1$. This allows us to rewrite \eqref{eq:3rd_estimate} as
      \begin{align*}
      \begin{split}
          &\boldsymbol{\tau}: \Bigg(
      \frac{\Big(\mathbf{C}\Dhat{\boldsymbol{\sigma}_{n-1}}{{\alpha}}f^{n-1}\Big)
       \otimes  \Big( 2 \mu f(\boldsymbol{\sigma},\boldsymbol{\chi}_{n-1})\partial^2_{\boldsymbol{\sigma}^2} f(\boldsymbol{\sigma},\boldsymbol{\chi}_{n-1})\Dhat{\boldsymbol{\sigma}_{n-1}}{{\alpha}}f^{n-1}\Big)}{\Big(2 \mu \partial_{\boldsymbol{\sigma}}f(\boldsymbol{\sigma},\boldsymbol{\chi}_{n-1}):\Dhat{\boldsymbol{\sigma}_{n-1}}{{\alpha}}f^{n-1}+k_1 \partial_{\boldsymbol{\chi}^1} f(\boldsymbol{\sigma},\boldsymbol{\chi}_{n-1}):\partial_{\boldsymbol{\chi}^1} f^{n-1}+k_2\Big)^2}\Bigg) \boldsymbol{\tau}\\
      &\qquad \geq \frac{-2 \mu C(\alpha)\Big|\mathbf{C}\Dhat{\boldsymbol{\sigma}_{n-1}}{{\alpha}}f^{n-1}\Big| |\boldsymbol{\tau}|^2}{\Big(2 \mu \partial_{\boldsymbol{\sigma}}f(\boldsymbol{\sigma},\boldsymbol{\chi}_{n-1}):\Dhat{\boldsymbol{\sigma}_{n-1}}{{\alpha}}f^{n-1}+k_1 \partial_{\boldsymbol{\chi}^1} f(\boldsymbol{\sigma},\boldsymbol{\chi}_{n-1}):\partial_{\boldsymbol{\chi}^1} f^{n-1}+k_2\Big)^2}.
      \end{split}
      \end{align*}
      Subsequently, by estimating the remaining fractional gradients with the help of $C(\alpha)$, we get
      \begin{align}
      \label{eq:fullestimate_alpha}
      \begin{split}
          \boldsymbol{\tau}:\mathcal{S} \boldsymbol{\tau} &\geq |\boldsymbol{\tau}|^2 \Bigg(1- \frac{2 \mu + \max\{2 \mu, \kappa d\} C(\alpha)}{(2 \mu- C(\alpha) +k_1)  \partial_{\boldsymbol{\chi}^1} f(\boldsymbol{\sigma},\boldsymbol{\chi}_{n-1}):\partial_{\boldsymbol{\chi}^1} f^{n-1}+k_2}\\
      &\qquad -\frac{2 \mu C(\alpha)\Big(2 \mu + \max\{ 2 \mu , \kappa d\} C(\alpha) \Big)}{\Big((2 \mu- C(\alpha) +k_1)  \partial_{\boldsymbol{\chi}^1} f(\boldsymbol{\sigma},\boldsymbol{\chi}_{n-1}):\partial_{\boldsymbol{\chi}^1} f^{n-1}+k_2\Big)^2}\Bigg).
      \end{split}
      \end{align}
      We will find $\eps>0$ such that the term in the parenthesis of \eqref{eq:fullestimate_alpha} is positive if
      \begin{align}
      \label{eq:c_alpha_pos}
          \Bigg(1- \frac{2 \mu + \max\{2 \mu, \kappa d\} C(\alpha)}{2 \mu- C(\alpha) +k_1+k_2}
      -\frac{2 \mu C(\alpha)\Big(2 \mu + \max\{ 2 \mu , \kappa d\} C(\alpha) \Big)}{\Big(2 \mu- C(\alpha) +k_1+k_2\Big)^2}\Bigg)>0.
      \end{align}
      Denoting $k_1+k_1:=N$ and $2 \mu < \kappa d := M$, \eqref{eq:c_alpha_pos} will be satisfied if
      \begin{align*}
          C(\alpha)&< \frac{-(M+2)(2 \mu +N)-2 \mu(2\mu -1)}{2 (M(2 \mu-1)-1)}\\
          &\qquad + \frac{\sqrt{\Big((M+2)(2 \mu +N)+2 \mu(2\mu -1)\Big)^2+4\Big(N(2 \mu+N)(M(2 \mu-1)-1)\Big)}}{2 (M(2 \mu-1)-1)}.
      \end{align*}
      By simplifying the numerator we see, that
      \begin{align*}
          \begin{split}
              &-(M+2)(2 \mu +N)-2 \mu(2\mu -1)\\
              &\qquad + \sqrt{\Big((M+2)(2 \mu +N)+2 \mu(2\mu -1)\Big)^2+4\Big(N(2 \mu+N)(M(2 \mu-1)-1)\Big)}\\
              &\leq \frac{4N(2 \mu +N)(M(2 \mu-1)-1)}{(M+2)(2 \mu +N)+2 \mu(2\mu -1)+ \sqrt{\Big((M+2)(2 \mu +N)+2 \mu(2\mu -1)\Big)^2+4\Big(N(2 \mu+N)(M(2 \mu-1)-1)\Big)}}\\
              &\leq \frac{4N(2 \mu +N)(M(2 \mu-1)-1)}{(M+2)(2 \mu +N)}.
          \end{split}
      \end{align*}
      Therefore, $C(\alpha)=1-\alpha$ has to be bounded from above by a term of order $\mathcal{O}(\frac{N}{M})$ for $M/N   \to  \infty$.
      \end{proof}
  \begin{remark}\label{rm:rposdef2} Lemma~\ref{lem:alphaconv}, uniform continuity of~\eqref{eq:s1}--\eqref{eq:s2} in $\boldsymbol{\sigma}$, and the compactness of $\Omega$ show that there exists $\eps>0$ such that $\|\boldsymbol{\sigma}-\boldsymbol{\sigma}_{n-1}\|_\infty< \eps$ implies
  \begin{align*}
      \boldsymbol{\tau}:\mathcal{S}\boldsymbol{\tau} \geq C |\boldsymbol{\tau}|^2\quad\text{for all }   \mathbf{x} \in \Omega, \text{ all }\boldsymbol{\tau} \in \mathbb{R}^{d \times d}, \text{ and all }\mathcal{S} \in \sD R_n(\boldsymbol{\sigma}(\mathbf{x}),\mathbf{x})
  \end{align*}
  with a uniform constant $C>0$, if the assumptions on the parameters of Theorem~\ref{thm:rposdef2} hold.
  \end{remark}
  
  Because the positive-definiteness of $\mathcal{S}$ does not directly imply the positive definiteness of $\mathcal{S} \mathbf{C}$, we require another property of the subdifferential.
  \begin{lemma}
  \label{lem:scposdef}
      If $\kappa \geq \frac{2 \mu}{d}$, $\mathcal{S}\mathbf{C}$ is positive definite for all $\mathcal{S} \in \sD R_n(\boldsymbol{\sigma})$.
  \end{lemma}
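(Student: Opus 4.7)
The plan is to exploit the block structure of $\mathbf{C}$ together with an invariance property of the correction terms appearing in $\sD R_n(\boldsymbol{\sigma})$. First I would decompose
\begin{equation*}
\mathbf{C} \;=\; 2\mu\,\Id + (d\kappa-2\mu)\,\mathbb{P},\qquad \mathbb{P}\boldsymbol{\tau}:=\tfrac{1}{d}\text{tr}(\boldsymbol{\tau})\,\id,
\end{equation*}
so that $\mathbb{P}$ is the orthogonal projector onto $\text{span}\{\id\}$ and the hypothesis $\kappa\ge 2\mu/d$ renders the second summand positive semidefinite on $M^d$.

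The crucial observation is that $\mathcal{S}\,\id = \id$ for every $\mathcal{S}\in\sD R_n(\boldsymbol{\sigma})$. Inspecting \eqref{eq:s1}--\eqref{eq:s2}, each correction tensor has the rank-one shape $(\mathbf{C}\,\Dhat{\boldsymbol{\sigma}_{n-1}}{\alpha}f^{n-1})\otimes\mathbf{v}$, where $\mathbf{v}$ is built from $\partial_{\boldsymbol{\sigma}}f(\boldsymbol{\sigma},\boldsymbol{\chi}_{n-1})=\text{dev}(\boldsymbol{\sigma}+\boldsymbol{\chi}^1_{n-1})/|\text{dev}(\boldsymbol{\sigma}+\boldsymbol{\chi}^1_{n-1})|$ (which is obviously traceless) and from $\partial^2_{\boldsymbol{\sigma}^2}f(\boldsymbol{\sigma},\boldsymbol{\chi}_{n-1})$ applied to auxiliary matrices. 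The explicit Hessian formula already stated in the paper immediately shows that $\partial^2_{\boldsymbol{\sigma}^2}f\cdot\mathbf{w}$ is a deviatoric matrix for every $\mathbf{w}$, hence $\text{tr}(\mathbf{v})=0$ in every case. Since $(A\otimes\mathbf{v})\,\id = A\,(\mathbf{v}:\id)=A\,\text{tr}(\mathbf{v})=0$, every correction annihilates $\id$, giving $\mathcal{S}\,\id=\id$ for both extremal cases \eqref{eq:s1} and \eqref{eq:s2}; by linearity the identity persists on the convex hull, covering the remaining case $f(\boldsymbol{\sigma},\boldsymbol{\chi}_{n-1})=0$.

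Combining the two ingredients, for arbitrary $\boldsymbol{\tau}\in M^d\setminus\{0\}$ I would compute
\begin{equation*}
\boldsymbol{\tau}:\mathcal{S}\mathbf{C}\boldsymbol{\tau}
\;=\; 2\mu\,\boldsymbol{\tau}:\mathcal{S}\boldsymbol{\tau} + (d\kappa-2\mu)\,\tfrac{1}{d}\text{tr}(\boldsymbol{\tau})\,\boldsymbol{\tau}:\mathcal{S}\,\id
\;=\; 2\mu\,\boldsymbol{\tau}:\mathcal{S}\boldsymbol{\tau} + \tfrac{d\kappa-2\mu}{d}\,|\text{tr}(\boldsymbol{\tau})|^{2}.
\end{equation*}
The first summand is strictly positive, since the hypotheses of \cref{thm:rposdef2} guarantee that every $\mathcal{S}\in\sD R_n(\boldsymbol{\sigma})$ is positive definite, and the second is non-negative by the assumption $\kappa\ge 2\mu/d$; together they deliver the positive definiteness of $\mathcal{S}\mathbf{C}$.

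The main obstacle I expect is the careful bookkeeping that \emph{all} rank-one correction terms in the sub-differential act trivially on $\id$. This ultimately reduces to reading off from the explicit Hessian formula that $\partial^2_{\boldsymbol{\sigma}^2}f\cdot\mathbf{w}$ lies in the deviatoric subspace for every argument $\mathbf{w}$ --- precisely the structural fact that decouples the corrections in $\sD R_n(\boldsymbol{\sigma})$ from the hydrostatic (trace) part of $\mathbf{C}$ and is what allows the bound on $\kappa$ to enter the argument only through the trivial semidefinite contribution $(d\kappa-2\mu)\mathbb{P}$.
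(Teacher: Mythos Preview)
Your proof is correct and follows essentially the same route as the paper: both split $\mathbf{C}=2\mu\,\Id+(\kappa-2\mu/d)\,\id\otimes\id$ (equivalently your $\mathbb{P}$), use the key structural fact that the rank-one corrections in $\sD R_n(\boldsymbol{\sigma})$ annihilate $\id$ (so that $\mathcal{S}\id=\id$), and arrive at the identical identity $\boldsymbol{\tau}:\mathcal{S}\mathbf{C}\boldsymbol{\tau}=2\mu\,\boldsymbol{\tau}:\mathcal{S}\boldsymbol{\tau}+(\kappa-2\mu/d)\,\text{tr}(\boldsymbol{\tau})^2$. The paper simply asserts the index identity $\sum_{i,j,k}\eps_{ij}S_{ijkk}=\text{tr}(\boldsymbol{\eps})$ without comment, whereas you spell out why it holds by checking that each second tensor factor $\mathbf{v}$ in \eqref{eq:s2} is deviatoric; this extra bookkeeping is a helpful addition rather than a different argument.
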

  \begin{proof}
    The definitions of $\mathcal{S}$ and $\mathbf{C}$ imply
    \begin{align*}
      \boldsymbol{\eps} : \mathcal{S}\mathbf{C} \boldsymbol{\eps}&=\sum_{i,j=1}^d \sum_{k,l=1}^d \sum_{p,g=1}^d \eps_{ij}S_{ijkl}C_{klpq}\eps_{pq}\\
      &=\sum_{i,j=1}^n \eps_{ij}\Big(\sum_{k,l=1}^d 2\mu S_{ijkl}\eps_{kl}+\sum_{k=1}^dS_{ijkk}(\kappa-\frac{2\mu}{d})\text{tr}(\boldsymbol{\eps})\Big).
    \end{align*}
    Together with $\sum_{i,j,k=1}^d\eps_{ij}S_{ijkk}=\text{tr}(\boldsymbol{\eps})$, this shows
    \begin{align*}
      \boldsymbol{\eps} : \mathcal{S}\mathbf{C} \boldsymbol{\eps}=2 \mu \boldsymbol{\eps} \mathcal{S} \boldsymbol{\eps}+ (\kappa-\frac{2\mu}{d})\text{tr}(\boldsymbol{\eps})^2
    \end{align*}
    and concludes the proof.
  \end{proof}
  \begin{remark}
  Note that for $\alpha \nearrow 1$, $\mathcal{S}$ is positive definite by definition and Lemma \ref{lem:scposdef} is not required and hence the restrictions on the material parameters can be dropped.
  \end{remark}
  Finally, we want to investigate the semismoothness of $R_n$ with respect to all involved variables, i.e.
  \begin{align*}
      \begin{split}
  R(\boldsymbol{\sigma}&,\boldsymbol{\sigma}_{n-1},\boldsymbol{\chi}_{n-1})\\
  &=\boldsymbol{\sigma}-\frac{\max\{0,f(\boldsymbol{\sigma},\boldsymbol{\chi}_{n-1})\}\mathbf{C}\Dhat{\boldsymbol{\sigma}_{n-1}}{\alpha}f^{n-1}}{2 \mu \partial_{\boldsymbol{\sigma}}f(\boldsymbol{\sigma},\boldsymbol{\chi}_{n-1}):\Dhat{\boldsymbol{\sigma}_{n-1}}{\alpha}f^{n-1} +k_1\partial_{\boldsymbol{\sigma}}f(\boldsymbol{\sigma},\boldsymbol{\chi}_{n-1}):\partial_{\boldsymbol{\sigma}} f^{n-1}+k_2}.
  \end{split}
  \end{align*}
  First, an auxiliary lemma about the fractional gradient is needed.
  \begin{lemma}
  \label{lem:fracdiff}
    The function $\D{\boldsymbol{\sigma}}{\alpha}f(\boldsymbol{\sigma},\boldsymbol{\chi})$ is continuously differentiable in $\boldsymbol{\chi}^1$ and $\boldsymbol{\sigma}$, as long as $|\text{dev}(\boldsymbol{\sigma}_{ij}+\boldsymbol{\chi}^1_{ij})|>\text{dev}(\boldsymbol{\Delta})_{ij}$ for all $1\leq i,j\leq d$.
  \end{lemma}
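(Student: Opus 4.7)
The approach is to exploit the explicit integral representations of $\D{\boldsymbol{\sigma}}{\alpha}f$ already derived in the proof of Lemma~\ref{lem:positive}, and to differentiate under the integral via Leibniz's rule. For the off-diagonal case $i\neq j$, I would start from~\eqref{eq:ijfrac} and perform the substitution $\tau=\sigma_{ij}+s$ to move the dependence on $\sigma_{ij}$ out of the integration limits. This turns the representation into an integral over the fixed domain $[-\Delta_{ij},\Delta_{ij}]$ of the function $(s+\sigma_{ij}+\chi^1_{ij})\,|s|^{-\alpha}\bigl((s+\sigma_{ij}+\chi^1_{ij})^2+r_{ij}\bigr)^{-1/2}$, where $r_{ij}$ collects the remaining squared components of ${\rm dev}(\boldsymbol{\sigma}+\boldsymbol{\chi}^1)$ and depends smoothly on the other entries of $\boldsymbol{\sigma}+\boldsymbol{\chi}^1$.

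Since ${\rm dev}(\boldsymbol{\sigma}+\boldsymbol{\chi}^1)_{ij}=\sigma_{ij}+\chi^1_{ij}$ and ${\rm dev}(\boldsymbol{\Delta})_{ij}=\Delta_{ij}$ for $i\neq j$, the hypothesis of the lemma yields $|s+\sigma_{ij}+\chi^1_{ij}|\geq |\sigma_{ij}+\chi^1_{ij}|-\Delta_{ij}>0$ uniformly in $s\in[-\Delta_{ij},\Delta_{ij}]$ and locally uniformly in $(\boldsymbol{\sigma},\boldsymbol{\chi}^1)$. Hence the square-root factor in the denominator is bounded away from zero, and the only non-smoothness in $s$ is the integrable weight $|s|^{-\alpha}$. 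Partial differentiation of the integrand with respect to any component of $\boldsymbol{\sigma}$ or $\boldsymbol{\chi}^1$ then produces a continuous function dominated by a constant multiple of $|s|^{-\alpha}$, so dominated convergence justifies interchanging derivative and integral and yields continuity of the resulting partial derivatives.

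For the diagonal case $i=j$, the representation given at the end of the proof of Lemma~\ref{lem:positive} has the same structure after the analogous substitution $\tau=\sigma_{ii}+s$, with $\sigma_{ij}+\chi^1_{ij}$ replaced by ${\rm dev}(\boldsymbol{\sigma}+\boldsymbol{\chi}^1)_{ii}+\tfrac{d-1}{d}s$ and with the auxiliary quantity $\widetilde{r}_{ii}$ becoming a polynomial in $s$ whose coefficients depend smoothly on $(\boldsymbol{\sigma},\boldsymbol{\chi}^1)$. Under the hypothesis $|{\rm dev}(\boldsymbol{\sigma}+\boldsymbol{\chi}^1)_{ii}|>{\rm dev}(\boldsymbol{\Delta})_{ii}$, the denominator again stays bounded away from zero on the integration path, and the same dominated-convergence argument applies.

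The main obstacle is the bookkeeping in the diagonal case, where the deviator operator couples all diagonal entries of $\boldsymbol{\sigma}+\boldsymbol{\chi}^1$: differentiation with respect to $\sigma_{kk}$ or $\chi^1_{kk}$ for $k\neq i$ produces several contributions through $\widetilde{\chi}^1_{ii}$ and $\widetilde{r}_{ii}$ that must be tracked carefully, and one has to verify that all resulting derivatives remain uniformly dominated by an $|s|^{-\alpha}$-type weight. Once the uniform lower bound on the denominator is in place, however, this reduces to a routine (if tedious) computation rather than a genuine analytic obstruction, as is already hinted at in the closing remark of the proof of Lemma~\ref{lem:positive}.
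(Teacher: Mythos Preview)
Your proposal is correct and follows the same approach as the paper: substitute to fix the integration domain, then apply dominated convergence to differentiate under the integral. The only cosmetic difference is that the paper keeps the integrand abstract as $\partial_{\sigma_{ij}} f(\boldsymbol{\sigma}_{(\sigma_{ij}\pm\tau)_{ij}},\boldsymbol{\chi}^1)\,\tau^{-\alpha}$ and appeals directly to the $C^1$-regularity of $\partial_{\boldsymbol{\sigma}} f$ away from ${\rm dev}(\boldsymbol{\sigma}+\boldsymbol{\chi}^1)=0$, thereby avoiding the explicit diagonal/off-diagonal case split you carry out.
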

  \begin{proof}
    The $ij$ component of said fractional derivative is defined as
    \begin{align*}
      \frac{1}{\Gamma(1-\alpha)}\Big(\int_{\sigma_{ij}-\Delta_{ij}}^{\sigma_{ij}}\partial_{\sigma_{ij}}f(\boldsymbol{\sigma}_{\tau_{ij}},\boldsymbol{\chi}^1)(\sigma_{ij}-\tau)^{-\alpha} d \tau+\int_{\sigma_{ij}}^{\sigma_{ij}+\Delta_{ij}}\partial_{\sigma_{ij}} f(\boldsymbol{\sigma}_{\tau_{ij}},\boldsymbol{\chi}^1)(\tau-\sigma_{ij})^{-\alpha} d \tau \Big),
    \end{align*}
    where $\boldsymbol{\sigma}_{\tau_{ij}}$ is $\boldsymbol{\sigma}$ with the $ij$ component replaced by $\tau$. By a variable transformation we get
      \begin{align*}
        \frac{1}{\Gamma(1-\alpha)}\Big(\int_0^{\Delta_{ij}}\partial_{\sigma_{ij}} f(\boldsymbol{\sigma}_{(\sigma_{ij}-\tau)_{ij}},\boldsymbol{\chi}^1) \tau^{-\alpha}d \tau+\int_0^{\Delta_{ij}}\partial_{\sigma_{ij}} f(\boldsymbol{\sigma}_{(\sigma_{ij}+\tau)_{ij}},\boldsymbol{\chi}^1) \tau^{-\alpha}d \tau\Big).
      \end{align*}
    The assumptions ensure continuous differentiability of the integrand in $(\boldsymbol{\sigma}, \boldsymbol{\chi}^1)$ in a neighborhood of the integration domain. Therefore, dominated convergence yields the continuous differentiability and
    \begin{align*}
   \begin{split}
       &\partial_{\sigma_{kl}}\D{\boldsymbol{\sigma}}{\alpha}f(\boldsymbol{\sigma},\boldsymbol{\chi})_{ij}\\
       &=\frac{1}{\Gamma(1-\alpha)}\Big(\int_0^{\Delta_{ij}}\partial_{\sigma_{ij}\sigma_{kl}}^2 f(\boldsymbol{\sigma}_{(\sigma_{ij}-\tau)_{ij}},\boldsymbol{\chi}^1) \tau^{-\alpha}d \tau+\int_0^{\Delta_{ij}}\partial_{\sigma_{ij}\sigma_{kl}}^2 f(\boldsymbol{\sigma}_{(\sigma_{ij}+\tau)_{ij}},\boldsymbol{\chi}^1) \tau^{-\alpha}d \tau\Big).
   \end{split}
    \end{align*}
  \end{proof}
  Now we are ready to assess semismoothness of the return mapping as a function of all involved variables.
  \begin{theorem}
  \label{thm:rsemi}
  For given $\alpha \in (0,1)$ and $(\boldsymbol{\sigma}_{n-1},\boldsymbol{\chi}_{n-1}) \in  M^d \times M^d \times \mathbb{R}^-_0$, there exists $\eps>0$ such that if $|\boldsymbol{\sigma}-\boldsymbol{\sigma}_{n-1}|< \eps$ and if $|{\rm dev}(\boldsymbol{\Delta})|<Y_0- 2\eps$, $R$ is semismooth around $(\boldsymbol{\sigma},\boldsymbol{\sigma}_{n-1},\boldsymbol{\chi}_{n-1})$.
  \end{theorem}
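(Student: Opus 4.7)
The plan is to reduce the statement to the single-variable semismoothness already established in Theorem~\ref{thm:semismooth} by showing that, in a suitable neighborhood of the reference point, every ingredient of $R$ except the piecewise-linear $\max\{0,\cdot\}$ is jointly $C^1$ in all three arguments, and then invoking the chain rule for semismooth compositions.

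First, I would pick $\eps>0$ small enough that the standing assumption $|\boldsymbol{\sigma}-\boldsymbol{\sigma}_{n-1}|<\eps$ together with $|{\rm dev}(\boldsymbol{\Delta})|<Y_0-2\eps$ forces, by uniform continuity, a strictly positive componentwise lower bound of the form $|{\rm dev}(\boldsymbol{\tau}+\boldsymbol{\chi}^1)_{ij}|>{\rm dev}(\boldsymbol{\Delta})_{ij}$ for all $(\boldsymbol{\tau},\boldsymbol{\chi}^1)$ in a neighborhood of $(\boldsymbol{\sigma}_{n-1},\boldsymbol{\chi}^1_{n-1})$. This serves two purposes simultaneously: it guarantees the hypothesis of Lemma~\ref{lem:fracdiff} and it keeps $|{\rm dev}(\boldsymbol{\sigma}+\boldsymbol{\chi}^1_{n-1})|$ bounded away from zero, so that the yield function $f$ together with $\partial_{\boldsymbol{\sigma}}f$ and $\partial_{\boldsymbol{\chi}^1}f$ are $C^\infty$ jointly in $(\boldsymbol{\sigma},\boldsymbol{\chi}_{n-1})$ on the neighborhood in question. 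Shrinking $\eps$ once more using the continuity argument already employed inside the proof of Theorem~\ref{thm:semismooth} ensures that the denominator of $R$ stays bounded below by a positive constant.

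Second, I would combine these regularity facts. By Lemma~\ref{lem:fracdiff}, $\D{\boldsymbol{\sigma}_{n-1}}{\alpha}f^{n-1}$ is $C^1$ in $(\boldsymbol{\sigma}_{n-1},\boldsymbol{\chi}^1_{n-1})$; by Lemma~\ref{lem:positive} it is nonzero, so the normalized quantity $\Dhat{\boldsymbol{\sigma}_{n-1}}{\alpha}f^{n-1}$ inherits $C^1$ dependence. Combined with the $C^\infty$ dependence of $f(\boldsymbol{\sigma},\boldsymbol{\chi}_{n-1})$ and of $\partial_{\boldsymbol{\sigma}}f(\boldsymbol{\sigma},\boldsymbol{\chi}_{n-1}),\partial_{\boldsymbol{\chi}^1}f(\boldsymbol{\sigma},\boldsymbol{\chi}_{n-1})$ on our neighborhood, both the numerator factor $\mathbf{C}\Dhat{\boldsymbol{\sigma}_{n-1}}{\alpha}f^{n-1}$ and the entire denominator of $R$ become $C^1$ functions of $(\boldsymbol{\sigma},\boldsymbol{\sigma}_{n-1},\boldsymbol{\chi}_{n-1})$, with the denominator uniformly bounded away from zero.

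Finally, the scalar map $\max\{0,\cdot\}$ is globally semismooth, and its precomposition with the $C^1$ map $(\boldsymbol{\sigma},\boldsymbol{\sigma}_{n-1},\boldsymbol{\chi}_{n-1})\mapsto f(\boldsymbol{\sigma},\boldsymbol{\chi}_{n-1})$ is semismooth by the chain rule for semismooth functions~\cite{Facchinei.2003b}. Multiplying by a $C^1$ tensor-valued factor, dividing by a strictly positive $C^1$ scalar, and adding the $C^1$ term $\boldsymbol{\sigma}$ all preserve semismoothness, yielding the claim. The main obstacle is purely bookkeeping: one must choose a single $\eps>0$ that simultaneously ensures positivity and nonvanishing of $|\D{\boldsymbol{\sigma}_{n-1}}{\alpha}f^{n-1}|$, the denominator bound, and the componentwise dev-bound needed in Lemma~\ref{lem:fracdiff}; this is handled by a finite shrinking of $\eps$ using continuity of all three constraints in $(\boldsymbol{\sigma},\boldsymbol{\sigma}_{n-1},\boldsymbol{\chi}_{n-1})$ around the reference point.
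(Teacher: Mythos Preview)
Your overall strategy---show every ingredient except $\max\{0,\cdot\}$ is $C^1$ in all three arguments and then invoke the semismooth chain rule---is exactly the paper's strategy, and your use of Lemma~\ref{lem:fracdiff} and Lemma~\ref{lem:positive} matches Case~2 of the paper's proof. However, there is a genuine gap in your first step.

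You claim that taking $\eps$ small and assuming $|{\rm dev}(\boldsymbol{\Delta})|<Y_0-2\eps$ ``forces, by uniform continuity, a strictly positive componentwise lower bound of the form $|{\rm dev}(\boldsymbol{\tau}+\boldsymbol{\chi}^1)_{ij}|>{\rm dev}(\boldsymbol{\Delta})_{ij}$'' and ``keeps $|{\rm dev}(\boldsymbol{\sigma}+\boldsymbol{\chi}^1_{n-1})|$ bounded away from zero.'' Neither statement follows from the hypotheses of the theorem. The reference data $(\boldsymbol{\sigma}_{n-1},\boldsymbol{\chi}_{n-1})$ are arbitrary in $M^d\times M^d\times\mathbb{R}^-_0$; nothing excludes ${\rm dev}(\boldsymbol{\sigma}_{n-1}+\boldsymbol{\chi}^1_{n-1})=0$. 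In that situation no choice of $\eps$ rescues the bound, the fractional gradient $\D{\boldsymbol{\sigma}_{n-1}}{\alpha}f^{n-1}$ is not well-defined, Lemma~\ref{lem:fracdiff} does not apply, and the denominator of $R$ is not a priori nonzero. Even when the norm $|{\rm dev}(\boldsymbol{\sigma}_{n-1}+\boldsymbol{\chi}^1_{n-1})|$ is large, individual components can vanish, so the componentwise inequality you write down is not available either.

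The paper closes this gap by a case split on the sign of $f(\boldsymbol{\sigma},\boldsymbol{\chi}_{n-1})$. If $f(\boldsymbol{\sigma},\boldsymbol{\chi}_{n-1})<0$, then $\max\{0,f\}=0$ in a full neighborhood, so $R\equiv\boldsymbol{\sigma}$ there and semismoothness is trivial---no regularity of the fractional term is needed. If $f(\boldsymbol{\sigma},\boldsymbol{\chi}_{n-1})\geq 0$, then $\chi^2_{n-1}\leq 0$ gives $|{\rm dev}(\boldsymbol{\sigma}+\boldsymbol{\chi}^1_{n-1})|\geq Y_0$, and now the assumption $|{\rm dev}(\boldsymbol{\Delta})|<Y_0-2\eps$ together with $|\boldsymbol{\sigma}-\boldsymbol{\sigma}_{n-1}|<\eps$ really does yield the well-definedness and $C^1$ regularity you want via Lemma~\ref{lem:fracdiff}, and your remaining argument goes through verbatim. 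Insert this case distinction at the start and your proof is complete.
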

  \begin{proof}
     \emph{Case~1, $f(\boldsymbol{\sigma},\boldsymbol{\chi}_{n-1})<0$:} Then we have $R(\boldsymbol{\sigma},\boldsymbol{\sigma}_{n-1},\boldsymbol{\chi}_{n-1})=\boldsymbol{\sigma}$ in a neighborhood of $(\boldsymbol{\sigma},\boldsymbol{\sigma}_{n-1},\boldsymbol{\chi}_{n-1})$ and the assertion follows immediately.
  
     \emph{Case~2, $f(\boldsymbol{\sigma},\boldsymbol{\chi}_{n-1}) \geq 0$:} Choose $\eps>0$ such that $|{\rm dev}(\boldsymbol{\Delta})|<Y_0- 2\eps$. This yields the well-definedness of the involved fractional gradients if $|\boldsymbol{\sigma}-\boldsymbol{\sigma}_{n-1}|< \eps$. Furthermore, by the continuity of the denominator in $\boldsymbol{\sigma}$ we have a possibly smaller bound, again denoted by $\eps>0$ such that the denominator is positive for $|\boldsymbol{\sigma}-\boldsymbol{\sigma}_{n-1}|< \eps$. Lemma~\ref{lem:fracdiff} shows continuous differentiability in $(\boldsymbol{\sigma},\boldsymbol{\sigma}_{n-1},\boldsymbol{\chi}_{n-1})$ because $|\boldsymbol{\sigma}+\boldsymbol{\chi}^1_{n-1}|$ is sufficiently large. Hence, semismoothness now follows by the chain-rule for semismooth functions as $\max\{0,\cdot\}$ is the only non-differentiable component.
  \end{proof}
  \begin{remark}\label{rm:rsemi}
  Since all involved quantities depend on $\boldsymbol{x}$, we can generalize Theorem~\ref{thm:rsemi} such that $R$ is semismooth around $(\boldsymbol{\sigma}(\mathbf{x}),\boldsymbol{\sigma}_{n-1}(\mathbf{x}),\boldsymbol{\chi}_{n-1}(\mathbf{x}))$ for all $\mathbf{x} \in \Omega$ if $\|\boldsymbol{\sigma}-\boldsymbol{\sigma}_{n-1}\|_\infty<\eps$.
  \end{remark}
  
  \section{Well-posedness of the space-time discretizations}\label{sec:wellposed}
   Before going into that in full detail, we want to justify our choice of explicit time-discretization, by considering the limit-case $\alpha \nearrow 1$ in our return-mapping $R_n$. This translates to the explicitly discretized von-Mises model for which~\cite{Sauter} studies implicit discretizations.
  \subsection{Well-posedness of explicit (non-fractional) space-time-discretization}
   For $\alpha \nearrow 1$ the return-mapping dependent on the previous state $(\boldsymbol{\sigma}_{n-1},\boldsymbol{\chi}_{n-1})$ in~\eqref{eq:returnmapping} turns into
   \begin{align}
   \label{eq:return_exp}
      R^e_n(\boldsymbol{\sigma}) = \boldsymbol{\sigma}-\frac{2 \mu \max\{0,f(\boldsymbol{\sigma},\boldsymbol{\chi}_{n-1})\}}{2 \mu \partial_{\boldsymbol{\sigma}}f(\boldsymbol{\sigma},\boldsymbol{\chi}_{n-1}):\partial_{\boldsymbol{\sigma}} f^{n-1} +k_1\partial_{\boldsymbol{\sigma}}f(\boldsymbol{\sigma},\boldsymbol{\chi}_{n-1}):\partial_{\boldsymbol{\sigma}} f^{n-1}+k_2}\partial_{\boldsymbol{\sigma}} f^{n-1}.
    \end{align}
    It will be useful to compare this to the return-mapping resulting from the implicit-discretization of the associated von-Mises problem~\cite{Sauter}
    \begin{align*}
      R^i_n(\boldsymbol{\sigma})=\boldsymbol{\sigma}-\frac{2 \mu \max\{0,f(\boldsymbol{\sigma},\boldsymbol{\chi}_{n-1})\}}{2 \mu +k_1+k_2}\frac{{\rm dev}(\boldsymbol{\sigma}+\boldsymbol{\chi}^1_{n-1})}{|{\rm dev}(\boldsymbol{\sigma}+\boldsymbol{\chi}^1_{n-1})|}
    \end{align*}
      as there holds $R^e_n(\boldsymbol{\sigma}) \to R^i_n(\boldsymbol{\sigma})$ as $\boldsymbol{\sigma}_{n-1} \to \boldsymbol{\sigma}$. 
    We denote $R^{\boldsymbol{\sigma}_{n-1}}_n:=R^e_n$ to show the dependence on the previous stress and recall the finite-dimensional subspaces:
    \begin{align*}
      V_h &\subset (W^{1,\infty}_0)^d,\quad M_h \subset L^\infty(\Omega)^{d \times d}_{\text{sym}}
    \end{align*}
  note that $\boldsymbol{\eps}(V_h) \subseteq M_h$. We select a basis $\boldsymbol{\phi}_1, \ldots, \boldsymbol{\phi}_k$ of $V_h$ and $\boldsymbol{\psi}_1, \ldots, \boldsymbol{\psi}_l$ of $M_h$ and define the map (dependent on $\boldsymbol{\eps}^p_{n-1} \in M_h$, $\boldsymbol{\chi}^1_{n-1} \in L^{\infty}(\Omega)^{d\times d}_{\text{sym}}$ and $\chi^2_{n-1} \in L^\infty(\Omega)$)
  \begin{align}\label{eq:defTnonfrac}
  \begin{split}
    T&\colon V_h \times M_h \to (V_h)^*\\
     \langle T(\mathbf{u}_n,\boldsymbol{\sigma}_{n-1}), \mathbf{v} \rangle &= \int_{\Omega}R^{\boldsymbol{\sigma}_{n-1}}_n\Big(\mathbf{C}(\boldsymbol{\eps}(\mathbf{u}_n)-\boldsymbol{\eps}^p_{n-1})\Big):\boldsymbol{\eps}(\mathbf{v})dx-\int_{\Omega}\mathbf{b}_n \cdot \mathbf{v}dx-\int_{\Gamma_D} \mathbf{t}_n \cdot \mathbf{v}ds,
  \end{split}
  \end{align}
  and its implicit version
  \begin{align*}
  \begin{split}
    T^i&: V_h  \to (V_h)^*\\
  \langle T^i(\mathbf{u}_n), \mathbf{v} \rangle &= \int_{\Omega}R^i_n\Big(\mathbf{C}(\boldsymbol{\eps}(\mathbf{u}_n)-\boldsymbol{\eps}^p_{n-1})\Big):\boldsymbol{\eps}(\mathbf{v})dx-\int_{\Omega}\mathbf{b}_n \cdot \mathbf{v}dx-\int_{\Gamma_D} \mathbf{t}_n \cdot \mathbf{v}ds.
  \end{split}
  \end{align*}
  With this, we can state the main result of the section on well-posedness of~\eqref{eq:modelproblem} for the non-fractional return mapping.
  \begin{theorem}
  \label{thm:solexp}
      Given $\mathbf{b}_n \in L^1(\Omega)^d, \mathbf{t}_n \in L^1(\Gamma_D)^d$, $\boldsymbol{\eps}^p_{n-1} \in M_h$, $\boldsymbol{\chi}^1_{n-1} \in L^{\infty}(\Omega)^{d\times d}_{\text{sym}}$ and $\chi^2_{n-1} \in L^\infty(\Omega)$ such that $\chi^2_{n-1} \leq0$ almost everywhere, let $\hat{\mathbf{u}}_n \in V_h$ satisfy $T^i(\hat{\mathbf{u}}_n)=0.$ Then there exists $\eps>0$ and a semismooth function $\mathbf{u}:B^\infty_\eps(\hat{\boldsymbol{\sigma}}_{n-1}) \to V_h$ such that 
      \begin{align*}
          T(\mathbf{u}(\boldsymbol{\sigma}_{n-1}),\boldsymbol{\sigma}_{n-1})=0, \quad\text{for all } \boldsymbol{\sigma}_{n-1} \in B^\infty_\eps(\hat{\boldsymbol{\sigma}}_{n-1}),
      \end{align*}
      where $\hat{\boldsymbol{\sigma}}_{n-1}=\mathbf{C}(\boldsymbol{\eps}(\hat{\mathbf{u}}_n)-\boldsymbol{\eps}^p_{n-1}),$ and $B^\infty_\eps$ is the ball with radius $\eps$ in $M_h$ with respect to the $L^\infty$-norm.
  \end{theorem}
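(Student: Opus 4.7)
The strategy is to recognize that $\hat{\mathbf{u}}_n$ is already a root of $T(\cdot,\hat{\boldsymbol{\sigma}}_{n-1})$ and then to obtain the branch by an implicit function theorem for semismooth maps on the finite-dimensional space $V_h\times M_h$. First I would verify the anchor condition. Since $\hat{\boldsymbol{\sigma}}_{n-1}=\mathbf{C}(\boldsymbol{\eps}(\hat{\mathbf{u}}_n)-\boldsymbol{\eps}^p_{n-1})$, the trial stress fed into $R^{\hat{\boldsymbol{\sigma}}_{n-1}}_n$ inside $T(\hat{\mathbf{u}}_n,\hat{\boldsymbol{\sigma}}_{n-1})$ coincides pointwise in $\Omega$ with the previous stress. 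Comparing \eqref{eq:return_exp} with the expression for $R^i_n$, the only structural difference is that $R^e_n$ contracts with $\partial_{\boldsymbol{\sigma}} f^{n-1}={\rm dev}(\boldsymbol{\sigma}_{n-1}+\boldsymbol{\chi}^1_{n-1})/|{\rm dev}(\boldsymbol{\sigma}_{n-1}+\boldsymbol{\chi}^1_{n-1})|$ while $R^i_n$ contracts with the same quantity evaluated at the current stress; the two agree as soon as the current and previous stresses coincide. Hence $R^{\hat{\boldsymbol{\sigma}}_{n-1}}_n(\hat{\boldsymbol{\sigma}}_{n-1})=R^i_n(\hat{\boldsymbol{\sigma}}_{n-1})$, so $T(\hat{\mathbf{u}}_n,\hat{\boldsymbol{\sigma}}_{n-1})=T^i(\hat{\mathbf{u}}_n)=0$.

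Next I would establish semismoothness of $T$ in a neighborhood of $(\hat{\mathbf{u}}_n,\hat{\boldsymbol{\sigma}}_{n-1})$. The composition $(\mathbf{u}_n,\boldsymbol{\sigma}_{n-1})\mapsto(\mathbf{C}(\boldsymbol{\eps}(\mathbf{u}_n)-\boldsymbol{\eps}^p_{n-1}),\boldsymbol{\sigma}_{n-1},\boldsymbol{\chi}_{n-1})$ is affine, so the chain rule for semismooth maps together with Theorem~\ref{thm:rsemi} and Remark~\ref{rm:rsemi} yields pointwise semismoothness of the integrand, and integration against $\boldsymbol{\eps}(\mathbf{v})$ for $\mathbf{v}$ ranging in the finite-dimensional $V_h$ preserves this property. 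The admissibility bound $|\boldsymbol{\sigma}-\boldsymbol{\sigma}_{n-1}|<\eps$ of Theorem~\ref{thm:rsemi} is trivially satisfied at the anchor, where the two stresses agree, and therefore throughout a sufficiently small $L^\infty$-neighborhood.

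The core analytic step is to verify that every element $\mathcal{J}\in\sD_{\mathbf{u}_n}T(\hat{\mathbf{u}}_n,\hat{\boldsymbol{\sigma}}_{n-1})$ is invertible. By the chain rule, each such $\mathcal{J}$ acts on $\mathbf{w},\mathbf{v}\in V_h$ as
\begin{align*}
\langle\mathcal{J}\mathbf{w},\mathbf{v}\rangle=\int_\Omega \mathcal{S}(\mathbf{x})\,\mathbf{C}\,\boldsymbol{\eps}(\mathbf{w}):\boldsymbol{\eps}(\mathbf{v})\,dx,
\end{align*}
for some measurable selection $\mathcal{S}(\mathbf{x})\in\sD R^{\hat{\boldsymbol{\sigma}}_{n-1}(\mathbf{x})}_n(\hat{\boldsymbol{\sigma}}_{n-1}(\mathbf{x}))$. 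Because this subsection treats the non-fractional limit $\alpha\nearrow 1$, Theorem~\ref{thm:rposdef2} together with Remark~\ref{rm:rposdef2} give uniform positive definiteness of $\mathcal{S}(\mathbf{x})$ on $\Omega$ without material restrictions (cf.\ the remark following Lemma~\ref{lem:scposdef}), and Lemma~\ref{lem:scposdef} lifts this to uniform positive definiteness of $\mathcal{S}(\mathbf{x})\mathbf{C}$. Testing $\mathbf{v}=\mathbf{w}$ and invoking Korn's inequality on $V_h\subset(W^{1,\infty}_0)^d$, which is available thanks to the Dirichlet condition on $\Gamma_D$, yields coercivity $\langle\mathcal{J}\mathbf{w},\mathbf{w}\rangle\gtrsim\|\mathbf{w}\|_{H^1}^2$ and hence bijectivity of $\mathcal{J}$ on the finite-dimensional space $V_h$.

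With these three ingredients at hand, the semismooth implicit function theorem (e.g.~\cite{Facchinei.2003b}) produces a semismooth map $\mathbf{u}\colon B^\infty_\eps(\hat{\boldsymbol{\sigma}}_{n-1})\to V_h$ with $T(\mathbf{u}(\boldsymbol{\sigma}_{n-1}),\boldsymbol{\sigma}_{n-1})=0$, after possibly shrinking $\eps$ so that the semismoothness region of the previous step is preserved. The main obstacle is the third step: combining the pointwise Clarke positive definiteness from Theorem~\ref{thm:rposdef2}/Remark~\ref{rm:rposdef2} and Lemma~\ref{lem:scposdef} with Korn's inequality to obtain a uniformly coercive Jacobian on $V_h$. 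A secondary subtlety, and the reason the $L^\infty$ topology on $M_h$ appears, is that it is precisely strong enough to propagate the pointwise uniform bounds of Theorems~\ref{thm:rposdef2}--\ref{thm:rsemi} into a neighborhood in which the semismooth implicit function theorem can be applied.
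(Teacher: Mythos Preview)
Your overall strategy coincides with the paper's: verify the anchor $T(\hat{\mathbf u}_n,\hat{\boldsymbol\sigma}_{n-1})=0$, establish semismoothness of $T$, show that the relevant generalized Jacobians in the $\mathbf u_n$-direction are invertible, and conclude via a semismooth implicit function theorem. The anchor verification and the invocation of Korn's inequality are fine. There are, however, two genuine gaps that the paper closes with dedicated lemmas and that your sketch skips.

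\medskip
\noindent\textbf{(i) Semismoothness of $T$.} The assertion that ``integration against $\boldsymbol\eps(\mathbf v)$ \ldots\ preserves this property'' is not automatic. Pointwise semismoothness of $(\boldsymbol\sigma,\boldsymbol\sigma_{n-1})\mapsto R_n^{\boldsymbol\sigma_{n-1}}(\boldsymbol\sigma)$ does \emph{not} directly give semismoothness of the integral operator; one needs an integrable Lipschitz majorant for the integrand so that dominated convergence can be applied to the directional difference quotients and to the semismoothness residual. This is exactly what the paper isolates in Lemma~\ref{lem:lip} (uniform Lipschitz bound on $B^\infty_\eps(\hat{\boldsymbol\sigma}_{n-1})$) and Lemma~\ref{lem:ssint} (semismoothness of $\int_\Omega\widetilde f(\mathbf x,\mathbf y)\,dy$ from pointwise semismoothness plus an integrable Lipschitz constant). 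Without these, semismoothness of $T$ is asserted but not proved.

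\medskip
\noindent\textbf{(ii) Partial versus full generalized Jacobian.} The semismooth implicit function theorem (in the form used here, e.g.\ \cite[Prop.~10.3]{Sauter}) requires nonsingularity of the $\mathbf u_n$-block of every element of the \emph{full} Clarke generalized Jacobian $\sD T(\hat{\mathbf u}_n,\hat{\boldsymbol\sigma}_{n-1})$, i.e.\ of every $\mathcal S$ with $[\mathcal S\ \mathcal P]\in\sD T$. You instead check invertibility for elements of the \emph{partial} subdifferential $\sD_{\mathbf u_n}T$. For vector-valued Lipschitz maps these two objects need not coincide: limits of $D_{\mathbf u_n}R_n^{\boldsymbol\sigma_{n-1}^k}(\boldsymbol\sigma^k)$ along sequences where $\boldsymbol\sigma_{n-1}^k$ also varies are not a priori captured by $\sD_{\boldsymbol\sigma}R_n^{\boldsymbol\sigma_{n-1}}$. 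The paper's Lemma~\ref{lem:partgrad} handles precisely this point by exploiting the specific structure of $R_n$ (the only nondifferentiability comes from $\max\{0,\cdot\}$, which is independent of $\boldsymbol\sigma_{n-1}$), so that any $[\mathcal S\ \mathcal P]\in\sD R_n^{\boldsymbol\sigma_{n-1}}(\boldsymbol\sigma)$ has $\mathcal S\in\sD_{\boldsymbol\sigma}R_n^{\boldsymbol\sigma_{n-1}}(\boldsymbol\sigma)$. Only after this can your positive-definiteness argument for $\mathcal S\mathbf C$ be applied to the objects the implicit function theorem actually needs. As written, your step ``each such $\mathcal J$ acts as $\int_\Omega \mathcal S(\mathbf x)\mathbf C\boldsymbol\eps(\mathbf w):\boldsymbol\eps(\mathbf v)\,dx$ with $\mathcal S(\mathbf x)\in\sD R_n^{\hat{\boldsymbol\sigma}_{n-1}}(\hat{\boldsymbol\sigma}_{n-1})$'' is unjustified without this lemma.
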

  We postpone the proof of this result to the end of this section, where we aim to apply the implicit function theorem for semismooth functions~\cite[Proposition 10.3 ]{Sauter} and use $T(\hat{\mathbf{u}}_n,\hat{\boldsymbol{\sigma}}_{n-1})=0$ as an anchor point. Note that existence results of $\hat{\mathbf{u}}_n$ satisfying $T^i(\hat{\mathbf{u}}_n)=0$, are well-known and can be found in \cite{Han.1999}. Following this strategy, it is required to proof assumptions, similar to certain smoothness and invertibility of Jacobians at the anchor point for the classical implicit function theorem. Because the involved spaces are finite-dimensional we can write $T$ as
  \begin{align*}
  \begin{split}
    &T: \mathbb{R}^k \times \mathbb{R}^l \to \mathbb{R}^k\\
    & T(\mathbf{c},\widetilde{\mathbf
    {c}})_i =\int_{\Omega}R^{\boldsymbol{\sigma}_{n-1}}_n\Big(\mathbf{C}(\boldsymbol{\eps}(\mathbf{u}_n)-\boldsymbol{\eps}^p_{n-1})\Big):\boldsymbol{\eps}(\boldsymbol{\phi}_i)dx-\int_{\Omega}\mathbf{b}_n \cdot \boldsymbol{\phi}_idx-\int_{\Gamma_D} \mathbf{t}_n \cdot \boldsymbol{\phi}_ids,
  \end{split}
  \end{align*}
  with $\mathbf{u}_n=\sum_{i=1}^{k}c_i \boldsymbol{\phi}_i$ and $\boldsymbol{\sigma}_{n-1}=\sum_{i=1}^{l}\widetilde{c}_i \boldsymbol{\psi}_i$. The first step in the discussion is to show that $T(\cdot,\cdot)$ is a semismooth mapping. Since we established semismoothness for the return mapping $R_n$, semismoothness of $T$ can be established with the help of the following two lemmas, concerning a Lipschitz bound of the integrand in $T$ and semismoothness of integral operators.
  \begin{lemma}
  \label{lem:lip}
      There exists $\eps>0$ and an integrable function $K$ such that for all $\boldsymbol{\sigma}^1,\boldsymbol{\sigma}^2, \boldsymbol{\sigma}^1_{n-1}, \boldsymbol{\sigma}^2_{n-1}  \in B^\infty_\eps(\hat{\boldsymbol{\sigma}}_{n-1})$ there holds pointwise almost everywhere in $\Omega$ that
      \begin{align*}
          \Big|R^{\boldsymbol{\sigma}^1_{n-1}}_n(\boldsymbol{\sigma}^1):\boldsymbol{\eps}(\boldsymbol{\phi}_i)-R^{\boldsymbol{\sigma}^2_{n-1}}_n(\boldsymbol{\sigma}^2):\boldsymbol{\eps}(\boldsymbol{\phi}_i)\Big|<K\Big(|\boldsymbol{\sigma}^1-\boldsymbol{\sigma}^2|+|\boldsymbol{\sigma}^1_{n-1}-\boldsymbol{\sigma}^2_{n-1}|\Big).
      \end{align*}
  \end{lemma}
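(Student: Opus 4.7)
The plan is to combine the pointwise semismoothness (hence local Lipschitz continuity) of $R$ established in Theorem~\ref{thm:rsemi} and Remark~\ref{rm:rsemi} with the $L^\infty$-boundedness of the test-function strains and the data $\boldsymbol{\chi}_{n-1}$, then upgrade the pointwise Lipschitz constant to one that is uniform in $\mathbf{x}$.

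First, I would choose $\eps>0$ small enough so that Remark~\ref{rm:rsemi} applies on the whole of $\Omega$: for every $\mathbf{x}\in\Omega$ and every pair $(\boldsymbol{\sigma},\boldsymbol{\sigma}_{n-1})$ of points in $B^\infty_\eps(\hat{\boldsymbol{\sigma}}_{n-1})$, the map $(\boldsymbol{\tau}_1,\boldsymbol{\tau}_2)\mapsto R(\boldsymbol{\tau}_1,\boldsymbol{\tau}_2,\boldsymbol{\chi}_{n-1}(\mathbf{x}))$ is semismooth, and in particular locally Lipschitz, with some pointwise constant $L(\mathbf{x})$.

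The second, and main, step is to bound $L(\mathbf{x})$ uniformly in $\mathbf{x}$. Inspection of the formula for $R_n$ shows that any such Lipschitz constant can be written as a rational expression in $\|\boldsymbol{\chi}^1_{n-1}\|_\infty$, $\|\chi^2_{n-1}\|_\infty$, $\|\hat{\boldsymbol{\sigma}}_{n-1}\|_\infty$, the material parameters, and $Y_0$, divided by (i)~$|\D{\boldsymbol{\sigma}_{n-1}}{\alpha}f^{n-1}|$ (coming from the normalization $\Dhat{}{}$) and (ii)~the denominator of $R_n$. The numerator pieces are finite by the hypotheses $\boldsymbol{\chi}^1_{n-1},\chi^2_{n-1}\in L^\infty(\Omega)$. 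For the two denominators, Lemma~\ref{lem:positive} applied at every $\mathbf{x}$ yields a strictly positive lower bound on $|\D{\boldsymbol{\sigma}_{n-1}}{\alpha}f^{n-1}|$, and the continuity argument from the proof of Theorem~\ref{thm:semismooth} yields a strictly positive lower bound on the $R_n$-denominator, both uniformly in $\mathbf{x}$ provided $\eps$ is sufficiently small. This produces a finite $L:=\operatorname*{ess\,sup}_{\mathbf{x}\in\Omega}L(\mathbf{x})$.

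Third, since $\boldsymbol{\phi}_i\in V_h\subset W^{1,\infty}(\Omega)^d$, the strain $\boldsymbol{\eps}(\boldsymbol{\phi}_i)$ is essentially bounded, so Cauchy--Schwarz gives
\begin{align*}
\bigl|R^{\boldsymbol{\sigma}^1_{n-1}}_n(\boldsymbol{\sigma}^1):\boldsymbol{\eps}(\boldsymbol{\phi}_i)-R^{\boldsymbol{\sigma}^2_{n-1}}_n(\boldsymbol{\sigma}^2):\boldsymbol{\eps}(\boldsymbol{\phi}_i)\bigr|
\le L\,|\boldsymbol{\eps}(\boldsymbol{\phi}_i)|\,\bigl(|\boldsymbol{\sigma}^1-\boldsymbol{\sigma}^2|+|\boldsymbol{\sigma}^1_{n-1}-\boldsymbol{\sigma}^2_{n-1}|\bigr)
\end{align*}
pointwise almost everywhere, so that $K:=L\,|\boldsymbol{\eps}(\boldsymbol{\phi}_i)|\in L^\infty(\Omega)\subset L^1(\Omega)$ (since $\Omega$ is bounded) is the required majorant. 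The hard part, as indicated, is the uniformity in $\mathbf{x}$: one must simultaneously control the two denominators above at every $\mathbf{x}\in\Omega$, which is exactly where the $L^\infty$-smallness assumption $\boldsymbol{\sigma}^j,\boldsymbol{\sigma}^j_{n-1}\in B^\infty_\eps(\hat{\boldsymbol{\sigma}}_{n-1})$, combined with the strict positivity statements of Lemma~\ref{lem:positive} and Theorem~\ref{thm:semismooth}, is indispensable.
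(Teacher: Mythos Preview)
There is a genuine gap in your argument. First, a context issue: Lemma~\ref{lem:lip} concerns the \emph{non-fractional} return mapping $R^{\boldsymbol{\sigma}_{n-1}}_n=R^e_n$ from~\eqref{eq:return_exp}, in which no fractional gradient appears. Invoking Lemma~\ref{lem:positive} (positivity of $\D{\boldsymbol{\sigma}}{\alpha}f$) or Theorem~\ref{thm:rsemi}/Remark~\ref{rm:rsemi} (semismoothness of the fractional $R$) is therefore misplaced here; the relevant ingredient in $R^e_n$ is the classical normalized gradient $\partial_{\boldsymbol{\sigma}}f^{n-1}={\rm dev}(\boldsymbol{\sigma}_{n-1}+\boldsymbol{\chi}^1_{n-1})/|{\rm dev}(\boldsymbol{\sigma}_{n-1}+\boldsymbol{\chi}^1_{n-1})|$.

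More importantly, your uniformity step fails at points $\mathbf{x}$ where $|{\rm dev}(\hat{\boldsymbol{\sigma}}_{n-1}(\mathbf{x})+\boldsymbol{\chi}^1_{n-1}(\mathbf{x}))|$ is small (or zero). There the map $\boldsymbol{\sigma}_{n-1}\mapsto\partial_{\boldsymbol{\sigma}}f^{n-1}$ is undefined or has arbitrarily large Lipschitz constant (the unit-vector map is not Lipschitz near the origin), so neither ``the two denominators are bounded below'' nor ``semismooth $\Rightarrow$ locally Lipschitz $\Rightarrow$ uniform $L$'' goes through. The paper's proof resolves this by partitioning $\Omega=\Omega_1\cup\Omega_2$: on $\Omega_1$ (small deviator) one has $f(\boldsymbol{\sigma},\boldsymbol{\chi}_{n-1})<0$ for all $\boldsymbol{\sigma}\in B^\infty_\eps(\hat{\boldsymbol{\sigma}}_{n-1})$, so $R^{\boldsymbol{\sigma}_{n-1}}_n(\boldsymbol{\sigma})=\boldsymbol{\sigma}$ and $K=1$ works trivially; on $\Omega_2$ the deviator is bounded below by a fixed $\delta>0$, which gives the uniform Lipschitz bound $C/\delta^2$ for the normalized gradient and the uniform lower bound $k_2$ on the denominator. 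Without this splitting, your Step~2 does not produce a finite $\operatorname*{ess\,sup}_{\mathbf{x}}L(\mathbf{x})$.
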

  \begin{proof}
      This follows from Lipschitz continuity of $R^{\boldsymbol{\sigma}^1_{n-1}}_n(\boldsymbol{\sigma}^1)$ and uniform boundedness of numerator and denominator, i.e it comes down to calculating the Lipschitz-constants and check their boundedness in $\Omega$.
      Let us partition our domain $\Omega=\Omega_1\cup \Omega_2$ with
      \begin{align*}
          \Omega_1 &= \Big\{\mathbf{x} \in \Omega: |{\rm dev}(\hat{\boldsymbol{\sigma}}_{n-1}(\mathbf{x})+\boldsymbol{\chi}^1_{n-1}(\mathbf{x}))|\leq\frac{-\chi^2_{n-1}(\mathbf{x})+Y_0}{2}\Big\}, \\
          \Omega_2 &= \Big\{\mathbf{x} \in \Omega: |{\rm dev}(\hat{\boldsymbol{\sigma}}_{n-1}(\mathbf{x})+\boldsymbol{\chi}^1_{n-1}(\mathbf{x}))|>\frac{-\chi^2_{n-1}(\mathbf{x})+Y_0}{2}\Big\}.
      \end{align*}
      This allows to choose $\eps_1>0$ such that for $\boldsymbol{\sigma} \in B^\infty_{\eps_1}(\hat{\boldsymbol{\sigma}}_{n-1})$ it holds
      \begin{align*}
          &|{\rm dev}(\boldsymbol{\sigma}(\mathbf{x})+\boldsymbol{\chi}^1_{n-1}(\mathbf{x}))|<-\chi^2_{n-1}(\mathbf{x})+Y_0 \ \text{in} \ \Omega_1,\\
          &|{\rm dev}(\boldsymbol{\sigma}(\mathbf{x})+\boldsymbol{\chi}^1_{n-1}(\mathbf{x}))|>\delta>0 \ \text{in} \ \Omega_2.
      \end{align*}
      We have now for $\mathbf{x} \in \Omega_1$ that $R_n^{\boldsymbol{\sigma}_{n-1}}(\boldsymbol{\sigma})=\boldsymbol{\sigma}$ as long as $\boldsymbol{\sigma}, \boldsymbol{\sigma}_{n-1} \in B^\infty_{\eps_1}(\hat{\boldsymbol{\sigma}}_{n-1}).$ Therefore, the estimate follows immediately with $K=1$. For $\mathbf{x} \in \Omega_2$ we need to employ rules for calculating Lipschitz constants of products and quotients. It holds, always under the assumption $\boldsymbol{\sigma}^1,\boldsymbol{\sigma}^2, \boldsymbol{\sigma}^1_{n-1}, \boldsymbol{\sigma}^2_{n-1}  \in B^\infty_{\eps_1}(\hat{\boldsymbol{\sigma}}_{n-1})$, that
      \begin{align*}
          \Big|\frac{{\rm dev}(\boldsymbol{\sigma}^1+\boldsymbol{\chi}^1_{n-1})}{|{\rm dev}(\boldsymbol{\sigma}^1+\boldsymbol{\chi}^1_{n-1})|}-\frac{{\rm dev}(\boldsymbol{\sigma}^2+\boldsymbol{\chi}^1_{n-1})}{|{\rm dev}(\boldsymbol{\sigma}^2+\boldsymbol{\chi}^1_{n-1})|}\Big| \leq C\frac{|\boldsymbol{\sigma}^1-\boldsymbol{\sigma}^2|}{\delta^2},
      \end{align*}
      where $C>0$ is a constant. Choosing a smaller $0<\eps< \eps_1$ such that
      \begin{align*}
          \partial_{\boldsymbol{\sigma}}f(\boldsymbol{\sigma}^1,\boldsymbol{\chi}^1_{n-1}):\partial_{\boldsymbol{\sigma}}f(\boldsymbol{\sigma}^1_{n-1},\boldsymbol{\chi}^1_{n-1})>0,
      \end{align*}
      in $\Omega_2$ yields by continuity on a bounded domain, that the denominator in $R_n^{\boldsymbol{\sigma}_{n-1}}$, i.e. \eqref{eq:return_exp}, is bounded from below by $k_2$, and from above by $k_2+k_1+2\mu$. Furthermore,
      \begin{align*}
          |\max\{0,f(\boldsymbol{\sigma}^1,\boldsymbol{\chi}_{n-1}\}-\max\{0,f(\boldsymbol{\sigma}^2,\boldsymbol{\chi}_{n-1}\}| \leq |\boldsymbol{\sigma}^1-\boldsymbol{\sigma}^2|.
      \end{align*}
      Since $|{\rm dev}(\boldsymbol{\sigma}^1+\boldsymbol{\chi}^1_{n-1})|\leq|{\rm dev}(\hat{\boldsymbol{\sigma}}_{n-1}+\boldsymbol{\chi}^1_{n-1})|+ \eps <\infty$, because also $\boldsymbol{\chi}^1_{n-1}$ is bounded, the rules for Lipschitz constants of products and quotients gives the result with a constant $K>0$, because $|\boldsymbol{\phi}_i|$ is bounded by assumption.
  \end{proof}
  \begin{lemma}
  \label{lem:ssint}
    Let $\Omega \subset \mathbb{R}^d$. Let $\widetilde{f}: \mathbb{R}^n \times \Omega \to \mathbb{R}$ such that $f(\mathbf{x},\cdot) \in L^1(\Omega)$ for all $\mathbf{x}\in \mathbb{R}^n$ and define $f:\mathbb{R}^n \to \mathbb{R}$ by
    \begin{align*}
      f(\mathbf{x} )=\int_{\Omega}\widetilde{f}(\mathbf{x} ,\mathbf{y} )dy .
    \end{align*}
    Suppose there exists an integrable local Lipschitz constant $k\colon \Omega\to [0,\infty)$ such that almost everywhere $\mathbf{y}\in\Omega$ 
    \begin{align}
        \label{eq:lipae}
      |\widetilde{f}(\mathbf{x}_1,\mathbf{y})-\widetilde{f}(\mathbf{x}_2,\mathbf{y} )|<k(\mathbf{y} )|\mathbf{x}_1-\mathbf{x}_2|, \quad \text{for sufficiently close } \mathbf{x}_1,\mathbf{x}_2 \in \mathbb{R}^n.
    \end{align}
    Then $f$ is semismooth at $\mathbf{x} $ whenever $\widetilde{f}(\cdot,\mathbf{y} )$ is semismooth at $\mathbf{x} $ for almost all $\mathbf{y}  \in \Omega$.
  \end{lemma}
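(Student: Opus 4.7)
The plan is to verify the three requirements for semismoothness of $f$ at $\mathbf{x}$ from the definition given in the paper: local Lipschitz continuity, directional differentiability in a neighborhood of $\mathbf{x}$, and the asymptotic estimate $|f(\mathbf{x}+\boldsymbol{\theta}) - f(\mathbf{x}) - \mathcal{S}\boldsymbol{\theta}| = o(|\boldsymbol{\theta}|)$ for every $\mathcal{S} \in \sD f(\mathbf{x}+\boldsymbol{\theta})$. The integrability of the Lipschitz constant $k$ will in each step be used to pass from pointwise-in-$\mathbf{y}$ statements to the integral functional $f$ via dominated convergence.

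The local Lipschitz property is immediate by integrating \eqref{eq:lipae}, which yields $|f(\mathbf{x}_1) - f(\mathbf{x}_2)| \leq \|k\|_{L^1(\Omega)} |\mathbf{x}_1 - \mathbf{x}_2|$ for $\mathbf{x}_1,\mathbf{x}_2$ in a suitable neighborhood of $\mathbf{x}$. For directional differentiability at $\mathbf{x}$ in direction $\mathbf{d}$, I would consider the difference quotient $(\widetilde f(\mathbf{x}+t\mathbf{d},\mathbf{y}) - \widetilde f(\mathbf{x},\mathbf{y}))/t$: for almost every $\mathbf{y}$, semismoothness of $\widetilde f(\cdot,\mathbf{y})$ guarantees pointwise convergence to $\widetilde f'(\mathbf{x},\mathbf{y};\mathbf{d})$ as $t \to 0^+$, while \eqref{eq:lipae} dominates the quotient by $k(\mathbf{y})|\mathbf{d}|$. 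Dominated convergence then produces $f'(\mathbf{x};\mathbf{d}) = \int_\Omega \widetilde f'(\mathbf{x},\mathbf{y};\mathbf{d}) \, dy$, and the same reasoning works throughout a neighborhood of $\mathbf{x}$.

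For the subdifferential estimate, the key ingredient is Clarke's theorem on integral functionals, which gives, under the present integrable Lipschitz hypothesis, the inclusion $\sD f(\mathbf{x}+\boldsymbol{\theta}) \subseteq \int_\Omega \sD_\mathbf{x} \widetilde f(\mathbf{x}+\boldsymbol{\theta},\mathbf{y}) \, dy$ in the sense of Aumann integrals. Thus, for any $\mathcal{S} \in \sD f(\mathbf{x}+\boldsymbol{\theta})$ there exists a measurable selection $\widetilde{\mathcal{S}}(\cdot)$ with $\widetilde{\mathcal{S}}(\mathbf{y}) \in \sD_\mathbf{x} \widetilde f(\mathbf{x}+\boldsymbol{\theta},\mathbf{y})$ and $\mathcal{S} = \int_\Omega \widetilde{\mathcal{S}}(\mathbf{y}) \, dy$; moreover, since Clarke subgradients are bounded by the local Lipschitz constant, $|\widetilde{\mathcal{S}}(\mathbf{y})| \leq k(\mathbf{y})$ almost everywhere. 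Then
\begin{align*}
  \bigl|f(\mathbf{x}+\boldsymbol{\theta}) - f(\mathbf{x}) - \mathcal{S}\boldsymbol{\theta}\bigr|
  \leq \int_\Omega \bigl|\widetilde f(\mathbf{x}+\boldsymbol{\theta},\mathbf{y}) - \widetilde f(\mathbf{x},\mathbf{y}) - \widetilde{\mathcal{S}}(\mathbf{y})\boldsymbol{\theta}\bigr| \, dy,
\end{align*}
and after dividing by $|\boldsymbol{\theta}|$ the integrand is dominated by $2k(\mathbf{y})$ while, by semismoothness of $\widetilde f(\cdot,\mathbf{y})$ at $\mathbf{x}$ for almost every $\mathbf{y}$, it tends to zero pointwise as $\boldsymbol{\theta} \to 0$. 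A final application of dominated convergence delivers the required $o(|\boldsymbol{\theta}|)$ bound.

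The main obstacle is the careful invocation of Clarke's characterization of the subdifferential of an integral functional together with the associated measurable-selection argument: one must ensure that for an arbitrary sequence $\boldsymbol{\theta}_n \to 0$ and corresponding $\mathcal{S}_n \in \sD f(\mathbf{x}+\boldsymbol{\theta}_n)$, one can simultaneously extract measurable selections $\widetilde{\mathcal{S}}_n(\mathbf{y})$ whose $L^1$-integrability is controlled uniformly by $k(\mathbf{y})$, so that dominated convergence can legitimately be applied to the $n$-indexed family. The rest of the argument is essentially bookkeeping with the pointwise semismoothness hypothesis.
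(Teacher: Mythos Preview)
Your proposal is correct and follows essentially the same route as the paper's proof: both verify local Lipschitz continuity by integrating~\eqref{eq:lipae}, obtain directional differentiability by dominated convergence, and then invoke Clarke's inclusion $\sD f(\mathbf{x}) \subset \int_\Omega \sD_\mathbf{x}\widetilde f(\mathbf{x},\mathbf{y})\,dy$ together with the bound $|\widetilde{\mathcal{S}}(\mathbf{y})|\le k(\mathbf{y})$ on selections to pass the semismoothness estimate through the integral via dominated convergence. The concern you flag about uniform domination of the selections along a sequence $\boldsymbol{\theta}_n\to 0$ is exactly the point the paper handles by citing \cite[Proposition~2.1.2]{clarke}, and your treatment of it is adequate.
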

  \begin{proof}
    We first show that $f$ is locally Lipschitz. From \eqref{eq:lipae} we get
    \begin{align*}
      |f(\mathbf{x}_1)-f(\mathbf{x}_2)|\leq \int_{\Omega}|\widetilde{f}(\mathbf{x}_1,\mathbf{y})-\widetilde{f}(\mathbf{x}_2,\mathbf{y})|dy\leq |\mathbf{x} _1-\mathbf{x} _2|\int_\Omega k(\mathbf{y} ) dy ,
    \end{align*}
    which yields the claim.
    We continue by showing that $f$ is directionally differentiable around $\mathbf{x} \in \mathbb{R}^n$ where $\widetilde{f}$ is semismooth for almost all $\mathbf{y} \in \Omega$. Let $\mathbf{v} \neq 0$ $\in$ $\mathbb{R}^n$ and $\mathbf{x} \in \mathbb{R}^n$ be fixed. It holds, again using \eqref{eq:lipae}, for $t$ small enough that
    \begin{align*}
      \Big|\frac{\widetilde{f}(\mathbf{x}+t\mathbf{v},\mathbf{y})-\widetilde{f}(\mathbf{x},\mathbf{y})}{t}\Big| \leq \frac{k(\mathbf{y}) t|\mathbf{v}|}{t}=k(\mathbf{y})|\mathbf{v}|,
    \end{align*}
    which is integrable by our assumptions.
     Dominated convergence yields
    \begin{align*}
      \lim_{t \searrow 0} \frac{f(\mathbf{x}+t\mathbf{v})-f(\mathbf{x})}{t}=\int_{\Omega}\lim_{t \searrow 0}\frac{\widetilde{f}(\mathbf{x}+t\mathbf{v},\mathbf{y})-\widetilde{f}(\mathbf{x},\mathbf{y})}{t}dy
    \end{align*}
  We conclude the proof, by finally showing that $f$ is semismooth at $\mathbf{x}$.
    Note that it holds by (\cite[Theorem 2.7.2]{clarke}) (since $\mathbb{R}^n$ is separable)  that
    \begin{align*}
      \sD f(\mathbf{x}) \subset \int_{\Omega} \sD \widetilde{f}(\mathbf{x},\mathbf{y})dy.
    \end{align*}
  This means, that for every $\mathbf{H} \in \sD f(\mathbf{x}) \subset \mathbb{R}^n$ there exists a map $\mathbf{H} (\cdot): \Omega \to \mathbb{R}^n$ such that, $\mathbf{H} (\mathbf{y}) \in \sD \widetilde{f}(\mathbf{x},\mathbf{y})$.
  Such a map is also called a selection of $\sD \widetilde{f}(\mathbf{x},\cdot)$, and for $\mathbf{v}  \in \mathbb{R}^n$, $\mathbf{H} (\cdot)\cdot \mathbf{v} $ is integrable and
  \begin{align*}
    \mathbf{H}  \cdot \mathbf{v}  =\int_{\Omega}\mathbf{H} (\mathbf{y}) \cdot \mathbf{v}  \ dy.
  \end{align*}
  This shows that every component of $\mathbf{H} (\cdot)$ is integrable and therefore also $|\mathbf{H} (\cdot)|$. Local Lipschitz continuity~\eqref{eq:lipae} implies for $\mathbf{x}'$ being in a neighborhood of $\mathbf{x}$ and $\mathbf{H} \in \sD f(\mathbf{x}')$ that
  \begin{align*}
    \Big|\frac{\widetilde{f}(\mathbf{x}',\mathbf{y})-\widetilde{f}(\mathbf{x},\mathbf{y})-\mathbf{H} (\mathbf{y})(\mathbf{x}'-\mathbf{x})}{|\mathbf{x}'-\mathbf{x}|}\Big|\leq \frac{k(\mathbf{y})|\mathbf{x}'-\mathbf{x}|+|\mathbf{H} (\mathbf{y})\|\mathbf{x}'-\mathbf{x}|}{|\mathbf{x}'-\mathbf{x}|}=k(\mathbf{y})+|\mathbf{H} (\mathbf{y})|.
  \end{align*}
  Finally, we have $|\mathbf{H} (\mathbf{y})|\leq k(\mathbf{y})$ by \cite[Proposition 2.1.2]{clarke} and therefore integrability.
  Thus, dominated convergence implies
  \begin{align*}
    \lim_{\mathbf{x}'\to \mathbf{x}\atop \mathbf{H} \in \sD f(\mathbf{x}')} \frac{f(\mathbf{x}')-f(\mathbf{x})-\mathbf{H} \cdot(\mathbf{x}'-\mathbf{x})}{|\mathbf{x}'-\mathbf{x}|}=\int_{\Omega}\lim_{\mathbf{x}'\to \mathbf{x}\atop \mathbf{H} (\mathbf{y}) \in \sD \widetilde{f}(\mathbf{x}',\mathbf{y})} \frac{\widetilde{f}(\mathbf{x}',\mathbf{y})-\widetilde{f}(\mathbf{x},\mathbf{y})-\mathbf{H} (\mathbf{y})\cdot(\mathbf{x}'-\mathbf{x})}{|\mathbf{x}'-\mathbf{x}|}dy=0,
  \end{align*}
  due to the almost everywhere semismoothness of $\widetilde{f}$, at $\mathbf{x}$.
  \end{proof}
  Lemmas~\ref{lem:lip} and~\ref{lem:ssint} show that if $R^{\boldsymbol{\sigma}_{n-1}}_n(\boldsymbol{\sigma})$ is semismooth in $\boldsymbol{\sigma}_{n-1}$ and $\boldsymbol{\sigma}$ at almost all $\mathbf{x} \in \Omega$, then $T$ is semismooth in $\mathbf{c}$ and $\widetilde{\mathbf{c}}$, because
  \begin{align*}
      &R^{\sum_{p}\widetilde{c}^1_p\boldsymbol{\psi}_p}_n\Big(\mathbf{C}(\boldsymbol{\eps}(\sum_{q} c^1_q \boldsymbol{\phi}_q)-\boldsymbol{\eps}^p_{n-1})\Big):\boldsymbol{\eps}(\boldsymbol{\phi}_i)-R^{\sum_p\widetilde{c}^2_p\boldsymbol{\psi}_p}_n\Big(\mathbf{C}(\boldsymbol{\eps}(\sum_q c^2_q \boldsymbol{\phi}_q)-\boldsymbol{\eps}^p_{n-1})\Big):\boldsymbol{\eps}(\boldsymbol{\phi}_i)\\
      &\qquad\qquad\leq 
      K\Big(\Big|\mathbf{C}\boldsymbol{\eps}(\sum_q c^1_q \boldsymbol{\phi}_q)-\mathbf{C}\boldsymbol{\eps}(\sum_q c^2_q \boldsymbol{\phi}_q)\Big|+\Big|\sum_p\widetilde{c}^1_p\boldsymbol{\psi}_p-\sum_p\widetilde{c}^2_p\boldsymbol{\psi}_p\Big|\Big)\\
      &\qquad\qquad \leq 
      K \Big(\|\mathbf{C}\| (\sum_{q=1}^k|\boldsymbol{\phi}_q|^2)^{1/2} |\mathbf{c}^1-\mathbf{c}^2|+(\sum_{p=1}^l|\boldsymbol{\psi}_p|^2)^{1/2} |\widetilde{\mathbf{c}}^1-\widetilde{\mathbf{c}}^2|\Big).
  \end{align*}
   Since $R^{\boldsymbol{\sigma}_{n-1}}_n(\boldsymbol{\sigma})$ is semismooth for almost all $\mathbf{x} \in \Omega$ in a $L^\infty$ neighborhood of $\hat{\boldsymbol{\sigma}}_{n-1}$, Remark~\ref{rm:rsemi} shows all the assumptions of Lemma~\ref{lem:ssint} are met. Thus, recalling $\hat{\mathbf{u}}_n=\sum_{i=1}^{k}\hat{c}_i \boldsymbol{\phi}_i$ and $\hat{\boldsymbol{\sigma}}_{n-1}=\sum_{i=1}^{l}\hat{\widetilde{c}}_i \boldsymbol{\psi}_i$, $T$ is semismooth in a neighborhood of $(\hat{\mathbf{c}}, \hat{\widetilde{\mathbf{c}}})$.
  
   The next step concerns the subdifferential of $T$ with respect to $\boldsymbol{c}$. Because there is no straightforward connection between partial subdifferentials and full subdifferentials~\cite{clarke}, we require the following result.
  \begin{lemma}
  \label{lem:partgrad}
    For $\mathcal{S} \in \mathbb{R}^{d \times d\times d\times d} $ and $\mathcal{P} \in \mathbb{R}^{d \times d\times d\times d}$, let $[\mathcal{S} \ \mathcal{P}]\in\mathbb{R}^{d \times d \times 2d \times d}$ denote the concatenation in the third dimension. Then, any $[\mathcal{S} \ \mathcal{P}] \in \sD R_n^{\boldsymbol{\sigma}_{n-1}}(\boldsymbol{\sigma})$ satisfies $ \mathcal{S} \in \sD_{\boldsymbol{\sigma}} R_n^{\boldsymbol{\sigma}_{n-1}}(\boldsymbol{\sigma})$ under the assumption that $|\boldsymbol{\sigma}-\boldsymbol{\sigma}_{n-1}|$ is sufficiently small to ensure well-definedness.
  \end{lemma}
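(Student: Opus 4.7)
The strategy exploits the fact that $R$ is non-smooth only through the composition with $\max\{0,\cdot\}$ applied to $f(\boldsymbol{\sigma},\boldsymbol{\chi}_{n-1})$, and this argument depends on $\boldsymbol{\sigma}$ alone (with $\boldsymbol{\chi}_{n-1}$ fixed), not on $\boldsymbol{\sigma}_{n-1}$. Consequently, the non-differentiability set in the $(\boldsymbol{\sigma},\boldsymbol{\sigma}_{n-1})$-plane has the product structure $\{f(\cdot,\boldsymbol{\chi}_{n-1})=0\}\times \mathbb{R}^{d\times d}$, which makes the $\boldsymbol{\sigma}$-slice of the Clarke subdifferential well-behaved. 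Since $\sD = \mathrm{conv}(\partial^B)$ and the projection $[\mathcal{S}\ \mathcal{P}]\mapsto \mathcal{S}$ is linear while $\sD_{\boldsymbol{\sigma}} R_n^{\boldsymbol{\sigma}_{n-1}}(\boldsymbol{\sigma})$ is itself convex, it suffices to establish the claim for $[\mathcal{S}\ \mathcal{P}]\in\partial^B R_n^{\boldsymbol{\sigma}_{n-1}}(\boldsymbol{\sigma})$.

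Accordingly, I fix a sequence $(\boldsymbol{\sigma}_p,\boldsymbol{\sigma}_{n-1,p})\to(\boldsymbol{\sigma},\boldsymbol{\sigma}_{n-1})$ along which $R$ is classically differentiable and the full Jacobian converges to $[\mathcal{S}\ \mathcal{P}]$. Because classical differentiability fails precisely on the set $\{f(\boldsymbol{\sigma}',\boldsymbol{\chi}_{n-1})=0\}$, I pass to a subsequence on which the sign of $f(\boldsymbol{\sigma}_p,\boldsymbol{\chi}_{n-1})$ is constant. If the sign is negative, formula \eqref{eq:s1} yields $\partial_{\boldsymbol{\sigma}}R(\boldsymbol{\sigma}_p,\boldsymbol{\sigma}_{n-1,p})=\Id$, whence $\mathcal{S}=\Id\in \sD_{\boldsymbol{\sigma}} R_n^{\boldsymbol{\sigma}_{n-1}}(\boldsymbol{\sigma})$ trivially (since the same identity matrix is the Jacobian of $R_n^{\boldsymbol{\sigma}_{n-1}}$ at each $\boldsymbol{\sigma}_p$). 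If the sign is positive, the analogue of formula~\eqref{eq:s2} gives $\partial_{\boldsymbol{\sigma}}R(\boldsymbol{\sigma}_p,\boldsymbol{\sigma}_{n-1,p})$ as an explicit function of $(\boldsymbol{\sigma}_p,\boldsymbol{\sigma}_{n-1,p})$ that is jointly continuous on a neighborhood of $(\boldsymbol{\sigma},\boldsymbol{\sigma}_{n-1})$; here Lemma~\ref{lem:fracdiff} supplies continuity (indeed, differentiability) of $\Dhat{\boldsymbol{\sigma}_{n-1}}{\alpha}f^{n-1}$ in $\boldsymbol{\sigma}_{n-1}$, while Lemma~\ref{lem:positive} together with the well-definedness assumption keeps the denominator bounded away from zero. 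I then swap the second argument from $\boldsymbol{\sigma}_{n-1,p}$ to the fixed $\boldsymbol{\sigma}_{n-1}$: joint continuity forces the difference $\partial_{\boldsymbol{\sigma}}R(\boldsymbol{\sigma}_p,\boldsymbol{\sigma}_{n-1,p})-\partial_{\boldsymbol{\sigma}}R(\boldsymbol{\sigma}_p,\boldsymbol{\sigma}_{n-1})$ to tend to zero, so $\partial R_n^{\boldsymbol{\sigma}_{n-1}}(\boldsymbol{\sigma}_p) = \partial_{\boldsymbol{\sigma}}R(\boldsymbol{\sigma}_p,\boldsymbol{\sigma}_{n-1})\to \mathcal{S}$, placing $\mathcal{S}\in \partial^B R_n^{\boldsymbol{\sigma}_{n-1}}(\boldsymbol{\sigma})\subseteq \sD_{\boldsymbol{\sigma}} R_n^{\boldsymbol{\sigma}_{n-1}}(\boldsymbol{\sigma})$.

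The one delicate step is this swap in the positive-sign case; the reason it goes through is entirely the product structure of the non-differentiability set noted at the outset. No comparable argument is available for the $\boldsymbol{\sigma}_{n-1}$-component $\mathcal{P}$, which is why the lemma singles out $\mathcal{S}$; the assumption $|\boldsymbol{\sigma}-\boldsymbol{\sigma}_{n-1}|$ sufficiently small enters only to guarantee, via Lemmas~\ref{lem:positive} and~\ref{lem:fracdiff}, that both the fractional gradients and the denominator of $R$ are smooth and uniformly non-degenerate on a neighborhood big enough to contain the sequence $(\boldsymbol{\sigma}_p,\boldsymbol{\sigma}_{n-1,p})$ for large $p$.
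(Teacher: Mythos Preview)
Your argument is correct and follows essentially the same route as the paper's proof: reduce to $\partial^B$ via convexity and the linearity of the projection, take a sequence of differentiability points with convergent Jacobian, observe that the sign of $f(\boldsymbol{\sigma}_p,\boldsymbol{\chi}_{n-1})$ is eventually constant (the paper argues this is forced by convergence, you pass to a subsequence---either works), and then use continuity of the partial Jacobian on each sign region to conclude $\mathcal{S}\in\partial^B_{\boldsymbol{\sigma}} R_n^{\boldsymbol{\sigma}_{n-1}}(\boldsymbol{\sigma})$. One small mismatch: Lemma~\ref{lem:partgrad} sits in the \emph{non-fractional} subsection, so the return mapping here is $R_n^e$ from~\eqref{eq:return_exp}, which involves the classical gradient $\partial_{\boldsymbol{\sigma}} f^{n-1}$ rather than $\Dhat{\boldsymbol{\sigma}_{n-1}}{\alpha}f^{n-1}$; your invocations of Lemmas~\ref{lem:positive} and~\ref{lem:fracdiff} are therefore not needed---smoothness and non-degeneracy of the denominator follow directly from $|{\rm dev}(\boldsymbol{\sigma}_{n-1}+\boldsymbol{\chi}^1_{n-1})|$ being bounded away from zero and $\partial_{\boldsymbol{\sigma}} f(\boldsymbol{\sigma},\boldsymbol{\chi}_{n-1}):\partial_{\boldsymbol{\sigma}} f^{n-1}\approx 1$ for $\boldsymbol{\sigma}$ near $\boldsymbol{\sigma}_{n-1}$. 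This is cosmetic and does not affect the validity of your argument; the fractional analogue is Lemma~\ref{lem:partgradfrac}, where your references would be apt.
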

  \begin{proof}
      Remember
  \begin{align*}
    R^{\boldsymbol{\sigma}_{n-1}}_n(\boldsymbol{\sigma})=\boldsymbol{\sigma}-\frac{2 \mu \max\{0,f(\boldsymbol{\sigma},\boldsymbol{\chi}_{n-1})\}}{2 \mu \partial_{\boldsymbol{\sigma}}f(\boldsymbol{\sigma},\boldsymbol{\chi}_{n-1}):\partial_{\boldsymbol{\sigma}} f^{n-1} +k_1\partial_{\boldsymbol{\sigma}}f(\boldsymbol{\sigma},\boldsymbol{\chi}_{n-1}):\partial_{\boldsymbol{\sigma}} f^{n-1}+k_2}\partial_{\boldsymbol{\sigma}} f^{n-1}
  \end{align*}
  We have continuous differentiability of $R^{\boldsymbol{\sigma}_{n-1}}_n(\boldsymbol{\sigma})$ everywhere (if $\boldsymbol{\sigma}$ and $\boldsymbol{\sigma}_{n-1}$ are close enough) but in the set
  \begin{align*}
    \mathcal{A}=\Big\{(\boldsymbol{\sigma},\boldsymbol{\sigma}_{n-1}) \in M^{d \times d} \times M^{d \times d}: f(\boldsymbol{\sigma},\boldsymbol{\chi}_{n-1})=0\Big\}
  \end{align*}
  which is a set of measure 0 in $M^{d \times d} \times M^{d \times d}$. The claim holds everywhere outside $\mathcal{A}$. Let now $(\boldsymbol{\sigma},\boldsymbol{\sigma}_{n-1}) \in \mathcal{A}$ and $[\mathcal{S} \ \mathcal{P}] \in \partial^B R^{\boldsymbol{\sigma}_{n-1}}_n(\boldsymbol{\sigma}).$ By definition, there exist sequences $(\boldsymbol{\sigma}^k, \boldsymbol{\sigma}^k_{n-1}) \notin \mathcal{A} \to (\boldsymbol{\sigma}, \boldsymbol{\sigma}_{n-1})$ such that 
  \begin{align*}
    \Big[ \partial_{\boldsymbol{\sigma}} R^{\boldsymbol{\sigma}^k_{n-1}}_n(\boldsymbol{\sigma}^k) \ \partial_{\boldsymbol{\sigma}_{n-1}} R^{\boldsymbol{\sigma}^k_{n-1}}_n(\boldsymbol{\sigma}^k)\Big] \to [\mathcal{S} \ \mathcal{P}].
  \end{align*}
  There are only two possibilities (because otherwise we do not have convergence of the Jacobian due to a discontinuity at the yield surface as seen in formulas~\eqref{eq:s1}--\eqref{eq:s2}), either we have $f(\boldsymbol{\sigma}^k,\boldsymbol{\chi}_{n-1})>0$ or $f(\boldsymbol{\sigma}^k,\boldsymbol{\chi}_{n-1})<0$ for all sufficiently large $k$. Because of the continuity of the Jacobians outside $\mathcal{A}$ we have therefore that $\mathcal{S} \in \partial^B_{\boldsymbol{\sigma}}R^{\boldsymbol{\sigma}_{n-1}}_n(\boldsymbol{\sigma})$. Since $\sD R^{\boldsymbol{\sigma}_{n-1}}_n(\boldsymbol{\sigma})$ only consists of finite convex combinations of $\partial^B R^{\boldsymbol{\sigma}_{n-1}}_n(\boldsymbol{\sigma})$, this yields the claim.
  \end{proof}
  The positive definiteness of the partial subdifferentials of $T$ is addressed in the next lemma. Note that we already use an implication of Lemma \ref{lem:partgrad} in the statement.    
\begin{lemma}
\label{lem:posdef}
	Let $[\sD_{\mathbf{c}}T(\hat{\mathbf{c}},\hat{\widetilde{\mathbf{c}}}) \ \mathcal{P}] \in \sD T(\hat{\mathbf{c}},\hat{\widetilde{\mathbf{c}}})$, then $\sD_{\mathbf{c}}T(\hat{\mathbf{c}},\hat{\widetilde{\mathbf{c}}})$ is well-defined and positive definite.
\end{lemma}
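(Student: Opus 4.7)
The plan is two-fold: (i) represent any element of $\sD_{\mathbf{c}}T(\hat{\mathbf{c}},\hat{\widetilde{\mathbf{c}}})$ as an elasticity-type stiffness matrix built from a measurable selection inside $\sD_{\boldsymbol{\sigma}}R_n^{\hat{\boldsymbol{\sigma}}_{n-1}}(\hat{\boldsymbol{\sigma}}_{n-1})$, and then (ii) conclude positive-definiteness pointwise via Remark~\ref{rm:rposdef2} and Lemma~\ref{lem:scposdef} together with Korn's inequality. A preliminary observation is that at the anchor point one has $\mathbf{C}(\boldsymbol{\eps}(\hat{\mathbf{u}}_n)-\boldsymbol{\eps}^p_{n-1})=\hat{\boldsymbol{\sigma}}_{n-1}$ almost everywhere in $\Omega$, so the evaluation point of $R_n^{\boldsymbol{\sigma}_{n-1}}$ coincides pointwise with $\boldsymbol{\sigma}_{n-1}$. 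In particular, the smallness assumption $|\boldsymbol{\sigma}-\boldsymbol{\sigma}_{n-1}|<\eps$ used in Theorem~\ref{thm:rposdef2}, Remark~\ref{rm:rposdef2}, and Theorem~\ref{thm:rsemi} holds trivially, so $\sD_{\boldsymbol{\sigma}}R_n^{\hat{\boldsymbol{\sigma}}_{n-1}(\mathbf{x})}(\hat{\boldsymbol{\sigma}}_{n-1}(\mathbf{x}))$ is well-defined uniformly in $\mathbf{x}\in\Omega$.

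For step (i) I would invoke the integral-exchange formula \cite[Theorem~2.7.2]{clarke} (already used inside the proof of Lemma~\ref{lem:ssint}) componentwise on $T$. The semi-smooth chain rule applied to the composition $\mathbf{c}\mapsto\mathbf{C}\boldsymbol{\eps}(\sum_j c_j\boldsymbol{\phi}_j)\mapsto R_n^{\boldsymbol{\sigma}_{n-1}}(\cdot)$, combined with Lemma~\ref{lem:partgrad} to isolate the $\boldsymbol{\sigma}$-block of the Clarke subdifferential of $R_n$, and with an analogous $\mathbf{c}/\widetilde{\mathbf{c}}$ split at the $T$-level, then shows that every element of $\sD_{\mathbf{c}}T(\hat{\mathbf{c}},\hat{\widetilde{\mathbf{c}}})$ is represented by a matrix $A\in\mathbb{R}^{k\times k}$ with
\begin{equation*}
A_{ij}=\int_\Omega \boldsymbol{\eps}(\boldsymbol{\phi}_i)(\mathbf{x}):\mathcal{S}(\mathbf{x})\,\mathbf{C}\,\boldsymbol{\eps}(\boldsymbol{\phi}_j)(\mathbf{x})\,dx,
\end{equation*}
for a measurable selection $\mathbf{x}\mapsto\mathcal{S}(\mathbf{x})\in\sD_{\boldsymbol{\sigma}}R_n^{\hat{\boldsymbol{\sigma}}_{n-1}(\mathbf{x})}(\hat{\boldsymbol{\sigma}}_{n-1}(\mathbf{x}))$. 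Well-definedness of $\sD_{\mathbf{c}}T(\hat{\mathbf{c}},\hat{\widetilde{\mathbf{c}}})$ is inherited from that of $\mathcal{S}(\mathbf{x})$.

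For step (ii) I fix $\mathbf{d}\in\mathbb{R}^k\setminus\{0\}$ and set $\mathbf{v}_d:=\sum_i d_i\boldsymbol{\phi}_i\in V_h$. Summing the representation above yields
\begin{equation*}
\mathbf{d}^\top A\mathbf{d}=\int_\Omega \boldsymbol{\eps}(\mathbf{v}_d)(\mathbf{x}):\mathcal{S}(\mathbf{x})\,\mathbf{C}\,\boldsymbol{\eps}(\mathbf{v}_d)(\mathbf{x})\,dx.
\end{equation*}
Remark~\ref{rm:rposdef2} supplies a uniform lower bound $\boldsymbol{\tau}:\mathcal{S}(\mathbf{x})\boldsymbol{\tau}\geq C|\boldsymbol{\tau}|^2$, and Lemma~\ref{lem:scposdef} (available under the standing hypothesis $\kappa\geq 2\mu/d$) upgrades this to $\boldsymbol{\tau}:\mathcal{S}(\mathbf{x})\mathbf{C}\boldsymbol{\tau}\geq c|\boldsymbol{\tau}|^2$ pointwise. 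Since $V_h$ enforces $\mathbf{v}_d|_{\Gamma_D}=0$, Korn's inequality gives $\mathbf{d}^\top A\mathbf{d}\geq c\,C_K\|\mathbf{v}_d\|_{H^1(\Omega)}^2>0$, using that $\{\boldsymbol{\phi}_i\}$ is a basis of $V_h$, so $\mathbf{v}_d\not\equiv 0$.

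The expected obstacle lies in the second half of step (i): in general the partial subdifferential $\sD_{\mathbf{c}}T$ is strictly contained in the $\mathbf{c}$-block projection of the full Clarke subdifferential $\sD T$. The extraction succeeds only because the non-smoothness of both $R_n$ and the integrand of $T$ is concentrated on the measure-zero yield set $\{f(\boldsymbol{\sigma},\boldsymbol{\chi}_{n-1})=0\}$ --- the precise mechanism exploited in Lemma~\ref{lem:partgrad}, whose argument has to be lifted to the integrated functional to produce the required block decomposition.
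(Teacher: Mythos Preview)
Your proposal is correct and follows essentially the same route as the paper: represent elements of $\sD_{\mathbf{c}}T$ via Clarke's integral calculus \cite[Theorem~2.7.2]{clarke} and the chain rule as stiffness-type matrices $\int_\Omega \boldsymbol{\eps}(\boldsymbol{\phi}_i):\mathcal{S}\mathbf{C}\boldsymbol{\eps}(\boldsymbol{\phi}_j)\,dx$ with $\mathcal{S}$ a selection of $\sD_{\boldsymbol{\sigma}}R_n^{\hat{\boldsymbol{\sigma}}_{n-1}}(\hat{\boldsymbol{\sigma}}_{n-1})$, and then conclude positivity from the pointwise positive-definiteness of $\mathcal{S}\mathbf{C}$. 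One minor difference: you invoke Lemma~\ref{lem:scposdef} under the hypothesis $\kappa\geq 2\mu/d$, whereas the paper notes parenthetically that in this \emph{non-fractional} setting ($\alpha\nearrow 1$) the structure of $R_n^e$ (the flow direction $\partial_{\boldsymbol{\sigma}}f^{n-1}$ is deviatoric, so $\mathbf{C}\partial_{\boldsymbol{\sigma}}f^{n-1}=2\mu\,\partial_{\boldsymbol{\sigma}}f^{n-1}$) makes $\mathcal{S}\mathbf{C}$ positive definite without that material-parameter restriction; your explicit use of Korn's inequality fills a step the paper leaves implicit.
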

\begin{proof}
By~\cite[Corollary below Thm. 2.6.6]{clarke} and the chain-rule it holds for $\mathbf{b} \in \mathbb{R}^k$ that
\begin{align*}
		\mathbf{b}^T\sD_{\mathbf{c}}T(\hat{\mathbf{c}},\hat{\widetilde{\mathbf{c}}})\mathbf{b}&=\sD_{\mathbf{c}}\Big(\mathbf{b}^TT(\hat{\mathbf{c}},\hat{\widetilde{\mathbf{c}}})\Big)\mathbf{b} \subset \int_{\Omega} \sD_{\mathbf{c}} \Big(\sum_{i=1}^k b_i \boldsymbol{\eps}(\boldsymbol{\phi}_i) : R^{\hat{\boldsymbol{\sigma}}_{n-1}}_n\Big(\mathbf{C}(\boldsymbol{\eps}(\hat{\mathbf{u}}_n)-\boldsymbol{\eps}^p_{n-1})\Big)\Big) \mathbf{b}dx \\
		&= 
        \sum_{i=1}^k b_i \int_\Omega \boldsymbol{\eps}(\boldsymbol{\phi}_i) : \sD_{\mathbf{c}}R^{\hat{\boldsymbol{\sigma}}_{n-1}}_n\Big(\mathbf{C}(\boldsymbol{\eps}(\hat{\mathbf{u}}_n)-\boldsymbol{\eps}^p_{n-1})\Big) \mathbf{b}dx\\
		 & \subset 
        \mathbf{b}^T 
        \left\{\begin{pmatrix}
			\int_{\Omega} \mathcal{S}\mathbf{C}\boldsymbol{\eps}(\boldsymbol{\phi}_1):\boldsymbol{\eps}(\boldsymbol{\phi}_1)dx&
			\cdots & \int_{\Omega} \mathcal{S}\mathbf{C}\boldsymbol{\eps}(\boldsymbol{\phi}_k):\boldsymbol{\eps}(\boldsymbol{\phi}_1)dx\\
			\vdots & \ddots & \vdots\\
            \int_{\Omega} \mathcal{S}\mathbf{C}\boldsymbol{\eps}(\boldsymbol{\phi}_1):\boldsymbol{\eps}(\boldsymbol{\phi}_k)dx & \cdots &\int_{\Omega} \mathcal{S}\mathbf{C}\boldsymbol{\eps}(\boldsymbol{\phi}_k):\boldsymbol{\eps}(\boldsymbol{\phi}_k)dx
		\end{pmatrix}\right\}\mathbf{b},
\end{align*}
where $\mathcal{S}$ denotes selections of $\sD_{\boldsymbol{\sigma}} R_n^{\hat{\boldsymbol{\sigma}}_{n-1}}(\mathbf{C}(\boldsymbol{\eps}(\hat{\mathbf{u}}_{n})-\boldsymbol{\eps}^p_{n-1})),$ such that $\mathcal{S}\mathbf{C}\boldsymbol{\eps}(\boldsymbol{\phi}_i):\boldsymbol{\eps}(\boldsymbol{\phi}_j)$ is integrable for every $1 \leq i,j  \leq k$.
Because $\mathcal{S}\mathbf{C}$ is uniformly (in $\mathbf{x}$) positive definite by Remark~\ref{rm:rposdef2} and Lemma~\ref{lem:scposdef} (independently of the condition on the material-parameters), the result follows.
\end{proof}
\begin{proof}[Proof of Theorem~\ref{thm:solexp}]
  Combining Lemmas~\ref{lem:lip},~\ref{lem:partgrad} and~\ref{lem:posdef} we can apply the implicit function theorem for semismooth functions \cite[Proposition 10.3 ]{Sauter}, which directly proves Theorem~\ref{thm:solexp}.
  \end{proof}
  \subsection{Well-posedness of fractional explicit space-time-discretization}
  Now we turn to the case $\alpha< 1$. We recall and introduce finite-dimensional subspaces
  \begin{align*}
    V_h \subset W^{1,\infty}(\Omega)^d, \quad
    M_h \subset  L^\infty(\Omega)^{d \times d}_{\text{sym}}, \quad
    M_h^{s} \subset  L^\infty(\Omega), \quad
    B_h \subset L^1(\Omega)^d, \quad
    T_h \subset L^1(\Omega)^d,
  \end{align*}
  select a basis $\boldsymbol{\phi}_1, \ldots, \boldsymbol{\phi}_k$ of $V_h$, and define the map
  \begin{align*}
      \begin{split}
    T:V_h &\times M_h \times M_h \times M_h \times M_h^s \times B_h \times T_h \to (V_h)^*\\
    \langle T(\mathbf{u}_n,&\boldsymbol{\eps}^p_{n-1},\boldsymbol{\sigma}_{n-1},\boldsymbol{\chi}_{n-1},\mathbf{b}^h_n,\mathbf{t}^h_n), \mathbf{v}_h \rangle\\
    =&\int_{\Omega}R\Big(\mathbf{C}(\boldsymbol{\eps}(\mathbf{u}_n)-\boldsymbol{\eps}^p_{n-1}),\boldsymbol{\sigma}_{n-1},\boldsymbol{\chi}_{n-1}\Big):\boldsymbol{\eps}(\mathbf{v})dx-\int_{\Omega}\mathbf{b}^h_n \cdot \mathbf{v}dx-\int_{\Gamma_D} \mathbf{t}^h_n \cdot \mathbf{v}ds.
  \end{split}
  \end{align*}
  
  The main result of this section is the well-posedness of~\eqref{eq:modelproblem} for the fractional return mapping.
  \begin{theorem}
  \label{thm:solfrac}
      Consider $(\mathbf{u}_{n-1},\boldsymbol{\eps}^p_{n-2},\boldsymbol{\sigma}_{n-2},\boldsymbol{\chi}_{n-2},\mathbf{b}^h_{n-1},\mathbf{t}^h_{n-1}) \in V_h \times M_h^3\times M_h^s \times B_h \times T_h $ such that $T(\mathbf{u}_{n-1},\boldsymbol{\eps}^p_{n-2},\boldsymbol{\sigma}_{n-2},\boldsymbol{\chi}_{n-2},\mathbf{b}^h_{n-1},\mathbf{t}^h_{n-1})=0$ and $\|\mathbf{C}(\boldsymbol{\eps}(\mathbf{u}_{n-1})-\boldsymbol{\eps}^p_{n-2})-\boldsymbol{\sigma}_{n-2}\|_{\infty}, \boldsymbol{\Delta}>0$ is sufficiently small as well as  $\kappa\geq \frac{2 \mu}{d}$ and $\chi^2_{n-2}\leq 0$ almost everywhere. Suppose furthermore, that the assumptions on the parameters $k_1, k_2, \kappa d, \alpha$ from Theorem~\ref{thm:rposdef2} hold. Then, there exists $\eps>0$,  and a semismooth function $\mathbf{u}: B_\eps^\infty(\boldsymbol{\eps}^p_{n-2},\boldsymbol{\sigma}_{n-2},\boldsymbol{\chi}_{n-2},\mathbf{b}^h_{n-1},\mathbf{t}^h_{n-1}) \to V_h$ such that
      \begin{align*}
          &T(\mathbf{u}(\boldsymbol{\eps}^p,\boldsymbol{\sigma},\boldsymbol{\chi},\mathbf{b},\mathbf{t}),\boldsymbol{\eps}^p,\boldsymbol{\sigma},\boldsymbol{\chi},\mathbf{b},\mathbf{t}) = 0, \\
          &\forall (\boldsymbol{\eps}^p,\boldsymbol{\sigma},\boldsymbol{\chi},\mathbf{b},\mathbf{t}) \in B_\eps^\infty(\boldsymbol{\eps}^p_{n-2},\boldsymbol{\sigma}_{n-2},\boldsymbol{\chi}_{n-2},\mathbf{b}^h_{n-1},\mathbf{t}^h_{n-1}),
      \end{align*}
      where $B_\eps^\infty(\cdot)$ denotes the ball of radius $\eps$ in $M_h^3\times M_h^s \times B_h \times T_h $.
  \end{theorem}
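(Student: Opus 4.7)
The strategy mirrors the proof of Theorem~\ref{thm:solexp}, but now the anchor for the semismooth implicit function theorem (\cite[Proposition~10.3]{Sauter}) is the solution at the \emph{previous} time step rather than the solution of the implicit non-fractional scheme. Concretely, I would set $\hat{\mathbf{u}}:=\mathbf{u}_{n-1}$ together with $\hat{\boldsymbol{\eps}}^p:=\boldsymbol{\eps}^p_{n-2}$, $\hat{\boldsymbol{\sigma}}:=\boldsymbol{\sigma}_{n-2}$, $\hat{\boldsymbol{\chi}}:=\boldsymbol{\chi}_{n-2}$, $\hat{\mathbf{b}}:=\mathbf{b}^h_{n-1}$, $\hat{\mathbf{t}}:=\mathbf{t}^h_{n-1}$, so that $T(\hat{\mathbf{u}},\hat{\boldsymbol{\eps}}^p,\hat{\boldsymbol{\sigma}},\hat{\boldsymbol{\chi}},\hat{\mathbf{b}},\hat{\mathbf{t}})=0$ holds by assumption. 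The hypothesis $\|\mathbf{C}(\boldsymbol{\eps}(\hat{\mathbf{u}})-\hat{\boldsymbol{\eps}}^p)-\hat{\boldsymbol{\sigma}}\|_\infty$ small, together with $|\boldsymbol{\Delta}|$ small, ensures that at the anchor we are in the regime where Theorem~\ref{thm:rsemi}, Remark~\ref{rm:rsemi}, Theorem~\ref{thm:rposdef2}, and Remark~\ref{rm:rposdef2} apply pointwise in $\Omega$. The semismooth implicit function theorem then produces the desired semismooth map $\mathbf{u}(\boldsymbol{\eps}^p,\boldsymbol{\sigma},\boldsymbol{\chi},\mathbf{b},\mathbf{t})$ on an $L^\infty$-ball around the anchor.

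For semismoothness of $T$ at the anchor as a function of the vector of coefficients of all its arguments, I would proceed as in the non-fractional case. The key pointwise input is Remark~\ref{rm:rsemi}, which yields semismoothness of $R$ in $(\boldsymbol{\sigma},\boldsymbol{\sigma}_{n-1},\boldsymbol{\chi}_{n-1})$ for a.e.~$\mathbf{x}\in\Omega$. The contribution of $(\mathbf{b},\mathbf{t})$ to $T$ is linear, hence trivially semismooth. It then suffices to prove a version of Lemma~\ref{lem:lip} for $R$ as a function of all of its arguments, which follows verbatim once one notes that the denominator of $R$ stays bounded away from zero in the $L^\infty$-ball around the anchor by Lemma~\ref{lem:positive}, Lemma~\ref{lem:fracdiff}, and continuity; the integrands in $T$ are then uniformly Lipschitz in the coefficients, with an integrable majorant dominated by $\|\boldsymbol{\phi}_i\|_\infty$ times a bounded function, so Lemma~\ref{lem:ssint} directly yields semismoothness of $T$ around the anchor.

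For the invertibility part, I would generalize Lemma~\ref{lem:partgrad} to the present setting: every $[\mathcal{S}\ \mathcal{P}_1\ \mathcal{P}_2\ \dots]\in\sD R(\hat{\boldsymbol{\sigma}},\hat{\boldsymbol{\sigma}}_{n-1},\hat{\boldsymbol{\chi}}_{n-1})$ has first block $\mathcal{S}\in \sD_{\boldsymbol{\sigma}} R$, because (arguing as in Lemma~\ref{lem:partgrad}) the only non-differentiable component is $\max\{0,\cdot\}\circ f$ and the two admissible approximating sequences (with $f>0$ or $f<0$) produce first blocks that already belong to $\sD_{\boldsymbol{\sigma}} R$. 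Combining this with Remark~\ref{rm:rposdef2}, Lemma~\ref{lem:scposdef} (using $\kappa\geq 2\mu/d$), and the parameter assumptions of Theorem~\ref{thm:rposdef2}, I obtain that $\mathcal{S}\mathbf{C}$ is uniformly positive definite on $\Omega$ at the anchor. A verbatim repetition of the matrix computation in Lemma~\ref{lem:posdef} then shows that every element of $\sD_{\mathbf{c}} T$ at the anchor is positive definite, and in particular invertible.

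The main obstacle I anticipate is the uniformity of all these bounds simultaneously. The well-posedness of the fractional derivatives requires $|\boldsymbol{\sigma}-\boldsymbol{\sigma}_{n-1}|$ small relative to $\boldsymbol{\Delta}$ versus $Y_0$; the positive definiteness of $\sD R_n$ (Theorem~\ref{thm:rposdef2}) requires the stronger $L^\infty$ neighborhood of the previous state to propagate through the convex hull in the subdifferential; and the integrable Lipschitz constant needed for Lemma~\ref{lem:ssint} requires that the denominator of $R$ stays bounded below uniformly on $\Omega$. Each of these is a small-neighborhood condition, so the main care is in choosing $\eps>0$ small enough (and, backing up to the hypothesis, the gap $\|\mathbf{C}(\boldsymbol{\eps}(\hat{\mathbf{u}})-\hat{\boldsymbol{\eps}}^p)-\hat{\boldsymbol{\sigma}}\|_\infty$ and $|\boldsymbol{\Delta}|$ small enough) so that all three conditions hold at once on $B_\eps^\infty$. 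Once this is arranged, the semismooth implicit function theorem concludes.
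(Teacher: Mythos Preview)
Your proposal is correct and follows essentially the same route as the paper: establish a fractional analogue of the Lipschitz bound (the paper's Lemma~\ref{lem:lipfrac}), invoke Lemma~\ref{lem:ssint} together with Theorem~\ref{thm:rsemi} for semismoothness of $T$, extend Lemma~\ref{lem:partgrad} to the full variable set (the paper's Lemma~\ref{lem:partgradfrac}), repeat the positive-definiteness argument of Lemma~\ref{lem:posdef} using Theorem~\ref{thm:rposdef2} and Lemma~\ref{lem:scposdef} (the paper's Lemma~\ref{lem:posdeffrac}), and conclude with the semismooth implicit function theorem. The only point where ``verbatim'' is a slight overstatement is the Lipschitz lemma: in the fractional case the paper partitions $\Omega$ according to the size of $|{\rm dev}(\mathbf{C}(\boldsymbol{\eps}(\mathbf{u}_{n-1})-\boldsymbol{\eps}^p_{n-2})+\boldsymbol{\chi}^1_{n-2})|$ relative to $\zeta(-\chi^2_{n-2}+Y_0)$ and must separately establish Lipschitz continuity of $\Dhat{\boldsymbol{\sigma}_{n-1}}{\alpha}f(\boldsymbol{\sigma}_{n-1},\boldsymbol{\chi})$ in $(\boldsymbol{\sigma}_{n-1},\boldsymbol{\chi})$, which is where the smallness of $\boldsymbol{\Delta}$ and of $\|\mathbf{C}(\boldsymbol{\eps}(\mathbf{u}_{n-1})-\boldsymbol{\eps}^p_{n-2})-\boldsymbol{\sigma}_{n-2}\|_\infty$ enter; but you have correctly flagged exactly this uniformity issue as the main care point.
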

  The proof follows the same steps as the one for Theorem~\ref{thm:solexp}, meaning we aim to apply the implicit function theorem for semismooth functions. In the following we state and prove the pendants of Lemmas~\ref{lem:lip}~\&~\ref{lem:partgrad}--\ref{lem:posdef}. Because of the finite-dimensionality we can write $T$ as
  \begin{align*}
  \begin{split}
      T\colon V_h \times M_h \times M_h \times M_h \times M_h^s \times B_h \times T_h &\to \mathbb{R}^k\\
     T(\mathbf{u}_n,\boldsymbol{\eps}^p_{n-1},\boldsymbol{\sigma}_{n-1},\boldsymbol{\chi}_{n-1},\mathbf{b}^h_n,\mathbf{t}^h_n)_i&=\langle T(\mathbf{u}_n,\boldsymbol{\eps}^p_{n-1},\boldsymbol{\sigma}_{n-1},\boldsymbol{\chi}_{n-1},\mathbf{b}^h_n,\mathbf{t}^h_n), \boldsymbol{\phi}_i \rangle\quad\text{for }i=1,\ldots, k.
  \end{split}
  \end{align*}
  The following lemma gives a Lipschitz type estimate on the integrand in the definition of $T$.
  \begin{lemma}\label{lem:lipfrac}
  Suppose $ \|\mathbf{C}(\boldsymbol{\eps}(\mathbf{u}_{n-1})-\boldsymbol{\eps}^p_{n-2})-\boldsymbol{\sigma}_{n-2}\|_{\infty}$ and $\boldsymbol{\Delta}>0$ sufficiently small, then there exists $\eps>0$ and a constant $K>0$ such that for all $(\boldsymbol{\sigma}^1,\boldsymbol{\sigma}^1_{n-1},\boldsymbol{\chi}_1), (\boldsymbol{\sigma}^2,\boldsymbol{\sigma}^2_{n-1},\boldsymbol{\chi}_2) \in B_{\eps}^\infty\Big(\mathbf{C}(\boldsymbol{\eps}(\mathbf{u}_{n-1})-\boldsymbol{\eps}^p_{n-2}),\boldsymbol{\sigma}_{n-2},\boldsymbol{\chi}_{n-2}\Big) $ it holds pointwise almost everywhere in $\Omega$ that
  \begin{align*}
      \Big|R\Big(\boldsymbol{\sigma}^1,\boldsymbol{\sigma}^1_{n-1},\boldsymbol{\chi}_1\Big)&:\boldsymbol{\eps}(\boldsymbol{\phi}_i)-R\Big(\boldsymbol{\sigma}^2,\boldsymbol{\sigma}^2_{n-1},\boldsymbol{\chi}_2\Big):\boldsymbol{\eps}(\boldsymbol{\phi}_i)\Big|\\
      &<K\Big(|\boldsymbol{\sigma}^1-\boldsymbol{\sigma}^2|+|\boldsymbol{\sigma}^1_{n-1}-\boldsymbol{\sigma}^2_{n-1}|+|\boldsymbol{\chi}^1_1-\boldsymbol{\chi}^1_2|+|\chi^2_1-\chi^2_2|\Big).
  \end{align*}
  \end{lemma}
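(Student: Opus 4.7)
The strategy mirrors that of Lemma~\ref{lem:lip}: partition $\Omega$ into an elastic region, where the return mapping reduces to the identity, and a plastic region, where one establishes local Lipschitz continuity in all three arguments $(\boldsymbol{\sigma},\boldsymbol{\sigma}_{n-1},\boldsymbol{\chi})$ by differentiating through the closed-form expression~\eqref{eq:returnmapping}. The novelty compared to Lemma~\ref{lem:lip} is the presence of the normalized fractional derivative $\Dhat{\boldsymbol{\sigma}_{n-1}}{\alpha}f^{n-1}$, which depends on $\boldsymbol{\sigma}_{n-1}$ and $\boldsymbol{\chi}_{n-1}$ in a non-local way; the necessary regularity information for this object is already encoded in Lemma~\ref{lem:fracdiff} and its magnitude is controlled from below by Lemma~\ref{lem:positive}.

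Concretely, I partition $\Omega = \Omega_1 \cup \Omega_2$ according to whether $|{\rm dev}(\boldsymbol{\sigma}_{n-2}+\boldsymbol{\chi}^1_{n-2})|$ lies below or above $(-\chi^2_{n-2}+Y_0)/2$, exactly as in Lemma~\ref{lem:lip}. Choosing $\eps_1>0$ sufficiently small, the continuity of $f$ and the anchor relation $T(\mathbf{u}_{n-1},\ldots)=0$ (so that $\mathbf{C}(\boldsymbol{\eps}(\mathbf{u}_{n-1})-\boldsymbol{\eps}^p_{n-2})$ is close to $\boldsymbol{\sigma}_{n-2}$) ensure that on $\Omega_1$ we have $f(\boldsymbol{\sigma}^j,\boldsymbol{\chi}_j)<0$ for $j=1,2$ and all admissible perturbations; hence $R\equiv \boldsymbol{\sigma}$ on $\Omega_1$, and the claim is trivial with constant $|\boldsymbol{\eps}(\boldsymbol{\phi}_i)|\in L^\infty(\Omega)$.

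On $\Omega_2$ the real work is to verify that each of the building blocks of $R$ is bounded and Lipschitz in $(\boldsymbol{\sigma},\boldsymbol{\sigma}_{n-1},\boldsymbol{\chi})$ on the $\eps$-ball around the anchor point. The term $\max\{0,f(\boldsymbol{\sigma},\boldsymbol{\chi}_{n-1})\}$ is $1$-Lipschitz composed with a smooth function of $(\boldsymbol{\sigma},\boldsymbol{\chi}_{n-1})$; the tensor $\partial_{\boldsymbol{\sigma}}f^{n-1}$ is $C^1$ in $(\boldsymbol{\sigma}_{n-1},\boldsymbol{\chi}_{n-1})$ under the lower bound on $|{\rm dev}(\boldsymbol{\sigma}_{n-1}+\boldsymbol{\chi}^1_{n-1})|$ guaranteed on $\Omega_2$; and the unnormalized fractional gradient $\D{\boldsymbol{\sigma}_{n-1}}{\alpha}f^{n-1}$ is continuously differentiable in the same arguments by Lemma~\ref{lem:fracdiff}, provided $\boldsymbol{\Delta}$ is small enough so that $|{\rm dev}(\boldsymbol{\sigma}_{n-1}+\boldsymbol{\chi}^1_{n-1})|>|{\rm dev}(\boldsymbol{\Delta})|$ throughout the perturbation ball. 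The main delicate point is the normalization: to apply the quotient rule we must bound $|\D{\boldsymbol{\sigma}_{n-1}}{\alpha}f^{n-1}|$ away from zero, which I do by invoking Lemma~\ref{lem:positive} together with the strict positivity $\partial_{\boldsymbol{\sigma}}f:\D{\boldsymbol{\sigma}_{n-1}}{\alpha}f^{n-1}>0$, upgraded to a uniform positive lower bound via compactness on the closed perturbation ball and continuity of all involved quantities (after possibly shrinking $\eps$). The same bound-below argument applied to the full denominator in~\eqref{eq:returnmapping} yields a uniform positive lower bound, so that standard product/quotient rules for Lipschitz maps combine the above bounds into a uniform Lipschitz estimate, with the factor $\boldsymbol{\eps}(\boldsymbol{\phi}_i)\in L^\infty$ producing the final pointwise constant $K$.

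The principal obstacle is thus not the Lipschitz calculus itself, which is essentially identical to Lemma~\ref{lem:lip}, but controlling the normalized fractional derivative: one needs simultaneously that (i) $\D{\boldsymbol{\sigma}_{n-1}}{\alpha}f^{n-1}$ is well-defined and smooth (requiring $\boldsymbol{\Delta}$ small relative to $|{\rm dev}(\boldsymbol{\sigma}_{n-1}+\boldsymbol{\chi}^1_{n-1})|$), and (ii) its magnitude admits a uniform positive lower bound over the perturbation ball. Both conditions are secured by the smallness assumptions on $\|\mathbf{C}(\boldsymbol{\eps}(\mathbf{u}_{n-1})-\boldsymbol{\eps}^p_{n-2})-\boldsymbol{\sigma}_{n-2}\|_\infty$ and $\boldsymbol{\Delta}$ together with Lemmas~\ref{lem:positive} and~\ref{lem:fracdiff}, after which the remainder of the argument proceeds in direct analogy to Lemma~\ref{lem:lip}.
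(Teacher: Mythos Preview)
Your proposal is correct and follows essentially the same approach as the paper's proof: partition into an elastic region (where $R\equiv\boldsymbol{\sigma}$) and a plastic region, then on the latter control the denominator from below via Lemma~\ref{lem:positive} and combine Lipschitz bounds for the individual factors through product/quotient rules. The only noteworthy difference is that the paper obtains the Lipschitz estimate for the fractional gradient by a direct computation in the spirit of Lemma~\ref{lem:alphaconv} (yielding an explicit constant depending on $\boldsymbol{\Delta}$ and $\alpha$), whereas you invoke the $C^1$-regularity from Lemma~\ref{lem:fracdiff}; both routes are valid, and the paper's partition uses a parameter $\zeta$ tuned to an explicit distance condition where you simply appeal to smallness of $\boldsymbol{\Delta}$, which amounts to the same thing.
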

  \begin{proof}
      Again, we partition $\Omega$ into two subdomains. First, we choose $0<\zeta <1$ such that $|{\rm dev}(\mathbf{C}(\boldsymbol{\eps}(\mathbf{u}_{n-1})-\boldsymbol{\eps}^p_{n-2})+\boldsymbol{\chi}^1_{n-2})|\geq\zeta Y_0$ implies
  \begin{align}\label{eq:distest}
          \text{dist}\Big(0, [{\rm dev}(\mathbf{C}(\boldsymbol{\eps}(\mathbf{u}_{n-1})-\boldsymbol{\eps}^p_{n-2})+\boldsymbol{\chi}^1_{n-2}-\boldsymbol{\Delta}),{\rm dev}(\mathbf{C}(\boldsymbol{\eps}(\mathbf{u}_{n-1})-\boldsymbol{\eps}^p_{n-2})+\boldsymbol{\chi}^1_{n-2}+\boldsymbol{\Delta})]\Big)>c>0,
      \end{align}
      where $c>0$ is fixed. Obviously, this puts restrictions on $\boldsymbol{\Delta}$.
      The domain is then partitioned in
      \begin{align*}
          \Omega_1&=\Big\{\mathbf{x} \in \Omega: |{\rm dev}(\mathbf{C}(\boldsymbol{\eps}(\mathbf{u}_{n-1}(\mathbf{x}))-\boldsymbol{\eps}^p_{n-2}(\mathbf{x}))+\boldsymbol{\chi}^1_{n-2}(\mathbf{x}))|\leq\zeta(-\chi^2_{n-2}(\mathbf{x})+Y_0)\Big\}, \\
          \Omega_2&=\Big\{\mathbf{x} \in \Omega: |{\rm dev}(\mathbf{C}(\boldsymbol{\eps}(\mathbf{u}_{n-1}(\mathbf{x}))-\boldsymbol{\eps}^p_{n-2}(\mathbf{x}))+\boldsymbol{\chi}^1_{n-2}(\mathbf{x}))|>\zeta(-\chi^2_{n-2}(\mathbf{x})+Y_0)\Big\}.
      \end{align*}
      Now choose $\eps>0$ such that for $(\boldsymbol{\sigma},\boldsymbol{\sigma}_{n-1},\boldsymbol{\chi}) \in B_{\eps}^\infty\Big(\mathbf{C}(\boldsymbol{\eps}(\mathbf{u}_{n-1})-\boldsymbol{\eps}^p_{n-2}),\boldsymbol{\sigma}_{n-2},\boldsymbol{\chi}_{n-2}\Big)$ it holds
      \begin{align}\label{eq:uniformfrom0}
      \begin{split}
          &|{\rm dev}(\boldsymbol{\sigma}(\mathbf{x})+ \boldsymbol{\chi}^1(\mathbf{x})|\leq -\chi^2(\mathbf{x})+Y_0,\ \mathbf{x} \in \Omega_1\quad\text{ as well as}\\
          &\text{dist}\Big(0,[{\rm dev}(\boldsymbol{\sigma}_{n-1}(\mathbf{x})+\boldsymbol{\chi}^1(\mathbf{x})-\boldsymbol{\Delta}),{\rm dev}(\boldsymbol{\sigma}_{n-1}(\mathbf{x})+\boldsymbol{\chi}^1(\mathbf{x})+\boldsymbol{\Delta})]\Big) > c_2>0, \ \mathbf{x} \in \Omega_2, 
          \end{split}
      \end{align}
      for some $c_2>0$, which follows from~\eqref{eq:distest} if $\|\mathbf{C}(\boldsymbol{\eps}(\mathbf{u}_{n-1})-\boldsymbol{\eps}^p_{n-2})-\boldsymbol{\sigma}_{n-2}\|_{\infty}$ sufficiently small relative to $\eps$. In the following let $(\boldsymbol{\sigma}^1,\boldsymbol{\sigma}^1_{n-1},\boldsymbol{\chi}_1), (\boldsymbol{\sigma}^2,\boldsymbol{\sigma}^2_{n-1},\boldsymbol{\chi}_2)\in B_{\eps}^\infty\Big(\mathbf{C}(\boldsymbol{\eps}(\mathbf{u}_{n-1})-\boldsymbol{\eps}^p_{n-2}),\boldsymbol{\sigma}_{n-2},\boldsymbol{\chi}_{n-2}\Big)$.
      In $\Omega_1$ we have that $R(\boldsymbol{\sigma}^1,\boldsymbol{\sigma}^1_{n-1},\boldsymbol{\chi}_1)=\boldsymbol{\sigma}_1$, therefore the estimate follows immediately with $K=1$. A more thorough investigation is needed in $\Omega_2$.  Because of~\eqref{eq:uniformfrom0}, we show analogously to Lemma~\ref{lem:lip} that $\Dhat{\boldsymbol{\sigma}^1_{n-1}}{\alpha}f(\boldsymbol{\sigma}^1_{n-1},\boldsymbol{\chi}_1)$ is continuous in $(\boldsymbol{\sigma}^1_{n-1},\boldsymbol{\chi}_1)$ at all $\mathbf{x} \in \Omega_2$.
      Furthermore, there obviously holds
  \begin{align}
      \label{eq:fracbounded}
      \Big|\Dhat{\boldsymbol{\sigma}^1_{n-1}}{\alpha}f(\boldsymbol{\sigma}^1_{n-1},\boldsymbol{\chi}_1)\Big| =1.
  \end{align}
  This implies that $|\max\{0,f(\boldsymbol{\sigma}^1,\boldsymbol{\chi}_1)\}\Dhat{\boldsymbol{\sigma}^1_{n-1}}{\alpha}f(\boldsymbol{\sigma}^1_{n-1},\boldsymbol{\chi}_1)|$ is bounded from above uniformly in $\Omega_2$, independent of $(\boldsymbol{\sigma}^1,\boldsymbol{\sigma}^1_{n-1},\boldsymbol{\chi}_1).$
  Let us turn our attention to the denominator, i.e. 
  \begin{align*}
      2 \mu \partial_{\boldsymbol{\sigma}}f(\boldsymbol{\sigma}^1,\boldsymbol{\chi}_1):\Dhat{\boldsymbol{\sigma}^1_{n-1}}{\alpha}f(\boldsymbol{\sigma}^1_{n-1},\boldsymbol{\chi}_1) +k_1\partial_{\boldsymbol{\sigma}}f(\boldsymbol{\sigma}^1,\boldsymbol{\chi}_1):\partial_{\boldsymbol{\sigma}}f(\boldsymbol{\sigma}^1_{n-1},\boldsymbol{\chi}_1)+k_2.
  \end{align*}
  Lemma~\ref{lem:positive} implies the existence of (a possibly smaller) $\eps>0$ such that the inner products are positive, therefore the denominator is bounded from below by $k_2$ uniformly in $\Omega_2$ and independent of $(\boldsymbol{\sigma}^1,\boldsymbol{\sigma}^1_{n-1},\boldsymbol{\chi}_1)$. It is also bounded from above because of Cauchy--Schwartz inequality and~\eqref{eq:fracbounded}.
  The remaining step is to calculate the Lipschitz constants for numerator and denominator. For the fractional gradient we get via  a similar calculation as in Lemma~\ref{lem:alphaconv} that
  \begin{align*}
  \begin{split}
      \Big|\D{\boldsymbol{\sigma}^1_{n-1}}{\alpha}&f(\boldsymbol{\sigma}^1_{n-1},\boldsymbol{\chi}_1)-\D{\boldsymbol{\sigma}^2_{n-1}}{\alpha}f(\boldsymbol{\sigma}^2_{n-1},\boldsymbol{\chi}_2)\Big| \\
      &\lesssim \frac{ \sum_{ij}\Delta_{ij}^{1-\alpha}}{c_2^2\Gamma(2-\alpha)}\Big(|\boldsymbol{\sigma}^1_{n-1}-\boldsymbol{\sigma}^2_{n-1}|+|\boldsymbol{\chi}^1_1-\boldsymbol{\chi}^1_2|\Big).
  \end{split}
  \end{align*} 
  A similar estimate can be calculated  for $\Dhat{\boldsymbol{\sigma}^1_{n-1}}{\alpha}f(\boldsymbol{\sigma}^1_{n-1},\boldsymbol{\chi}_1)$, because it is continuous on a compact domain and therefore bounded from above and below. For the classical gradient we already obtained an estimate in Lemma~\ref{lem:lip}. Bringing everything together yields the result, where $K>0$ depends only on $\alpha$ and $\boldsymbol{\Delta}$.
  \end{proof}
  It follows from Lemma~\ref{lem:ssint} that $T$ is semismooth in $(\mathbf{u}_n,\boldsymbol{\eps}^p_{n-1},\boldsymbol{\sigma}_{n-1},\boldsymbol{\chi}_{n-1},\mathbf{b}^h_n,\mathbf{t}^h_n)$ around the given solution, whenever $R$ is semismooth in ($\mathbf{C}(\boldsymbol{\eps}(\mathbf{u}_n)-\boldsymbol{\eps}^p_{n-1}),\boldsymbol{\sigma}_{n-1},\boldsymbol{\chi}_{n-1}$) in an $L^\infty$ neighborhood of $(\mathbf{C}(\boldsymbol{\eps}(\mathbf{u}_{n-1})-\boldsymbol{\eps}^p_{n-2}),\boldsymbol{\sigma}_{n-2},\boldsymbol{\chi}_{n-2})$ for almost all $\mathbf{x} \in \Omega$, which is ensured by Theorem~\ref{thm:rsemi}.
  The next lemma states the relationship between partial subdifferentials and full subdifferentials as already used in the non-fractional case.
  \begin{lemma}\label{lem:partgradfrac}
  For any $[\mathcal{S} \ \mathcal{P}] \in \sD R(\boldsymbol{\sigma},\boldsymbol{\sigma}_{n-1},\boldsymbol{\chi}_{n-1})$, there holds $ \mathcal{S} \in \sD_{\boldsymbol{\sigma}} R(\boldsymbol{\sigma},\boldsymbol{\sigma}_{n-1},\boldsymbol{\chi}_{n-1})$.
  \end{lemma}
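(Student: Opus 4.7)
The plan is to mimic the proof of Lemma~\ref{lem:partgrad}, extending it from the two-block case $(\boldsymbol{\sigma},\boldsymbol{\sigma}_{n-1})$ to the three-block case $(\boldsymbol{\sigma},\boldsymbol{\sigma}_{n-1},\boldsymbol{\chi}_{n-1})$. The essential structural feature that made the earlier argument work was that the only source of non-differentiability of $R$ was the composition with $\max\{0,\cdot\}$ through $f(\boldsymbol{\sigma},\boldsymbol{\chi}_{n-1})$; this localizes the bad set to the yield surface and is preserved in the fractional setting.

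First, I would identify the exceptional set
\begin{align*}
\mathcal{A} = \bigl\{(\boldsymbol{\sigma},\boldsymbol{\sigma}_{n-1},\boldsymbol{\chi}_{n-1}) : f(\boldsymbol{\sigma},\boldsymbol{\chi}_{n-1}) = 0\bigr\},
\end{align*}
and argue that $R$ is continuously differentiable on the complement of $\mathcal{A}$ inside a suitable neighborhood of the reference point. For this I would invoke Lemma~\ref{lem:fracdiff} to guarantee continuous differentiability of $\Dhat{\boldsymbol{\sigma}_{n-1}}{\alpha}f(\boldsymbol{\sigma}_{n-1},\boldsymbol{\chi}_{n-1})$ with respect to both $\boldsymbol{\sigma}_{n-1}$ and $\boldsymbol{\chi}^1_{n-1}$ (the condition on $\boldsymbol{\Delta}$ versus $|{\rm dev}(\boldsymbol{\sigma}_{n-1}+\boldsymbol{\chi}^1_{n-1})|$ follows from the smallness assumptions in effect), together with Lemma~\ref{lem:positive} to ensure that the denominator in the definition of $R$ stays strictly positive. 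Classical smoothness of $\partial_{\boldsymbol{\sigma}} f$ and $\partial_{\boldsymbol{\chi}^1} f$ away from the tip singularity of the deviator handles the rest. Since $\mathcal{A}$ is a zero set in $M^d\times M^d\times M^d\times\mathbb{R}$, it is negligible for the computation of $\partial^B R$.

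Next, for a fixed $[\mathcal{S}\ \mathcal{P}]\in \partial^B R(\boldsymbol{\sigma},\boldsymbol{\sigma}_{n-1},\boldsymbol{\chi}_{n-1})$ I would pick a sequence $(\boldsymbol{\sigma}^k,\boldsymbol{\sigma}^k_{n-1},\boldsymbol{\chi}^k_{n-1})\notin \mathcal{A}$ converging to the reference point along which the full Jacobian converges to $[\mathcal{S}\ \mathcal{P}]$. Because the derivative formulas~\eqref{eq:s1} and~\eqref{eq:s2} differ by a term that does not tend to $0$ when $f(\boldsymbol{\sigma},\boldsymbol{\chi}_{n-1})\to 0^+$ (the factor $\max\{0,f\}$ vanishes, but the Jacobian picks up an $\partial_{\boldsymbol{\sigma}}f$-contribution whose magnitude is non-vanishing), such a converging sequence must eventually stay on one side of the yield surface, either entirely in $\{f<0\}$ or entirely in $\{f>0\}$. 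Hence the partial Jacobians $\partial_{\boldsymbol{\sigma}} R(\boldsymbol{\sigma}^k,\boldsymbol{\sigma}^k_{n-1},\boldsymbol{\chi}^k_{n-1})$ are continuous along the sequence and their limit equals $\mathcal{S}$, so $\mathcal{S}\in \partial^B_{\boldsymbol{\sigma}} R(\boldsymbol{\sigma},\boldsymbol{\sigma}_{n-1},\boldsymbol{\chi}_{n-1})$.

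Finally, I would promote this from $\partial^B$ to $\sD$ by taking convex combinations: if $[\mathcal{S}\ \mathcal{P}]=\sum_i \lambda_i [\mathcal{S}_i\ \mathcal{P}_i]$ with $[\mathcal{S}_i\ \mathcal{P}_i]\in \partial^B R$, then each $\mathcal{S}_i\in \partial^B_{\boldsymbol{\sigma}} R$ by the preceding step, so $\mathcal{S}=\sum_i\lambda_i \mathcal{S}_i\in \sD_{\boldsymbol{\sigma}} R$. The main obstacle I anticipate is the bookkeeping needed to rigorously justify the dichotomy "the sequence eventually stays on one side" in the three-variable setting; the argument relies on the fact that the two branches of the Jacobian differ by a term bounded away from zero uniformly in $k$, which in the fractional case requires combining the Lemma~\ref{lem:positive} positivity estimate with the smallness bound on $\boldsymbol{\Delta}$ to keep $|\Dhat{\boldsymbol{\sigma}_{n-1}}{\alpha}f^{n-1}|=1$ and $\partial_{\boldsymbol{\sigma}} f$ uniformly of unit length along the sequence. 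Beyond this, the proof is a direct transcription of the earlier argument.
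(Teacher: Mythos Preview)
Your proposal is correct and follows essentially the same approach as the paper, which simply states that the proof transfers verbatim from Lemma~\ref{lem:partgrad}. You have spelled out the necessary adaptations (invoking Lemma~\ref{lem:fracdiff} for the smoothness of the fractional gradient in the additional variables), but the argument structure---exceptional set, one-sided sequence, convex hull---is identical.
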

  \begin{proof}
     The proof transfers verbatim from Lemma~\ref{lem:partgrad}.
  \end{proof}
  For the invertibility of the elements of the subgradient, we identify $\mathbf{u}_{n-1}$ with the corresponding vector $\mathbf{c}\in \mathbb{R}^k$ Then, statement and proof are very similar to the previous case in Lemma~\ref{lem:posdef}.
  \begin{lemma}\label{lem:posdeffrac}
    Assume that the subderivative $[\widetilde{\mathcal{S}}\ \mathcal{P}]\in \sD T(\mathbf{u}_{n-1},\boldsymbol{\eps}^p_{n-2},\boldsymbol{\sigma}_{n-2},\boldsymbol{\chi}_{n-2},\mathbf{b}^h_{n-1},\mathbf{t}^h_{n-1})$ satisfies $\widetilde{\mathcal{S}}=\sD_{\mathbf{c}}T(\mathbf{u}_{n-1},\boldsymbol{\eps}^p_{n-2},\boldsymbol{\sigma}_{n-2},\boldsymbol{\chi}_{n-2},\mathbf{b}^h_{n-1},\mathbf{t}^h_{n-1})$. Then all matrices in $\widetilde{\mathcal{S}}$ are positive definite.
  \end{lemma}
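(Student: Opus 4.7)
The plan is to mimic the strategy of Lemma~\ref{lem:posdef} essentially verbatim, replacing the non-fractional ingredients (Theorem~\ref{thm:solexp} era) with their fractional counterparts established in Theorems~\ref{thm:rposdef2}, \ref{thm:rsemi}, Remark~\ref{rm:rposdef2}, Lemma~\ref{lem:scposdef}, and Lemma~\ref{lem:partgradfrac}. The hypotheses of Theorem~\ref{thm:solfrac} are assumed, in particular $\kappa \geq 2\mu/d$ as well as the parameter conditions (ii) or (iii) from Theorem~\ref{thm:rposdef2}, and the trial stress lies in a sufficiently small $L^\infty$-neighbourhood of $\boldsymbol{\sigma}_{n-2}$ so that all subsequent pointwise assertions apply at every $\mathbf{x}\in\Omega$.

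First, I would fix an arbitrary $\mathbf{b}\in\mathbb{R}^k$ and, following~\cite[Cor.\ below Thm.\ 2.6.6]{clarke} combined with the chain rule for Clarke subdifferentials, move the inner product inside the subdifferential:
\begin{align*}
\mathbf{b}^T \widetilde{\mathcal{S}}\, \mathbf{b}
\;=\;\sD_{\mathbf{c}}\bigl(\mathbf{b}^T T\bigr)\mathbf{b}
\;\subset\;\sum_{i=1}^k b_i\int_\Omega \boldsymbol{\eps}(\boldsymbol{\phi}_i):\sD_{\mathbf{c}} R\!\left(\mathbf{C}(\boldsymbol{\eps}(\mathbf{u}_{n-1})-\boldsymbol{\eps}^p_{n-2}),\boldsymbol{\sigma}_{n-2},\boldsymbol{\chi}_{n-2}\right)\mathbf{b}\,dx.
\end{align*}
The integral interchange is legitimate because Lemma~\ref{lem:lipfrac} provides an integrable local Lipschitz bound, which is precisely the hypothesis needed to apply Clarke's interchange theorem (as in the proof of Lemma~\ref{lem:ssint}). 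A further application of the chain rule together with $\mathbf{u}_{n-1}=\sum_j c_j\boldsymbol{\phi}_j$ rewrites the inner subdifferential as $\sD_{\boldsymbol{\sigma}} R(\cdots)\,\mathbf{C}\,\boldsymbol{\eps}(\boldsymbol{\phi}_j)$ summed against $b_j$.

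Next I would invoke Lemma~\ref{lem:partgradfrac} to ensure that every selection $\mathcal{S}$ that arises this way indeed lies in the partial subdifferential $\sD_{\boldsymbol{\sigma}} R_n^{\boldsymbol{\sigma}_{n-2}}(\cdots)$, so that the positive-definiteness statements of Theorem~\ref{thm:rposdef2} and Remark~\ref{rm:rposdef2} are applicable. Combined with Lemma~\ref{lem:scposdef}, which carries the positive definiteness of $\mathcal{S}$ over to $\mathcal{S}\mathbf{C}$ under $\kappa\geq 2\mu/d$, we obtain a uniform constant $C>0$, independent of $\mathbf{x}\in\Omega$ and of the selection, such that
\begin{align*}
\boldsymbol{\tau}:\mathcal{S}\mathbf{C}\,\boldsymbol{\tau}\;\geq\;C\,|\boldsymbol{\tau}|^2 \qquad\text{for all }\boldsymbol{\tau}\in\mathbb{R}^{d\times d}.
\end{align*}
Assembling the resulting bilinear form exactly as in the matrix display in the proof of Lemma~\ref{lem:posdef} then yields
\begin{align*}
\mathbf{b}^T\widetilde{\mathcal{S}}\,\mathbf{b}\;\geq\; C\int_\Omega\Bigl|\sum_{j=1}^k b_j\,\boldsymbol{\eps}(\boldsymbol{\phi}_j)\Bigr|^2 dx\;\geq\; \widetilde{C}\,|\mathbf{b}|^2,
\end{align*}
with the last step following from the discrete Korn/Poincaré inequality on $V_h$ (recall $V_h\subset(W^{1,\infty}_0)^d$), using that $\{\boldsymbol{\phi}_j\}$ is a basis; equivalently one may note that $\mathbf{b}\mapsto \sum_j b_j\boldsymbol{\eps}(\boldsymbol{\phi}_j)$ is injective on $\mathbb{R}^k$ so its $L^2$-norm is an equivalent norm.

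The main obstacle, as in the non-fractional case, is the chain-rule/interchange step: Clarke's chain rule produces only an inclusion rather than an equality, so one must argue that \emph{every} element of $\sD_{\mathbf{c}} T$ (not just selected ones) is captured by the integral representation. This is exactly the subtlety that Lemma~\ref{lem:partgradfrac} is designed to overcome, and the Lipschitz bound from Lemma~\ref{lem:lipfrac} supplies the integrable majorant that Clarke's interchange theorem requires. Once these two ingredients are in place, the positive definiteness of the partial subgradient $\widetilde{\mathcal{S}}$ follows uniformly in $\mathbf{b}$, concluding the proof.
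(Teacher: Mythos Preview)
Your proposal is correct and follows essentially the same route as the paper: apply \cite[Corollary below Thm.~2.6.6]{clarke} together with the chain rule to obtain the inclusion of $\mathbf{b}^T\sD_{\mathbf{c}}T\,\mathbf{b}$ into the set of matrices assembled from integrable selections $\mathcal{S}\in\sD_{\boldsymbol{\sigma}}R$, and then conclude via uniform positive definiteness of $\mathcal{S}\mathbf{C}$. Your write-up is in fact more explicit than the paper's about the supporting ingredients (the Lipschitz majorant from Lemma~\ref{lem:lipfrac}, the role of Lemma~\ref{lem:scposdef}, and the final Korn step), whereas the paper compresses these into a reference back to Lemma~\ref{lem:posdef} and a single appeal to Theorem~\ref{thm:rposdef2}.
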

  \begin{proof}
  Denote by $\mathcal{S}$ the selections of $\sD_{\boldsymbol{\sigma}} R(\mathbf{C}(\boldsymbol{\eps}(\mathbf{u}_{n-1})-\boldsymbol{\eps}^p_{n-1},\boldsymbol{\sigma}_{n-2},\boldsymbol{\chi}_{n-2})$, such that $\mathcal{S}\mathbf{C}\boldsymbol{\eps}(\boldsymbol{\phi}_i):\boldsymbol{\eps}(\boldsymbol{\phi}_j)$ is integrable for every $1 \leq i,j  \leq k$, we get by using chain-rule and as in Lemma \ref{lem:posdef} by \cite[Corollary below Thm. 2.6.6]{clarke}  that for $\mathbf{b} \in \mathbb{R}^k$, we have
  \begin{align*}
      \begin{split}
          \mathbf{b}^T\sD_{\mathbf{c}}&T(\mathbf{u}_{n-1},\boldsymbol{\eps}^p_{n-2},\boldsymbol{\sigma}_{n-2},\boldsymbol{\chi}_{n-2},\mathbf{b}^h_{n-1},\mathbf{t}^h_{n-1})\mathbf{b}\\
          &\subset \mathbf{b}^T 
          \left\{\begin{pmatrix}
        \int_{\Omega} \mathcal{S}\mathbf{C}\boldsymbol{\eps}(\boldsymbol{\phi}_1):\boldsymbol{\eps}(\boldsymbol{\phi}_1)dx&
        \cdots & \int_{\Omega} \mathcal{S}\mathbf{C}\boldsymbol{\eps}(\boldsymbol{\phi}_k):\boldsymbol{\eps}(\boldsymbol{\phi}_1)dx\\
        \vdots & \ddots & \vdots\\
              \int_{\Omega} \mathcal{S}\mathbf{C}\boldsymbol{\eps}(\boldsymbol{\phi}_1):\boldsymbol{\eps}(\boldsymbol{\phi}_k)dx & \cdots &\int_{\Omega} \mathcal{S}\mathbf{C}\boldsymbol{\eps}(\boldsymbol{\phi}_k):\boldsymbol{\eps}(\boldsymbol{\phi}_k)dx
      \end{pmatrix}\right\}\mathbf{b}.
      \end{split}
  \end{align*}
  Since Theorem~\ref{thm:rposdef2} implies uniform positive definiteness of $\mathcal{S} \mathbf{C}$  if $\|\mathbf{C}(\boldsymbol{\eps}(\mathbf{u}_{n-1})-\boldsymbol{\eps}^p_{n-2})-\boldsymbol{\sigma}_{n-2}\|_{\infty}$ is sufficiently small, the result follows.
  \end{proof}
  
  \begin{proof}[Proof of Theorem~\ref{thm:solfrac}]
  Following the proof of Theorem~\ref{thm:solexp} but replacing Lemmas~\ref{lem:lip}~\&~\ref{lem:partgrad}--\ref{lem:posdef} with Lemmas~\ref{lem:lipfrac}~\&~\ref{lem:partgradfrac}--\ref{lem:posdeffrac}, we conclude the proof. 
  \end{proof}
  \subsection{Generalizations}\label{sec:gen}
  In this section we want to generalize Theorems~\ref{thm:solexp} and~\ref{thm:solfrac} to the case of infinite dimensional inputs, which would arise if we do not use the space discretization introduced above, i.e. piecewise affine displacements and piecewise constant stresses. We start with the non-fractional setting of Theorem~\ref{thm:solexp} and redefine $T$ from~\eqref{eq:defTnonfrac} as
  \begin{align*}
      T: V_h \times L^\infty(\Omega)^{d \times d}_{\text{sym}} \to (V_h)^*
  \end{align*}
  to prove the following theorem. Note that $V_h$ can be any finite-dimensional subspace of bounded $H^1_0(\Omega)^d$ functions.
  \begin{theorem}
  \label{thm:solexpinf}
      Given, $\kappa \geq \frac{2 \mu}{d}$, $\mathbf{b}_n \in L^1(\Omega)^d, \mathbf{t}_n \in L^1(\Gamma_D)^d$, $\boldsymbol{\eps}^p_{n-1} \in L^\infty(\Omega)^{d \times d}_{\text{sym}}, \boldsymbol{\chi}^1_{n-1} \in L^{\infty}(\Omega)^{d\times d}_{\text{sym}}$, and $\chi^2_{n-1} \in L^\infty(\Omega)$ such that $\chi^2_{n-1}\leq 0$ almost everywhere, let $\hat{\mathbf{u}}_n \in V_h$ satisfy $T^i(\hat{\mathbf{u}}_n)=0.$ Then there exists $\eps>0$ and a function $\mathbf{u}:B^\infty_\eps(\hat{\boldsymbol{\sigma}}_{n-1}) \to V_h$ such that 
      \begin{align*}
          T(\mathbf{u}(\boldsymbol{\sigma}_{n-1}),\boldsymbol{\sigma}_{n-1})=0, \ \forall \boldsymbol{\sigma}_{n-1} \in B^\infty_\eps(\hat{\boldsymbol{\sigma}}_{n-1}),
      \end{align*}
      where $\hat{\boldsymbol{\sigma}}_{n-1}=\mathbf{C}(\boldsymbol{\eps}(\hat{\mathbf{u}}_n)-\boldsymbol{\eps}^p_{n-1})$ and $B^\infty_\eps$ is the Ball with radius $\eps$ in $L^\infty(\Omega)^{d \times d}_{\text{sym}}$ with respect to the $L^\infty$-norm.
  \end{theorem}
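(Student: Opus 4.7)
The plan is to run essentially the same argument as the proof of Theorem~\ref{thm:solexp}, only now treating $\boldsymbol{\sigma}_{n-1}$ as an $L^\infty$ parameter instead of an element of $M_h$. The unknown displacement $\mathbf{u}_n$ still lives in the finite-dimensional space $V_h$, so the equation $T(\mathbf{u},\boldsymbol{\sigma}_{n-1})=0$ is a finite nonlinear system for the coefficient vector $\mathbf{c}\in\mathbb{R}^k$ with $\mathbf{u}=\sum_i c_i\boldsymbol{\phi}_i$, now depending on a parameter in the infinite-dimensional space $L^\infty(\Omega)^{d\times d}_{\text{sym}}$.

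The first step is to observe that the proofs of Lemma~\ref{lem:lip}, Lemma~\ref{lem:partgrad}, and Lemma~\ref{lem:posdef} never use the finite-dimensional structure of $M_h$; they only exploit $L^\infty$-bounds on $\boldsymbol{\sigma}_{n-1}$ and pointwise properties of $R_n^{\boldsymbol{\sigma}_{n-1}}$. Consequently Lemma~\ref{lem:lip}, together with the uniform boundedness of $|\boldsymbol{\phi}_i|$ and $|\boldsymbol{\eps}(\boldsymbol{\phi}_i)|$ on $\Omega$, yields the Lipschitz estimate
\begin{align*}
|T(\mathbf{c}^1,\boldsymbol{\sigma}_{n-1}^1)-T(\mathbf{c}^2,\boldsymbol{\sigma}_{n-1}^2)|
\lesssim |\mathbf{c}^1-\mathbf{c}^2| + \|\boldsymbol{\sigma}_{n-1}^1-\boldsymbol{\sigma}_{n-1}^2\|_{L^\infty(\Omega)},
\end{align*}
while Lemma~\ref{lem:posdef} (together with Lemma~\ref{lem:scposdef}, which is where the hypothesis $\kappa\geq 2\mu/d$ enters) gives that every $\mathcal{S}\in \sD_{\mathbf{c}}T(\hat{\mathbf{c}},\hat{\boldsymbol{\sigma}}_{n-1})$ is uniformly positive definite.

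Since the semismooth implicit function theorem \cite[Proposition~10.3]{Sauter} that was used in the proof of Theorem~\ref{thm:solexp} requires a finite-dimensional parameter space, I plan to bypass it in favor of a direct Banach fixed-point argument. Fixing $\mathcal{S}_0 \in \sD_{\mathbf{c}}T(\hat{\mathbf{c}},\hat{\boldsymbol{\sigma}}_{n-1})$, consider
\begin{align*}
\Psi_{\boldsymbol{\sigma}_{n-1}}(\mathbf{c}) = \mathbf{c} - \mathcal{S}_0^{-1}\, T(\mathbf{c},\boldsymbol{\sigma}_{n-1}).
\end{align*}
The semismoothness of $T(\cdot,\hat{\boldsymbol{\sigma}}_{n-1})$ at $\hat{\mathbf{c}}$, inherited from Theorem~\ref{thm:rsemi} via Lemma~\ref{lem:ssint}, combined with positive definiteness of $\mathcal{S}_0$ and the $L^\infty$-Lipschitz bound above, will show that on a sufficiently small ball around $\hat{\mathbf{c}}$ and for $\boldsymbol{\sigma}_{n-1}$ in a sufficiently small $L^\infty$-neighborhood of $\hat{\boldsymbol{\sigma}}_{n-1}$, the map $\Psi_{\boldsymbol{\sigma}_{n-1}}$ is a self-mapping contraction. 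Banach's fixed-point theorem then produces the required solution $\mathbf{u}(\boldsymbol{\sigma}_{n-1})\in V_h$.

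The main obstacle, compared to Theorem~\ref{thm:solexp}, is the loss of the semismooth implicit function theorem in the infinite-dimensional parameter setting; this is also why the present statement no longer asserts semismoothness of $\mathbf{u}(\cdot)$, only existence (the contraction argument delivers continuity at best without additional regularity in $\boldsymbol{\sigma}_{n-1}$). A subtler technical point is that the contraction constant of $\Psi_{\boldsymbol{\sigma}_{n-1}}$ must be chosen uniformly in $\boldsymbol{\sigma}_{n-1}$ over a small $L^\infty$-ball. This follows because both the Lipschitz constant from Lemma~\ref{lem:lip} and the spectral lower bound on $\sD_{\mathbf{c}}T$ depend only on $\|\boldsymbol{\sigma}_{n-1}-\hat{\boldsymbol{\sigma}}_{n-1}\|_{L^\infty(\Omega)}$, so shrinking $\eps$ keeps the contraction rate bounded away from $1$ and places the Newton-type perturbation $\mathcal{S}_0^{-1}T(\hat{\mathbf{c}},\boldsymbol{\sigma}_{n-1})$ inside the ball where the contraction is active.
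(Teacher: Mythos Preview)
Your observation that Lemmas~\ref{lem:lip}--\ref{lem:posdef} carry over verbatim to $L^\infty$ parameters is correct, as is your diagnosis that the semismooth implicit function theorem of \cite[Proposition~10.3]{Sauter} is unavailable here. However, the proposed Banach fixed-point argument has a genuine gap. The map $\Psi_{\boldsymbol{\sigma}_{n-1}}(\mathbf{c})=\mathbf{c}-\mathcal{S}_0^{-1}T(\mathbf{c},\boldsymbol{\sigma}_{n-1})$ is a contraction only if $\|I-\mathcal{S}_0^{-1}A\|<1$ for \emph{every} $A$ that can arise via the mean-value theorem for Clarke subdifferentials along segments near $\hat{\mathbf{c}}$. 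But $\sD_{\mathbf{c}}T(\hat{\mathbf{c}},\hat{\boldsymbol{\sigma}}_{n-1})$ is in general not a singleton: wherever $f(\hat{\boldsymbol{\sigma}}_{n-1}(\mathbf{x}),\boldsymbol{\chi}_{n-1}(\mathbf{x}))=0$ on a set of positive measure, the pointwise subdifferential of $R_n$ is the full convex hull of~\eqref{eq:s1} and~\eqref{eq:s2}, and the resulting set $\mathcal{A}$ of integrated selections contains matrices that differ by~$\mathcal{O}(1)$. Semismoothness at $\hat{\mathbf{c}}$ and positive definiteness of one fixed $\mathcal{S}_0$ do not force $\mathcal{S}_0^{-1}A$ close to $I$ for all such $A$; shrinking the ball in $\mathbf{c}$ or in $\boldsymbol{\sigma}_{n-1}$ does not shrink the diameter of the subdifferential, so the contraction constant cannot be pushed below~$1$.

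The paper addresses precisely this obstruction by invoking an implicit function theorem of Kruse \cite[Thm.~4]{Kruse} tailored to set-valued derivatives. The single Jacobian $\mathcal{S}_0$ is replaced by the entire set $\mathcal{A}$ of integrals of measurable selections of $\sD_{\boldsymbol{\sigma}}R_n^{\hat{\boldsymbol{\sigma}}_{n-1}}$, and one verifies that $\mathcal{A}\subset\mathbb{R}^{k\times k}$ is compact and convex (via support-function and Gelfand-integral arguments), that every element of $\mathcal{A}$ is invertible (this is where Lemmas~\ref{lem:scposdef}--\ref{lem:posdef} and the hypothesis $\kappa\geq 2\mu/d$ enter), and that $\mathcal{A}$ is a \emph{uniform strict prederivative} of $T$ in $\mathbf{c}$ near $\hat{\boldsymbol{\sigma}}_{n-1}$, which in turn rests on upper-semicontinuity of the set-valued map $(\mathbf{c},\boldsymbol{\sigma}_{n-1})\mapsto\widetilde{\mathcal{A}}(\mathbf{c},\boldsymbol{\sigma}_{n-1})$. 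These are the ingredients that substitute for your single-$\mathcal{S}_0$ contraction and make the argument go through.
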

  For the proof we invoke \cite[Thm. 4]{Kruse}. Let us state each step in a separate lemma.
  \begin{lemma}
      $T(\hat{\mathbf{u}}_n, \cdot)$ is continuous at $\hat{\boldsymbol{\sigma}}_{n-1}$ and $T(\cdot, \hat{\boldsymbol{\sigma}}_{n-1})$ is Lipschitz continuous around $\hat{\mathbf{u}}_n.$
  \end{lemma}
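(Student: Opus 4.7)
The plan is to verify the two regularity properties separately, in both cases reducing the claim to the pointwise Lipschitz estimate of Lemma~\ref{lem:lip}. The key observation is that the Lipschitz constant $K$ produced there is an integrable (indeed $L^\infty$) function on $\Omega$, independent of the choice of arguments within $B^\infty_\eps(\hat{\boldsymbol{\sigma}}_{n-1})$, and that the only difference from the finite-dimensional setting of Theorem~\ref{thm:solexp} is that $\boldsymbol{\sigma}_{n-1}$ now varies in the infinite-dimensional space $L^\infty(\Omega)^{d\times d}_{\text{sym}}$; the target space $V_h$ is still finite-dimensional and bounded.

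For Lipschitz continuity of $T(\cdot,\hat{\boldsymbol{\sigma}}_{n-1})$ around $\hat{\mathbf{u}}_n$, fix $\mathbf{u}_n,\mathbf{u}_n' \in V_h$ sufficiently close to $\hat{\mathbf{u}}_n$. Since $V_h$ is finite-dimensional with bounded basis, closeness in the $V_h$-norm implies closeness of $\mathbf{C}(\boldsymbol{\eps}(\mathbf{u}_n)-\boldsymbol{\eps}^p_{n-1})$ and $\mathbf{C}(\boldsymbol{\eps}(\mathbf{u}_n')-\boldsymbol{\eps}^p_{n-1})$ to $\hat{\boldsymbol{\sigma}}_{n-1}$ in $L^\infty(\Omega)$. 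Applying Lemma~\ref{lem:lip} with $\boldsymbol{\sigma}^1_{n-1}=\boldsymbol{\sigma}^2_{n-1}=\hat{\boldsymbol{\sigma}}_{n-1}$ yields pointwise a.e.\ in $\Omega$
\begin{align*}
  \Big|\big(R^{\hat{\boldsymbol{\sigma}}_{n-1}}_n(\mathbf{C}(\boldsymbol{\eps}(\mathbf{u}_n)-\boldsymbol{\eps}^p_{n-1})) - R^{\hat{\boldsymbol{\sigma}}_{n-1}}_n(\mathbf{C}(\boldsymbol{\eps}(\mathbf{u}_n')-\boldsymbol{\eps}^p_{n-1}))\big):\boldsymbol{\eps}(\mathbf{v})\Big| \leq K\,\|\mathbf{C}\|\,|\boldsymbol{\eps}(\mathbf{u}_n-\mathbf{u}_n')|\,|\boldsymbol{\eps}(\mathbf{v})|.
\end{align*}
Integrating over $\Omega$ and using that all $V_h$-norms are equivalent and that $K \in L^\infty(\Omega)$ gives the desired Lipschitz bound in $(V_h)^*$.

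For continuity of $T(\hat{\mathbf{u}}_n,\cdot)$ at $\hat{\boldsymbol{\sigma}}_{n-1}$, take any sequence $\boldsymbol{\sigma}^k_{n-1}\to \hat{\boldsymbol{\sigma}}_{n-1}$ in $L^\infty(\Omega)$. For $k$ large enough, $\boldsymbol{\sigma}^k_{n-1}\in B^\infty_\eps(\hat{\boldsymbol{\sigma}}_{n-1})$, so Lemma~\ref{lem:lip} applied with $\boldsymbol{\sigma}^1=\boldsymbol{\sigma}^2=\mathbf{C}(\boldsymbol{\eps}(\hat{\mathbf{u}}_n)-\boldsymbol{\eps}^p_{n-1})$ gives pointwise a.e.
\begin{align*}
  \Big|R^{\boldsymbol{\sigma}^k_{n-1}}_n(\hat{\boldsymbol{\sigma}}):\boldsymbol{\eps}(\boldsymbol{\phi}_i) - R^{\hat{\boldsymbol{\sigma}}_{n-1}}_n(\hat{\boldsymbol{\sigma}}):\boldsymbol{\eps}(\boldsymbol{\phi}_i)\Big| \leq K\,|\boldsymbol{\sigma}^k_{n-1}-\hat{\boldsymbol{\sigma}}_{n-1}| \leq K\,\|\boldsymbol{\sigma}^k_{n-1}-\hat{\boldsymbol{\sigma}}_{n-1}\|_{L^\infty(\Omega)}
\end{align*}
for every basis function $\boldsymbol{\phi}_i$ of $V_h$. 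Integrating and passing to the limit $k\to\infty$, the right-hand side vanishes uniformly in $i$, hence $T(\hat{\mathbf{u}}_n,\boldsymbol{\sigma}^k_{n-1})\to T(\hat{\mathbf{u}}_n,\hat{\boldsymbol{\sigma}}_{n-1})$ in $(V_h)^*$.

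The only subtle point, and the main technical concern, is to verify that the hypotheses of Lemma~\ref{lem:lip} remain valid in the present $L^\infty$ setting: specifically that the partition $\Omega=\Omega_1\cup\Omega_2$ and the uniform lower bound $\delta>0$ on $|\text{dev}(\boldsymbol{\sigma}+\boldsymbol{\chi}^1_{n-1})|$ in $\Omega_2$ used there depend only on the fixed anchor $\hat{\boldsymbol{\sigma}}_{n-1}$ and not on the finite-dimensional structure of $M_h$. Inspecting that proof, one sees that the construction only uses essential boundedness of $\hat{\boldsymbol{\sigma}}_{n-1}$ and $\boldsymbol{\chi}^1_{n-1}$ together with the sign condition $\chi^2_{n-1}\leq 0$, which are all assumed here as well, so the argument transfers verbatim and the two assertions follow.
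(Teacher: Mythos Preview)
Your proof is correct and follows essentially the same route as the paper: both parts are reduced to the pointwise Lipschitz estimate of Lemma~\ref{lem:lip}, integrated over $\Omega$. The paper's proof is terser---it writes out the continuity argument via a sequence exactly as you do and then dispatches Lipschitz continuity in one sentence (``also follows from Lemma~\ref{lem:lip}'')---while you supply more detail and explicitly flag that the construction in Lemma~\ref{lem:lip} uses only $L^\infty$ data and hence carries over to the infinite-dimensional $\boldsymbol{\sigma}_{n-1}$ setting, a point the paper leaves implicit.
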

  \begin{proof}
      Let $\boldsymbol{\sigma}_{n-1}^k$ be a sequence in $B^\infty_\eps(\hat{\boldsymbol{\sigma}}_{n-1})$ such that $\|\boldsymbol{\sigma}_{n-1}^k-\hat{\boldsymbol{\sigma}}_{n-1}\|_{\infty}\to 0$ as $k \to \infty$. We have then
      \begin{align*}
          |T(\hat{\mathbf{u}}_n, &\boldsymbol{\sigma}_{n-1}^k)_i-T(\hat{\mathbf{u}}_n, \hat{\boldsymbol{\sigma}}_{n-1})_i| \\
          &\leq \int_{\Omega}|\Big(R_n^{\boldsymbol{\sigma}_{n-1}^k}\Big(\mathbf{C}(\boldsymbol{\eps}(\hat{\mathbf{u}}_n)- \boldsymbol{\eps}^p_{n-1})\Big)-R_n^{\hat{\boldsymbol{\sigma}}_{n-1}}\Big(\mathbf{C}(\boldsymbol{\eps}(\hat{\mathbf{u}}_n)- \boldsymbol{\eps}^p_{n-1})\Big)\Big):\boldsymbol{\eps}(\boldsymbol{\phi}_i)|dx  \\
          &\leq\int_{\Omega}K|\boldsymbol{\sigma}_{n-1}^k-\hat{\boldsymbol{\sigma}}_{n-1}| dx \to 0\quad\text{as }k\to \infty,
      \end{align*}
      where the last estimate uses Lemma~\ref{lem:lip}. The  Lipschitz continuity also follows from Lemma~\ref{lem:lip}.
  \end{proof}
  For the following considerations, define 
  \begin{align*}
      \mathcal{A}=\left\{\begin{pmatrix}
        \int_{\Omega} \mathcal{S}\mathbf{C}\boldsymbol{\eps}(\boldsymbol{\phi}_1):\boldsymbol{\eps}(\boldsymbol{\phi}_1)dx&
        \cdots & \int_{\Omega} \mathcal{S}\mathbf{C}\boldsymbol{\eps}(\boldsymbol{\phi}_k):\boldsymbol{\eps}(\boldsymbol{\phi}_1)dx\\
        \vdots & \ddots & \vdots\\
              \int_{\Omega} \mathcal{S}\mathbf{C}\boldsymbol{\eps}(\boldsymbol{\phi}_1):\boldsymbol{\eps}(\boldsymbol{\phi}_k)dx & \cdots &\int_{\Omega} \mathcal{S}\mathbf{C}\boldsymbol{\eps}(\boldsymbol{\phi}_k):\boldsymbol{\eps}(\boldsymbol{\phi}_k)dx
      \end{pmatrix}\right\}\subseteq \mathbb{R}^{k\times k},
  \end{align*}
  where $\mathcal{S}$ are all selections of $\sD_{\boldsymbol{\sigma}} R_n^{\hat{\boldsymbol{\sigma}}_{n-1}}(\hat{\boldsymbol{\sigma}}_{n-1})$, such that $\mathcal{S}\mathbf{C}\boldsymbol{\eps}(\boldsymbol{\phi}_i):\boldsymbol{\eps}(\boldsymbol{\phi}_j)$ is integrable for every $1 \leq i,j  \leq k$. Lemma~\ref{lem:posdef} gives the invertibility of each element in $\mathcal{A}$ and below we will derive geometric and topological properties of $\mathcal{A}$.
  To that end, we require the following result.
  \begin{lemma}
      Let $\eps>0$, $\Tilde{\mathbf{y}} \in \mathbb{R}^n$, and $\mathbf{f}: \Omega \times B_\eps(\Tilde{\mathbf{y}}) \subset \mathbb{R}^n \to \mathbb{R}^m$ such that $\mathbf{f}(\cdot,\mathbf{y})$ is integrable for every fixed $\mathbf{y} \in B_\eps(\Tilde{\mathbf{y}})$. Furthermore, suppose that $|\mathbf{f}(\mathbf{x},\mathbf{y}_1)-\mathbf{f}(\mathbf{x},\mathbf{y}_2)| \leq l(\mathbf{x}) |\mathbf{y}_1-\mathbf{y}_2|$, for an integrable function $l$ and every $\mathbf{y}_1,\mathbf{y}_2 \in B_\eps(\Tilde{\mathbf{y}})$. Then we have that
      \begin{align*}
          \int_{\Omega}\sD_\mathbf{y} \mathbf{f}(\mathbf{x},\mathbf{y}) dx
      \end{align*}
      is well-defined as the set of integrals of integrable selections of $\sD_\mathbf{y} \mathbf{f}(\cdot,\mathbf{y})$ for every $\mathbf{y} \in B_\eps(\Tilde{\mathbf{y}})$ and compact and convex in $\mathbb{R}^{m \times n}$.
  \end{lemma}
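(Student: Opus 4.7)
The plan is to identify the claimed set with the Aumann integral of the set-valued map $\mathbf{x}\mapsto \sD_{\mathbf{y}} \mathbf{f}(\mathbf{x},\mathbf{y})$ and to verify the three properties (well-definedness, convexity, compactness) one after the other. First I would observe that the Lipschitz hypothesis guarantees, by standard Clarke calculus \cite[Prop.~2.1.2]{clarke}, that for almost every fixed $\mathbf{x}$ and every $\mathbf{y}\in B_\eps(\Tilde{\mathbf{y}})$ the subdifferential $\sD_{\mathbf{y}}\mathbf{f}(\mathbf{x},\mathbf{y})$ is a nonempty, compact, convex subset of $\mathbb{R}^{m\times n}$ contained in the closed ball of radius $l(\mathbf{x})$. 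Measurability of the multifunction $\mathbf{x}\mapsto \sD_{\mathbf{y}}\mathbf{f}(\mathbf{x},\mathbf{y})$ then follows from the pointwise representation of the Clarke subdifferential through generalized directional derivatives, evaluated on a countable dense set of directions.

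For well-definedness, I would invoke the Kuratowski--Ryll-Nardzewski measurable selection theorem to obtain at least one measurable selection $\mathbf{H}$ of $\sD_{\mathbf{y}}\mathbf{f}(\cdot,\mathbf{y})$. Since $|\mathbf{H}(\mathbf{x})|\leq l(\mathbf{x})$ almost everywhere, every such selection is integrable, and hence the set of integrals is a well-defined, nonempty subset of $\mathbb{R}^{m\times n}$. Convexity is then inherited pointwise: given two integrable selections $\mathbf{H}_1,\mathbf{H}_2$ and $\lambda\in[0,1]$, the convex combination $\lambda\mathbf{H}_1+(1-\lambda)\mathbf{H}_2$ still takes its values in $\sD_{\mathbf{y}}\mathbf{f}(\mathbf{x},\mathbf{y})$ (convexity of values), so linearity of the integral yields the claim.

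The main obstacle is compactness, and in particular closedness. Boundedness is immediate from $\bigl|\int_\Omega \mathbf{H}\,dx\bigr|\leq \int_\Omega l\,dx<\infty$ for every selection. For closedness, I would consider a sequence of integrable selections $\mathbf{H}_k$ with $\int_\Omega \mathbf{H}_k\,dx\to \mathbf{A}$ in $\mathbb{R}^{m\times n}$ and aim to construct an integrable selection $\mathbf{H}$ with integral equal to $\mathbf{A}$. The uniform pointwise bound by $l\in L^1(\Omega)$ provides equi-integrability, so by the Dunford--Pettis theorem a subsequence $\mathbf{H}_{k_j}$ converges weakly in $L^1(\Omega;\mathbb{R}^{m\times n})$ to some $\mathbf{H}$, which automatically satisfies $\int_\Omega \mathbf{H}\,dx=\mathbf{A}$ and $|\mathbf{H}(\mathbf{x})|\leq l(\mathbf{x})$ almost everywhere. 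The delicate point is to show that $\mathbf{H}(\mathbf{x})\in \sD_{\mathbf{y}}\mathbf{f}(\mathbf{x},\mathbf{y})$ for almost every $\mathbf{x}$. For this I would apply Mazur's lemma to extract convex combinations of the $\mathbf{H}_{k_j}$ converging strongly, hence (along a further subsequence) pointwise almost everywhere, to $\mathbf{H}$. Since the values of the multifunction are convex, every such convex combination lies pointwise in $\sD_{\mathbf{y}}\mathbf{f}(\mathbf{x},\mathbf{y})$, and closedness of the latter transfers the inclusion to the pointwise limit. This produces the required selection and closes the argument.
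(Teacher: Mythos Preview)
Your argument is correct and takes a genuinely different route from the paper's. The paper does not argue directly with selections at all: it instead characterizes $\sD_{\mathbf{y}}\mathbf{f}(\mathbf{x},\mathbf{y})$ through its support function via a result of Imbert, verifies that this support function is measurable and integrable in $\mathbf{x}$ (using that $\mathbf{f}$ is Carath\'eodory and rewriting the limsup over a countable parameter set), and then invokes a general theorem on Gelfand integrals of compact convex--valued multifunctions \cite[Cor.~18.37]{aliprantis} to obtain nonemptiness, convexity and compactness in one stroke. Your approach, by contrast, stays at the level of the Aumann integral: Kuratowski--Ryll-Nardzewski for existence of selections, pointwise convexity of values for convexity of the integral, and Dunford--Pettis combined with Mazur's lemma for closedness. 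This is more elementary in that it avoids the Gelfand integral machinery, and the Mazur step is a nice way to recover the pointwise inclusion from weak convergence. One small caveat: your measurability sketch (``generalized directional derivatives on a countable dense set'') is phrased for the scalar case; for vector-valued $\mathbf{f}$ the Clarke generalized Jacobian is defined via limits of Jacobians rather than a directional derivative, so you would in practice still need a support-function-type description (as in Imbert) or an explicit argument via the B-subdifferential to make that step rigorous. This is a technical detail rather than a gap in the overall strategy.
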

  \begin{proof}
      Because of the Lipschitz condition, $\sD_\mathbf{y} \mathbf{f}(\mathbf{x},\mathbf{y})$ is well-defined and convex and compact for all $\mathbf{x} \in \Omega$ and $\mathbf{y} \in B_\eps (\Tilde{\mathbf{y}})$. From \cite[Thm. 1]{imbert:hal-00176517}, we know the exact form of its support function, i.e.
      \begin{align}
      \label{eq:supp}
      \begin{split}
           h(\mathbf{x},\mathbf{y},\mathbf{M})&:=\max\{\mathbf{A}:\mathbf{M} , \mathbf{A} \in \partial_\mathbf{y} \mathbf{f}(\mathbf{x},\mathbf{y})\}\\
          &=\limsup_{\mathbf{y}'\to \mathbf{y}, \delta \searrow 0} \frac{1}{\delta^n}\int_{\partial P_\delta (\mathbf{y}')} \mathbf{f}(\mathbf{x},\mathbf{s}):\Big( \mathbf{M \mathbf{n}(\mathbf{s})}\Big) d \mathbf{s},
      \end{split}
      \end{align}
      where $P_\delta(\mathbf{y})$ is the hypercube in $\mathbb{R}^n$, starting at $\mathbf{y}$ with edge length $\delta$, $\mathbf{M} \in \mathbb{R}^{m \times n}$ and $\mathbf{n}(\cdot)$ denotes the outward unit normal. Let us assert the measurability of $h(\cdot,\mathbf{y},\mathbf{M})$. First, note that $\mathbf{f}(\cdot.\cdot)$ is a Caratheodory-function as it is measurable in $\mathbf{x}$ and continuous in $\mathbf{y}$, which by \cite[Lemma 4.51]{aliprantis} ensures joint measurability. Clearly, this implies joint measurability of $\mathbf{f}(\cdot,\cdot):\Big( \mathbf{M \mathbf{n}(\cdot)}\Big)$. Following~\cite[Section~2.2]{imbert:hal-00176517}, the integral in \eqref{eq:supp} can be rewritten  as
      \begin{align}
      \label{eq:rewritesupp}
          \frac{1}{\eps^n}\int_{\partial P_\delta (\mathbf{y}')} \mathbf{f}(\mathbf{x},\mathbf{s}):\Big( \mathbf{M \mathbf{n}(\mathbf{s})}\Big) d \mathbf{s}=\sum_{i=1}^n \int_{[0,1]^{n-1}}\frac{\Big(\mathbf{f}(\mathbf{x},\mathbf{y}'+\delta \mathbf{s}+\delta \mathbf{e}_i)-\mathbf{f}(\mathbf{x},\mathbf{y}'+\delta \mathbf{s}\Big):\Big(\mathbf{M}\mathbf{e}_i\Big)}{\delta} d \mathbf{s},
      \end{align}
      where $\mathbf{e}_i$ denotes the $i$-th unit vector in $\mathbb{R}^n$. Because of the Lipschitz condition, \eqref{eq:rewritesupp} can be bounded by an integrable function over $\Omega$. The Fubini-Tonelli theorem shows measurability of $\int_{\partial P_\delta (\mathbf{y}')} \mathbf{f}(\cdot,\mathbf{s}):\Big( \mathbf{M \mathbf{n}(\mathbf{s})}\Big) d \mathbf{s}$ for all $\mathbf{y}', \delta>0$ such that $P_\delta(\mathbf{y}') \subset B_\eps(\Tilde{\mathbf{y}})$ and $\mathbf{M} \in \mathbb{R}^{m \times n}$.
      The Lipschitz condition and dominated convergence show that~\eqref{eq:rewritesupp} is continuous in $\mathbf{y}'$. Moreover, it is also continuous in $\delta>0$. Thus, we can restrict $\boldsymbol{y}'$ and $\delta$ in~\eqref{eq:supp} to rational values. 
      Therefore, $h(\cdot, \mathbf{y},\mathbf{M})$ is the pointwise $\lim\sup$ of measurable functions, and thus measurable itself.
       By \eqref{eq:rewritesupp}, $h$ is integrable as well and thus \cite[Corollary 18.37]{aliprantis}  shows that the Gelfand integral of $\sD_\mathbf{y} \mathbf{f}(\mathbf{x},\mathbf{y})$ is compact, convex, and non-empty, which coincides with our integral definition in finite-dimensional spaces.
  \end{proof}
  Since $\mathcal{A}$ is the integral of a generalized Jacobian this immediately implies the next result.
  \begin{lemma}
  \label{lem:compA}
      $\mathcal{A}$ is convex and compact in $\mathbb{R}^{k \times k}$.
  \end{lemma}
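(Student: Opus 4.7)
The plan is to realize $\mathcal{A}$ as the integral of a Clarke Jacobian in exactly the form treated in the previous lemma, and then read off both convexity and compactness at once.

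First, I would introduce the auxiliary map $\mathbf{F}\colon \Omega \times \mathbb{R}^k \to \mathbb{R}^k$ defined componentwise by
\begin{align*}
F_i(\mathbf{x},\mathbf{c}) = R_n^{\hat{\boldsymbol{\sigma}}_{n-1}(\mathbf{x})}\Big(\mathbf{C}\big(\boldsymbol{\eps}(\textstyle\sum_{q=1}^k c_q\boldsymbol{\phi}_q)(\mathbf{x})-\boldsymbol{\eps}^p_{n-1}(\mathbf{x})\big)\Big) : \boldsymbol{\eps}(\boldsymbol{\phi}_i)(\mathbf{x}),
\end{align*}
i.e.\ the composition of the semismooth return mapping with the smooth linear assembly map $\mathbf{c}\mapsto \mathbf{C}(\boldsymbol{\eps}(\sum_q c_q\boldsymbol{\phi}_q)-\boldsymbol{\eps}^p_{n-1})$, followed by double contraction with test functions. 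With this choice, applying the Clarke chain rule for a Lipschitz map composed with a smooth affine map yields that any element of $\sD_{\mathbf{c}}\mathbf{F}(\mathbf{x},\hat{\mathbf{c}})$ has $(i,q)$-entry of the form $\mathcal{S}(\mathbf{x})\mathbf{C}\boldsymbol{\eps}(\boldsymbol{\phi}_q)(\mathbf{x}):\boldsymbol{\eps}(\boldsymbol{\phi}_i)(\mathbf{x})$ with $\mathcal{S}(\mathbf{x})\in \sD_{\boldsymbol{\sigma}} R_n^{\hat{\boldsymbol{\sigma}}_{n-1}(\mathbf{x})}(\hat{\boldsymbol{\sigma}}_{n-1}(\mathbf{x}))$, which is precisely the entry appearing in the definition of $\mathcal{A}$.

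Next, I would verify the hypotheses of the previous lemma for $\mathbf{F}$. Pointwise integrability of $\mathbf{F}(\cdot,\mathbf{c})$ follows from the uniform $L^\infty$-bound on $R_n^{\hat{\boldsymbol{\sigma}}_{n-1}}$ in a neighbourhood of $\hat{\boldsymbol{\sigma}}_{n-1}$ combined with $\boldsymbol{\phi}_i \in W^{1,\infty}(\Omega)^d$, while the integrable local Lipschitz constant $l(\mathbf{x})$ with respect to $\mathbf{c}$ follows directly from Lemma \ref{lem:lip} together with the elementary bound $|\mathbf{C}\boldsymbol{\eps}(\sum_q c_q^1\boldsymbol{\phi}_q)-\mathbf{C}\boldsymbol{\eps}(\sum_q c_q^2\boldsymbol{\phi}_q)|\le \|\mathbf{C}\|(\sum_q|\boldsymbol{\eps}(\boldsymbol{\phi}_q)|^2)^{1/2}|\mathbf{c}^1-\mathbf{c}^2|$ and boundedness of $\boldsymbol{\eps}(\boldsymbol{\phi}_i)$.

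With these ingredients in place, invoking the previous lemma shows that $\int_\Omega \sD_{\mathbf{c}}\mathbf{F}(\mathbf{x},\hat{\mathbf{c}})\,dx$ is non-empty, convex, and compact in $\mathbb{R}^{k\times k}$. Since the identification from the first step exhibits $\mathcal{A}$ as exactly this integrated generalized Jacobian, the claim follows. The main obstacle I anticipate is the bookkeeping around the chain rule: one needs to check that integrable selections $\mathcal{S}(\cdot)$ of $\sD_{\boldsymbol{\sigma}} R_n^{\hat{\boldsymbol{\sigma}}_{n-1}(\cdot)}(\hat{\boldsymbol{\sigma}}_{n-1}(\cdot))$ used in the definition of $\mathcal{A}$ correspond precisely to integrable selections of $\sD_{\mathbf{c}}\mathbf{F}(\cdot,\hat{\mathbf{c}})$, so that the two integral sets agree rather than being related by mere set inclusion. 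Since the inner map is linear and smooth, this chain-rule identity holds with equality, and the measurability and integrability of the resulting selections transfer without loss.
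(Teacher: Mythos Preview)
Your plan coincides with the paper's: exhibit $\mathcal{A}$ as the integral of a Clarke generalized Jacobian and invoke the preceding lemma (the paper's own proof is the one-line assertion ``Since $\mathcal{A}$ is the integral of a generalized Jacobian this immediately implies the next result'').

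One point to tighten: your justification for the chain-rule \emph{equality} is not sufficient as written. Smoothness and linearity of the inner map $\mathbf{c}\mapsto \mathbf{C}\boldsymbol{\eps}(\sum_q c_q\boldsymbol{\phi}_q)(\mathbf{x})$ only gives the inclusion $\sD_{\mathbf{c}}\mathbf{F}(\mathbf{x},\hat{\mathbf{c}})\subseteq G(\mathbf{x})$, where $G(\mathbf{x})$ is the pointwise set appearing in the definition of $\mathcal{A}$; Clarke's chain rule becomes an equality only under an additional hypothesis such as surjectivity of the inner derivative or regularity of the outer map, neither of which follows from linearity alone. What actually rescues your argument here is the piecewise-$C^1$ structure of $R_n$ (exactly two smooth branches separated by the yield surface $\{f=0\}$): the composite with an affine map is again piecewise-$C^1$ with the same essentially active branches, so its Clarke Jacobian equals the convex hull of the two branch Jacobians post-multiplied by the inner derivative, and equality holds. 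Alternatively, you can sidestep the chain rule entirely: the proof of the preceding lemma really only uses that the set-valued integrand has convex, compact, nonempty values and a measurable, integrably bounded support function, before invoking the Gelfand-integral compactness result. Since $G(\mathbf{x})$ is the affine image of a Clarke Jacobian, these properties hold for $G$ directly, and this cleaner route is what the paper's one-liner appears to intend.
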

  Note that positive definiteness (Lemma~\ref{lem:scposdef}) together with Lemma~\ref{lem:compA}, also yields compactness of the set $\{A^{-1}: A \in \mathcal{A}\}$.
The notion of subdifferential is generalized to the setting of uniform strict prederivatives (see~\cite[Definition 13]{Kruse} for details) in the following. 
\begin{lemma}
\label{lem:predev}
    The set $\mathcal{A}$ is a \textit{uniform strict prederivative for T at $\hat{\mathbf{u}}_n$ near $\hat{\boldsymbol{\sigma}}_{n-1}$}, i.e., for every $\eps>0$ there exists a neighborhood of $(\hat{\mathbf{c}},\hat{\boldsymbol{\sigma}}_{n-1})$ ($\hat{\mathbf{c}}\in \mathbb{R}^k$ is the vector corresponding to the basis expansion of $\hat{\mathbf{u}}_n$), such that for elements $(\mathbf{c}_1,\boldsymbol{\sigma}_{n-1}),(\mathbf{c}_2,\boldsymbol{\sigma}_{n-1})$ in this neighborhood it holds
    \begin{align*}
        \inf_{A \in \mathcal{A}} \Big|T(\mathbf{c}_1,\boldsymbol{\sigma}_{n-1})-T(\mathbf{c}_2,\boldsymbol{\sigma}_{n-1})-A(\mathbf{c}_1-\mathbf{c}_2)\Big| \leq \eps |\mathbf{c}_1-\mathbf{c}_2|.
    \end{align*}
\end{lemma}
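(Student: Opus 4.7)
The strategy combines Clarke's vector-valued mean value theorem with the upper semi-continuity of the generalized Jacobian, and leverages the inclusion $\sD_{\mathbf{c}}T(\hat{\mathbf{c}},\hat{\boldsymbol{\sigma}}_{n-1})\subseteq \mathcal{A}$ that is essentially contained in Lemma~\ref{lem:posdef}. The key point is that we do not have to invoke semismoothness directly: Clarke's Jacobian is a uniform strict prederivative at any point where the map is locally Lipschitz and the multifunction $\sD_{\mathbf{c}}T$ is upper semi-continuous.

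By Lemma~\ref{lem:lip} the map $T(\cdot,\boldsymbol{\sigma}_{n-1})$ is Lipschitz continuous near $\hat{\mathbf{c}}$ with a Lipschitz constant uniform for $\boldsymbol{\sigma}_{n-1}$ in an $L^\infty$-ball around $\hat{\boldsymbol{\sigma}}_{n-1}$. Clarke's mean value theorem for vector-valued Lipschitz maps (see~\cite[Prop.~2.6.5]{clarke}), applied on the segment $[\mathbf{c}_2,\mathbf{c}_1]$ for fixed $\boldsymbol{\sigma}_{n-1}$, then yields
\begin{align*}
    T(\mathbf{c}_1,\boldsymbol{\sigma}_{n-1})-T(\mathbf{c}_2,\boldsymbol{\sigma}_{n-1})=M(\mathbf{c}_1-\mathbf{c}_2),\qquad M\in \textup{conv}\Bigl(\bigcup_{\mathbf{z}\in[\mathbf{c}_2,\mathbf{c}_1]}\sD_{\mathbf{c}}T(\mathbf{z},\boldsymbol{\sigma}_{n-1})\Bigr).
\end{align*}
The upper semi-continuity of Clarke's generalized Jacobian (\cite[Prop.~2.6.2]{clarke}, transferred to the partial Jacobian via Lemma~\ref{lem:partgrad}) together with the convexity of $\sD_{\mathbf{c}}T(\hat{\mathbf{c}},\hat{\boldsymbol{\sigma}}_{n-1})$ allows us to shrink the neighborhood so that the above convex hull lies in $\sD_{\mathbf{c}}T(\hat{\mathbf{c}},\hat{\boldsymbol{\sigma}}_{n-1})+\eps B_1(0)$, where $B_1(0)\subset \mathbb{R}^{k\times k}$ is the unit ball. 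Decomposing $M=A+E$ with $A\in\sD_{\mathbf{c}}T(\hat{\mathbf{c}},\hat{\boldsymbol{\sigma}}_{n-1})\subseteq\mathcal{A}$ (the last inclusion coming from Lemma~\ref{lem:posdef}) and $|E|\leq\eps$, we conclude
\begin{align*}
    \inf_{A \in \mathcal{A}}|T(\mathbf{c}_1,\boldsymbol{\sigma}_{n-1})-T(\mathbf{c}_2,\boldsymbol{\sigma}_{n-1})-A(\mathbf{c}_1-\mathbf{c}_2)| \leq |E(\mathbf{c}_1-\mathbf{c}_2)| \leq \eps|\mathbf{c}_1-\mathbf{c}_2|,
\end{align*}
which is exactly the asserted prederivative estimate.

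The main technical obstacle is verifying the upper semi-continuity of $\sD_{\mathbf{c}}T$ jointly in the finite-dimensional variable $\mathbf{c}$ and the infinite-dimensional $L^\infty$-variable $\boldsymbol{\sigma}_{n-1}$. This has to be read off from the integral representation of $T$ by combining the pointwise upper semi-continuity of the set-valued map $(\mathbf{x},\boldsymbol{\sigma},\boldsymbol{\sigma}_{n-1})\mapsto \sD_{\boldsymbol{\sigma}}R_n^{\boldsymbol{\sigma}_{n-1}(\mathbf{x})}(\boldsymbol{\sigma}(\mathbf{x}))$ at the anchor point with the uniform Lipschitz domination provided by Lemma~\ref{lem:lip}, together with a measurable selection and dominated-convergence argument of the same flavour as Lemma~\ref{lem:ssint}. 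This ensures that any integrated selection of the perturbed partial Jacobian lies in an $\eps$-neighborhood of $\mathcal{A}$, which is precisely what is needed to close the argument.
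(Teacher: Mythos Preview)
Your proposal is correct and follows essentially the same route as the paper: Clarke's mean-value theorem combined with upper semicontinuity of the (integrated) partial subdifferential at the anchor point, plus the inclusion $\sD_{\mathbf{c}}T\subseteq\mathcal{A}$ from Lemma~\ref{lem:posdef}. The paper makes the joint upper semicontinuity step explicit by introducing the auxiliary set-valued map $\widetilde{\mathcal{A}}(\mathbf{c},\boldsymbol{\sigma}_{n-1})$ (integrals of pointwise selections of $\sD_{\boldsymbol{\sigma}}R_n$) and proving its upper semicontinuity via separate Lemmas~\ref{lem:pointwiseusc} and~\ref{lem:usc}, whereas you defer this to a sketch in your final paragraph. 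One point worth tightening: your appeal to \cite[Prop.~2.6.2]{clarke} only gives upper semicontinuity in the finite-dimensional variable $\mathbf{c}$, not jointly in the $L^\infty$-parameter $\boldsymbol{\sigma}_{n-1}$; the latter genuinely requires the integral/selection argument you outline, and the paper's route through $\widetilde{\mathcal{A}}$ rather than $\sD_{\mathbf{c}}T$ itself is what makes that argument clean, since $\widetilde{\mathcal{A}}$ is defined directly via pointwise selections while $\sD_{\mathbf{c}}T$ is not.
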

For the proof we need another result that requires the set-valued mapping
\begin{align*}
\begin{split}
    \widetilde{\mathcal{A}}&: \mathbb{R}^k \times L^\infty(\Omega)^{d \times d}_{\text{sym}} \to \{S\,:\, S\subseteq \mathbb{R}^{k \times k}\}\\
    \widetilde{\mathcal{A}}(\mathbf{c},\boldsymbol{\sigma}_{n-1})&= \left\{\begin{pmatrix}
			\int_{\Omega} \mathcal{S}\mathbf{C}\boldsymbol{\eps}(\boldsymbol{\phi}_1):\boldsymbol{\eps}(\boldsymbol{\phi}_1)dx&
			\cdots & \int_{\Omega} \mathcal{S}\mathbf{C}\boldsymbol{\eps}(\boldsymbol{\phi}_k):\boldsymbol{\eps}(\boldsymbol{\phi}_1)dx\\
			\vdots & \ddots & \vdots\\
            \int_{\Omega} \mathcal{S}\mathbf{C}\boldsymbol{\eps}(\boldsymbol{\phi}_1):\boldsymbol{\eps}(\boldsymbol{\phi}_k)dx & \cdots &\int_{\Omega} \mathcal{S}\mathbf{C}\boldsymbol{\eps}(\boldsymbol{\phi}_k):\boldsymbol{\eps}(\boldsymbol{\phi}_k)dx
		\end{pmatrix}\right\},
\end{split}
\end{align*}
where $\mathcal{S}$ are selections of $\sD_{\boldsymbol{\sigma}} R_n^{\boldsymbol{\sigma}_{n-1}}(\mathbf{C}(\boldsymbol{\eps}(c_i \boldsymbol{\phi}_i)-\boldsymbol{\eps}^p_{n-1}))$, such that $\mathcal{S}\mathbf{C}\boldsymbol{\eps}(\boldsymbol{\phi}_i):\boldsymbol{\eps}(\boldsymbol{\phi}_j)$ is integrable for every $1 \leq i,j  \leq k$. Note that $\mathcal{A}=\widetilde{\mathcal{A}}(\hat{\mathbf{c}},\hat{\boldsymbol{\sigma}}_{n-1}).$
\begin{lemma}
  \label{lem:pointwiseusc}
      $\sD_{\boldsymbol{\sigma}} R_n^{\boldsymbol{\sigma}_{n-1}}\Big(\mathbf{C}\Big(\boldsymbol{\eps}\Big(\sum_{i=1}^dc_i \boldsymbol{\phi}_i\Big)-\boldsymbol{\eps}^p_{n-1}\Big)\Big)$ is upper-semicontinuous at $(\hat{\mathbf{c}},\hat{\boldsymbol{\sigma}}_{n-1})$, for every $\mathbf{x} \in \Omega.$
  \end{lemma}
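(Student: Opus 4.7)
The plan is to exploit the standard outer semicontinuity (closed-graph property) of Clarke's generalized Jacobian on the domain of a locally Lipschitz function, see, e.g., \cite[Proposition~2.6.2]{clarke}, specialized to the partial subdifferential of $R_n^{\boldsymbol{\sigma}_{n-1}}(\boldsymbol{\sigma})$ in $\boldsymbol{\sigma}$ and treating $\boldsymbol{\sigma}_{n-1}$ as a parameter varying in a neighborhood. Fixing $\mathbf{x} \in \Omega$, the map $\mathbf{c} \mapsto \mathbf{C}\bigl(\boldsymbol{\eps}(\sum_i c_i \boldsymbol{\phi}_i)(\mathbf{x}) - \boldsymbol{\eps}^p_{n-1}(\mathbf{x})\bigr)$ is linear and continuous, so composing with it preserves upper semicontinuity by the chain rule for set-valued maps; hence it suffices to prove the pointwise outer semicontinuity of $(\boldsymbol{\sigma},\boldsymbol{\sigma}_{n-1}) \mapsto \sD_{\boldsymbol{\sigma}} R_n^{\boldsymbol{\sigma}_{n-1}}(\boldsymbol{\sigma})$ at $(\hat{\boldsymbol{\sigma}}_{n-1}(\mathbf{x}), \hat{\boldsymbol{\sigma}}_{n-1}(\mathbf{x}))$.

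First, I would shrink the neighborhood so that the hypotheses of Theorem~\ref{thm:semismooth} and Lemma~\ref{lem:fracdiff} are uniformly valid, and distinguish three cases according to the sign of $f(\hat{\boldsymbol{\sigma}}_{n-1}(\mathbf{x}), \boldsymbol{\chi}_{n-1}(\mathbf{x}))$, reading off the elements of $\sD_{\boldsymbol{\sigma}} R_n^{\boldsymbol{\sigma}_{n-1}}(\boldsymbol{\sigma})$ from the explicit formulas~\eqref{eq:s1}--\eqref{eq:s2}. If $f < 0$, then by continuity $R_n^{\boldsymbol{\sigma}_{n-1}}$ coincides with the identity in a neighborhood of the base point, so $\sD_{\boldsymbol{\sigma}} R_n^{\boldsymbol{\sigma}_{n-1}} \equiv \{\Id\}$ and outer semicontinuity is trivial. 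If $f > 0$, then Lemma~\ref{lem:fracdiff} together with the uniform lower bound on the denominator in~\eqref{eq:s2} (available by Lemma~\ref{lem:positive} after a further shrink of the neighborhood) imply that $\sD_{\boldsymbol{\sigma}} R_n^{\boldsymbol{\sigma}_{n-1}}(\boldsymbol{\sigma})$ is single-valued and jointly continuous in $(\boldsymbol{\sigma},\boldsymbol{\sigma}_{n-1})$, which again yields the claim.

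The main obstacle is the boundary case $f(\hat{\boldsymbol{\sigma}}_{n-1}(\mathbf{x}), \boldsymbol{\chi}_{n-1}(\mathbf{x})) = 0$, where $\sD_{\boldsymbol{\sigma}} R_n^{\hat{\boldsymbol{\sigma}}_{n-1}(\mathbf{x})}(\hat{\boldsymbol{\sigma}}_{n-1}(\mathbf{x}))$ is, by definition, the convex hull of $\Id$ and the smooth operator from~\eqref{eq:s2} evaluated at the base point. Given any sequence $(\boldsymbol{\sigma}^k, \boldsymbol{\sigma}_{n-1}^k) \to (\hat{\boldsymbol{\sigma}}_{n-1}(\mathbf{x}), \hat{\boldsymbol{\sigma}}_{n-1}(\mathbf{x}))$ with selections $\mathcal{S}_k \in \sD_{\boldsymbol{\sigma}} R_n^{\boldsymbol{\sigma}_{n-1}^k}(\boldsymbol{\sigma}^k)$ converging to some $\mathcal{S}_\infty$, I would extract a subsequence along which $\mathrm{sgn}\, f(\boldsymbol{\sigma}^k, \boldsymbol{\chi}_{n-1}(\mathbf{x}))$ is constant; the two previous cases then identify $\mathcal{S}_\infty$ either with $\Id$, with the smooth operator in~\eqref{eq:s2} at the base point, or (if $f$ vanishes along the subsequence, via Carath\'eodory's theorem applied to the convex hull) with a convex combination of these two. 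In each subcase $\mathcal{S}_\infty$ lies in $\sD_{\boldsymbol{\sigma}} R_n^{\hat{\boldsymbol{\sigma}}_{n-1}(\mathbf{x})}(\hat{\boldsymbol{\sigma}}_{n-1}(\mathbf{x}))$, and a routine $\eps$-argument converts this into the desired outer semicontinuity.
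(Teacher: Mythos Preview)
Your argument is correct but organized differently from the paper's. The paper separates the two variables: it first freezes $\mathbf{c}$ and observes that the explicit formulas~\eqref{eq:s1}--\eqref{eq:s2} depend on $\boldsymbol{\sigma}_{n-1}$ only through smooth terms (the sign of $f$ being independent of $\boldsymbol{\sigma}_{n-1}$), so the subdifferential varies continuously in that parameter; it then freezes $\boldsymbol{\sigma}_{n-1}=\hat{\boldsymbol{\sigma}}_{n-1}$ and invokes \cite[Prop.~2.6.2]{clarke} as a black box to get upper semicontinuity in $\mathbf{c}$. Combining the two $\eps$-balls finishes the proof in two lines.

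You instead decompose by the sign of $f$ at the base point and argue jointly in $(\boldsymbol{\sigma},\boldsymbol{\sigma}_{n-1})$ via a sequential closed-graph argument. This is more explicit and self-contained---in particular it makes transparent exactly where the structure of~\eqref{eq:s1}--\eqref{eq:s2} enters---at the cost of being longer. Two minor remarks: Carath\'eodory is overkill in the boundary subcase, since the subdifferential is the segment between just two points and it suffices to extract a convergent subsequence of the scalar convex coefficients $\lambda_k\in[0,1]$; and the ``routine $\eps$-argument'' at the end is simply the standard equivalence between the sequential and neighborhood characterizations of upper semicontinuity for locally bounded compact-valued maps, which you might state once for clarity.
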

  \begin{proof}
      Let $\mathbf{x} \in \Omega$ be fixed and $\eps>0$. Since all elements of $\sD_{\boldsymbol{\sigma}} R_n^{\boldsymbol{\sigma}_{n-1}(\mathbf{x})}\Big(\mathbf{C}\Big(\boldsymbol{\eps}\Big(\sum_{i=1}^dc_i \boldsymbol{\phi}_i(\mathbf{x})\Big)-\boldsymbol{\eps}^p_{n-1}(\mathbf{x})\Big)\Big)$ are uniformly continuous in $\boldsymbol{\sigma}_{n-1}(\mathbf{x})$ for fixed $\mathbf{c}$, we have the existence of $\delta >0$ such that for $\|\boldsymbol{\sigma}_{n-1}-\hat{\boldsymbol{\sigma}}_{n-1}\|_\infty< \delta$ we have that
      \begin{align*}
          \sD_{\boldsymbol{\sigma}} R_n^{\boldsymbol{\sigma}_{n-1}(\mathbf{x})}\Big(\mathbf{C}\Big(\boldsymbol{\eps}\Big(\sum_{i=1}^dc_i \boldsymbol{\phi}_i(\mathbf{x})\Big)-\boldsymbol{\eps}^p_{n-1}(\mathbf{x})\Big)\Big) \subset B_\eps \Bigg(\sD_{\boldsymbol{\sigma}} R_n^{\hat{\boldsymbol{\sigma}}_{n-1}(\mathbf{x})}\Big(\mathbf{C}\Big(\boldsymbol{\eps}\Big(\sum_{i=1}^dc_i \boldsymbol{\phi}_i(\mathbf{x})\Big)-\boldsymbol{\eps}^p_{n-1}(\mathbf{x})\Big)\Big)\Bigg).
      \end{align*}
      By the upper semi-continuity properties of generalized gradients~\cite[Prop. 2.6.2]{clarke}, we have a possibly smaller $\delta_2>0$ such that for $|\mathbf{c}-\hat{\mathbf{c}}|< \delta_2$ it holds
      \begin{align*}
          \sD_{\boldsymbol{\sigma}} R_n^{\hat{\boldsymbol{\sigma}}_{n-1}(\mathbf{x})}\Big(\mathbf{C}\Big(\boldsymbol{\eps}\Big(\sum_{i=1}^dc_i \boldsymbol{\phi}_i(\mathbf{x})\Big)-\boldsymbol{\eps}^p_{n-1}(\mathbf{x})\Big)\Big) \subset B_\eps \Bigg(\sD_{\boldsymbol{\sigma}} R_n^{\hat{\boldsymbol{\sigma}}_{n-1}(\mathbf{x})}\Big(\mathbf{C}\Big(\boldsymbol{\eps}\Big(\sum_{i=1}^d\hat{c}_i \boldsymbol{\phi}_i(\mathbf{x})\Big)-\boldsymbol{\eps}^p_{n-1}(\mathbf{x})\Big)\Big)\Bigg).
      \end{align*}
      Bringing both results together, yields the claim.
  \end{proof}
  \begin{lemma}
    \label{lem:usc}
        $\widetilde{\mathcal{A}}$ is upper semicontinuous at $(\hat{\mathbf{c}},\hat{\boldsymbol{\sigma}}_{n-1})$.
    \end{lemma}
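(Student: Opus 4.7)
The plan is to reduce upper semicontinuity of the set-valued map $\widetilde{\mathcal{A}}$ at $(\hat{\mathbf{c}},\hat{\boldsymbol{\sigma}}_{n-1})$ to upper semicontinuity of its support function in each direction and then lift the pointwise result of Lemma~\ref{lem:pointwiseusc} to the integrated object by a dominated convergence argument. Since Lemma~\ref{lem:compA} shows that $\mathcal{A} = \widetilde{\mathcal{A}}(\hat{\mathbf{c}},\hat{\boldsymbol{\sigma}}_{n-1})$ is compact and convex (and the same argument gives compactness and convexity of $\widetilde{\mathcal{A}}(\mathbf{c},\boldsymbol{\sigma}_{n-1})$ in a neighborhood), the standard characterization of upper semicontinuity for convex compact valued maps applies: it suffices to show that for every fixed $\mathbf{M}\in\mathbb{R}^{k\times k}$, the support function
\begin{align*}
h(\mathbf{M},\mathbf{c},\boldsymbol{\sigma}_{n-1}) := \sup_{A \in \widetilde{\mathcal{A}}(\mathbf{c},\boldsymbol{\sigma}_{n-1})} A:\mathbf{M}
\end{align*}
is upper semicontinuous at $(\hat{\mathbf{c}},\hat{\boldsymbol{\sigma}}_{n-1})$.

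First, I would rewrite $h$ as a single integral over $\Omega$. Using the definition of $\widetilde{\mathcal{A}}$ and collecting terms, one gets
\begin{align*}
h(\mathbf{M},\mathbf{c},\boldsymbol{\sigma}_{n-1}) = \sup_{\mathcal{S}} \int_{\Omega} \mathcal{S}(\mathbf{x}) : \mathcal{T}(\mathbf{M},\mathbf{x})\,dx,
\end{align*}
where $\mathcal{T}(\mathbf{M},\mathbf{x}) = \sum_{i,j} M_{ij}\,\mathbf{C}\boldsymbol{\eps}(\boldsymbol{\phi}_j)(\mathbf{x})\otimes \boldsymbol{\eps}(\boldsymbol{\phi}_i)(\mathbf{x})$ and the supremum runs over integrable selections of $\sD_{\boldsymbol{\sigma}}R_n^{\boldsymbol{\sigma}_{n-1}(\mathbf{x})}(\mathbf{C}(\boldsymbol{\eps}(\sum c_i\boldsymbol{\phi}_i)(\mathbf{x})-\boldsymbol{\eps}^p_{n-1}(\mathbf{x})))$. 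By the standard interchange for Aumann/Gelfand integrals of measurable set-valued maps (already exploited via \cite[Thm.~1]{imbert:hal-00176517} earlier), the supremum can be pulled inside, yielding
\begin{align*}
h(\mathbf{M},\mathbf{c},\boldsymbol{\sigma}_{n-1}) = \int_\Omega g(\mathbf{M},\mathbf{c},\boldsymbol{\sigma}_{n-1},\mathbf{x})\,dx,\qquad
g(\mathbf{M},\mathbf{c},\boldsymbol{\sigma}_{n-1},\mathbf{x}) := \sup_{\mathcal{S}_0 \in \sD_{\boldsymbol{\sigma}}R_n^{\boldsymbol{\sigma}_{n-1}(\mathbf{x})}(\cdots)} \mathcal{S}_0 : \mathcal{T}(\mathbf{M},\mathbf{x}).
\end{align*}
The measurability required for this step is granted by the Caratheodory argument already used in the preceding lemma.

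The second step is pointwise upper semicontinuity of $g$ in $(\mathbf{c},\boldsymbol{\sigma}_{n-1})$. For fixed $\mathbf{x}$, Lemma~\ref{lem:pointwiseusc} gives upper semicontinuity of the set-valued subdifferential, which, since $\mathcal{T}(\mathbf{M},\mathbf{x})$ is a fixed continuous linear functional in the $\mathcal{S}_0$-variable, immediately transfers to upper semicontinuity of the support value $g$. Lemmas~\ref{lem:lip} and~\ref{lem:scposdef} provide a uniform bound $|\mathcal{S}_0| \leq C$ on every element of the subdifferential for $(\mathbf{c},\boldsymbol{\sigma}_{n-1})$ in a fixed neighborhood of $(\hat{\mathbf{c}},\hat{\boldsymbol{\sigma}}_{n-1})$, so $|g(\mathbf{M},\mathbf{c},\boldsymbol{\sigma}_{n-1},\mathbf{x})| \leq C|\mathcal{T}(\mathbf{M},\mathbf{x})|$, and the right-hand side is integrable over $\Omega$ since $\boldsymbol{\eps}(\boldsymbol{\phi}_i) \in L^\infty(\Omega)^{d\times d}$.

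Third, I would apply the reverse Fatou lemma to conclude
\begin{align*}
\limsup_{(\mathbf{c},\boldsymbol{\sigma}_{n-1}) \to (\hat{\mathbf{c}},\hat{\boldsymbol{\sigma}}_{n-1})} h(\mathbf{M},\mathbf{c},\boldsymbol{\sigma}_{n-1}) \leq \int_\Omega \limsup g(\mathbf{M},\mathbf{c},\boldsymbol{\sigma}_{n-1},\mathbf{x})\,dx \leq \int_\Omega g(\mathbf{M},\hat{\mathbf{c}},\hat{\boldsymbol{\sigma}}_{n-1},\mathbf{x})\,dx = h(\mathbf{M},\hat{\mathbf{c}},\hat{\boldsymbol{\sigma}}_{n-1}).
\end{align*}
Since this holds for every $\mathbf{M}\in\mathbb{R}^{k\times k}$ and the values of $\widetilde{\mathcal{A}}$ are compact and convex, upper semicontinuity of $\widetilde{\mathcal{A}}$ at $(\hat{\mathbf{c}},\hat{\boldsymbol{\sigma}}_{n-1})$ follows.

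The main obstacle I anticipate is justifying the sup/integral interchange in the first step rigorously; concretely, one needs to know that the pointwise supremum defining $g$ is measurable in $\mathbf{x}$ and that it is actually attained by a measurable selection that is, moreover, integrable. This is the same type of argument that was used in the proof of the lemma preceding Lemma~\ref{lem:compA}, and the same machinery (Caratheodory, Fubini-Tonelli, together with the integrable Lipschitz bound of Lemma~\ref{lem:lip}) applies here with no essential modification.
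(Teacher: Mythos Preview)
Your argument is correct and follows a genuinely different route from the paper. The paper defines the pointwise set-valued map $\widetilde{\mathcal{A}}^\circ(\mathbf{c},\boldsymbol{\sigma}_{n-1},\mathbf{x})$, verifies that it has compact convex values, a measurable graph in $\mathbf{x}$, is integrably bounded, and is upper semicontinuous in $(\mathbf{c},\boldsymbol{\sigma}_{n-1})$ for each $\mathbf{x}$ (via Lemma~\ref{lem:pointwiseusc}); it then invokes \cite[Thm.~3.1]{YANNELIS} as a black box to conclude that the integrated correspondence $\widetilde{\mathcal{A}}$ is upper semicontinuous. Your approach instead passes through support functions: you interchange sup and integral (the Aumann/Gelfand property), use Lemma~\ref{lem:pointwiseusc} to get pointwise upper semicontinuity of the support value $g$, and then apply reverse Fatou under the integrable bound coming from Lemma~\ref{lem:lip}. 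Both arguments ultimately rest on the same three ingredients (measurability, pointwise usc, integrable domination); the paper packages them into a single citation, while you unfold the relevant special case of Yannelis's theorem by hand. Your approach is more self-contained; the paper's is shorter once the reference is accepted. One minor correction: the uniform bound on elements of the subdifferential comes from Lemma~\ref{lem:lip} together with \cite[Prop.~2.1.2]{clarke}, not from Lemma~\ref{lem:scposdef}, which concerns positive definiteness rather than boundedness.
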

    \begin{proof}
    We consider the set valued mapping $\widetilde{\mathcal{A}}^\circ\colon \mathbb{R}^k\times  L^\infty(\Omega)^{d\times d}_{\rm sym} \times \Omega \to \{S\,:\,S\subseteq \mathbb{R}^{k\times k}\}$ by
    \begin{align*}
    \begin{split}
     \widetilde{\mathcal{A}}^\circ(\mathbf{c}, \boldsymbol{\sigma}_{n-1},\mathbf{x})=  \left\{\begin{pmatrix}
          \mathcal{S}(\mathbf{x})\mathbf{C}\boldsymbol{\eps}(\boldsymbol{\phi}_1(\mathbf{x})):\boldsymbol{\eps}(\boldsymbol{\phi}_1(\mathbf{x}))&
          \cdots &  \mathcal{S}(\mathbf{x})\mathbf{C}\boldsymbol{\eps}(\boldsymbol{\phi}_k(\mathbf{x})):\boldsymbol{\eps}(\boldsymbol{\phi}_1(\mathbf{x}))\\
          \vdots & \ddots & \vdots\\
                \mathcal{S}(\mathbf{x})\mathbf{C}\boldsymbol{\eps}(\boldsymbol{\phi}_1(\mathbf{x})):\boldsymbol{\eps}(\boldsymbol{\phi}_k(\mathbf{x}))& \cdots &\mathcal{S}(\mathbf{x})\mathbf{C}\boldsymbol{\eps}(\boldsymbol{\phi}_k(\mathbf{x})):\boldsymbol{\eps}(\boldsymbol{\phi}_k(\mathbf{x}))
        \end{pmatrix}\right\}
    \end{split}
    \end{align*}
    and note that its range $\mathbb{R}^{k\times k}$ is a separable metric space. Analogously to the proof of Lemma~\ref{lem:compA}, the images of the above map are compact and convex. Thus,~\cite[18.6]{aliprantis}, together with \cite[Thm. 18.31]{aliprantis}, shows that $\mathbf{x}\mapsto \widetilde{\mathcal{A}}^\circ(\mathbf{c},\boldsymbol{\sigma}_{n-1})$ has a measurable graph for every $(\mathbf{c},\boldsymbol{\sigma}_{n-1})$ (here we used that the images of $\widetilde{\mathcal{A}}^\circ$ are compact and thus, particularly, closed).
    By the chain rule for subdifferentials, the upper semi-continuity of $\widetilde{\mathcal{A}}^\circ$ follows from the upper semi-continuity of  $\sD_{\boldsymbol{\sigma}} R_n^{\boldsymbol{\sigma}_{n-1}}\Big(\mathbf{C}\Big(\boldsymbol{\eps}\Big(\sum_{i=1}^dc_i \boldsymbol{\phi}_i\Big)-\boldsymbol{\eps}^p_{n-1}\Big)\Big)$ and hence Lemma~\ref{lem:pointwiseusc}.
    The existence of an integrable upper bound follows from the integrable boundedness of the components.
    The finite-dimensionality (and thus separability and equivalence of strong and weak topology) of the codomain of $\widetilde{\mathcal{A}}^\circ$ allows us to apply~\cite[Thm. 3.1]{YANNELIS}, which directly yields the result.
    \end{proof}
    \begin{proof}[Proof of Lemma~\ref{lem:predev}]
      We can invoke the mean-value theorem of \cite[Prop. 2.6.5]{clarke} for generalized Jacobian: There exists $t \in [0,1]$ such that
      \begin{align*}
          T(\mathbf{c}_1,\boldsymbol{\sigma}_{n-1})-T(\mathbf{c}_2,\boldsymbol{\sigma}_{n-1}) \in \text{co}\{A(\mathbf{c}_1-\mathbf{c}_2): A \in \sD_\mathbf{c} T(\mathbf{c}_1+t(\mathbf{c}_2-\mathbf{c}_1),\boldsymbol{\sigma}_{n-1})\}.
      \end{align*}
      In Lemma~\ref{lem:posdef} it is shown $\sD_\mathbf{c} T(\mathbf{c}_1+t(\mathbf{c}_2-\mathbf{c}_1),\boldsymbol{\sigma}_{n-1}) \mathbf{b} \subset \widetilde{\mathcal{A}}(\mathbf{c}_1+t(\mathbf{c}_2-\mathbf{c}_1),\boldsymbol{\sigma}_{n-1})\mathbf{b}$, if $\mathbf{c}_1, \mathbf{c}_2, \boldsymbol{\sigma}_{n-1}$ are in a sufficiently small neighborhood of $(\hat{\mathbf{c}},\hat{\boldsymbol{\sigma}}_{n-1})$ to ensure well-definedness of $T$ and its generalized gradients. Therefore, we have that for a finite convex combination we can write
      \begin{align*}
          T(\mathbf{c}_1,\boldsymbol{\sigma}_{n-1})-T(\mathbf{c}_2,\boldsymbol{\sigma}_{n-1})=\sum_i \lambda_i A_i(\mathbf{c}_1-\mathbf{c}_2),
      \end{align*} where $A_i \in \widetilde{\mathcal{A}}(\mathbf{c}_1+t(\mathbf{c}_2-\mathbf{c}_1),\boldsymbol{\sigma}_{n-1})$. For $\eps>0$, we can now choose a neighborhood of $(\hat{\mathbf{c}},\hat{\boldsymbol{\sigma}}_{n-1})$ by Lemma~\ref{lem:usc}, such that for every $ A \in \widetilde{\mathcal{A}}(\mathbf{c},\boldsymbol{\sigma}_{n-1})$ in the neighborhood we have $ \overline{A} \in \mathcal{A}$ such that
      \begin{align*}
          |A-\overline{A}|<\eps.
      \end{align*}
      This shows for points $(\mathbf{c}_1,\boldsymbol{\sigma}_{n-1}), (\mathbf{c}_2,\boldsymbol{\sigma}_{n-1})$ in the neighborhood that
      \begin{align*}
          |T(\mathbf{c}_1,\boldsymbol{\sigma}_{n-1})-T(\mathbf{c}_2,\boldsymbol{\sigma}_{n-1})-\sum_i \lambda_i \overline{A}(\mathbf{c}_1-\mathbf{c}_2)|< \eps |\mathbf{c}_1-\mathbf{c}_2|
      \end{align*}
      and concludes the proof.
  \end{proof}
  \begin{proof}[Proof of Theorem~\ref{thm:solexpinf}]
    With the results above, we can directly apply~\cite[Thm. 4]{Kruse} to get the result.
    \end{proof}
    The same program can be applied in order to generalize Theorem~\ref{thm:solfrac}, i.e we redefine $T$ as
    \begin{align*}
        T:V_h \times L^\infty(\Omega)^{d \times d}_{\text{sym}} \times L^\infty(\Omega)^{d \times d}_{\text{sym}} \times L^\infty(\Omega)^{d \times d}_{\text{sym}} \times L^\infty(\Omega) \times L^1(\Omega)^d \times L^1(\Gamma_D)^d \to (V_h)^*
    \end{align*}
    and aim to prove
    \begin{theorem}
    Let $(\mathbf{u}_{n-1},\boldsymbol{\eps}^p_{n-2},\boldsymbol{\sigma}_{n-2},\boldsymbol{\chi}_{n-2},\mathbf{b}_{n-1},\mathbf{t}_{n-1}) \in V_h \times L^\infty(\Omega)^{d \times d}_{\text{sym}} \times L^\infty(\Omega)^{d \times d}_{\text{sym}} \times L^\infty(\Omega)^{d \times d}_{\text{sym}} \times L^\infty(\Omega) \times L^1(\Omega)^d \times L^1(\Gamma_D)^d $ such that $T(\mathbf{u}_{n-1},\boldsymbol{\eps}^p_{n-2},\boldsymbol{\sigma}_{n-2},\boldsymbol{\chi}_{n-2},\mathbf{b}_{n-1},\mathbf{t}_{n-1})=0$ and $\|\mathbf{C}(\boldsymbol{\eps}(\mathbf{u}_{n-1})-\boldsymbol{\eps}^p_{n-2})-\boldsymbol{\sigma}_{n-2}\|_{\infty}$, $\boldsymbol{\Delta}$ is sufficiently small, $\kappa \geq \frac{2 \mu}{d}$and $\chi^2_{n-2}\leq 0$ almost everywhere. Furthermore, let the assumptions on the parameters $k_1, k_2, \kappa d, \alpha$ from Theorem \ref{thm:rposdef2} hold. Then there exists $\eps>0$,  and a function $\mathbf{u}: B_\eps^\infty(\boldsymbol{\eps}^p_{n-2},\boldsymbol{\sigma}_{n-2},\boldsymbol{\chi}_{n-2},\mathbf{b}_{n-1},\mathbf{t}_{n-1}) \to V_h$ such that
        \begin{align*}
            T(\mathbf{u}(\boldsymbol{\eps}^p,\boldsymbol{\sigma},\boldsymbol{\chi},\mathbf{b},\mathbf{t}),\boldsymbol{\eps}^p,\boldsymbol{\sigma},\boldsymbol{\chi},\mathbf{b},\mathbf{t})=0,
        \end{align*}
        where $B_\eps^\infty(\cdot)$ denotes the Ball of radius $\eps$ in  $L^\infty(\Omega)^{d \times d}_{\text{sym}} \times L^\infty(\Omega)^{d \times d}_{\text{sym}} \times L^\infty(\Omega)^{d \times d}_{\text{sym}} \times L^\infty(\Omega) \times L^1(\Omega)^d \times L^1(\Gamma_D)^d$.  
    \end{theorem}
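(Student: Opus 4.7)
The plan is to apply Kruse's implicit function theorem for set-valued/semismooth maps~\cite[Thm.~4]{Kruse}, exactly as in the proof of Theorem~\ref{thm:solexpinf}, but with the enlarged parameter space that now also includes $\boldsymbol{\eps}^p$, $\boldsymbol{\chi}$, $\mathbf{b}$, and $\mathbf{t}$. All the analytic ingredients are already available: the Lipschitz estimate of Lemma~\ref{lem:lipfrac}, the uniform positive definiteness of $\mathcal{S}\mathbf{C}$ (Theorem~\ref{thm:rposdef2}, Remark~\ref{rm:rposdef2} and Lemma~\ref{lem:scposdef}), and the relation between partial and joint subdifferentials in Lemma~\ref{lem:partgradfrac}.

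First I would verify the analogue of the continuity/Lipschitz lemma used in the non-fractional case. Continuity of $T(\mathbf{u}_{n-1},\cdot)$ at the anchor point in the product $L^\infty\times L^\infty\times L^\infty\times L^\infty\times L^1\times L^1$ topology follows from Lemma~\ref{lem:lipfrac} together with dominated convergence (the $(\mathbf{b},\mathbf{t})$-dependence is even linear), and Lipschitz continuity of $T(\cdot,\boldsymbol{\eps}^p_{n-2},\boldsymbol{\sigma}_{n-2},\boldsymbol{\chi}_{n-2},\mathbf{b}_{n-1},\mathbf{t}_{n-1})$ at $\mathbf{u}_{n-1}$ is immediate because $V_h$ is finite-dimensional and $\boldsymbol{\eps}$ is bounded on it. Next, define $\mathcal{A}\subseteq \mathbb{R}^{k\times k}$ as the set of matrices with entries $\int_\Omega \mathcal{S}\mathbf{C}\boldsymbol{\eps}(\boldsymbol{\phi}_i):\boldsymbol{\eps}(\boldsymbol{\phi}_j)\,dx$ where $\mathcal{S}$ ranges over integrable selections of $\sD_{\boldsymbol{\sigma}} R(\hat{\boldsymbol{\sigma}}_{n-1},\boldsymbol{\sigma}_{n-2},\boldsymbol{\chi}_{n-2})$ with $\hat{\boldsymbol{\sigma}}_{n-1}=\mathbf{C}(\boldsymbol{\eps}(\mathbf{u}_{n-1})-\boldsymbol{\eps}^p_{n-2})$. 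Convexity, compactness and non-emptiness of $\mathcal{A}$ follow from a Gelfand-integral argument verbatim as in Lemma~\ref{lem:compA}, using Lemma~\ref{lem:lipfrac} to produce the required integrable upper bound. Invertibility of every $A\in\mathcal{A}$ is given by Lemma~\ref{lem:posdeffrac}.

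The central and most delicate step is to show that $\mathcal{A}$ is a uniform strict prederivative of $T$ at $\mathbf{u}_{n-1}$ near $(\boldsymbol{\eps}^p_{n-2},\boldsymbol{\sigma}_{n-2},\boldsymbol{\chi}_{n-2},\mathbf{b}_{n-1},\mathbf{t}_{n-1})$. Mimicking Lemma~\ref{lem:predev}, I would apply Clarke's mean-value theorem~\cite[Prop. 2.6.5]{clarke} to write $T(\mathbf{c}_1,\cdot)-T(\mathbf{c}_2,\cdot)$ as a convex combination $\sum_i \lambda_i A_i(\mathbf{c}_1-\mathbf{c}_2)$ with $A_i$ drawn from the fractional analogue $\widetilde{\mathcal{A}}(\mathbf{c},\boldsymbol{\eps}^p,\boldsymbol{\sigma},\boldsymbol{\chi})$ of the map in the non-fractional case (the dependence on $\mathbf{b},\mathbf{t}$ enters only through an additive linear term, which does not contribute to $\sD_{\mathbf{c}} T$), and then approximate each $A_i$ by an element of $\mathcal{A}$ via upper semicontinuity of $\widetilde{\mathcal{A}}$. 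The hard part is precisely the joint upper semicontinuity of $\widetilde{\mathcal{A}}$, i.e., generalizing Lemmas~\ref{lem:pointwiseusc} and~\ref{lem:usc} to the enlarged argument list. Pointwise upper semi-continuity of $\sD_{\boldsymbol{\sigma}} R(\boldsymbol{\sigma},\boldsymbol{\sigma}_{n-1},\boldsymbol{\chi})$ jointly in all three arguments follows from Lemma~\ref{lem:fracdiff}, which yields continuity of $\D{\boldsymbol{\sigma}_{n-1}}{\alpha}f$ in $(\boldsymbol{\sigma}_{n-1},\boldsymbol{\chi})$ once the smallness of $\|\mathbf{C}(\boldsymbol{\eps}(\mathbf{u}_{n-1})-\boldsymbol{\eps}^p_{n-2})-\boldsymbol{\sigma}_{n-2}\|_\infty$ and $\boldsymbol{\Delta}$ ensure the uniform lower bound $|\text{dev}(\boldsymbol{\sigma}+\boldsymbol{\chi}^1)|>|\text{dev}(\boldsymbol{\Delta})|$ in a neighborhood of the anchor, combined with the chain rule and the upper-semicontinuity of Clarke's subdifferential~\cite[Prop.~2.6.2]{clarke}. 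Lifting pointwise upper semicontinuity to the integrated quantity $\widetilde{\mathcal{A}}$ then proceeds exactly as in Lemma~\ref{lem:usc}, invoking~\cite[Thm.~3.1]{YANNELIS} with the integrable bound again provided by Lemma~\ref{lem:lipfrac}.

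Once these three ingredients — anchor continuity/Lipschitz property, compact convex invertible $\mathcal{A}$, and the uniform strict prederivative property — are in place, a direct application of~\cite[Thm.~4]{Kruse} yields the existence of the function $\mathbf{u}$ on an $L^\infty\times L^\infty\times L^\infty\times L^\infty\times L^1\times L^1$ ball around $(\boldsymbol{\eps}^p_{n-2},\boldsymbol{\sigma}_{n-2},\boldsymbol{\chi}_{n-2},\mathbf{b}_{n-1},\mathbf{t}_{n-1})$ satisfying $T(\mathbf{u}(\cdot),\cdot)=0$. I expect the dominant technical difficulty to be the upper semicontinuity step, because it is the only place where the extra dependence on $\boldsymbol{\chi}$ interacts nontrivially with the nonlocal fractional structure; everything else is essentially a bookkeeping transfer of the arguments from the proof of Theorem~\ref{thm:solexpinf} using the fractional-setting counterparts established in Section~\ref{sec:wellposed}.
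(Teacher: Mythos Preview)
Your proposal is correct and follows essentially the same route as the paper: apply \cite[Thm.~4]{Kruse} by transferring the machinery of Theorem~\ref{thm:solexpinf} (continuity/Lipschitz of $T$, compactness and convexity of $\mathcal{A}$, invertibility via Lemma~\ref{lem:posdeffrac}, and the uniform strict prederivative property via upper semicontinuity of $\widetilde{\mathcal{A}}$) to the fractional setting using Lemmas~\ref{lem:lipfrac}, \ref{lem:partgradfrac}, and \ref{lem:fracdiff}. You also correctly single out the pointwise upper semicontinuity of $\sD_{\boldsymbol{\sigma}}R$ in the enlarged argument list as the only genuinely new step; the paper handles this by an explicit case split on the sign of $f$ at the anchor (inside, outside, and on the yield surface), which is exactly what your appeal to Lemma~\ref{lem:fracdiff} together with \cite[Prop.~2.6.2]{clarke} amounts to once unpacked.
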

    \begin{proof}
        We essentially follow the steps of the proof of Theorem~\ref{thm:solexpinf}. The only difference is in the arguments of Lemma~\ref{lem:pointwiseusc}, when we show pointwise upper-semicontinuity of $\sD_{\boldsymbol{\sigma}}R\Big(\mathbf{C}\Big(\boldsymbol{\eps}\Big(\sum_{i=1}^dc_i \boldsymbol{\phi}_i\Big)-\boldsymbol{\eps}^p_{n-1}\Big),\boldsymbol{\sigma}_{n-1},\boldsymbol{\chi}\Big)$  at $(\hat{\mathbf{c}},\boldsymbol{\eps}^p_{n-2},\boldsymbol{\sigma}_{n-2},\boldsymbol{\chi}_{n-2})$, where $\boldsymbol{u}_{n-1}$ is identified with its basis expansion $\hat{\mathbf{c}}$. As before, we fix $\mathbf{x} \in \Omega$ and $\eps>0$. Again, the uniform continuity of the elements of  $\sD_{\boldsymbol{\sigma}}R\Big(\mathbf{C}\Big(\boldsymbol{\eps}\Big(\sum_{i=1}^dc_i \boldsymbol{\phi}_i(\mathbf{x})\Big)-\boldsymbol{\eps}^p_{n-1}(\mathbf{x})\Big),\boldsymbol{\sigma}_{n-1}(\mathbf{x}),\boldsymbol{\chi}(\mathbf{x})\Big)$ in $\boldsymbol{\sigma}_{n-1}(\mathbf{x})$ implies that for $\delta>0$ and $||\boldsymbol{\sigma}_{n-1}-\boldsymbol{\sigma}_{n-2}||_\infty< \delta$ that
        \begin{align*}
            \sD_{\boldsymbol{\sigma}}R\Big(&\mathbf{C}\Big(\boldsymbol{\eps}\Big(\sum_{i=1}^dc_i \boldsymbol{\phi}_i(\mathbf{x})\Big)-\boldsymbol{\eps}^p_{n-1}(\mathbf{x})\Big),\boldsymbol{\sigma}_{n-1}(\mathbf{x}),\boldsymbol{\chi}(\mathbf{x})\Big) \\
            &\subset B_{\eps} \Bigg( \sD_{\boldsymbol{\sigma}}R\Big(\mathbf{C}\Big(\boldsymbol{\eps}\Big(\sum_{i=1}^dc_i \boldsymbol{\phi}_i(\mathbf{x})\Big)-\boldsymbol{\eps}^p_{n-1}(\mathbf{x})\Big),\boldsymbol{\sigma}_{n-2}(\mathbf{x}),\boldsymbol{\chi}(\mathbf{x})\Big)\Bigg).
        \end{align*}
        Differently to the previous proof, we have to distinguish two cases, namely the situation inside and outside the critical region. If $f\Big(\mathbf{C}\Big(\boldsymbol{\eps}\Big(\sum_{i=1}^d\hat{c}_i \boldsymbol{\phi}_i(\mathbf{x})\Big)-\boldsymbol{\eps}^p_{n-2}(\mathbf{x})\Big),\boldsymbol{\chi}_{n-2}(\mathbf{x}) \Big)<0$, it suffices to find $\delta>0$ such that for $||(\mathbf{c}-\hat{\mathbf{c}},\boldsymbol{\eps}^p_{n-1}-\boldsymbol{\eps}^p_{n-2},\boldsymbol{\chi}^1-\boldsymbol{\chi}^1_{n-2}, \chi^2-\chi^2_{n-2})||_\infty< \delta$, we stay inside the critical region. The same arguments work outside the critical region. If we are on the yield surface, i.e. $f\Big(\mathbf{C}\Big(\boldsymbol{\eps}\Big(\sum_{i=1}^d\hat{c}_i \boldsymbol{\phi}_i(\mathbf{x})\Big)-\boldsymbol{\eps}^p_{n-2}(\mathbf{x})\Big),\boldsymbol{\chi}_{n-2}(\mathbf{x}) \Big)= 0$, we  find $\delta>0$ by using the continuity of the elements of $\sD_{\boldsymbol{\sigma}}R$ in the respective variables to get
        \begin{align*}
            \sD_{\boldsymbol{\sigma}}R\Big(&\mathbf{C}\Big(\boldsymbol{\eps}\Big(\sum_{i=1}^dc_i \boldsymbol{\phi}_i(\mathbf{x})\Big)-\boldsymbol{\eps}^p_{n-1}(\mathbf{x})\Big),\boldsymbol{\sigma}_{n-2}(\mathbf{x}),\boldsymbol{\chi}(\mathbf{x})\Big) \\
        \end{align*}
        With this result, the remaining proof steps transfer verbatim.
    \end{proof}
    \section{Semismoothness of fractional implicit return-mapping}\label{sec:semi}
As discussed before, most classical discretizations of flow rules are implicit. This has stability advantages but also ensures exact complementary conditions. In our case, implicitly discretizing both flow rule~\eqref{eq:flow1}--\eqref{eq:flow3} and complementarity conditions~\eqref{eq:complementarity} yields the following equations for the material update ($\boldsymbol{\sigma}_n,\boldsymbol{\chi}_{n},\Delta \gamma_n$), with a given $\boldsymbol{\sigma}_{tr}$
	\begin{align*}
		\boldsymbol{\sigma}_n&=\boldsymbol{\sigma}_{tr}- \Delta \gamma_n \mathbf{C} \Dhat{\boldsymbol{\sigma}_n}{\alpha}f(\boldsymbol{\sigma}_n,\boldsymbol{\chi}_{n})\\
		\boldsymbol{\chi}^1_{n}&=\boldsymbol{\chi}^1_{n-1}-\Delta \gamma_n k_1 \partial_{\boldsymbol{\chi}^1} f(\boldsymbol{\sigma}_n,\boldsymbol{\chi}_{n})\\
		\chi^2_n&=\chi^2_{n-1}-\Delta \gamma_n k_2
	\end{align*}
	subject to the discretized complementarity conditions
	\begin{align}
    \label{eq:implicitcomp}
		\Delta \gamma_n \geq 0 \quad
		f(\boldsymbol{\sigma}_n,\boldsymbol{\chi}_{n}) \leq 0 \quad
		\Delta \gamma_n f(\boldsymbol{\sigma}_n,\boldsymbol{\chi}_{n}) =0.
	\end{align}
	Note that the complementarity conditions \eqref{eq:implicitcomp} can be written via a NCP-function \cite{Sauter} as
	\begin{align*}
		\max\Big\{0,\Delta \gamma_n+f(\boldsymbol{\sigma}_n,\boldsymbol{\chi}_{n})\Big\}- \Delta \gamma_n=0.
	\end{align*}
This motivates the definition of the map
\begin{align*}
\begin{split}
	&T: (M^d)^3 \times \mathbb{R} \times \mathbb{R} \to (M^d)^2 \times \mathbb{R} \times \mathbb{R}, \\
	&T(\boldsymbol{\sigma}_{tr},\boldsymbol{\sigma}_n,\boldsymbol{\chi}_{n},\Delta \gamma_n)=
 \begin{bmatrix}
		\boldsymbol{\sigma}_n -\boldsymbol{\sigma}_{tr}+ \Delta \gamma_n \mathbf{C} \Dhat{\boldsymbol{\sigma}_n}{\alpha}f(\boldsymbol{\sigma}_n,\boldsymbol{\chi}_{n})\\
		\boldsymbol{\chi}^1_{n}-\boldsymbol{\chi}^1_{n-1}+\Delta \gamma_n k_1 \partial_{\boldsymbol{\chi}^1} f(\boldsymbol{\sigma}_n,\boldsymbol{\chi}_{n})\\
		\chi^2_n-\chi^2_{n-1}+\Delta \gamma_n k_2\\
		\max\Big\{0,\Delta \gamma_n+f(\boldsymbol{\sigma}_n,\boldsymbol{\chi}_{n})\Big\}- \Delta \gamma_n
\end{bmatrix}.
\end{split}
\end{align*}
A solution satisfying $T=0$ is a valid material update. We want to show, using the implicit function theorem for semismooth functions, that in a neighborhood of the region enclosed by the yield function $f$ there exists a semismooth return-mapping, $R(\boldsymbol{\sigma}_{tr})=\boldsymbol{\sigma}_n$. While this is a first step towards well-posedness of the implicit scheme, it does not imply that a solution $\mathbf{u}$ of the weak form~\eqref{eq:modelproblem} exists, which is left as an interesting open question.
\begin{theorem}
    Let $\boldsymbol{\chi}^1_{n-1} \in M^d, \chi^2_{n-1}<0$, and $\boldsymbol{\Delta}$ sufficiently small. Then, there exists an open neighborhood $\widetilde{K}$ of  $K=\{\boldsymbol{\sigma} \in M^d: f(\boldsymbol{\sigma},\boldsymbol{\chi}_{n-1}) \leq 0\}$ and a semismooth function
    \begin{align*}
        \mathcal{R}: \widetilde{K} \to (M^d)^2 \times \mathbb{R} \times \mathbb{R}
    \end{align*}
    such that
    \begin{align*}
        T(\boldsymbol{\sigma},\mathcal{R}(\boldsymbol{\sigma}))=0, \quad \text{for all } \boldsymbol{\sigma} \in \widetilde{K}.
    \end{align*}
\end{theorem}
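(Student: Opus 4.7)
The plan is to apply the semismooth implicit function theorem (as in \cite[Proposition 10.3]{Sauter}) to solve $T(\boldsymbol{\sigma}_{tr},\boldsymbol{\sigma}_n,\boldsymbol{\chi}_n,\Delta\gamma_n)=0$ for the last three arguments as a semismooth function of $\boldsymbol{\sigma}_{tr}$. The natural anchor points are given by the purely elastic trivial solution: for any $\hat{\boldsymbol{\sigma}}\in K$, the tuple $(\hat{\boldsymbol{\sigma}},\boldsymbol{\chi}_{n-1},0)$ solves $T(\hat{\boldsymbol{\sigma}},\hat{\boldsymbol{\sigma}},\boldsymbol{\chi}_{n-1},0)=0$, since the first three rows vanish trivially (because $\Delta\gamma_n=0$ and $\boldsymbol{\chi}_n=\boldsymbol{\chi}_{n-1}$) and the NCP row reduces to $\max\{0,f(\hat{\boldsymbol{\sigma}},\boldsymbol{\chi}_{n-1})\}=0$ by the defining property of $K$. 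Semismoothness of $T$ in all its arguments follows from Lemma~\ref{lem:fracdiff} (for the dependence through $\Dhat{\boldsymbol{\sigma}_n}{\alpha}f$), the smoothness of $\partial_{\boldsymbol{\chi}^1}f$ near anchor points (guaranteed by a small-$\boldsymbol{\Delta}$ and $|\chi^2_{n-1}|$ bound keeping $|{\rm dev}(\boldsymbol{\sigma}+\boldsymbol{\chi}^1_{n-1})|$ bounded away from zero), and the standard chain rule for semismooth functions applied to the $\max\{0,\cdot\}$ term.

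The main technical step is computing the partial subdifferential $\sD_{(\boldsymbol{\sigma}_n,\boldsymbol{\chi}_n,\Delta\gamma_n)}T$ at the anchor and verifying that every element is invertible. Since $\Delta\gamma_n=0$ at the anchor, the first three block-rows collapse to a block-triangular form with $\Id$, $\Id$ and $1$ on the diagonal and a last column $(\mathbf{C}\Dhat{\hat{\boldsymbol{\sigma}}}{\alpha}f, k_1\partial_{\boldsymbol{\chi}^1}f, k_2)^T$. For $\hat{\boldsymbol{\sigma}}\in{\rm int}(K)$, i.e.\ $f<0$, the NCP row contributes $(0,0,0,-1)$, yielding immediate invertibility. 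On the critical set $\partial K$ where $f(\hat{\boldsymbol{\sigma}},\boldsymbol{\chi}_{n-1})=0$, the Clarke subdifferential of $\max\{0,\cdot\}$ at $0$ is the interval $[0,1]$, so the NCP row takes the form $(s\,\partial_{\boldsymbol{\sigma}}f, s\,\partial_{\boldsymbol{\chi}^1}f, s, s-1)$ for some $s\in[0,1]$; this kink is exactly the main obstacle of the proof.

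To dispatch this obstacle, I would eliminate the first three block-columns of the Jacobian by row operations. The resulting $(4,4)$ entry evaluates to
\begin{align*}
(s-1)-s\Bigl(\partial_{\boldsymbol{\sigma}}f:\mathbf{C}\Dhat{\hat{\boldsymbol{\sigma}}}{\alpha}f+k_1\,\partial_{\boldsymbol{\chi}^1}f:\partial_{\boldsymbol{\chi}^1}f+k_2\Bigr)=-\bigl(1+s(D-1)\bigr),
\end{align*}
where $D>0$. Indeed, since $\partial_{\boldsymbol{\sigma}}f$ is deviatoric, one has $\partial_{\boldsymbol{\sigma}}f:\mathbf{C}\Dhat{\hat{\boldsymbol{\sigma}}}{\alpha}f=2\mu\,\partial_{\boldsymbol{\sigma}}f:\Dhat{\hat{\boldsymbol{\sigma}}}{\alpha}f$, which is strictly positive by Lemma~\ref{lem:positive} (the sign of each component of $\D{\hat{\boldsymbol{\sigma}}}{\alpha}f$ coincides with that of $\partial_{\boldsymbol{\sigma}}f$ and normalization preserves direction), and $\partial_{\boldsymbol{\chi}^1}f:\partial_{\boldsymbol{\chi}^1}f=1$, $k_2>0$. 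An elementary case distinction on $D\lessgtr 1$ shows $1+s(D-1)>0$ for every $s\in[0,1]$, so invertibility holds across the entire Clarke subdifferential.

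Finally, a direct application of the semismooth implicit function theorem at each anchor $\hat{\boldsymbol{\sigma}}\in K$ yields a local semismooth solution function. Uniqueness in the implicit function theorem combined with the fact that the trivial branch $\mathcal{R}(\boldsymbol{\sigma})=(\boldsymbol{\sigma},\boldsymbol{\chi}_{n-1},0)$ is a valid local solution at every interior anchor ensures consistency of these local branches across $K$; compactness of $K$ (or, if $K$ is unbounded, a local-to-global covering argument) then glues them into a single semismooth $\mathcal{R}$ defined on an open neighborhood $\widetilde{K}\supset K$, as required.
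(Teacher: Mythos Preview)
Your proposal is correct and follows essentially the same route as the paper: anchor at the trivial elastic solution $(\hat{\boldsymbol{\sigma}},\boldsymbol{\chi}_{n-1},0)$ for $\hat{\boldsymbol{\sigma}}\in K$, verify semismoothness of $T$ via Lemma~\ref{lem:fracdiff} and the chain rule, compute the partial Clarke subdifferential (block-triangular for $f<0$, a one-parameter family on $\partial K$), and check invertibility by row-reducing to the scalar $(4,4)$-entry before invoking the semismooth implicit function theorem. Your justification of the sign of $\partial_{\boldsymbol{\sigma}}f:\mathbf{C}\Dhat{\hat{\boldsymbol{\sigma}}}{\alpha}f$ via the deviatoric structure and Lemma~\ref{lem:positive} is in fact more explicit than the paper's, and your final gluing remark (local uniqueness plus agreement with the trivial branch on $K$) addresses a point the paper leaves implicit.
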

\begin{proof}
  The first step is to show, that $T$ is a semismooth function.
Due to Lemma~\ref{lem:fracdiff}, we have that the first three components of $T$ are continuously differentiable, while the fourth component is semismooth as a concatenation of semismooth functions. Therefore, $T$ is a semismooth function around a point where $\boldsymbol{\sigma}_n+\boldsymbol{\chi}^1_n$ is sufficiently far from 0 or $\Delta \gamma_n=0$, which will be the case for a solution of $T=0$.
Now let $\boldsymbol{\sigma}_{tr} \in K$, which yields that $T(\boldsymbol{\sigma}_{tr},\boldsymbol{\sigma}_{tr},\boldsymbol{\chi}_{n-1},0)=0$. We now want to check the generalized gradient at that point and distinguish two cases:

$\bullet$ $f(\boldsymbol{\sigma}_{tr},\boldsymbol{\chi}_{n-1})<0$:
This implies that $\Delta \gamma_n=0$ and that $T$ is continuously differentiable in a neighborhood of $(\boldsymbol{\sigma}_{tr},\boldsymbol{\sigma}_{tr},\boldsymbol{\chi}_{n-1},0)$. Therefore, we have by classical differentiation and using that $\Delta \gamma_n=0$
\begin{align}
    \label{eq:dt1}
\begin{split}
   \sD T(\boldsymbol{\sigma}_{tr},\boldsymbol{\sigma}_{tr},\boldsymbol{\chi}_{n-1},0)= 
   \begin{bmatrix}
    -\Id & \Id & 0 & 0 & \mathbf{C}\Dhat{\boldsymbol{\sigma}_{tr}}{\alpha}f(\boldsymbol{\sigma}_{tr},\boldsymbol{\chi}_{n-1}) \\
    0& 0 & \Id & 0 & k_1 \partial_{\boldsymbol{\chi}^1} f(\boldsymbol{\sigma}_{tr},\boldsymbol{\chi}_{n-1})\\
    0 & 0 & 0 & 1 & k_2 \\
    0 & 0 & 0 &0 & -1
  \end{bmatrix}.
\end{split}
\end{align}
It is necessary to show that $\partial T$ without the first column is invertible. This follows directly from the upper triangular structure.

$\bullet$ $f(\boldsymbol{\sigma}_{tr},\boldsymbol{\chi}_{n-1})=0$: Now we have to calculate the subgradient elements of $\sD T$ because of the lack of differentiability. It consists of convex combinations of \eqref{eq:dt1} and
\begin{align}
\label{eq:dt2}
\begin{split}
\begin{bmatrix}
-\Id & \Id& 0 & 0 & \mathbf{C}\Dhat{\boldsymbol{\sigma}_{tr}}{\alpha}f^{tr} \\
0& 0& \Id& 0 & k_1 \partial_{\boldsymbol{\chi}^1} f^{tr}\\
0 & 0 & 0 & 1 & k_2 \\
0 & \partial_{\boldsymbol{\sigma}} f^{tr} & \partial_{\boldsymbol{\chi}^1} f^{tr} &1 & 0
\end{bmatrix},
\end{split}
\end{align}
where we use the shortcut $f^{tr}:=f(\boldsymbol{\sigma}_{tr},\boldsymbol{\chi}_{n-1})$.
It remains to investigate convex combinations of \eqref{eq:dt1} and \eqref{eq:dt2}. With $\lambda \in (0,1)$, we get for the four trailing columns
\begin{align}\label{eq:lambdamat}
	\begin{bmatrix}
		\Id & 0 & 0 & \mathbf{C}\Dhat{\boldsymbol{\sigma}_{tr}}{\alpha}f^{tr} \\
		0& \Id& 0 & k_1 \partial_{\boldsymbol{\chi}^1} f^{tr}\\
		0 & 0 & 1 & k_2 \\
		\lambda \partial_{\boldsymbol{\sigma}} f^{tr} & \lambda \partial_{\boldsymbol{\chi}^1} f^{tr} & \lambda  & \lambda-1
	\end{bmatrix}.
\end{align}
Using the first three rows to eliminate the first three columns of the fourth row, the remaining entry in the fourth row is given by
\begin{align*}
    \lambda-1  -\lambda \mathbf{C}\Dhat{\boldsymbol{\sigma}_{tr}}{\alpha}f^{tr}: \partial_{\boldsymbol{\sigma}} f^{tr} -  k_1 \lambda \partial_{\boldsymbol{\chi}^1} f^{tr}:  \partial_{\boldsymbol{\chi}^1} f^{tr} -\lambda k_2.
\end{align*}
Clearly, invertibility of~\eqref{eq:lambdamat} is equivalent to this term being non-zero.
Since $| \partial_{\boldsymbol{\chi}^1} f^{tr}|=1$ and $\mathbf{C}\Dhat{\boldsymbol{\sigma}_{tr}}{\alpha}f^{tr}: \partial_{\boldsymbol{\sigma}} f^{tr}\geq 0$, due to the symmetry of $\mathbf{C}$, this follows if
\begin{align*}
     \lambda -1 -\lambda(k_1+k_2)<0,
\end{align*}
which is always satisfied for non-zero hardening variables $k_1, k_2$. 
The implicit function theorem for semismooth functions, i.e. \cite[Proposition 10.3]{Sauter} yields the existence of a semismooth return-mapping $\mathcal{R}(\boldsymbol{\sigma}_{tr})$ in a neighborhood of the feasible set $f(\boldsymbol{\sigma}_{tr},\boldsymbol{\chi}_{n-1})\leq0.$
\end{proof}
\section{Numerical Experiments}\label{sec:numerics}
To highlight the results of the previous sections, we conduct several numerical experiments. Different values for the fractional coefficient $\alpha$, for the interval matrix $\boldsymbol{\Delta}$ and different domains with corresponding loading regimes shall be considered and their influences examined. 
\subsection{Two-dimensional domain}
The domain $\Omega \subset \mathbb{R}^2$ is depicted in Figures~\ref{fig:domain}(A) and~\ref{fig:domain}(B).
\begin{figure}
    \centering
    \begin{subfigure}{0.45 \textwidth}
    \centering
    \begin{tikzpicture}[scale=0.55]
    
        \draw[black,thick,fill=gray!30] (0,0) -- (0,2)--(3,2)--(4,1.5)--(6,1.5)--(7,2)--(10,2)--(10,0)--(7,0)--(6,0.5)--(4,0.5)--(3,0)--cycle;
		\draw (0,0)-- +(-0.5,0.2);
		\draw (0,0.2)-- +(-0.5,0.2);
		\draw (0,0.4)-- +(-0.5,0.2);
		\draw (0,0.6)-- +(-0.5,0.2);
		\draw (0,0.8)-- +(-0.5,0.2);
		\draw (0,1)-- +(-0.5,0.2);
		\draw (0,1.2)-- +(-0.5,0.2);
		\draw (0,1.4)-- +(-0.5,0.2);
		\draw (0,1.6)-- +(-0.5,0.2);
		\draw (0,1.8)-- +(-0.5,0.2);
		\draw (0,2)-- +(-0.5,0.2);
		\draw[->,red, thick] (10,0) -- +(0.5,0);
		\draw[->,red, thick] (10,0.2) -- +(0.5,0);
		\draw[->,red, thick] (10,0.4) -- +(0.5,0);
		\draw[->,red, thick] (10,0.6) -- +(0.5,0);
		\draw[->,red, thick] (10,0.8) -- +(0.5,0);
		\draw[->,red, thick] (10,1) -- +(0.5,0);
		\draw[->,red, thick] (10,1.2) -- +(0.5,0);
		\draw[->,red, thick] (10,1.4) -- +(0.5,0);
		\draw[->,red, thick] (10,1.6) -- +(0.5,0);
		\draw[->,red, thick] (10,1.8) -- +(0.5,0);
		\draw[->,red, thick] (10,2) -- +(0.5,0);
    
	\end{tikzpicture}
    \caption{}
    \label{fig:domain1}
\end{subfigure} 
\begin{subfigure}{0.45\textwidth}
	\centering
	\begin{tikzpicture}[scale=0.55]
			\draw[white,fill=gray!30] (0,0) -- (0,2)--(3,2)--(4,1.5)--(6,1.5)--(7,2)--(10,2)--(10,0)--(7,0)--(6,0.5)--(4,0.5)--(3,0)--cycle;
			\node at (5,1) {$\Omega$};
			\draw[thick, blue] (0,0) -- (0,2);
			\node[fill, red, minimum size=0.8pt, inner sep=0] at (0,2) {};
			\node[fill, red, minimum size=0.8pt, inner sep=0] at (0,0) {};
			\node at (-0.5,1) {\color{blue}$\Gamma_D$};
			\draw[thick, red] (0,2)--(3,2)--(4,1.5)--(6,1.5)--(7,2)--(10,2)--(10,0)--(7,0)--(6,0.5)--(4,0.5)--(3,0)--(0,0);
			\node at (4,2) {\color{red}$\Gamma_N$};
			\node[minimum size=0,inner sep=0] (A) at (10.3,0) {};
			\node[minimum size=0,inner sep=0] (B) at (10.3,2) {};
			\draw[dashed] (A)--(B);
			\draw (10.2,0)-- (10.4,0);
			\draw (10.2,2)-- (10.4,2);
			\node[right] at (10.4,1) {$\mathbf{t}_N \neq 0$};
	\end{tikzpicture}
	\caption{}
 \label{fig:domain2}
\end{subfigure}
\vspace{2em}
\begin{subfigure}{ \textwidth}
    \centering
    	\begin{tikzpicture}[scale=0.7]
			\draw[black,thick,fill=gray!10] (0,0) -- (0,2)--(3,2)--(4,1.5)--(6,1.5)--(7,2)--(10,2)--(10,0)--(7,0)--(6,0.5)--(4,0.5)--(3,0)--cycle;
			\node[fill=red,circle, minimum size=2mm,inner sep=0] (A) at (5,0.5) {};
			\draw[red,->] (A) -- +(0,0.5);
			\node at (5.4,0.75) {\color{red}$d_y$};
            \draw[red,->] (10,1) -- +(0.5,0);
			\node at (10.4,1.25) {\color{red}$d_x$};
            \node[fill=red,circle, minimum size=2mm,inner sep=0] at (10,1) {};
	\end{tikzpicture}
    \caption{}
    \label{fig:dispmeasure}
\end{subfigure}
    \caption{(A): The domain $\Omega \subset \mathbb{R}^2$, is anchored on the left and subject to a pulling force on the right. (B): Depiction of $\Omega$ and $\partial \Omega= \Gamma_D \cup \Gamma_N$. (C): Depiction of locations where displacement measurements $d_x$ and $d_y$ are taken.}
    \label{fig:domain}
\end{figure}
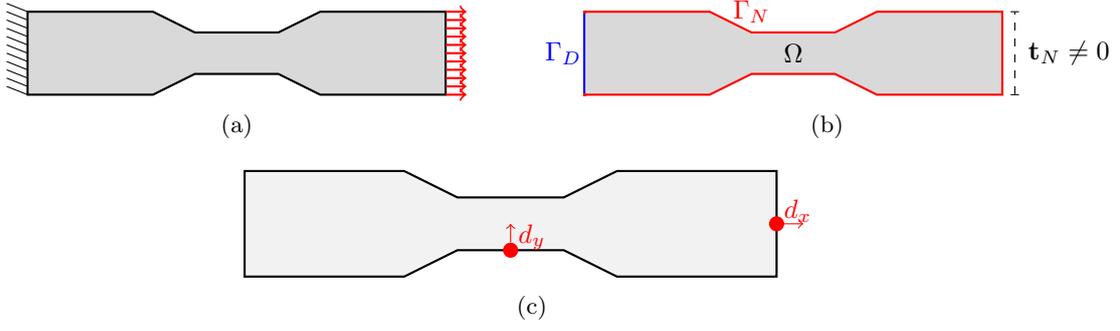
We have $\mathbf{b}=0$ and $\mathbf{t}_N$ vanishing everywhere but on the right edge, as seen in Figure~\ref{fig:domain}(B), where it is constant in space but varying in time as shown in Figure~\ref{fig:load}.

\begin{minipage}{0.45 \linewidth}
  \centering
  \begin{tikzpicture}[scale=0.8]
  \draw[->,thick] (-1,0) --(7,0);
  \draw[->,thick] (0,-1) -- (0,4);
  \draw[red,thick] (0,0) -- (3,3);
  \draw[red,thick] (3,3)--(6,0);
  \node at(7.4,0) {$t$};
  \node at (0,4.4) {$t^N_1$};
  \draw (-0.1,3)--(0.1,3);
  \node[left] at (-0.2,3) {$t^{max}_N$};
  \node[anchor=north east] at (0,0) {$0$};
\end{tikzpicture}
  
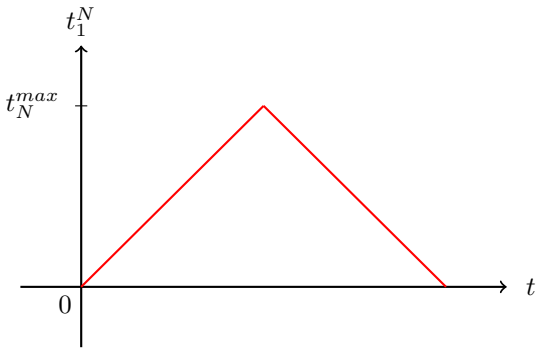
\captionof{figure}{\hbox{Loading $\mathbf{t}_N=(t_1^N,0)$ over time.}}
  \label{fig:load}
\end{minipage}
\begin{minipage}{0.45 \textwidth}
\centering
\begin{tabular}{c |c}
  
  Material parameters & Value \\
  \hline 
  $\mu$ & $55000$\\
  $\kappa$ & $55000$ \\
  $Y_0$ & $10000$ \\
  $k_1$ & $110000$ \\
  $k_2$ & $110000$\\
  $t_N^{max}$ & $15000$\\
          $\boldsymbol{\Delta}$&$\begin{bmatrix}
              100 & 100\\
              100 & 200
          \end{bmatrix}$
\end{tabular}
\bigskip
\bigskip
\bigskip

 \captionof{table}{\hbox{Default material parameters.}}
 \label{tab:paras}
\end{minipage}
\medskip

For most experiments we consider the fixed material parameters in Table~\ref{tab:paras}, chosen similarly to the ones used in the numerical experiments in \cite{Sauter} and adapted to fit the given constraints in our setting.
Figure~\ref{fig:flow} compares the plastic flow-vectors for $\alpha=0.5$ as defined in \eqref{eq:flow1} with the flow-vectors for $\alpha \approx 1$ on the yield-surface $f=0$, and we observe a clear deviation from the non-fractional case.
\begin{figure}
    \centering
    \begin{subfigure}{0.45\textwidth}
        \centering
        \includegraphics[width=\linewidth]{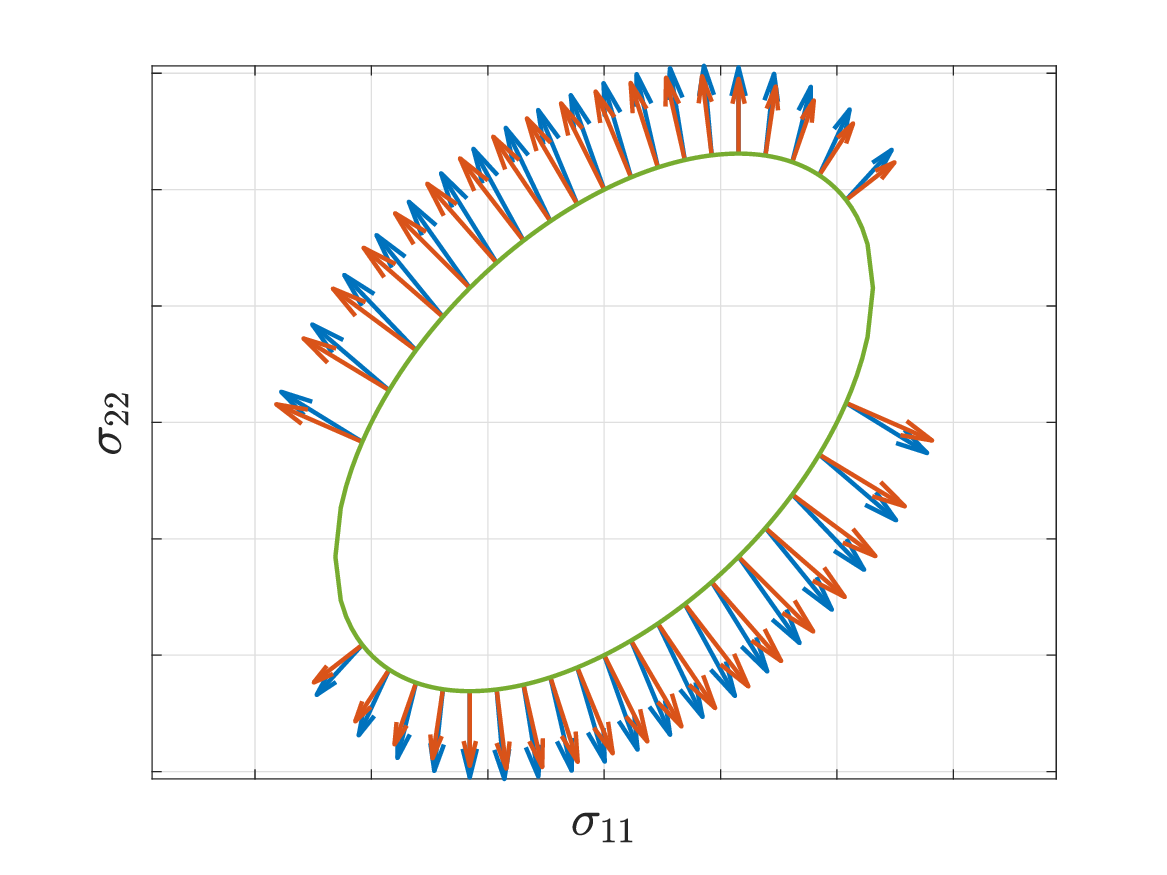}
        \caption{We depict the flow vectors for \textcolor{color1}{$\alpha=0.5$}, \textcolor{color2}{$\alpha=1$} and the \textcolor{color3}{yield surface}. $\boldsymbol{\Delta}=\begin{pmatrix}
            100 & 100 & 100\\
            100 & 200 & 100 \\
            100 & 100 & 100
        \end{pmatrix}$}
    \end{subfigure}
    \begin{subfigure}{0.45\textwidth}
        \centering
        \includegraphics[width=\linewidth]{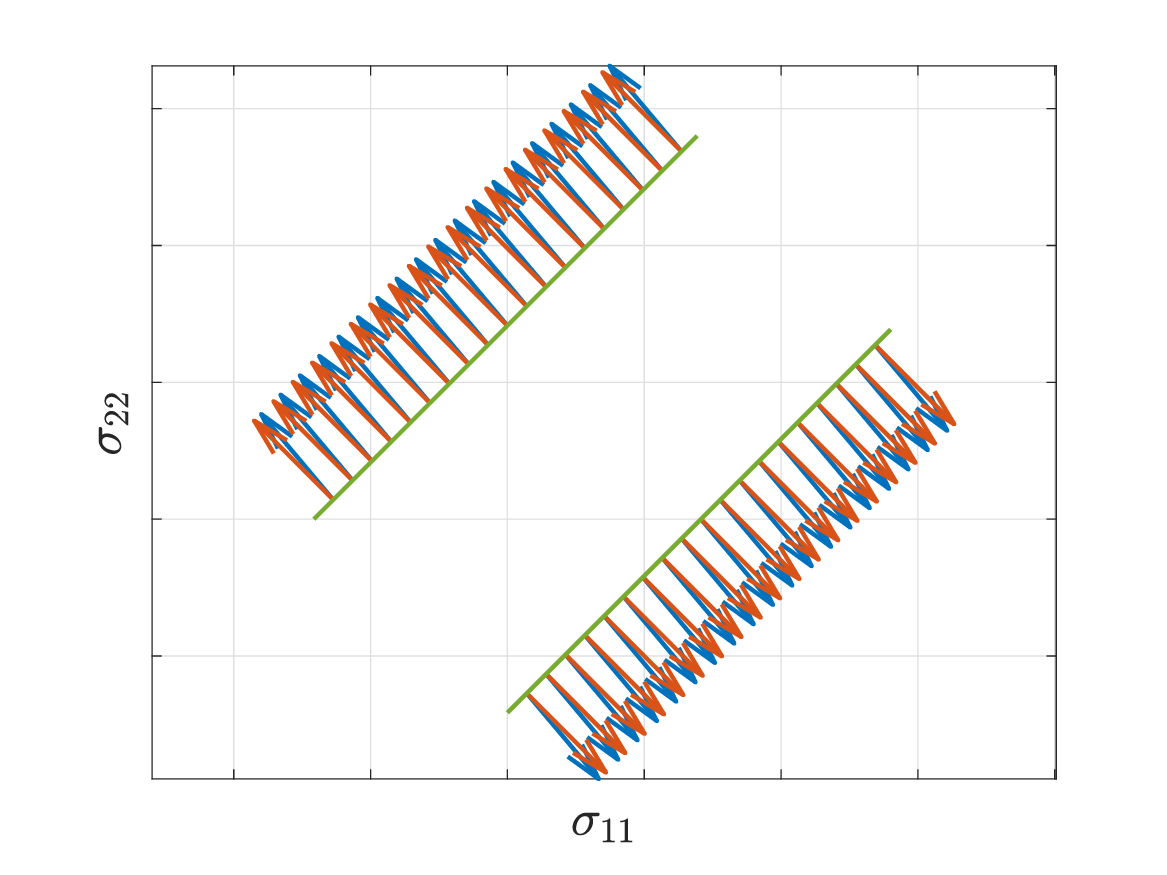}
        \caption{ We depict the flow vectors for \textcolor{color1}{$\alpha=0.5$}, \textcolor{color2}{$\alpha=1$} and the \textcolor{color3}{yield surface}. $\boldsymbol{\Delta}=\begin{pmatrix}
            100 & 100 \\
            100 & 200 
        \end{pmatrix}$.}
    \end{subfigure}
    \caption{The flow-vectors of the fractional flow rule for $\alpha=0.5$, compared to the flow-vectors for $\alpha \approx 1$, projected to the $\sigma_{11}-\sigma_{22}$ plane. (A): $d=3$, (B): $d=2$}
    \label{fig:flow}
\end{figure}
To solve \eqref{eq:modelproblem}, we use a first order spline space $V_h $ on a triangulation $\mathcal{T}$ of $\Omega$, i.e. 
\begin{align*}
    V_h=\Big\{ \mathbf{u} \in W^{1,\infty}_0(\Omega)^2 \cap C(\Omega,\mathbb{R}^2): \mathbf{u}_{|T} \ \text{is affine on all } T \in \mathcal{T}    \Big\}.
\end{align*}
The experiments are done on a mesh with roughly $1.2\cdot 10^6$ degrees of freedom with the finite-element framework \textit{FEniCSx} \cite{FEniCSProject.08.12.2023}.
In order to solve the nonlinear system of equations \eqref{eq:modelproblem}, we use so called \textit{semismooth Newton-methods} with convergence threshold for the residual of $10^{-8}$, as introduced in \cite{Facchinei.2003b} and applied to elasto-plasticity in \cite[Table 1]{Sauter}. Due to Theorems~\ref{thm:semismooth}\&\ref{thm:rposdef2} (semismoothness of  $R_n$ and positive-definiteness of  $\partial R_n$) together with~\cite[Thm. 3.2]{Sauter} we expect local superlinear convergence of the Newton-method. The fractional derivatives are evaluated with the convolutional quadrature equivalent to the implicit Euler method with $n_{\rm conv}=10$ nodes (see~\cite{Lubich.1988,Lubich.1988b, Lubich.1986} for details). To confirm the superlinear convergence, we plot the norm of the residuals in Figure~\ref{fig:suplin1}(A) for $\alpha=0.5$.
\begin{figure}
  \centering
  \begin{subfigure}{0.3 \textwidth}
      \centering
      \includegraphics[width=\textwidth]{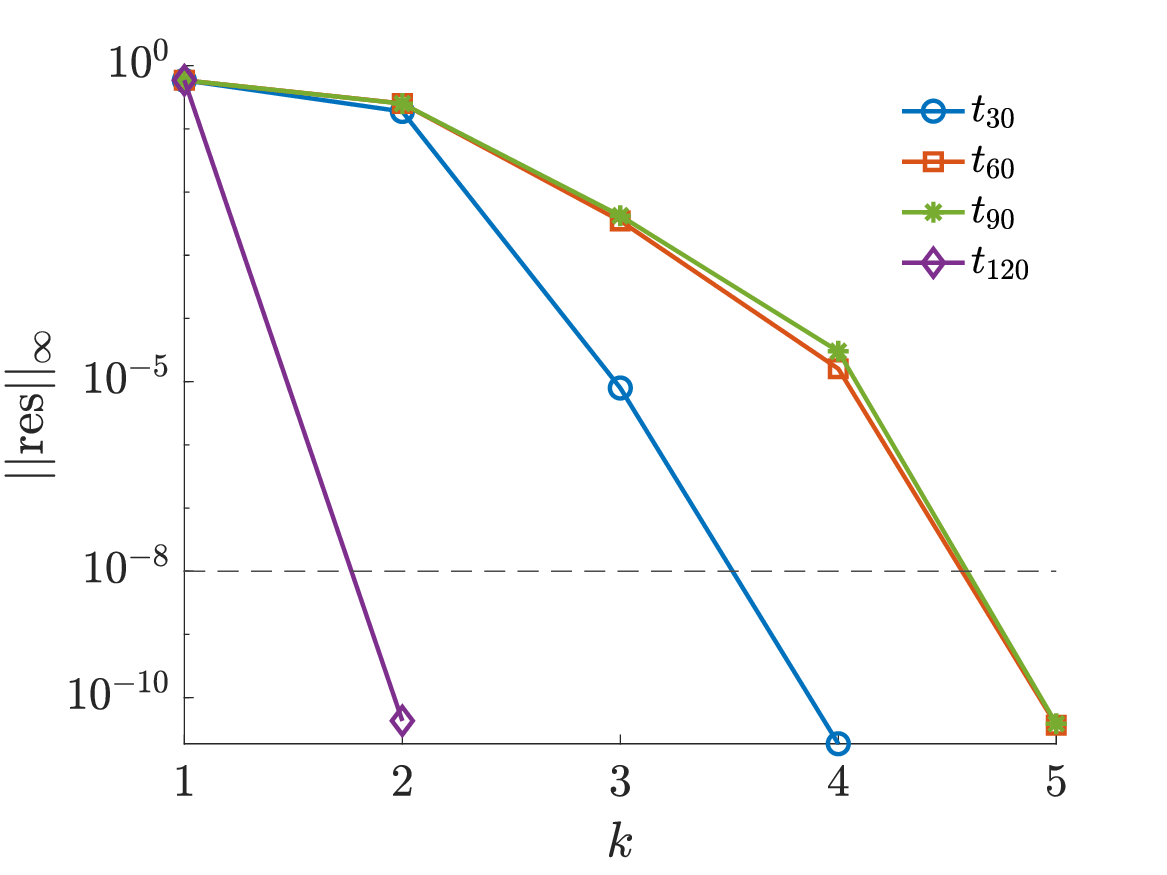}
      \caption{$\alpha=0.5$}
  \end{subfigure}
  \begin{subfigure}{0.3 \textwidth}
      \centering
      \includegraphics[width=\textwidth]{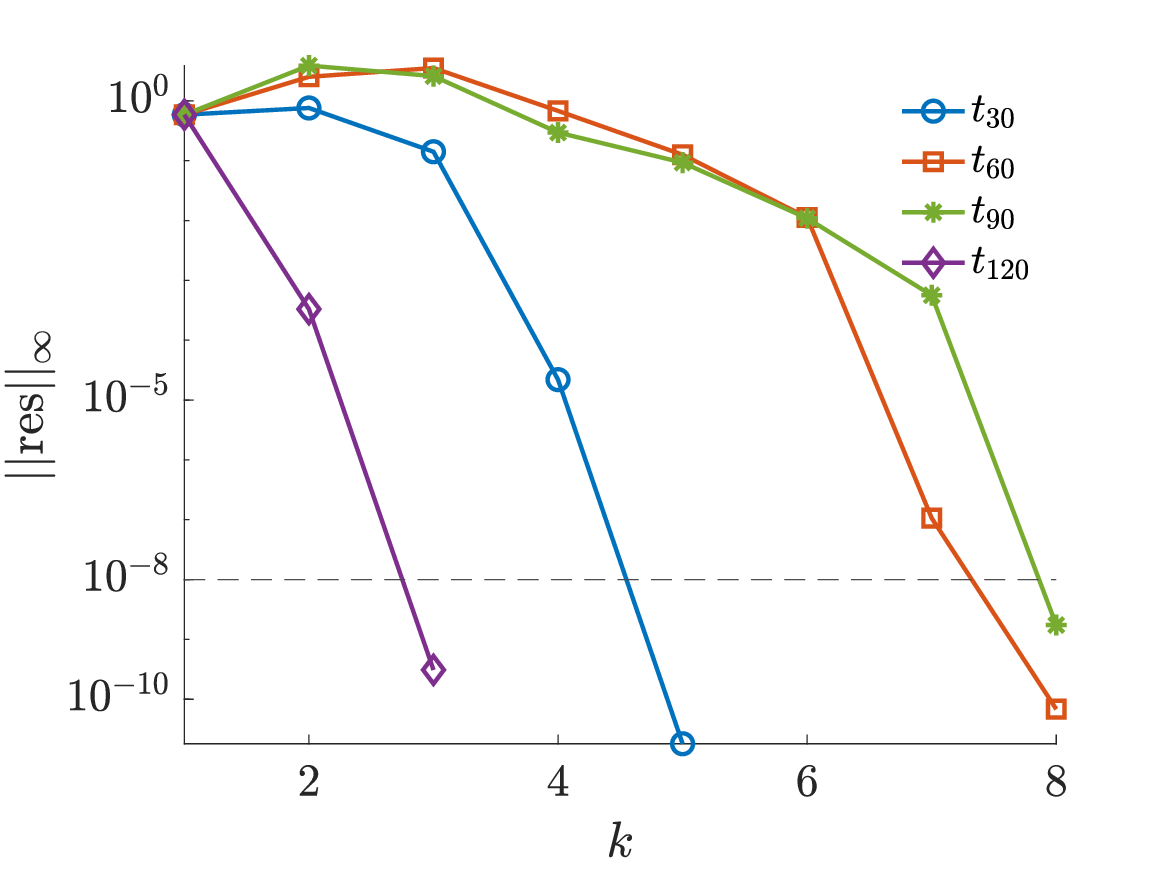}
      \caption{$\alpha=0.5$, $k_1=k_2=1500$}
  \end{subfigure}
  \begin{subfigure}{0.3 \textwidth}
      \centering
      \includegraphics[width=\textwidth]{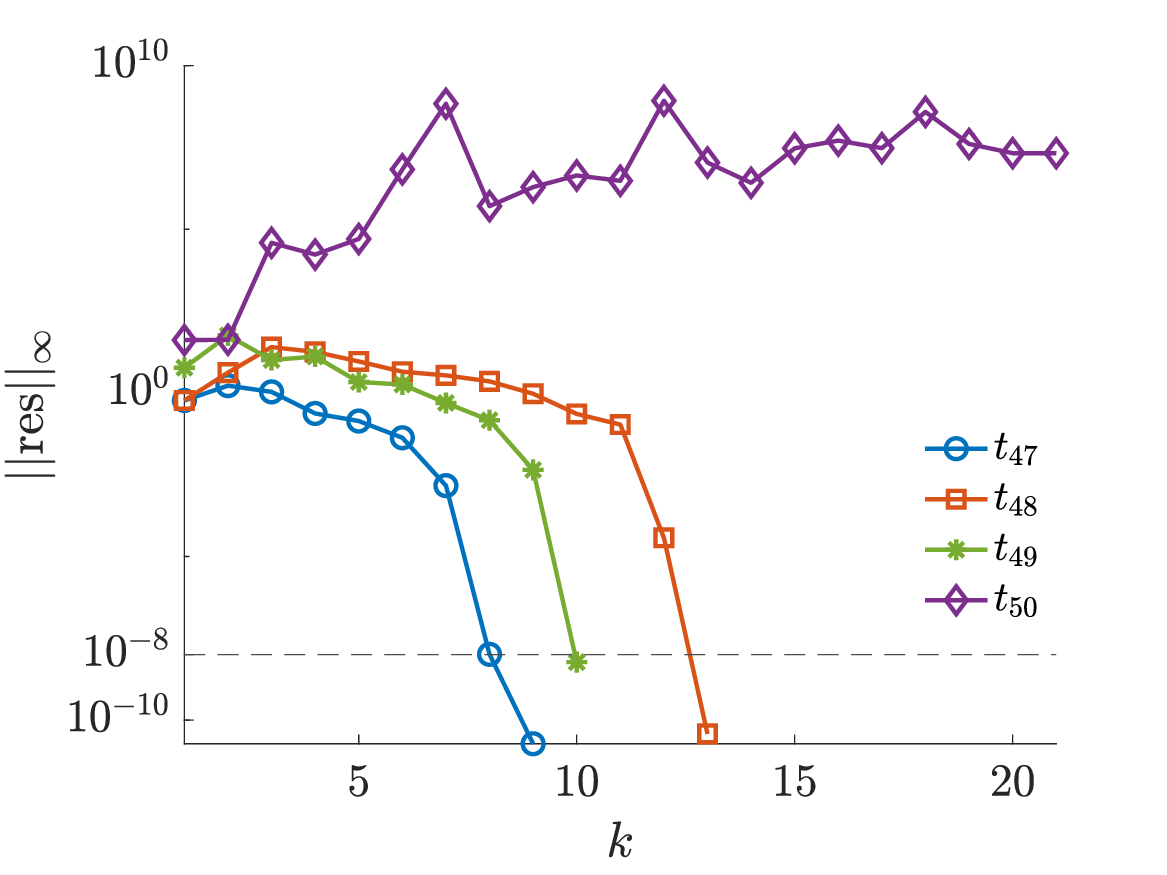}
      \caption{$\alpha=0.5$, $k_1=k_2=110$}
  \end{subfigure}
  \caption{Residual of Newton iterates for Newton step $k$. Convergence threshold was $10^{-8}$.}
  \label{fig:suplin1}
\end{figure}
Smaller hardening variables $k_1, k_2$ increase the Newton steps as seen in Figure \ref{fig:suplin1}(B) and finally result in convergence problems as seen in \ref{fig:suplin1}(C). This is partially due to substantially worse initial guesses if the plastic deformation is larger and a more and more ill-posed problem as mentioned in Theorem \ref{thm:solfrac}. Figure~\ref{fig:suplin} shows the number of necessary Newton steps at each time step $t_n$ for different values of $\alpha$ and varying interval matrix $\boldsymbol{\Delta}$. We see that the number of Newton steps is very robust with respect to both quantities.
\begin{figure}[htb]
  \centering
 \begin{subfigure}{0.45 \textwidth}
     \centering
     \includegraphics[width=\textwidth]{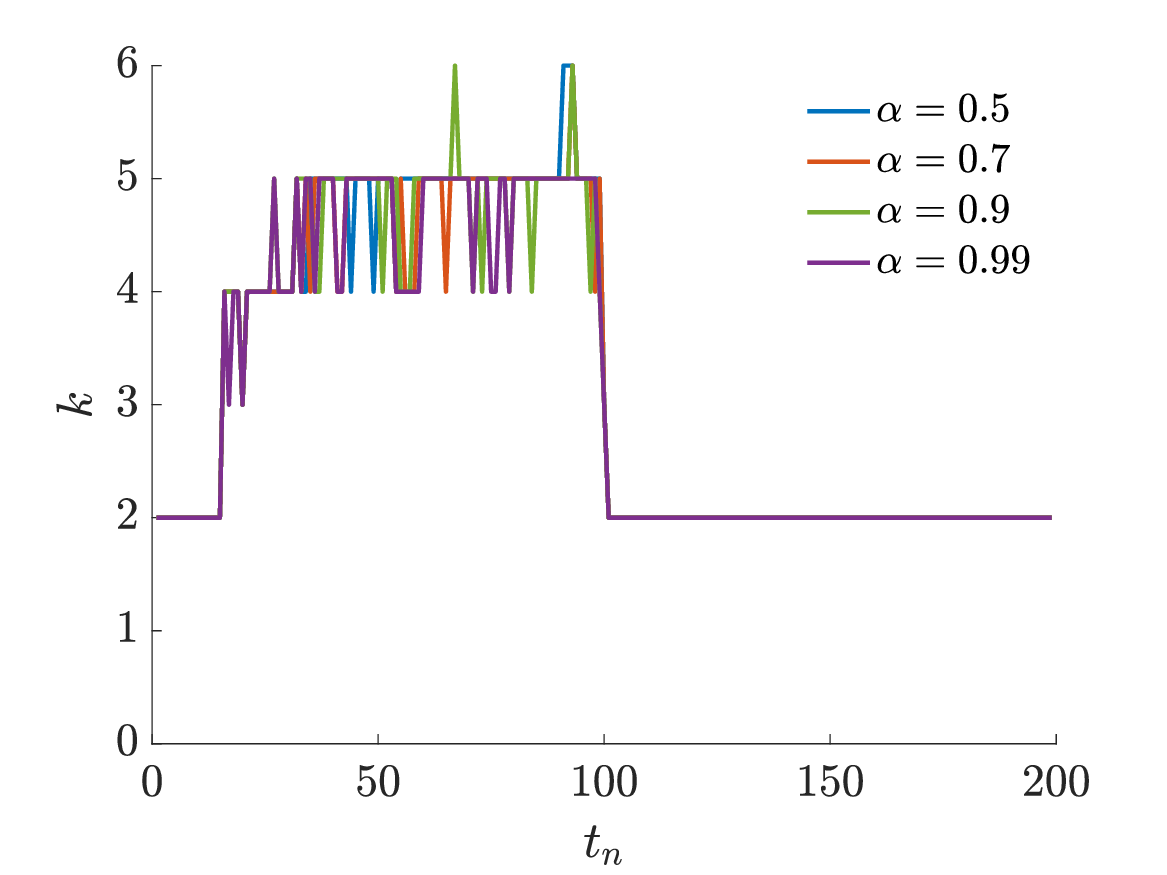}
     \caption{}
 \end{subfigure}
 \begin{subfigure}{0.45 \textwidth}
     \centering
     \includegraphics[width=\textwidth]{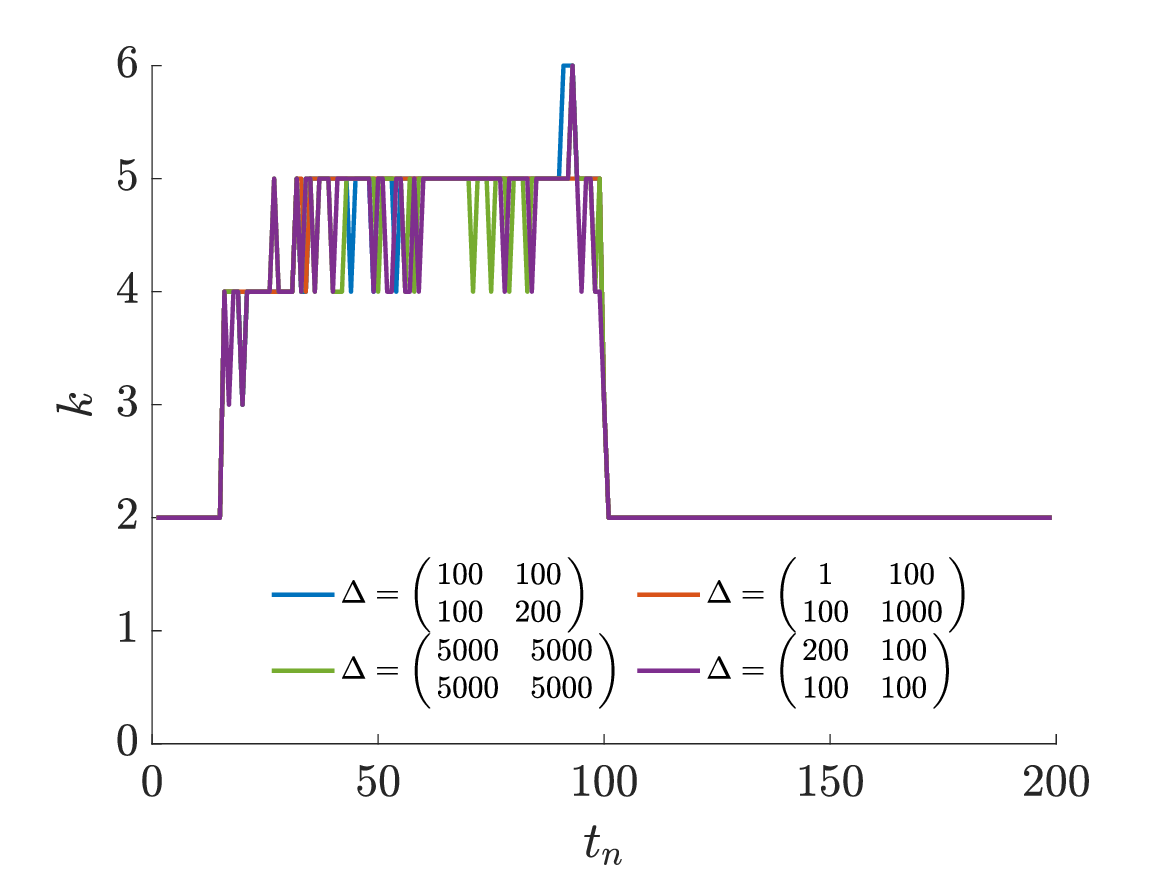}
     \caption{}
 \end{subfigure}
  \caption{Number of necessary Newton steps for different values of (A):$\alpha$ and (B): $\boldsymbol{\Delta}$. Convergence threshold is $10^{-8}$.}
  \label{fig:suplin}
\end{figure}

Next, we investigate how $\alpha$ and $\boldsymbol{\Delta}$ influence the overall plastic behavior. To that end, we take displacement measurements as seen in Figure~\ref{fig:domain}(C).
Evolution of the displacements values is presented during loading and unloading for different values of $\alpha$ and $\boldsymbol{\Delta}$.
\begin{figure}
  \centering
  \begin{subfigure}{0.45\textwidth}
      \centering
      \includegraphics[width=\textwidth]{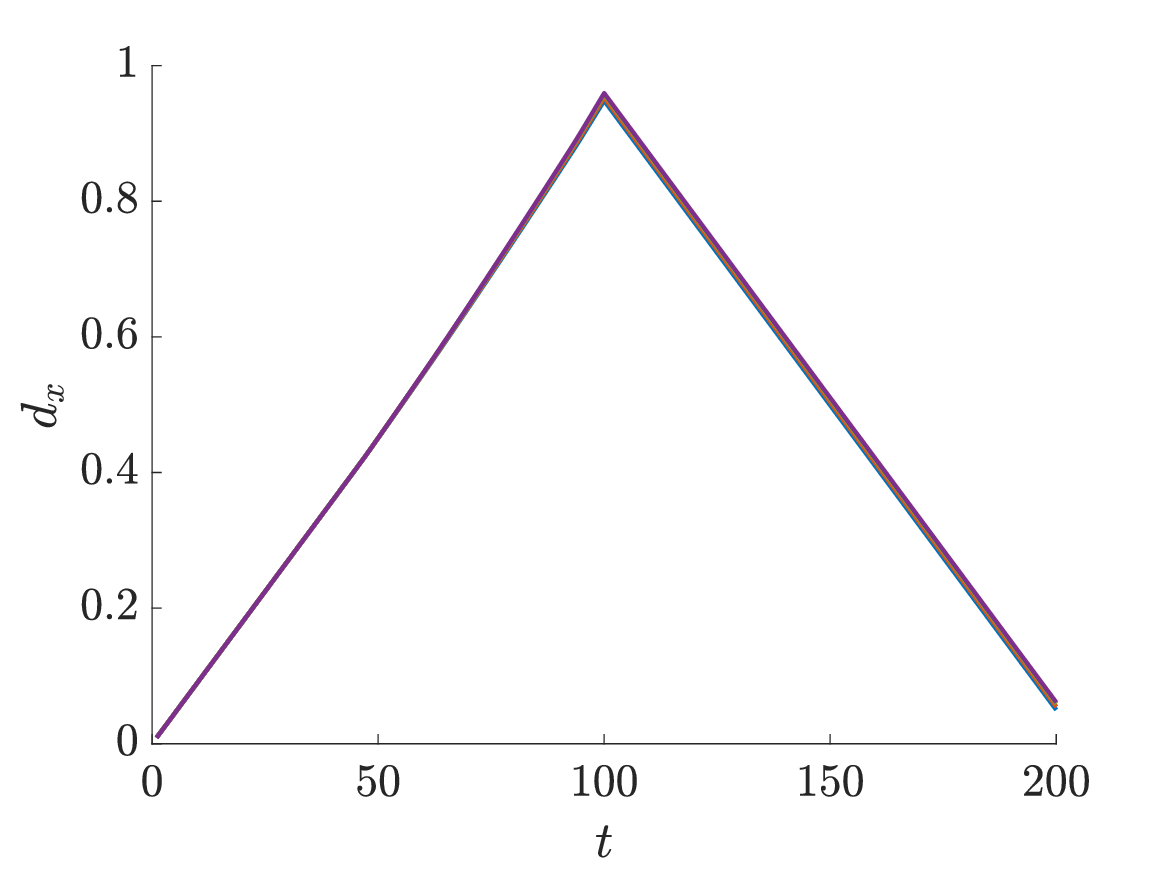}
      \caption{}
  \end{subfigure}
  \begin{subfigure}{0.45\textwidth}
      \centering
      \includegraphics[width=\textwidth]{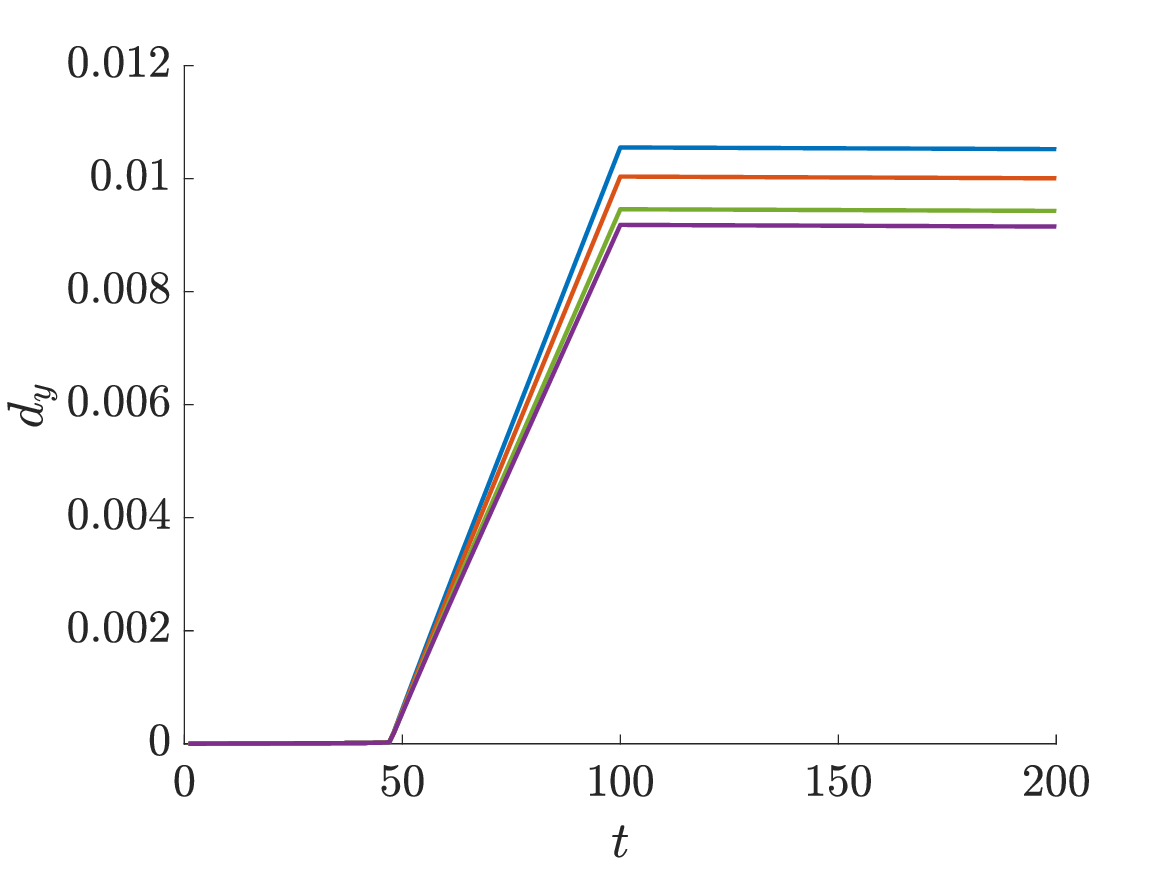}
      \caption{}
  \end{subfigure} \\
  \begin{subfigure}{0.45\textwidth}
      \centering
      \includegraphics[width=\textwidth]{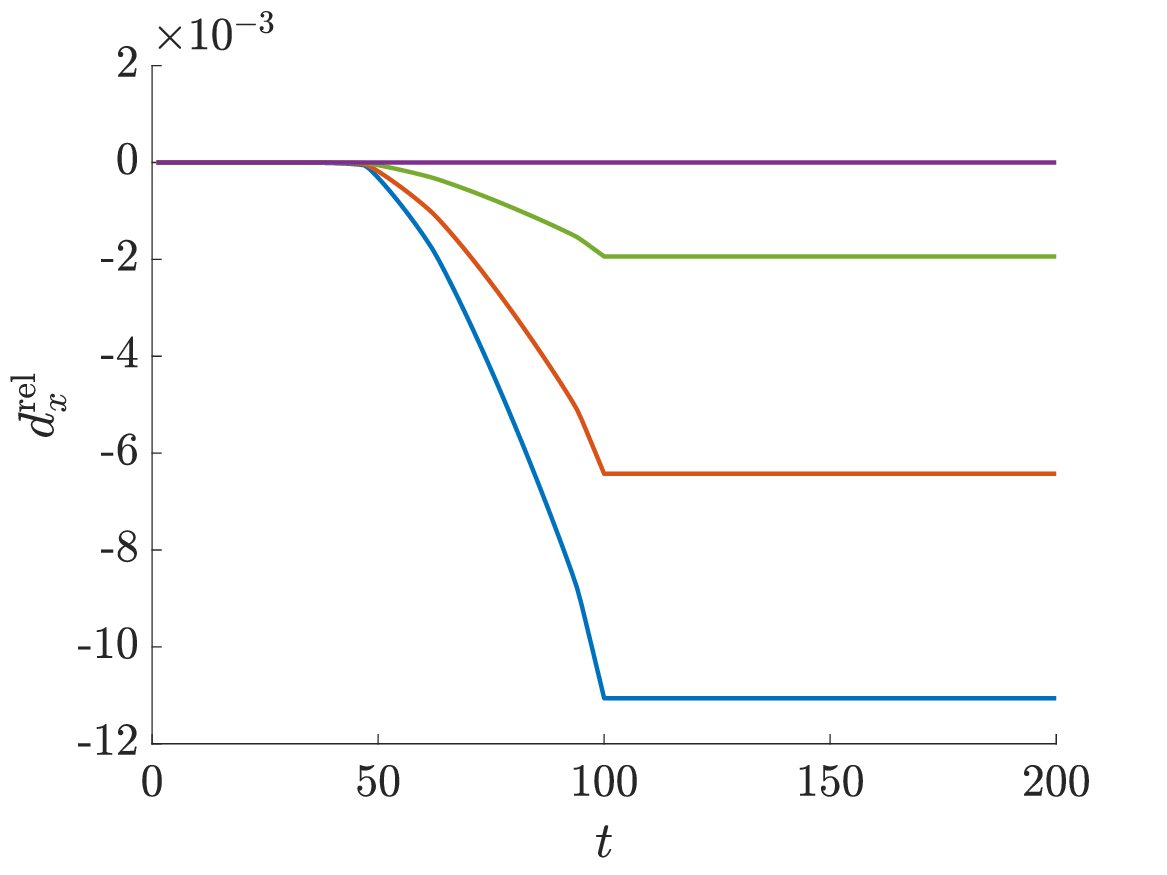}
      \caption{}
  \end{subfigure}
  \begin{subfigure}{0.45\textwidth}
      \centering
      \includegraphics[width=\textwidth]{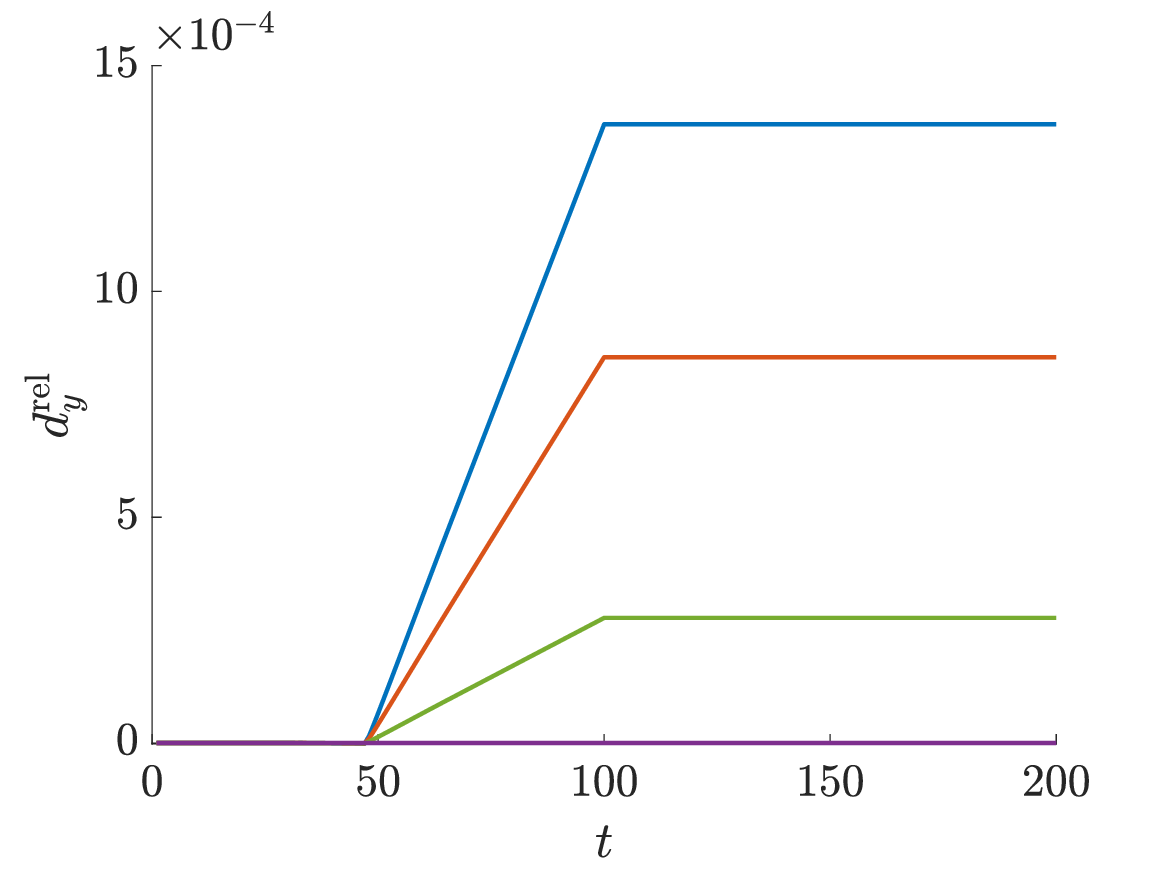}
      \caption{}
  \end{subfigure}
  \caption{Evolution of the displacement measurements $d_x$ (A) and $d_y$ (B) over time and for $\alpha \in \{\textcolor{color1}{0.5},\textcolor{color2}{0.7},\textcolor{color3}{0.9},\textcolor{color4}{0.99}\} $. (C) shows the displacement measurement $d_x$ relative to the measurement for $\alpha=0.99$. (D) shows the displacement measurement $d_y$ relative to the measurement for $\alpha=0.99$.}
  \label{fig:disps_alpha}
\end{figure}
\begin{figure}
  \centering
  \begin{subfigure}{0.45\textwidth}
      \centering
      \includegraphics[width=\textwidth]{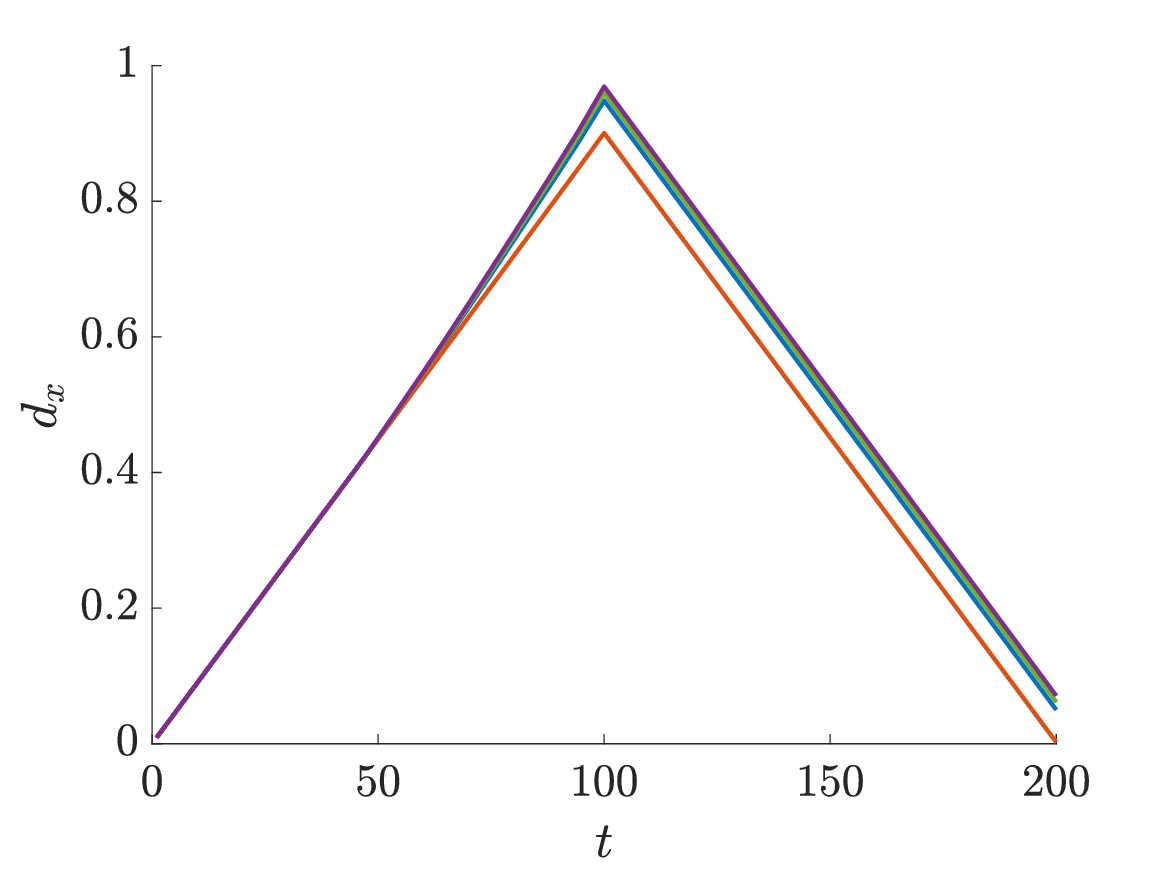}
      \caption{}
  \end{subfigure}
  \begin{subfigure}{0.45\textwidth}
      \centering
      \includegraphics[width=\textwidth]{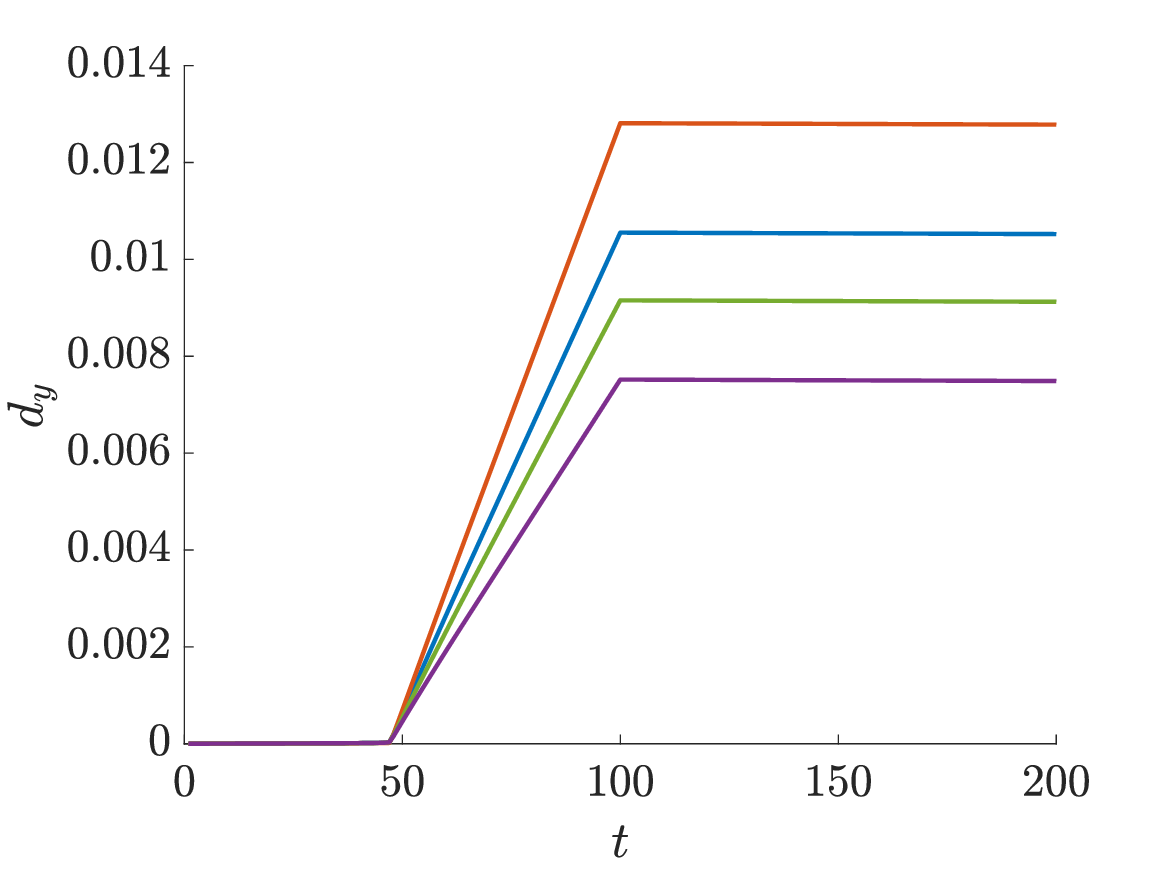}
      \caption{}
  \end{subfigure} \\
  \begin{subfigure}{0.45\textwidth}
      \centering
      \includegraphics[width=\textwidth]{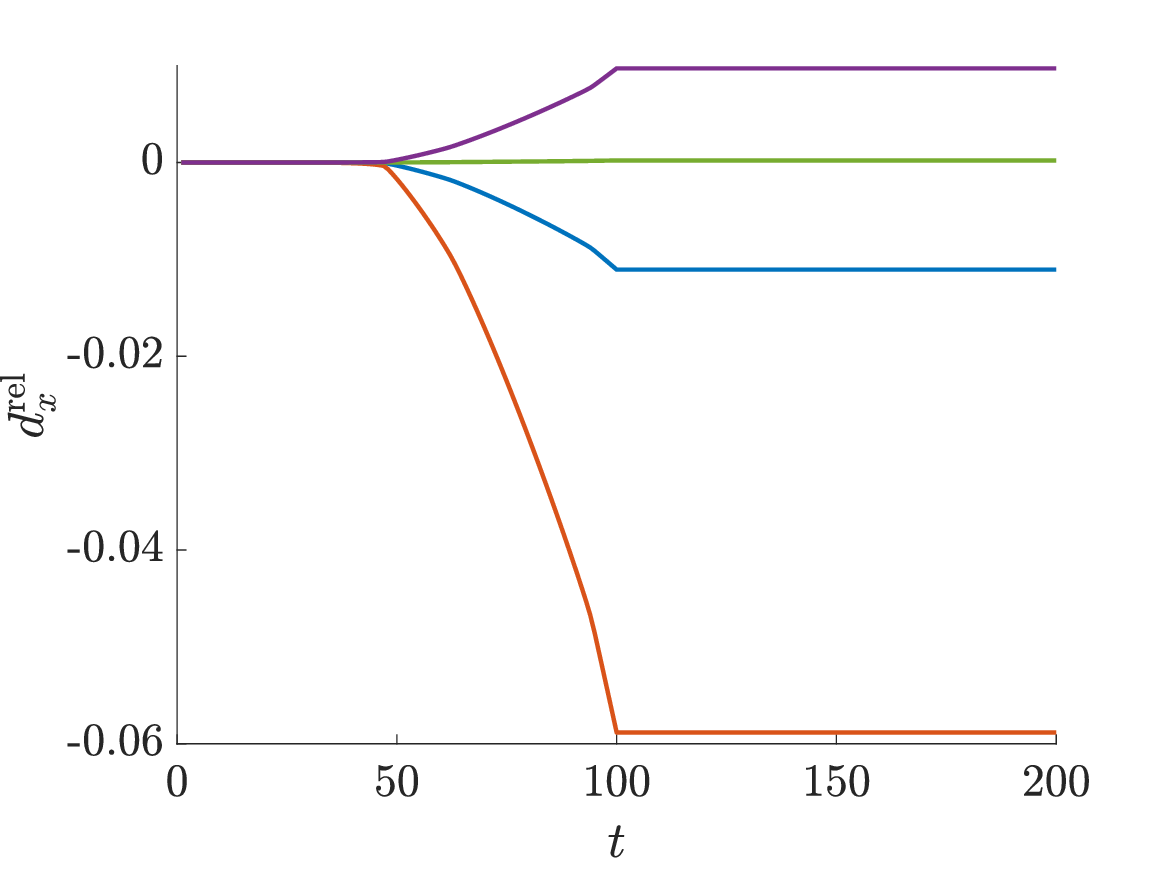}
      \caption{}
  \end{subfigure}
  \begin{subfigure}{0.45\textwidth}
      \centering
      \includegraphics[width=\textwidth]{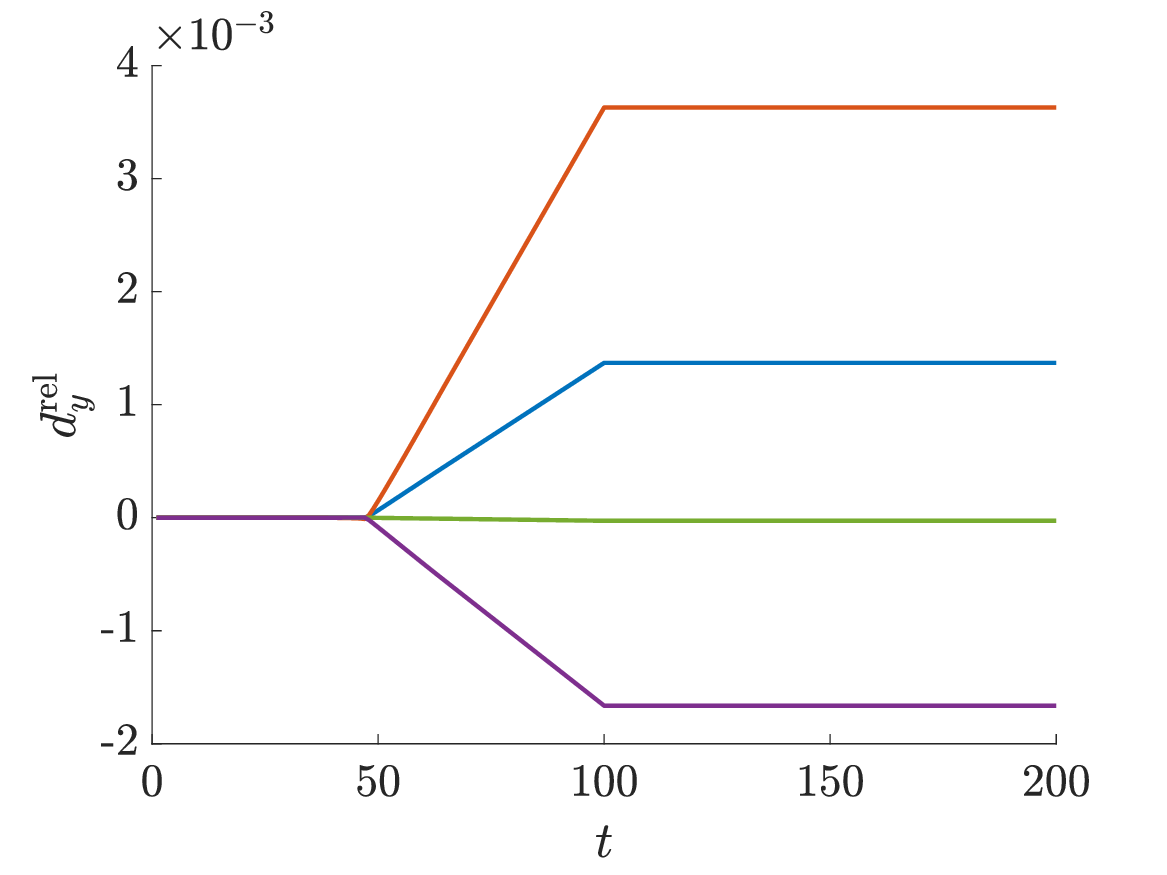}
      \caption{}
  \end{subfigure}
  \caption{Evolution of the displacement measurements $d_x$ (A) and $d_y$ (B) over time and for $\boldsymbol{\Delta} \in \Big\{ \textcolor{color1}{\begin{pmatrix}
      100 & 100 \\
      100 & 200
  \end{pmatrix}},\textcolor{color2}{\begin{pmatrix}
      1 & 100 \\
      100 & 1000
  \end{pmatrix}},\textcolor{color3}{\begin{pmatrix}
      5000 & 5000 \\
      5000 & 5000
  \end{pmatrix}},\textcolor{color4}{\begin{pmatrix}
      200 & 100 \\
      100 & 100
  \end{pmatrix}} \Big\}$. (C) shows the displacement measurement $d_x$ relative to the measurement for $\alpha=0.99$. (D) shows the displacement measurement $d_y$ relative to the measurement for $\alpha=0.99$.}
  \label{fig:disps_DELTA}
\end{figure}
Figure~\ref{fig:disps_alpha} shows that the plastic deformation is influenced by the value of $\alpha$. Around $t\approx 50$, the plasticity starts to dominate and plastic deformation $d_y$ is increasing for decreasing $\alpha$. Furthermore, we see in Figure~\ref{fig:disps_alpha}(A), that the displacement $d_x$ is mostly elastic. In the unloading phase (i.e. $t>100$), plastic deformation does not change, whereas elastic deformation vanishes as one would expect. Figure~\ref{fig:disps_DELTA} shows the influence of the interval matrix $\boldsymbol{\Delta}$ on the plastic behavior. Again the influence is different for $d_x$ and $d_y$. It is interesting, that for a interval matrix of the form $\boldsymbol{\Delta} =\lambda \begin{pmatrix}
  1 & 1 \\
  1 & 1
\end{pmatrix}, \lambda >0$, the plastic behavior is very close to the limit case $\alpha \nearrow 1$. Lastly we give qualitative pictures of the remaining deformation after unloading in Figure~\ref{fig:deformedgeometry}. The deformation is amplified by a factor of $20$. Clearly the previously discussed differences in plastic deformation are visible.
\begin{figure}
  \centering
  \includegraphics[trim= 0 400 0 400 , scale=0.2, clip]{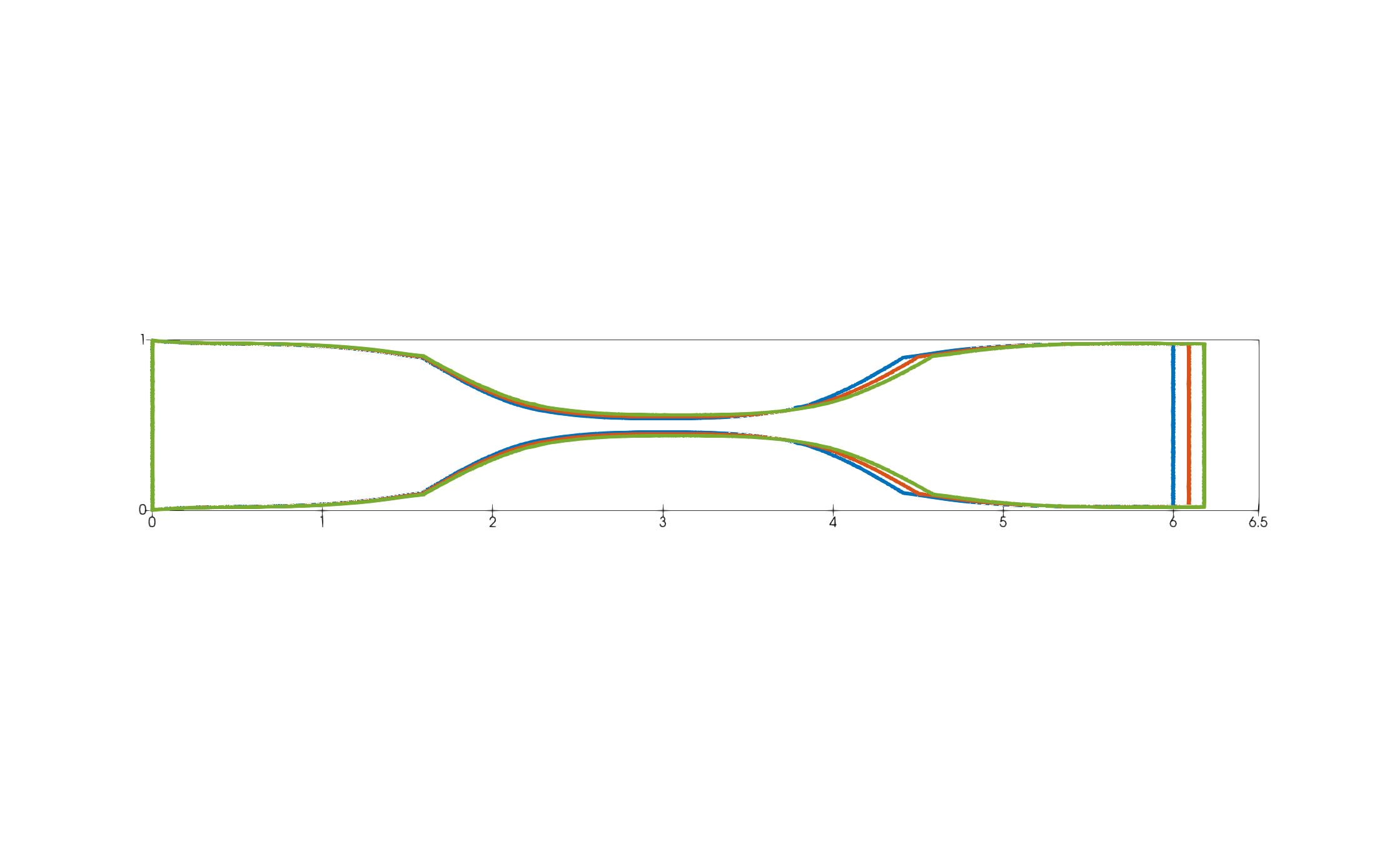}
  \caption{Qualitative pictures of the remaining deformation after unloading for different values of $\alpha$. Amplified by a factor of $20$. $\alpha \in \{\textcolor{color1}{0.5},\textcolor{color2}{0.7},\textcolor{color3}{0.9}\} $}
  \label{fig:deformedgeometry}
\end{figure}

\subsection{Three-dimensional domain}
The domain $\Omega \subset \mathbb{R}^3$ (see Figure~\ref{fig:3ddom}(A)) is a rectangular block with a vertical cylindrical hole in the center, which is anchored on the left and subjected to a pulling force upwards (parallel to the cylinder axis) on the right.
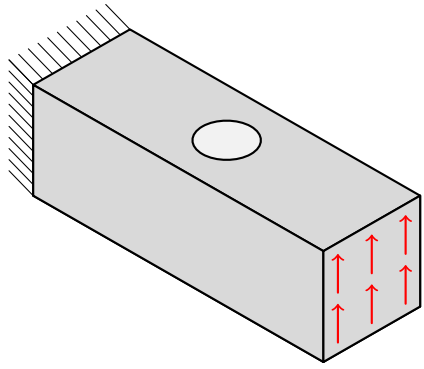
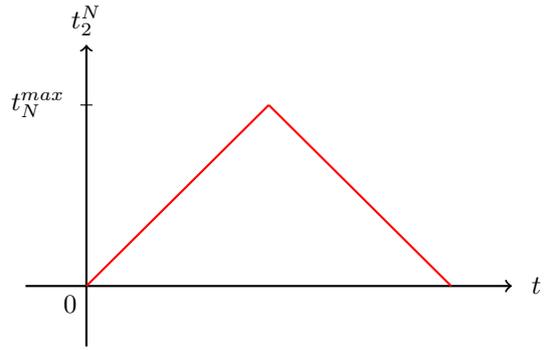
\begin{figure}
   \centering
   \begin{subfigure}{0.45 \textwidth}
       \centering
  \begin{tikzpicture}[scale=0.9, line cap=round,line join=round,%
   x={({-\xx cm,-\xy cm})},y={(\xx cm,-\xy cm)},z={(0 cm,\zy cm)}]
          % dimensions
          \def\a{2} % box side length
          \def\r{0.5} % cylinder radius
          \def\h{6} % box height
          % coordinates
          \coordinate (A) at (0,0,0);
          \coordinate (B) at (\a,0,0);
          \coordinate (C) at (\a,\h,0);
          \coordinate (D) at (0,\h,0);
          \coordinate (A') at (0,0,\a);
          \coordinate (B') at (\a,0,\a);
          \coordinate (C') at (\a,\h,\a);
          \coordinate (D') at (0,\h,\a);
          % cylinder
          
          %\draw[dashed] (1,3,0) circle (\r);
          \draw[thick] (1,3,\a) circle (\r);
          % box
          %\draw (A) -- (D) -- (D') -- (A') -- cycle;
          \draw[white,fill=gray!30] (A) -- (B) -- (B') -- (A') -- cycle;
          \draw[thick,fill=gray!30] (A') -- (B') -- (C') -- (D') -- cycle;
          %\draw[dashed] (A) -- (A');
          \draw[thick] (B) -- (B');
          \draw[thick] (C) -- (C');
          \draw[thick] (D) -- (D');
          %\draw[dashed] (A) -- (B);
          %\draw[dashed] (A) -- (D);
          \draw[thick]  (D) -- (C);
          \draw[thick]  (C) -- (B);
           \draw[thick,fill=gray!30] (B') -- (C') -- (C) -- (B) -- cycle;
          \draw[thick,fill=gray!10] (1,3,\a) circle (\r);
        \draw (\a,0,0)-- +(0,-0.5,0.2);
          \draw (\a,0,0.2)-- +(0,-0.5,0.2);
          \draw (\a,0,0.4)-- +(0,-0.5,0.2);
          \draw (\a,0,0.6)-- +(0,-0.5,0.2);
          \draw (\a,0,0.8)-- +(0,-0.5,0.2);
          \draw (\a,0,1)-- +(0,-0.5,0.2);
           \draw (\a,0,1.2)-- +(0,-0.5,0.2);
          \draw (\a,0,1.4)-- +(0,-0.5,0.2);
          \draw (\a,0,1.6)-- +(0,-0.5,0.2);
          \draw (\a,0,1.8)-- +(0,-0.5,0.2);
          \draw (\a,0,2)-- +(0,-0.5,0.2);
           \draw (\a,0,2)-- +(0,-0.5,0.2);
          \draw (\a-0.2,0,2)-- +(0,-0.5,0.2);
          \draw (\a-0.4,0,2)-- +(0,-0.5,0.2);
          \draw (\a-0.6,0,2)-- +(0,-0.5,0.2);
          \draw (\a-0.8,0,2)-- +(0,-0.5,0.2);
          \draw (\a-1,0,2)-- +(0,-0.5,0.2);
           \draw (\a-1.2,0,2)-- +(0,-0.5,0.2);
          \draw (\a-1.4,0,2)-- +(0,-0.5,0.2);
          \draw (\a-1.6,0,2)-- +(0,-0.5,0.2);
          \draw (\a-1.8,0,2)-- +(0,-0.5,0.2);
          \draw (\a-2,0,2)-- +(0,-0.5,0.2);
        \draw[thick,fill=gray!30] (C) -- (C') -- (D') -- (D) -- cycle;
          % hidden lines
          %\draw[dashed] (A') -- (C');
          %\draw[dashed] (A) -- (C);
          \draw[->, red, thick] (0.3,\h,0.2) -- +(0,0,0.7);
          \draw[->, red, thick] (1,\h,0.2) -- +(0,0,0.7);
          \draw[->, red, thick] (1.7,\h,0.2) -- +(0,0,0.7);
          \draw[->, red, thick] (0.3,\h,1.1) -- +(0,0,0.7);
          \draw[->, red, thick] (1,\h,1.1) -- +(0,0,0.7);
          \draw[->, red, thick] (1.7,\h,1.1) -- +(0,0,0.7);
          
      \end{tikzpicture}
  \caption{$\Omega \subset \mathbb{R}^3$.}
  \label{fig:3ddom_side}
   \end{subfigure}
   \begin{subfigure}{0.45 \textwidth}
       \centering
           \begin{tikzpicture}[scale=0.8]
  \draw[->,thick] (-1,0) --(7,0);
  \draw[->,thick] (0,-1) -- (0,4);
  \draw[red,thick] (0,0) -- (3,3);
  \draw[red,thick] (3,3)--(6,0);
  \node at(7.4,0) {$t$};
  \node at (0,4.4) {$t^N_2$};
  \draw (-0.1,3)--(0.1,3);
  \node[left] at (-0.2,3) {$t^{max}_N$};
  \node[anchor=north east] at (0,0) {$0$};
\end{tikzpicture}
  \caption{$\mathbf{t}_N =(0,t_2^N,0)$ over time.}
   \end{subfigure}
   \caption{The domain $\Omega \subset \mathbb{R}^3$ and the loading regime over time.}
   \label{fig:3ddom}
\end{figure}
We have $\mathbf{b}=0$ and $\mathbf{t}_N$ vanishing everywhere but on the right face, as seen in Figure~\ref{fig:3ddom}, where it is constant in space, but varying in time as shown depicted in Figure~\ref{fig:3ddom}(B).
 \begin{figure}
    \centering
    \begin{subfigure}{0.45 \textwidth}
        \centering
        \includegraphics[width=\textwidth]{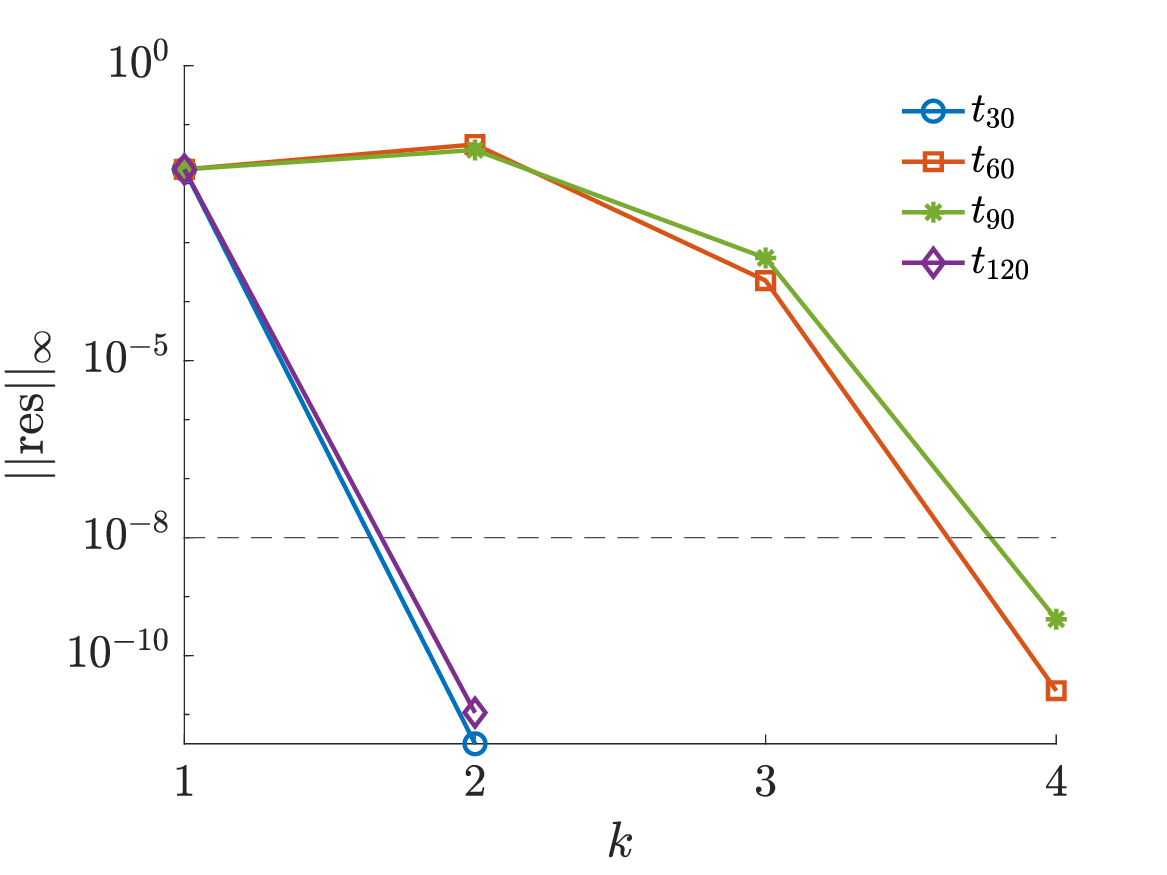}
        \caption{$\alpha=0.5$, Convergence threshold = $10^{-8}$}
    \end{subfigure}
    \begin{subfigure}{0.45 \textwidth}
       \centering
       \includegraphics[width=\textwidth]{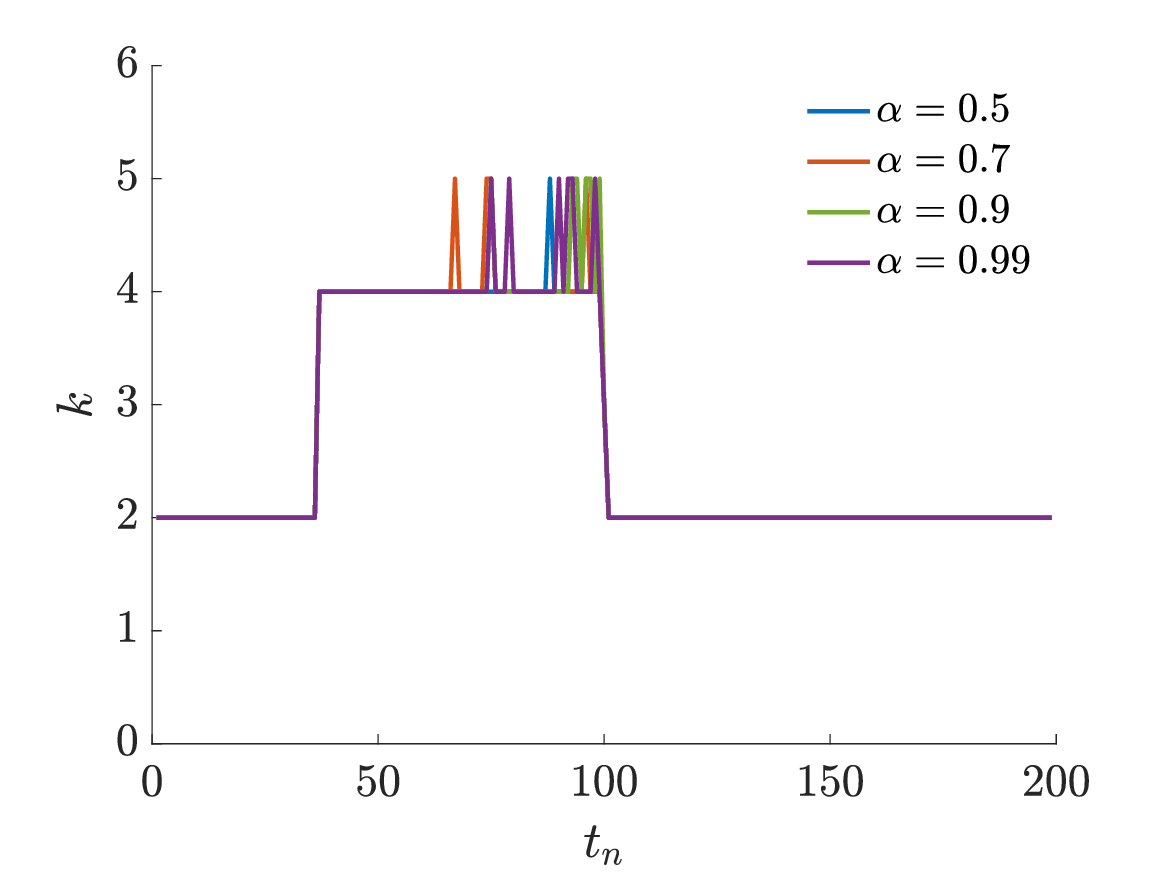}
       \caption{Convergence threshold = $10^{-8}$}
   \end{subfigure}
    \caption{(A): Residuals of Newton iterates for Newton step $k$. (B): Number of necessary Newton steps for different values of $\alpha$.}
    \label{fig:suplin3d}
\end{figure}
To solve~\eqref{eq:modelproblem}, we use the same piecewise affine ansatz space $V_h$ on a mesh $\mathcal{T}$ with $\approx 1.1\cdot 10^6$ degrees of freedom. If not mentioned otherwise, the material parameters from Table~\ref{tab:3dparas} are used, chosen similar to the ones from the 2d-experiment but adapted to fit the given constraints.
\begin{table}
	\centering
	\begin{tabular}{c |c}
		
		Material parameters & Value \\
		\hline 
		$\mu$ & $120000$\\
		$\kappa$ & $80000$ \\
		$Y_0$ & $50000$ \\
		$k_1$ & $200000$ \\
		$k_2$ & $200000$\\
		$t_N^{max}$ & $5000$\\
            $\boldsymbol{\Delta}$&$\begin{pmatrix}
                100 & 100 & 100\\
                100 & 500 & 100\\
                100 & 100 & 900
            \end{pmatrix}$
	\end{tabular}
	\caption{Default material parameters.}
	\label{tab:3dparas}
\end{table}

To judge the performance of the semismooth Newton method, we plot the norms of the residuals in each necessary Newton step for $\alpha=0.5$ in Figure~\ref{fig:suplin3d}(A) and clearly observe superlinear performance.
In Figure \ref{fig:suplin3d}(B), we show the number of necessary Newton steps over time for different values of $\alpha$. As already seen on our two-dimensional domain, $\alpha$ does not influence the convergence behavior substantially. To investigate the plastic behavior we measure the vertical displacement of the midpoint of the right-face of $\Omega$ in Figure~\ref{fig:dy3d} as well as the maximum equivalent von-Mises stress $||\sigma_{\rm eq}||_\infty=||\text{dev}(\sigma)||_\infty$ in Figure~\ref{fig:sigeq}. The displacement behavior is again dominated by the elasticity and increases with $\alpha$. For the equivalent stress, we clearly see the onset of plasticity at $t \approx 40$. Here smaller values of $\alpha$ give rise to larger stresses.
\begin{figure}
    \centering
    \begin{subfigure}{0.45\textwidth}
        \centering
        \includegraphics[width=\textwidth]{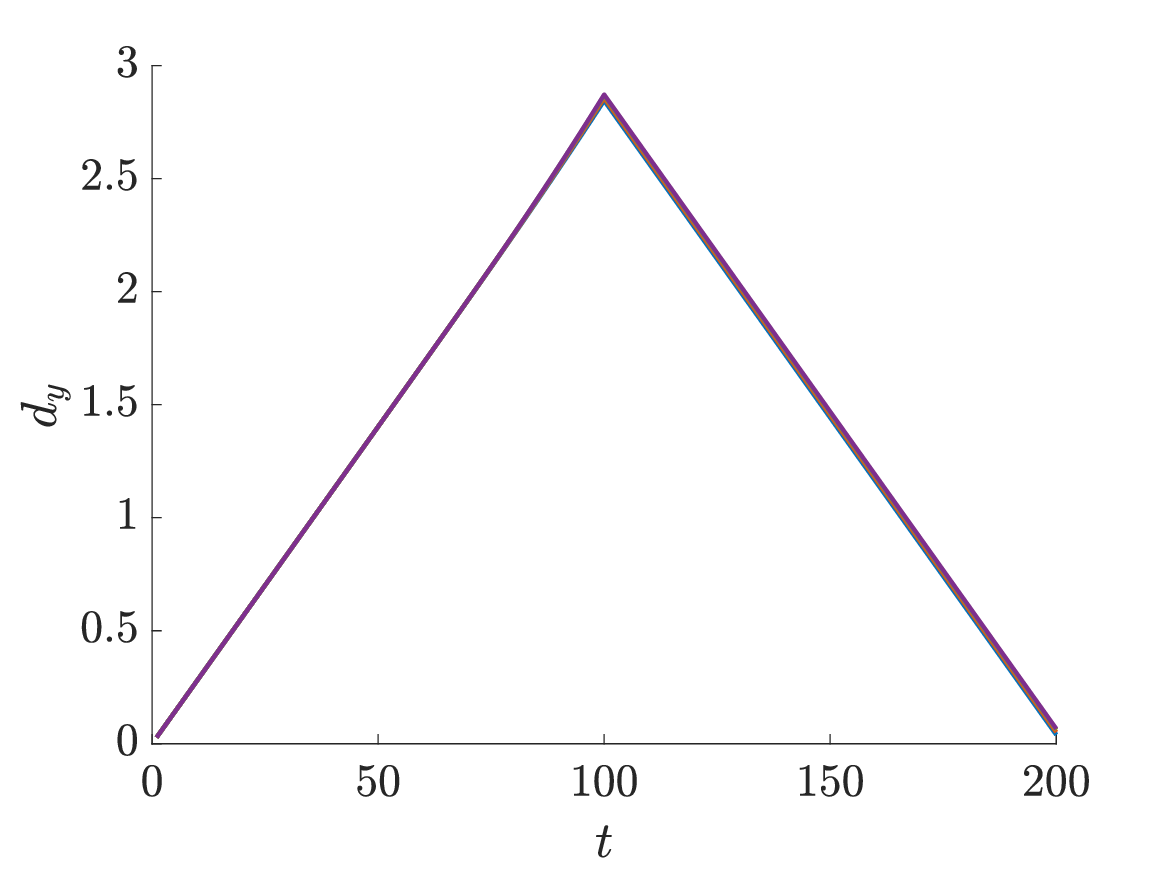}
        \caption{}
    \end{subfigure}
    \begin{subfigure}{0.45 \textwidth}
        \centering
        \includegraphics[width=\textwidth]{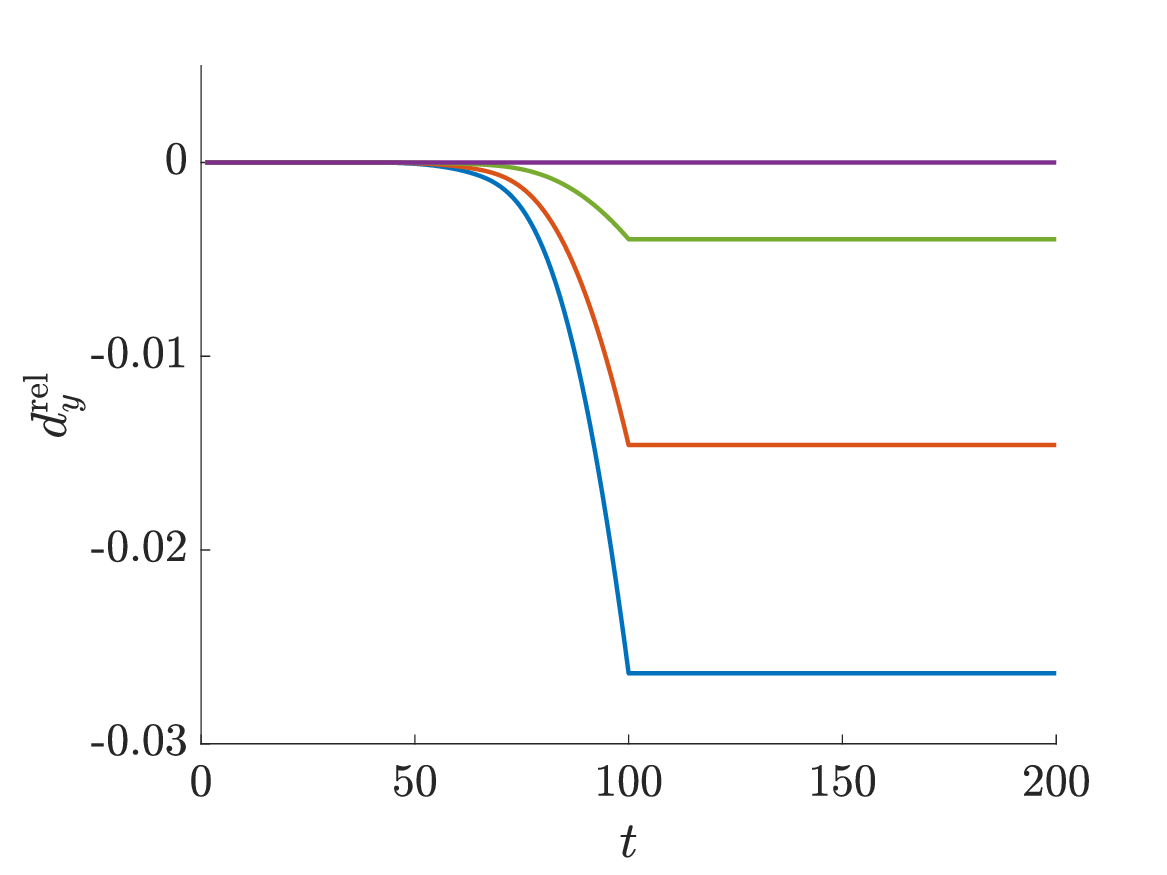}
        \caption{}
    \end{subfigure}
    \caption{(A): Vertical displacement of the midpoint of the right-face of $\Omega$ for $\alpha \in \{\textcolor{color1}{0.5},\textcolor{color2}{0.7},\textcolor{color3}{0.9},\textcolor{color4}{0.99}\}$. (B): Vertical displacement of the midpoint of the right-face relative to the displacement for $\alpha=0.99$. }
    \label{fig:dy3d}
\end{figure}
\begin{figure}
    \centering
    \begin{subfigure}{0.45\textwidth}
        \centering
        \includegraphics[width=\textwidth]{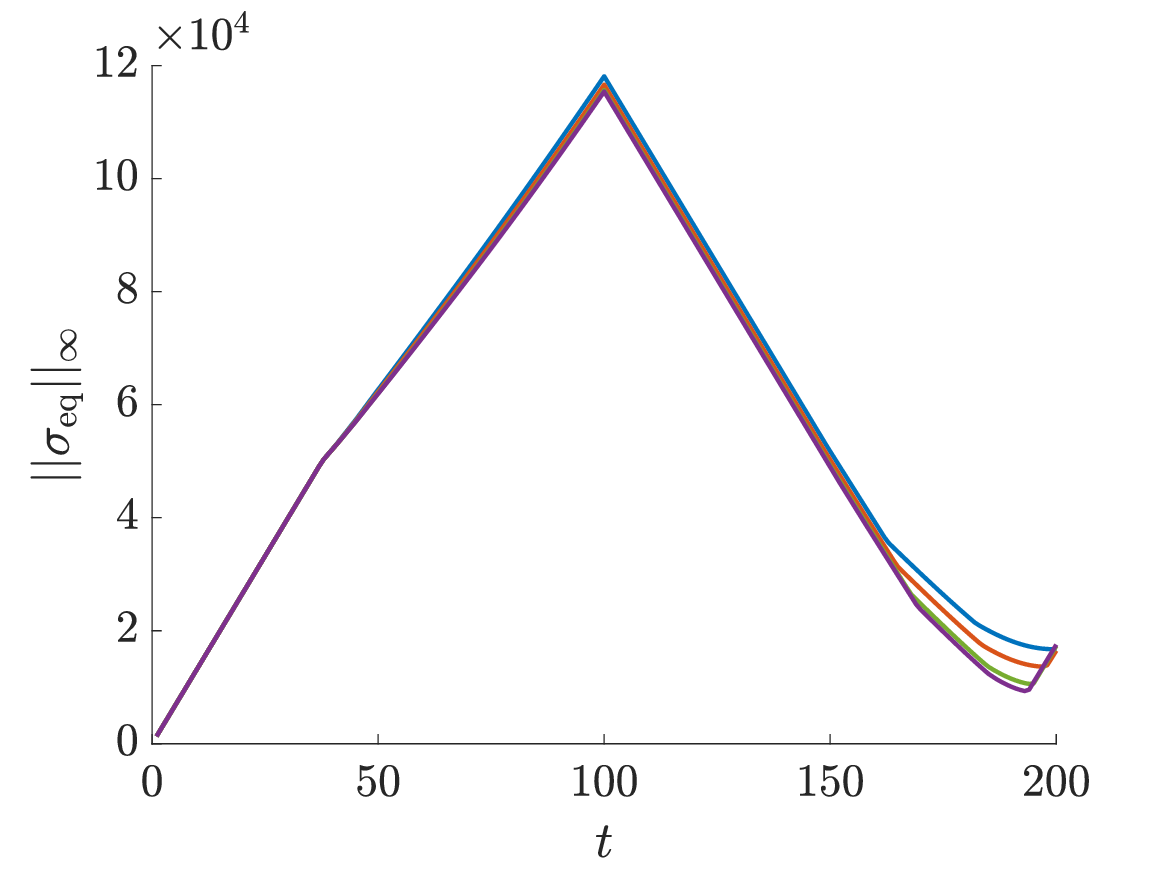}
        \caption{}
    \end{subfigure}
    \begin{subfigure}{0.45 \textwidth}
        \centering
        \includegraphics[width=\textwidth]{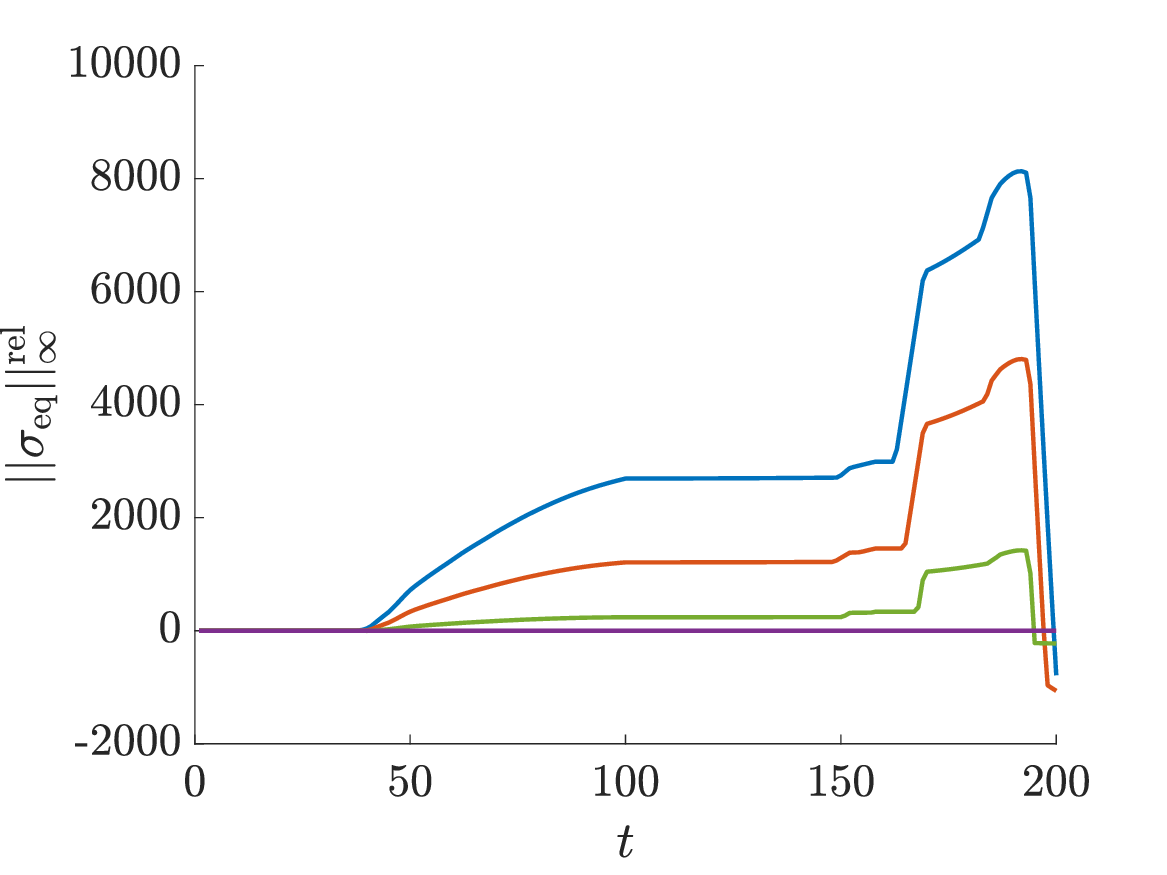}
        \caption{}
    \end{subfigure}
    \caption{(A): Maximum equivalent von-Mises stress $||\text{dev}(\sigma)||_\infty$ for $\alpha \in \{\textcolor{color1}{0.5},\textcolor{color2}{0.7},\textcolor{color3}{0.9},\textcolor{color4}{0.99}\}$. (B): Maximum equivalent von-Mises stress relative to the one for $\alpha=0.99$.}
    \label{fig:sigeq}
\end{figure}
\begin{figure}
\centering
 \begin{subfigure}{0.45 \textwidth}
     \centering
    \includegraphics[width=\textwidth]{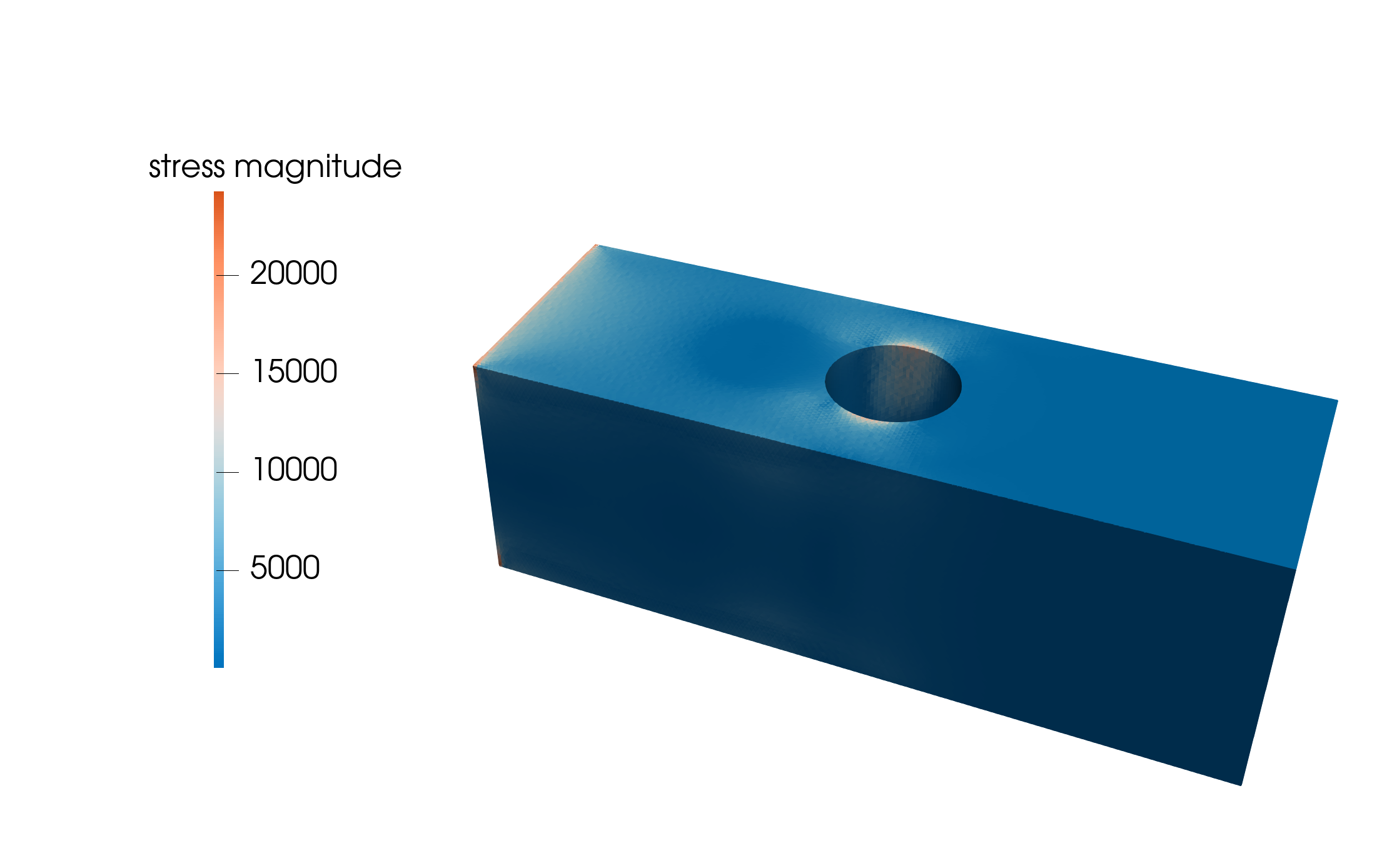}
    \caption{Stress magnitude after unloading for $\alpha=0.5$. }
    \label{fig:3dparaview}
 \end{subfigure}
    \begin{subfigure}{0.45 \textwidth}
        \centering
        \includegraphics[width=\textwidth]{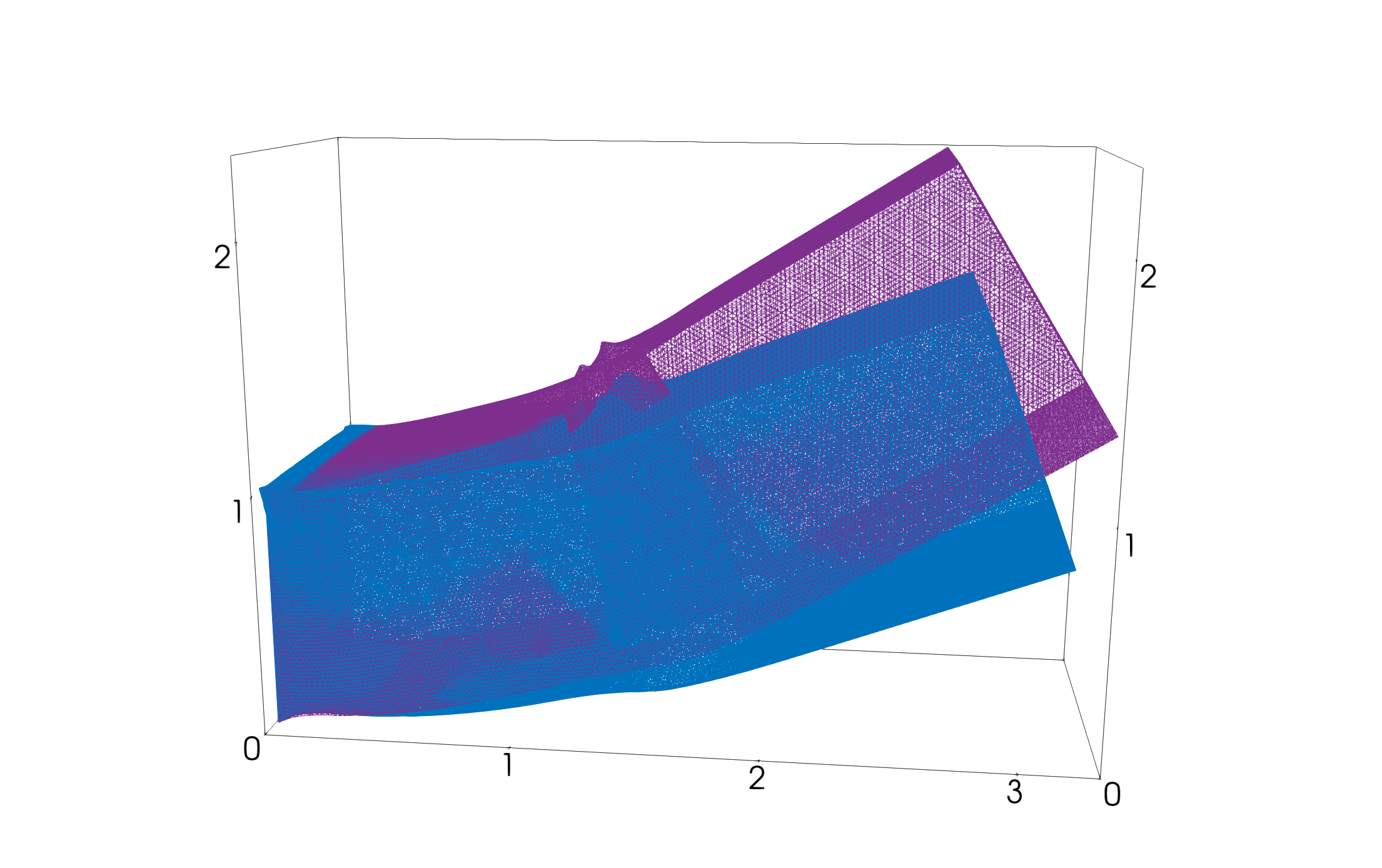}
        \caption{Comparison of deformation after unloading for $\alpha \in \{\textcolor{color1}{0.5},\textcolor{color4}{0.99}\}$. Amplified by a factor of $20$. }
        \label{fig:3dparaview_def}
    \end{subfigure}
    \caption{Qualitative pictures of stress and deformation for $\Omega \in \mathbb{R}^3$.}
    \label{fig:defo3d}
\end{figure}
Finally, we give qualitative pictures of the remaining stress and deformation after unloading in Figure~\ref{fig:defo3d}. In the deformation plot, where deformation is amplified by a factor of $20$, we clearly see the aforementioned results. The stress peaks arise at the boundaries of the cylindrical hole and at the spatially fixed face.
\FloatBarrier
\bibliographystyle{siamplain}
\bibliography{literature}
\end{document}